\pgfplotsset{compat=1.18}
\definecolor{myblue}{HTML}{007BFF} \definecolor{myred}{HTML}{FF5733}  \newtheorem{problem}{Problem}
\newtheorem{theorem}{Theorem}[section]
\newtheorem{lemma}[theorem]{Lemma}
\newtheorem{example}{Example}
\newtheorem{condition}{Condition}
\newtheorem{prop}[theorem]{Proposition}
\newtheorem{remark}{Remark}
\newtheorem{corollary}{Corollary}
\numberwithin{example}{section} 
\DeclareFontFamily{U}{matha}{\hyphenchar\font45}
\DeclareFontShape{U}{matha}{m}{n}{
<-6> matha5 <6-7> matha6 <7-8> matha7
<8-9> matha8 <9-10> matha9
<10-12> matha10 <12-> matha12
}{}
\DeclareSymbolFont{matha}{U}{matha}{m}{n}
\DeclareFontFamily{U}{mathx}{\hyphenchar\font45}
\DeclareFontShape{U}{mathx}{m}{n}{
<-6> mathx5 <6-7> mathx6 <7-8> mathx7
<8-9> mathx8 <9-10> mathx9
<10-12> mathx10 <12-> mathx12
}{}
\DeclareSymbolFont{mathx}{U}{mathx}{m}{n}
\DeclareMathOperator*{\argmin}{arg\, min}
\DeclareMathDelimiter{\vvvert} {0}{matha}{"7E}{mathx}{"17}\newcommand{\domain}{\operatorname{dom}}
\DeclarePairedDelimiterX{\normiii}[1]
{\vvvert}
{\vvvert}
{\ifblank{#1}{\:\cdot\:}{#1}}
\newcommand{\rami}[1]{\textcolor{black}{#1}}
\newcommand{\R}{\mathbb{R}}
\newcommand{\dd}{\mathop{}\!\mathrm{d}}
\title[A locally-conservative proximal Galerkin method]{A locally-conservative proximal Galerkin \\ method for pointwise bound constraints}
\date{\today}
\author{Guosheng Fu$^1$}
\address{$^1$ Department of Applied and Computational Mathematics and Statistics (ACMS), University of Notre Dame, Notre Dame, IN 46556}
\email{gfu@nd.edu}
\author{Brendan Keith$^2$}
\address{$^2$ Division of Applied Mathematics, Brown University, Providence, RI 02912}
\email{brendan\_keith@brown.edu {\normalfont\it and}  
 rami\_masri@brown.edu}
\author{Rami Masri$^2$}
\thanks{
GF was supported in part by NSF DMS-2410740. 
BK and RM were supported in part by the U.S.\ Department of Energy Office of Science, Early Career Research Program under Award Number DE-SC0024335.}
\begin{document}

\begin{abstract}
We introduce the first-order system proximal Galerkin (FOSPG) method, a locally mass-conserving, hybridizable finite element method for solving heterogeneous anisotropic diffusion and obstacle problems.
Like other proximal Galerkin methods, FOSPG finds solutions by solving a recursive sequence of smooth, discretized, nonlinear subproblems.
We establish the well-posedness and convergence of these nonlinear subproblems along with stability and error estimates under low regularity assumptions for the linearized equations obtained by solving each subproblem using Newton's method.
The FOSPG method exhibits several advantages, including high-order accuracy, discrete maximum principle or bound-preserving discrete solutions, and local mass conservation.
It also achieves prescribed solution accuracy within asymptotically mesh-independent numbers of subproblems and linear solves per subproblem iteration.
Numerical experiments on benchmarks for anisotropic diffusion and obstacle problems confirm these attributes.
Furthermore, an open-source implementation of the method is provided to facilitate broader adoption and reproducibility.

\vspace{1em}
 \smallskip
  \noindent \textit{Key words}. 
  Anisotropic diffusion, obstacle problems, high order accuracy, local mass conservation, discrete maximum principle preserving 
  
  \smallskip 
   
  \noindent \textit{MSC codes.} 35J86, 49J40, 65N30. 
\end{abstract}

\maketitle 

\section{Introduction}
 We propose the first-order system proximal Galerkin (FOSPG) method for second-order, elliptic energy principles with pointwise bound constraints. The method, given below in \Cref{alg:mixed_PG}, is based on the proximal Galerkin method proposed in \cite{keith2023proximal} and features (i) high-order accuracy, (ii) bound preservation at every point in the computational domain (not only at nodal/quadrature points), and (iii) local mass conservation.
 To the best of our knowledge, this is the first finite element method with these three properties.
 
 We choose to focus our presentation on two categories of model problems: heterogeneous anisotropic diffusion
 and (unilateral/bilateral) obstacle problems.
 These two types of model problems have a multitude of applications, with the first appearing in reservoir simulations \cite{aavatsmark1998discretization} and plasma physics \cite{green2022efficient}.
 The second is vital for optimal control \cite{rodrigues1987obstacle}, topology optimization \cite{papadopoulos2021computing}, and glaciology \cite{greve2009dynamics}, among many other applications.
 Although much remains to be learned, we have found that FOSPG is highly accurate and efficient for solving these two categories of problems and so aim to promote its use in scientific applications involving them.

\subsection{Heterogeneous anisotropic diffusion}  The exact solutions of certain boundary value problems satisfy pointwise bound constraints almost everywhere in the domain.  This property is known as the weak maximum principle \cite{kinderlehrer2000introduction}.
A long-sought feature of numerical approximations is to satisfy this maximum principle while maintaining high-order accuracy.

Fundamental links have been established between discrete maximum principle (DMP) preserving first-order finite element methods and local mesh properties. This connection has been known since the seminal work of Ciarlet \cite{ciarlet1973maximum}, which identifies the non-obtuse angle condition as a key condition ensuring DMP preservation for \emph{isotropic} diffusion. Several relaxations of this condition have been established subsequently. For example, \cite{xu1999monotone} shows that Delaunay meshes with obtuse angles near diffusion discontinuities also preserve the DMP.

DMP-preserving local mesh conditions are more delicate to derive and difficult to enforce for anisotropic diffusion.
In particular, weakly acute angle conditions formulated with respect to the diffusion tensor need to be enforced on the internal dihedral angles of the mesh \cite{li2010anisotropic}. 
However, generating such anisotropy-aligned meshes can be very challenging or infeasible \cite{green2022efficient, green2024efficient}. 
Recently, {\it nonlinear} schemes satisfying the DMP without any mesh restrictions have been proposed \cite{BE05,BJKR18, ABP23,barrenechea2024nodally}. However, these schemes typically have low-order accuracy or enforce DMP only at nodal points \cite{BE05,barrenechea2024nodally}.
It is also unclear whether these approaches can preserve mass locally.
We refer readers to the recent review article \cite{barrenechea2024finite} for a more detailed account of DMP-preserving finite element methods and further references. 
\medskip

\paragraph{\bf Unique properties:}
FOSPG is DMP-preserving at every point in the computational domain, regardless of the polynomial degree employed.
Moreover, the method is locally mass-conservative and does not require anisotropy-aligned meshes.
We note that FOSPG requires a sequence of nonlinear solves to approximate the solution. However, in practice, inexact Newton solves (often involving fewer than three  linear solves per subproblems) are typically sufficient to obtain an accurate solution. Combined with the \textit{mesh--independent} property of FOSPG,  this demonstrates that the additional computational cost remains manageable. 
\subsection{Obstacle problems} 
Minimizing a quadratic energy functional over a closed, non-empty, convex set results in a variational \textit{inequality} \cite[Section 6.9]{ciarlet2013linear}. We refer to \cite{kinderlehrer2000introduction} for a comprehensive introduction.  The obstacle problem, modeling the equilibrium position of an elastic membrane lying above an obstacle, is the prototypical example in this problem class. 
Here, we present a brief overview of the standard finite element approaches to solving the obstacle problem; see \cite[Section 3]{keith2023proximal}, \cite{karkkainen2003augmented}, and \cite{gustafsson2017finite} for additional details.  

The most well-known and widely-used approach is the quadratic penalty method, which relaxes the bound-constrained optimization problem into an unconstrained one by adding terms to the energy functional that grow quadratically with the violation of the constraint \cite{scholz1984numerical,lions1969quelques}. However, to maintain the accuracy of high-order finite elements, the penalty parameter needs to scale suitably with the mesh size and polynomial degree, leading to mesh-dependent ill-conditioning \cite{gustafsson2017finite}. Alternatively, one can discretize the variational inequality directly using the primal formulation \cite{brezzi1977error} or the mixed formulation \cite{brezzi1978error}.
However, the low regularity of the Lagrange multiplier associated with the inequality constraint in these formulations affects the performance of numerical solution techniques \cite{hintermuller2006feasible}, often making penalty methods more desirable \cite{hintermuller2006path,adam2019semismooth}; see also \cite[Section~4.1]{farrell2020deflation}.
A different mixed formulation is also obtained by introducing a Lagrange multiplier, modeling the contact pressure \cite{gustafsson2017mixed,wohlmuth2011variationally}. For this formulation to be inf-sup stable, one enriches the finite element spaces with bubble functions or uses consistent stability terms \cite{gustafsson2017finite}. The latter option leads to another penalty method for piecewise linear elements or to a Nitsche-type penalty method when broken polynomial spaces are used for the multiplier \cite{gustafsson2017finite}.
We refer to \cite{gustafsson2017mixed} for analyzing the latter class of methods. 
 
Techniques from nonlinear programming are often employed to solve discretized variational inequalities.
Unfortunately, naive ``first-discretize-then-optimize" approaches typically lead to mesh-dependence; i.e., the number of nonlinear solves required for convergence grows indefinitely with mesh refinement \cite{hintermuller2002primal,hintermuller2006feasible,keith2023proximal}.
Of the most prominent techniques, we highlight the primal-dual active set (or semi-smooth Newton) method \cite{hintermuller2002primal} and the augmented Lagrangian method \cite{glowinski1989augmented}.
We also wish to highlight that multigrid methods often help accelerate convergence and diminish mesh-dependence \cite{bueler2024full}; we refer to \cite{graser2009multigrid} for a detailed overview.

It is well-known that one can enforce pointwise bound constraints, such as pointwise non-negativity, by constraining the nodal values of linear elements \cite{brezzi1977error}. However, enforcing pointwise constraints on higher order approximations is far more difficult \cite{kirby2024high}.
We refer to \cite[Section~3.2]{keith2023proximal} for an overview.
Proximal Galerkin methods overcome this challenge by constructing discrete solutions in the image of a bound-preserving monotone map.
The resulting algorithm consists of solving a sequence of smooth, semilinear PDE systems that couple standard finite element variables to a so-called latent variable in the domain of the nonlinear map \cite{keith2023proximal}.
Analogs of this algorithm have been extended to density-based topology optimization \cite{keith2024analysis,kim2024simple} and a multitude of other challenging problems such as elastic contact, variational fracture, gradient constraints, and obstacle-type quasi-variational inequalities \cite{dokken2023latent}.
Efficient preconditioners and $hp$-adaptive schemes for proximal Galerkin have also been proposed in \cite{papadopoulos2024hierarchical}, exhibiting $hp$-robustness in the number of Newton linear solves and up to 20x speed-ups over state-of-the-art methods.
In this work, we show that the essential features of the proximal Galerkin method are maintained for mass-conserving discretizations.~~~
\medskip

\paragraph{\bf Unique properties:}
FOSPG has a low iteration complexity and does not require mesh-dependent penalty parameters. It also provides a discrete solution that satisfies prescribed bound constraints at every point in the computational domain and achieves high-order accuracy.
Further, it delivers a locally-conservative flux approximation in every element not intersecting the obstacle contact zone. 

\subsection{Main contributions and outline}
The outline of this paper and its main contributions are summarized below. 
\begin{itemize}[leftmargin=*]
\item In \Cref{sec:model_pbs}, we state the anisotropic diffusion and obstacle problems that motivate this work.
Here, we demonstrate that the former problem can be formulated as a variational inequality, a property allowing us to unify our exposition. 
\item We review the latent variable proximal point (LVPP) algorithm in \Cref{sec:lvpp_infinite}. 
We then derive the first-order system proximal Galerkin (FOSPG) method from this algorithm; cf.\ \Cref{alg:main_alg_disc_mixed}.
We then prove the existence and uniqueness of solutions to the FOSPG subproblems and their stability properties in \Cref{thm:ext_mixed} and \Cref{lemma:stability}.  
\item
In \Cref{sec:hybridization}, we present a hybridized form of the FOSPG method, given in \Cref{alg:main_alg_disc}.
We advocate for this approach because, after static condensation, the linearized FOSPG subproblems in \Cref{alg:main_alg_disc} reduce to sparse, symmetric positive definite systems involving only the mesh facet unknowns.
This convex structure is particularly convenient for applying Newton's method to solve the subproblems.
\Cref{alg:main_alg_disc} is used in the practical implementation of our method.  
\item  
In the main theoretical result, \Cref{thm:conv_mixed_vi}, we prove convergence of \Cref{alg:main_alg_disc_mixed,alg:main_alg_disc} to the solution $u_h^*$ of a discrete variational inequality for simplicial elements with polynomial degree $p=0$ and quadrilateral elements for any $p \geq 0$. We then prove that FOSPG is locally mass-conserving on all elements where $u_h^*$ does not come in contact with the obstacle or bound constraint. For $p=0$, error estimates between the exact solution and the iterates of \Cref{alg:main_alg_disc} are also established.
\item \Cref{sec:linear_analysis} contains stability and convergence analysis of the linearized subproblems that result from applying Newton's method to every (nonlinear) proximal subproblem in FOSPG. This analysis is carried out for the hybridized method in~\Cref{alg:main_alg_disc} and is valid for any polynomial degree $p\geq 0$ on simplicial and quadrilateral meshes.
Notably, this section focuses on low-regular solutions in order to remain valid for heterogeneous anisotropic diffusion. 
\item  Finally, the performance of our method and its key features (high order accuracy, local mass conservation, and mesh independence) are demonstrated on a series of numerical examples in \Cref{sec:numerics}. We share our implementation, \href{https://github.com/ramimasri/FOSPG-first-order-system-proximal-Galerkin.git}{https://github.com/ramimasri/FOSPG-first-order-system-proximal-Galerkin.git}, in NGSolve \cite{schoberl2014c++} to facilitate broader adoption and reproducibility. 
\end{itemize}
\subsection{Notation}  In what follows, $\Omega$ is an open bounded polygonal or polyhedral domain in $\R^d$, $d \in \{2,3\}$. We use standard notation for the Lebesgue spaces $L^p(\Omega)$ for $p \in [1, \infty]$ and for the Sobolev--Hilbert spaces $H^m(\Omega)$ for $ m \in \mathbb{N}$. The space $H^{1/2}(\partial \Omega)$ is the standard trace space of $H^1(\Omega)$. 
For $g \in H^{1/2}(\partial \Omega)$, the subset $H^1_g(\Omega)$ is the closed subset of $H^1(\Omega)$ consisting of functions $u$ with trace $\mathrm{tr}(u) = g $ on $\partial \Omega$. Further, we use the notation $(\cdot, \cdot)$ to denote the $L^2(\Omega)$-inner product. The spaces $W^{s,p}(\Omega)$ $(s \geq 0, p \in [1,\infty]$) denote the standard Sobolev spaces.
We follow the convention that $0 \ln 0 = 0$ and define the essential domain of a proper function $f \colon \mathbb{R}^d \to \mathbb{R}\cup\{+\infty\}$ as
\[
    \operatorname{dom} f \coloneqq \{ x \in \mathbb{R}^d \mid f(x) < \infty \}
    \,.
\]

We consider a conforming shape regular partition  $\mathcal{T}_h$  of $\Omega$ into elements $T$. Denote by  $\mathcal{E}_h$ the set of facets $E$ (edges in 2D/ faces in 3D) of the partition $\mathcal{T}_h$, and 
denote by $\partial\mathcal{T}_h$ the set of all element boundaries $\partial T$ with outward normal $\bm{n}$. 
We assume that each element $T$ and facet $E$ are generated by an affine map 
$\Phi_T$ or $\Phi_E$ from a reference element $\hat{T}$ or $\hat{E}$, respectively. For quadrilateral elements, this assumption means that we only consider parallelograms in 2D and parallelotopes in 3D. Although this assumption can be relaxed in practical implementations. The diameters of an element $T$ and of a facet $E$ are denoted by $h_T$ and $h_E$ respectively. The mesh size is given by $h = \max_{T \in \mathcal{T}_h} h_T$. For constants $W$ and $Q$, we use the notation $W \lesssim Q$ whenever there is a constant $C$ independent of the mesh size $h$ such that $W \leq C Q $.  For a symmetric positive definite matrix $A$, we define $A^{1/2}$ as the matrix satisfying $A^{1/2}A^{1/2} = A$.  

The space $H^1(\mathcal{T}_h)$ denotes the broken $H^1$ space corresponding to the mesh $\mathcal{T}_h$: 
\[H^1(\mathcal{T}_h) = \{ u \in L^2(\Omega): \;\; u_{\vert_{T}}  \in H^1(T), \;\; \forall T \in \mathcal{T}_h \}. \] The broken gradient is denoted by $\nabla_h$, meaning that $(\nabla_h v)_{\vert_{T}} = \nabla (v_{\vert_T})$ for $v \in H^1(\mathcal{T}_h)$. We also use the broken divergence operator, $(\nabla_h \cdot v )_{\vert_T} =\nabla  \cdot (v_{\vert_T}) $.  
Further, for all $q, \varphi \in  L^2(\partial\mathcal{T}_h)$ and $v,w \in L^2(\Omega)$, we use the notation 
\begin{equation*}
 (q,\varphi)_{\partial \mathcal T_h} = \sum_{T \in \mathcal{T}_h} \int_{\partial T} q\varphi \dd s, \quad (v, w)_{\mathcal T_h} = \sum_{T \in \mathcal{T}_h} \int_{T} v w \dd x.    
\end{equation*}

\section{Model problems}\label{sec:model_pbs}
We have introduced FOSPG to target the two model problems given below. As we demonstrate, \Cref{problem:1} can be seen as a specific version of \Cref{problem:2}.
However, these two problems usually have different mathematical assumptions affecting their analyses.
\begin{problem}[Heterogeneous anisotropic diffusion]\label{problem:1}
Consider the following model problem:
\begin{subequations}\label{eq:model_problem}
\begin{alignat}{2}
-\nabla \cdot (A \nabla u) & = f  &&  \quad \mathrm { in }   \;  \Omega,  \\ 
u & = g && \quad  \mathrm{ on }  \;  \partial \Omega. 
\end{alignat}
\end{subequations}
Here, $A \in L^{\infty}(\Omega; \R^{d,d})$ is a symmetric diffusion tensor with eigenvalues that are uniformly lower and upper bounded by positive constants,  $f \in L^2(\Omega)$ and  $g \in H^{1/2}(\partial \Omega)$. The Lax--Milgram Theorem \cite[Section 25.2]{ern2021finite} provides the existence and uniqueness of the weak solution $u \in H^1_g(\Omega)$ to  \eqref{eq:model_problem} satisfying 
\begin{equation}
(A \nabla u, \nabla v) = (f,v) \quad \forall v \in H^1_0(\Omega).  \label{eq:weak_solution}
\end{equation}
\end{problem}
\begin{problem}[Obstacle problem] \label{problem:2}Solve for $u \in K \cap H^1_g(\Omega) $ such that
\begin{subequations}
\label{eq:MinimizationProblem}
\begin{equation}
 J(u) \leq J(v) \;\;~ \forall  v \in K \cap H^1_g(\Omega),  \quad J(v) = \frac{1}{2} (A \nabla v,  \nabla v) - (f,v),  \label{eq:min_pb}
\end{equation}  
where the closed and convex set $K$ is given by  
\begin{equation} \label{eq:max_principle}
    K
    =
    \{
        u \in H^1(\Omega)
        \mid
        \underline{u} \leq u \leq \overline{u}
        ~\mathrm{ a.e.\ in } \
        \Omega
    \}.
    \end{equation}
\end{subequations}
Here, $\underline u, \overline{u} \in  H^1(\Omega) \cap  C(\overline{\Omega})$ with  $ \underline{u} \leq \overline u$ a.e. in $\Omega$ and $\underline u \leq g \leq \overline{u}$ on $\partial \Omega$ and the above problem is called the bilateral obstacle problem. 
We also allow $\overline{u} = \infty$ to include the unilateral obstacle problem.  
To ensure that $K$ is nonempty, we assume that $\underline u \leq g \leq \overline u$ a.e on $\partial \Omega$. 
The existence and uniqueness of solutions follow from \cite[Theorem 2.1]{kinderlehrer2000introduction}. Further,
the above problem is equivalent to a variational inequality over $ K \cap H^1_g(\Omega)$, see \cite[Theorem 1.1.2]{ciarlet2002finite}: Solve for $u \in K \cap H^1_g(\Omega)$ such that 
\begin{equation}\label{eq:VI}
    (A \nabla u, \nabla (v-u))
    \geq
    (f,v-u)
    \quad \forall v \in K \cap H^1_g(\Omega). 
\end{equation}
\end{problem}
We now demonstrate that \Cref{problem:1} can be equivalently formulated in the setting of \Cref{problem:2} with constant $\underline u $ and $\overline{u}$. Indeed, if $f \in L^q(\Omega)$ for $q > d$, the weak solution  $u \in H_g^1(\Omega)$ of \eqref{eq:model_problem} satisfies the weak maximum principle \cite[Theorem 5.5 and Theorem B.2]{kinderlehrer2000introduction} 
\begin{equation}
   \| u\|_{L^{\infty}(\Omega)} \le \|f\|_{L^q(\Omega)} + \sup_{ \bm x \in \partial \Omega} | g |. 
\end{equation}
Further, if $f \geq 0$ a.e. in $\Omega$, then  we can also ensure that $u \geq \inf_{\bm x \in \partial \Omega} g$ a.e in $\Omega$ \cite[Theorem 5.7]{kinderlehrer2000introduction}. 
That is,  there exist constants $\underline{u}$ and $\overline{u}$ such that the weak solution $u$ belongs to the set $K\cap H^1_g(\Omega)$. In addition, $u$ solves \eqref{eq:VI} since $v - u \in H^1_0(\Omega)$ for any $v \in K \cap H^1_g(\Omega)$ and since $u$ satisfies \eqref{eq:weak_solution}. By uniqueness of solutions, $u$ is both the solution to \eqref{eq:VI} and to \eqref{eq:min_pb}.

In practice, the diffusion tensor in \Cref{problem:1} can have very low regularity.
Yet, the fact that the exact solution satisfies a variational \emph{equation} can simplify error estimates, cf.\ \Cref{cor:complete_story_p0} and \cite[Theorem~1]{kirby2024high}.
On the other hand, the diffusion tensor in~\Cref{problem:2} usually has higher regularity \rami{and} the obstacles $\underline{u},\; \overline{u} \colon \Omega \to \mathbb{R}\cup\{-\infty\}\cup\{+\infty\}$ are non-trivial functions.
However, in this case, we cannot depart from the variational \emph{inequality}~\eqref{eq:VI}.

\section{The latent variable proximal point algorithm} \label{sec:lvpp_infinite}
The Latent Variable Proximal Point (LVPP) algorithm (see \Cref{alg:main_alg}) was introduced in \cite{keith2023proximal}
to derive numerical methods for solving variational inequalities like~\eqref{eq:VI}.
A first-order system finite element discretization of this algorithm leads to the FOSPG method.
Here, we present a derivation of LVPP based in part on the exposition in \cite{dokken2023latent}.

\begin{algorithm}[htb]
\caption{The Latent Variable Proximal Point Algorithm}
\begin{algorithmic}[1]\label{alg:main_alg}
    \State \textbf{input:} Initial latent solution guess $\psi^0  \in L^\infty(\Omega)$, a sequence of positive step sizes $\{\alpha^k\}$, and a Carath\'eodory function $\Upsilon \colon \Omega \times \mathbb{R} \to \mathbb{R}$, where $\Upsilon(x,\cdot)$ is an increasing bijection from $\R \to (\underline u(x), \overline u(x))$ for a.e.\ $x\in \Omega$.
        \State Initialize \(k = 1\). 
    \State \textbf{repeat}
    \State \quad Solve the following (nonlinear) saddle-point problem:
    Find $ u  \in H^1_g(\Omega)   \text{ and } \psi \in L^{\infty}(\Omega)  $ such that 
    \begin{subequations}
     \label{eq:saddle_point_problem}
    \begin{alignat}{2}
        \alpha^k (A \nabla u , \nabla v) +  (\psi , v) &= \alpha^k (f, v) + (\psi^{k-1}, v)  && \quad \forall v \in H^1_0(\Omega), \\
    (u , \varphi) - (\mathcal{U} (\psi), \varphi) & = 0  &&  \quad \forall \varphi \in L^\infty(\Omega),
    \end{alignat}
    where $\mathcal{U}(\psi)(x) := \Upsilon(x,\psi(x))$ for a.e.\ $x\in \Omega$.
\end{subequations}
\State \quad Assign  \(\psi^{k} \gets \psi\) and \(k \gets k + 1\).
    \State \textbf{until} a convergence test is satisfied.
\end{algorithmic}
\end{algorithm}

\subsection{Superposition operators}
\Cref{alg:main_alg} relies on a Carath\'eodory function $\Upsilon \colon \Omega \times \mathbb{R} \to \mathbb{R}$, whose restrictions $\Upsilon(x,\cdot)$ are monotonically-increasing, invertible functions taking $\R$ to $(\underline u(x), \overline u(x))$.
For any sufficiently regular map $\psi \colon \Omega \to \mathbb{R}$, one can then generate a new function $\mathcal{U}(\psi) \colon \Omega \to \mathbb{R}$ via the expression $\mathcal{U}(\psi)(x) = \Upsilon(x,\psi(x))$.
The resulting operator $\mathcal{U}$ is called a superposition (Nemytskii) operator \cite{Appell_Zabrejko_1990,ambrosetti1995primer}.
In \Cref{sec:Discretization}, we will see that these operators allow us to construct approximate solutions to the minimization problem~\eqref{eq:MinimizationProblem} that are guaranteed to respect pointwise bound constraints.

One can select among different choices for the generating function $\Upsilon$.
The following two examples are appropriate for double obstacle and anisotropic diffusion problems:
\begin{subequations}
\begin{alignat}{4}
\label{eq:Z_1}
    \text{Example 1:}&&
    \qquad\qquad
    \Upsilon(x,z) &= \frac{\underline u(x) + \overline u(x) \exp z}{1+\exp z}
    \,;
    \\[3pt]
\label{eq:Z_2}
    \text{Example 2:}&&
    \qquad\qquad
    \Upsilon(x,z) &= \frac12 (\underline u(x) + \overline u(x)) + \frac12 (\overline u(x) - \underline u(x)) \frac{z}{ \sqrt{1+z^2}}
    \,.
\end{alignat}
\end{subequations}
Meanwhile, for the unilateral obstacle problem ($\overline{u } = \infty$), one may consider:
\begin{subequations}
\begin{alignat}{4}
\label{eq:U3}
    \text{Example 3:}&&
    \qquad\qquad
    \Upsilon(x,z) &= \underline u(x) + \exp z
    \,;
    \\[7pt]
\label{eq:U4}
    \text{Example 4:}&&
    \qquad\qquad
    \Upsilon(x,z) &= \underline u(x) + \ln(1+\exp z)
    \,.
    \qquad\qquad
    \qquad\qquad
    \quad\;
\end{alignat}
\end{subequations}
The behavior of the resulting superposition operators is illustrated in \Cref{fig:mathcalZ}.

\begin{figure}[t]
\begin{minipage}{0.47\textwidth}
    \centering    
\begin{overpic}[width=0.95\linewidth]{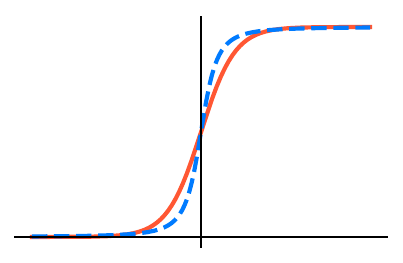} 
\put(58,16){\textcolor{myblue}{  \footnotesize{$\mathcal U(\psi) = \frac12 + \frac 12 \frac{\psi}{\sqrt{1+ \psi^2}}$}}} 
\put(5,35){\textcolor{myred}{ \footnotesize{$\mathcal U(\psi) = \frac{\exp(\psi)}{1+\exp(\psi) }$}}}
\put(96,1.1){$\psi$}
\put(48,64){$\mathcal{U}$}
\end{overpic}
\end{minipage}
\begin{minipage}{0.47\textwidth}
\begin{overpic} [width=0.95\linewidth]{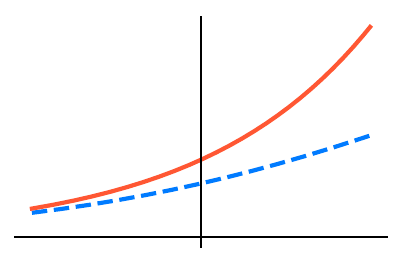} 
\put(58,16){\textcolor{myblue}{  \footnotesize{$\mathcal U(\psi) = \ln(1+ \exp(\psi)) $}}} 
\put(5,35){\textcolor{myred} {\footnotesize{$\mathcal{U}(\psi) = \exp(\psi) $}}}
\put(96,1.1){$\psi$}
\put(48,64){$\mathcal{U}$}
\end{overpic}
\end{minipage}
\caption{ Illustration of the different superposition operators $\mathcal{U}$. Left: $\underline{u} = 0$ and $\overline{u} = 1$.  Right: $\underline u = 0$ and $\overline{u} = \infty$. }
\label{fig:mathcalZ}
\end{figure}

\subsection{The Bregman proximal point algorithm}
The main purpose of this section is to formally derive \Cref{alg:main_alg}.
As motivation, we recall the proximal operator \cite{doi:10.1137/0314056}, $P: H^1(\Omega) \rightarrow K \cap H_g^1(\Omega)$ and corresponding proximal point algorithm: Given $u^0 \in H^1(\Omega)$,  define
 \begin{equation} \label{eq:plain_proximal}
     u^{k} = Pu^{k-1} = \argmin_{v \in K \cap H^1_g(\Omega) } \left \{ J(v) 
     + \frac{1}{2\alpha^k} 
     \|v - u^{k-1}\|_{H^1(\Omega)}^2 
    \right \}
    \,,
     \end{equation} 
for each $k = 1,2,\ldots$

It is well known \cite{doi:10.1137/0314056} that the iterates $u^{k} = Pu^{k-1}$ converge to the unique fixed point of $P$, which coincides with the minimizer of~\eqref{eq:MinimizationProblem}. 
However, computing each iterate $Pu^{k-1}$ requires solving another variational inequality (Euler's inequality, see, e.g., \cite[Theorem 7.1-6]{ciarlet2013linear}): Find $u^k \in K \cap H^1_g(\Omega)$ such that 
\begin{multline}
   \frac{1}{\alpha^k} (\nabla (u^{k} -  u^{k-1}) , \nabla (v - u^k) ) +  \frac{1}{\alpha^k}(u^{k} -u^{k-1},v - u^k)  \\  + (A\nabla u^{k}, \nabla (v - u^k) )  \geq (f,v - u^k),    \label{eq:plain_proximal_opt} 
\end{multline}
for all $v \in K \cap H^1_g(\Omega)$. 
This renders the standard proximal point method~\eqref{eq:plain_proximal} computationally infeasible on this class of problems.
Indeed, one would prefer to directly approximate the solution of the original variational inequality \eqref{eq:VI} instead of solving a sequence of equally challenging problems like~\eqref{eq:plain_proximal_opt}.

As a remedy, one may consider a particular form of the Bregman proximal point algorithm \cite{keith2023proximal,teboulle2018simplified}. The key idea is to replace the squared $H^1(\Omega)$-norm in \eqref{eq:plain_proximal} by an alternative \textit{dissimilarity} function: 
\begin{align}
u^{k} = P u^{k-1} = \argmin_{v \in K \cap H^1_g(\Omega)} \left \{  J(v) + \frac{1}{\alpha^k} \int_{\Omega} \mathcal{D}(v,u^{k-1}) \dd x \right \}.  \label{eq:Bregman_proximal_map}  
\end{align}
Here, the so-called Bregman divergence $\mathcal{D}(\cdot,\cdot)$ is derived from the error in the first-order Taylor's expansion of a superposition operator denoted $\mathcal{R}(u)$.
This operator is generated by a Carath\'eodory function $R \colon \Omega \times \mathbb{R} \to \mathbb{R}\cup\{+\infty\}$ that is strictly convex in its second argument.
More specifically, for a.e.\ $x \in \Omega$, we assume that $R(x,y)$ maps $[\underline{u}(x), \overline{u}(x)]$ into $\R$ and that $y \mapsto R(x,y)$ is strictly convex and differentiable on the open interval $(\underline u(x) , \overline{u}(x))$.
Denoting the corresponding superposition operator $\mathcal{R}(u)(x) = R(x,u(x))$ and its derivative $\mathcal{R}'(u)(x) = \frac{\partial}{\partial_y} R(x,u(x))$ \rami{for $u \in \operatorname{dom}(\mathcal R') = \{ u \in  L^\infty(\Omega) \mid  \operatorname{ess} \inf (u - \underline u) > 0 \text{ and } \operatorname{ess} \sup (u - \overline{u} ) < 0$\}}, we express the Bregman divergence $\mathcal{D}(\cdot,\cdot)$ as follows \cite{BREGMAN1967200}:
\begin{equation}\label{eq:D_formal}
\mathcal{D}(v,u^{k-1}) =   \mathcal{R}(v) - \mathcal{R}(u^{k-1})  -  \mathcal{R}'(u^{k-1}) (v -u^{k-1})\,.
\end{equation}

\rami{The algorithm given by \eqref{eq:Bregman_proximal_map} is well defined \cite[Theorem 4.7]{keith2023proximal} and  converges to the true solution $u$ of the VI. These properties were established for the unilateral obstacle problem ($\overline{u} = +\infty$) in \cite[Theorem 4.13]{keith2023proximal} under additional assumptions: the source term $f \in L^{\infty}(\Omega)$, the boundary data $g \in H^1(\Omega) \cap C(\overline \Omega)$, and obstacle $\phi \in H^1(\Omega) \cap C(\overline \Omega)$ with $\Delta \phi \in L^\infty(\Omega)$. In fact, the following estimate holds for all $k$: 
\begin{equation*}
\|\nabla u - \nabla u^k\|_{L^2(\Omega)}^2 \leq \frac{2 } {\sum_{\ell =1 }^k \alpha^\ell} \int_{\Omega } \mathcal{D}(u - \underline u, u^0 - \underline u) \mathrm{d} x, 
\end{equation*} 
where $u^0 = \mathcal{U}(\psi^0)$. 
We note that $u$ is not a fixed point of \eqref{eq:Bregman_proximal_map} since $u \notin \mathrm{dom} (\mathcal{R}') = \{ u \in L^\infty(\Omega) \mid \operatorname{ess} \inf(u - \underline u) > 0 \}$ and hence $\mathcal{D}(v, u)$ is not well defined. The proof of well-posedness and convergence of (11) for the double obstacle problem follows the same proof techniques presented in \cite{keith2023proximal} with some details carried out in \cite[Theorem 3.4]{keith2024analysis}. Finally, we note that the analysis put forth in this paper does not rely on the well-posedness of the continuous level \Cref{alg:main_alg} and \eqref{eq:Bregman_proximal_map}. All our results directly handle the discrete  \Cref{alg:main_alg_disc,alg:main_alg_disc_mixed}. This allows us to dispense with the assumptions of \cite{keith2023proximal,keith2024analysis}.}

\subsection{Further properties}
For technical reasons, we require $R(x,\cdot)$ to have the following supercoercivity property at a.e.\ $x\in\Omega$:
\begin{equation}
\label{eq:Supercoercivity}
    R(x,y)/|y| \rightarrow \infty
        ~~\text{ as }
    |y| \rightarrow \infty
    \,.
\end{equation}
This ensures that the convex conjugate of $R(x,\cdot)$ is well-defined over all of $\R$ \cite{bauschke1997legendre,dokken2023latent}, see \eqref{eq:convex_conj_S} below. We also require that $\domain R(x,\cdot) = [\underline u(x), \overline{u}(x)]$ to ensure convergence of the iterates~\eqref{eq:Bregman_proximal_map} to the unique minimizer of $J$; cf.\ \Cref{thm:conv_mixed_vi} below. 
\Cref{cond:key_assump} is then sufficient to derive \Cref{alg:main_alg} from \eqref{eq:Bregman_proximal_map}.
\rami{
\begin{example}[Fermi--Dirac entropy]  \label{example:fermi_Dirac_P1}
Consider the Carath\'eodory function given by
\begin{subequations}
\begin{equation}
\label{eq:FermiDirac}
    R(x,y)  
    = (y-\underline u(x)) \ln (y - \underline u(x)) + (\overline{u}(x) - y )\ln (\overline{u}(x) - y)
                \end{equation}
if $y \in [\underline{u}(x), \overline{u}(x)]$ and $R(x,y) = +\infty$ otherwise.
The corresponding superposition operator,
\begin{equation}
\label{eq:FermiDirac_superposition}
    \mathcal{R}(u)  
    = (u-\underline u) \ln (u - \underline u) + (\overline{u} - u )\ln (\overline{u} - u)
    \,,
    \quad u\in K
    \,,
\end{equation}
also known as the (generalized) Fermi--Dirac entropy,
is continuous over the feasible set $K$.
Moreover, if $u \in L^{\infty}(\Omega)$ with $\operatorname{ess\,inf} (\underline u -  u) > 0$ and $\operatorname{ess\,inf} (u - \overline{u}) > 0 $, then
\begin{equation}
    \mathcal{R}'(u) = \ln(u - \underline u ) - \ln (\overline{ u} - u)  
\end{equation}
belongs to $L^\infty(\Omega)$. 
\end{subequations} 
Note that \eqref{eq:Supercoercivity} is satisfied since $R(x,y)/|y| = +\infty$ for any $y \notin [\underline{u}(x), \overline{u}(x)]$ by the properties of extended arithmetic \cite[Chapter 1]{rockafellar1998variational}. 
\end{example} 
}
\begin{condition}\label{cond:key_assump} For any given $\varphi \in C^\infty_c(\Omega)$, there exists a constant  $\delta > 0 $ such that the solution $u^{k}$ to \eqref{eq:Bregman_proximal_map} satisfies 
\begin{equation} \label{eq:key_point}
    u^{k} \pm \delta \varphi \in K \cap H^1_g(\Omega).
\end{equation} 
\end{condition}
\noindent This condition can be realized by choosing $R$ whose derivatives are sufficiently singular at $\underline u$ and $\overline{u}$ and ensuring that the functions $f,\underline{u},\overline{u}$ are sufficiently regular.
In particular, if $f \in L^\infty(\Omega)$ and $\underline{u},\overline{u} \in \mathbb{R}$, then the operators in \Cref{example:fermi_Dirac,example:obstacle} imply~\Cref{cond:key_assump} \cite{keith2023proximal}.

\begin{example}[Shannon entropy] \label{example:obstacle}
 The derivation of \Cref{alg:main_alg} was first given in \cite{keith2023proximal} for the choice
\begin{align}
\mathcal{R}(u) = u \ln u - u , 
\end{align}
leading to the (extended) Kullbach--Liebler divergence
\[
    \mathcal{D}(v,u) = v\ln(v/u) - v + u
    \,.
\]
We refer to \cite{keith2023proximal} for further properties of this setting.
\end{example}

\subsection{From inequality to equality}
The solution of \eqref{eq:Bregman_proximal_map} satisfies the following variational inequality: For all $ v \in K \cap H^1_g(\Omega)$,  
\begin{equation}\label{eq:bregman_primal}
\frac{1}{\alpha^k} ( \mathcal{R}'(u^{k}) - \mathcal{R}'(u^{k-1}) , v - u^k  ) + (A\nabla u^{k}, \nabla (v - u^k) )  \geq (f,v - u^k).
\end{equation}
However, \Cref{cond:key_assump} allows us to transform this variational inequality \eqref{eq:bregman_primal} to a variational \textit{equation}. Indeed, testing \eqref{eq:bregman_primal} with $v = u^{k} \pm \delta \varphi$ and rescaling by $\delta$, we find that   
\begin{equation} \label{eq:after_applying_cond}
  \frac{1}{\alpha^k} (  \mathcal{R}'(u^{k}) - \mathcal{R}'(u^{k-1}) , \varphi  ) + (A \nabla u^{k}, \nabla \varphi )  =  (f, \varphi)  \quad \forall \varphi \in C^{\infty}_c(\Omega) .   
\end{equation}

\subsection{The latent variable}
We can now formally introduce the latent variable  
\begin{equation}\label{eq:latent0}
  \psi^{k} = \mathcal{R}'(u^k) \quad\iff\quad  ( \mathcal{R}' )^{-1}(\psi^k) = u^k, 
\end{equation}
where $\mathcal{R}^\prime$ is invertible because $\mathcal{R}$ is strictly convex.
Utilizing the sequence of latent variables $\psi^k$ given by \eqref{eq:latent0} and the density of $C^\infty_c(\Omega)$ in $H_0^1(\Omega)$, we arrive at the latent variable proximal point method:
For each $k =1,2,\ldots$, solve for $(u^{k} , \psi^{k})\in H^1_g(\Omega) \times L^\infty(\Omega)$ satisfying 
\begin{subequations}\label{eq:bregman_latent}
    \begin{alignat}{2}
 \frac{1}{\alpha^k} (\psi^{k} - \psi^{k-1}, v) + (A \nabla u^{k}, \nabla v) & = (f,v) && \quad \forall v \in H^1_0(\Omega), \label{eq:bregman_latent_0} \\ 
 (u^{k}, \varphi) - (( \mathcal{R}' )^{-1}(\psi^{k}), \varphi) & = 0 && \quad \forall \varphi \in L^\infty(\Omega).  
 \label{eq:bregman_latent_1}
\end{alignat}
\end{subequations}
\Cref{alg:main_alg} is recovered from~\eqref{eq:bregman_latent} by setting $\mathcal{U} = ( \mathcal{R}' )^{-1}$, multiplying \eqref{eq:bregman_latent_0} by $\alpha^k$, and rearranging terms.
We note that $(\psi^{k} - \psi^{k-1})/\alpha^k$ acts as an approximation to the Lagrange multiplier for the bound constraints in~\eqref{eq:max_principle} \cite{keith2023proximal}.
It can also be interpreted as a discretized rate of change of the latent gradient flow $\frac{\partial}{\partial\alpha} \psi = -\nabla J(u)$ over the manifold defined by~\eqref{eq:latent0}.

\begin{example}[Fermi--Dirac entropy, part 2]  \label{example:fermi_Dirac_P2}
Recall the setting of \Cref{example:fermi_Dirac_P1} and observe that
\[
    (\mathcal{R}')^{-1}(\psi)
    =
   \frac{\underline u + \overline u \exp \psi}{1+\exp \psi}
   \,.
\]
In turn, notice that $\mathcal{U}(\psi) = ( \mathcal{R}' )^{-1}(\psi)$ coincides with the superposition operator generated by~\eqref{eq:Z_1}.
\end{example}

\subsection{Duality}
As evident by the derivation above, the generating function $R$ is closely related to the generating function $\Upsilon$ in \Cref{alg:main_alg}. This relation is made precise in \Cref{prop:Conjugate} below, which involves the definition of the convex conjugate of $R(x,\cdot)$:
\begin{subequations}
\begin{equation}\label{eq:convex_conj_S}
    R^*(x,z)
    = \sup_{y \in \mathbb{R}}  \big\{ zy - R(x,y) \big\}
    \,.
\end{equation}
Likewise, we define the associated superposition operator
\begin{equation}\label{eq:convex_conj_S_superposition}
    \mathcal{R}^*(u)(x)
    =
    R^*(x,u(x))
    \,.
\end{equation}
\end{subequations}
We refer the interested reader to \cite{bauschke2011convex} for more general and abstract results.
\begin{prop}
\label{prop:Conjugate}
For a smooth and strictly increasing bijection $\Upsilon(x,\cdot):\R \rightarrow (\underline u(x), \overline{u}(x))$, there exists a strictly convex proper function $R(x,\cdot)\colon \mathbb{R} \rightarrow \R\cup\{+\infty\}$ such that
\begin{equation} \label{setting:main}
    \Upsilon(x,z)
    = 
    \left( \frac{\partial}{\partial y} R(x,\cdot) \right)^{-1}(z) = \frac{\partial}{\partial z}R^*(x,z)
\end{equation}
for all $z \in (\underline u(x), \overline{u}(x))$.
Moreover, $R$ satisfies~\eqref{eq:Supercoercivity}, and the corresponding superposition operators satisfy 
\begin{equation}
\label{setting:main_superposition}
\mathcal{U}(\psi) = (\mathcal{R}^*)^\prime(\psi) \quad \forall \psi \in L^{\infty}(\Omega). 
\end{equation}

\end{prop}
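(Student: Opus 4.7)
The plan is to construct $R$ explicitly as an antiderivative of the pointwise inverse of $\Upsilon(x,\cdot)$ and then verify the four claims (strict convexity, first equality in~\eqref{setting:main}, supercoercivity~\eqref{eq:Supercoercivity}, and second equality in~\eqref{setting:main}) using the classical Legendre--Fenchel correspondence.

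First I would set, for a.e.~$x \in \Omega$, $\Phi(x,\cdot) := \Upsilon(x,\cdot)^{-1}\colon (\underline u(x),\overline u(x)) \to \R$, choose a measurable base point such as $y_0(x) := \Upsilon(x,0)$, and define
\begin{equation*}
    R(x,y) :=
    \begin{cases}
        \displaystyle \int_{y_0(x)}^{y} \Phi(x,t)\,\dd t, & y \in (\underline u(x), \overline u(x)),\\[4pt]
        \displaystyle \lim_{y' \to y} R(x,y'), & y \in \{\underline u(x),\overline u(x)\},\\[2pt]
        +\infty, & \text{otherwise},
    \end{cases}
\end{equation*}
where the one-sided limits exist in $\R \cup \{+\infty\}$ by the monotonicity of $\Phi(x,\cdot)$. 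Since $\Upsilon(x,\cdot)$ is a smooth strictly increasing bijection, so is $\Phi(x,\cdot)$; the fundamental theorem of calculus gives $\partial_y R(x,y) = \Phi(x,y)$ on $(\underline u(x), \overline u(x))$. This makes $R(x,\cdot)$ proper and strictly convex (its derivative is strictly monotone) and immediately yields $(\partial_y R(x,\cdot))^{-1}(z) = \Upsilon(x,z)$, i.e.\ the first equality of~\eqref{setting:main}.

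Next I would verify supercoercivity~\eqref{eq:Supercoercivity}. If both $\underline u(x)$ and $\overline u(x)$ are finite, then $R(x,y) = +\infty$ for $|y|$ sufficiently large and~\eqref{eq:Supercoercivity} is immediate. If $\overline u(x) = +\infty$, then since $\Upsilon(x,\cdot)$ is surjective onto $(\underline u(x),+\infty)$, its inverse $\Phi(x,\cdot)$ tends to $+\infty$ as $t \to +\infty$; hence, given any $K>0$, choosing $t_K$ with $\Phi(x,t) \geq 2K$ for $t\geq t_K$ yields $R(x,y) \geq 2K(y - t_K)$ for large $y$, so $R(x,y)/|y| \geq K$ eventually. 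The case $\underline u(x) = -\infty$ is symmetric. With~\eqref{eq:Supercoercivity} established, the standard Legendre--Fenchel correspondence (e.g.~\cite{bauschke2011convex}) guarantees that the convex conjugate $R^*(x,\cdot)$ in~\eqref{eq:convex_conj_S} is everywhere finite and continuously differentiable on $\R$, with $\partial_z R^*(x,z) = (\partial_y R(x,\cdot))^{-1}(z) = \Upsilon(x,z)$. This completes~\eqref{setting:main}, and then~\eqref{setting:main_superposition} is its pointwise restatement $(\mathcal{R}^*)'(\psi)(x) = \partial_z R^*(x,\psi(x)) = \Upsilon(x,\psi(x)) = \mathcal{U}(\psi)(x)$, valid for any $\psi \in L^\infty(\Omega)$ since $\Upsilon(x,\cdot)$ maps bounded sets into $(\underline u(x),\overline u(x))$ uniformly in $x$.

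The main obstacle I anticipate is the measurability bookkeeping needed to ensure that $R$ is a genuine Carath\'eodory function, so that $\mathcal{R}$ and $\mathcal{R}^*$ are superposition operators on Lebesgue spaces. This reduces to showing that $(x,y) \mapsto \Phi(x,y)$ is jointly measurable (which follows from $\Upsilon$ being Carath\'eodory by a monotone-inverse argument) and that $x \mapsto y_0(x)$ is measurable, after which $x \mapsto R(x,y)$ inherits measurability by a dominated convergence / Fubini argument on the defining integral. These verifications are standard but must be carried out carefully to justify the functional-analytic framework of~\Cref{alg:main_alg}.
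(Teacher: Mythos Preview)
Your proposal is correct and follows essentially the same construction as the paper's sketch: both define $R(x,\cdot)$ as an antiderivative of $\Upsilon(x,\cdot)^{-1}$, infer strict convexity from strict monotonicity of the derivative, and obtain the second equality in~\eqref{setting:main} from Legendre--Fenchel duality. The one substantive difference is the supercoercivity step: the paper argues abstractly that since $R^*(x,\cdot)$ has full domain (its derivative $\Upsilon(x,\cdot)$ is defined on all of $\R$), biconjugacy and \cite[Proposition~2.16]{bauschke1997legendre} force $R=(R^*)^*$ to be supercoercive; you instead verify~\eqref{eq:Supercoercivity} directly by a case analysis on whether $\underline u(x),\overline u(x)$ are finite. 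Your argument is more elementary and self-contained, while the paper's duality route is shorter once the cited result is granted. Your added remarks on Carath\'eodory measurability go beyond the paper's sketch and are a welcome addition, since the paper does not address this point.
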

\begin{proof}[Sketch of proof]
The first equality in \eqref{setting:main} follows from the observations that (i) the inverse of a smooth, strictly increasing monotone function is also a smooth, strictly increasing monotone function, and (ii) any smooth, strictly increasing monotone function can be written as the derivative of a convex function. The second equality in \eqref{setting:main} follows from the definition of $R^*$.
\Cref{setting:main_superposition} follows readily from \eqref{setting:main} and the definitions of $\mathcal{U}$ and $\mathcal{R}$.
Meanwhile, $R = (R^\ast)^\ast$ satisfies~\eqref{eq:Supercoercivity} due to \cite[Proposition~2.16]{bauschke1997legendre}.
\end{proof}

\begin{example}[Fermi--Dirac entropy, part 3]  \label{example:fermi_Dirac}
We again recall the setting of \Cref{example:fermi_Dirac_P1}.
Denoting 
$$\Upsilon(x,z) = \left( \frac{\partial}{\partial y} R(x,\cdot) \right)^{-1}(z) = \frac{\underline u(x) + \overline u(x) \exp z}{1+\exp z},$$
we recover the function presented in \eqref{eq:Z_1}. Further, one can readily derive the expression for $R^*$:
\begin{equation} \label{eq:convex_conjugate}
    R^*(x,z) = z \Upsilon(x,z) -  R(x,\Upsilon(x,z)).
\end{equation}
\end{example}

\section{Spatial Discretization} 
\label{sec:Discretization}
We employ a locally conservative discretization of \eqref{eq:saddle_point_problem}. As a starting point, consider the first-order system formulation of the PDE \eqref{eq:saddle_point_problem} by introducing the flux variable $\bm{q}^k=-A \nabla u^k$:
\begin{subequations} 
\label{eq:alpha_step_1st}
\begin{alignat}{2}
  \alpha^k \nabla \cdot \bm q^{k}  +  \psi^{k}                 & = \alpha^k f + \psi^{k-1}  &&  \quad \text{in } \Omega,    \\   
A^{-1}\bm q^{k} +  \nabla u^{k} &  = 0 && \quad \text{in } \Omega, \\ 
u^{k}  - \mathcal{U}(\psi^{k})& = 0   &&\quad  \text{in } \Omega. 
\end{alignat}
\end{subequations} 
\subsection{Finite element spaces}
In this subsection, we introduce the discrete spaces.  
On the reference element $\hat{T}$, we denote by $\mathcal{P}_p(\hat T)$ either the space of all polynomials of degree at most $p$   ($\mathbb P_p(\hat T)$) if $\hat T$ is simplicial or the space
of polynomials of degree at most $p$ in each direction ($\mathbb{Q}_p(\hat T))$ if $\hat T$ is a quadrilateral or parallelotope.    
We also use the Raviart--Thomas element \cite{RaviartThomas77} of degree $p$ on $\hat T$, 
$\mathrm{RT}_p(\hat{T}):=[\mathcal{P}_p(\hat{T})]^d+  \mathbf{x}\cdot\mathcal{P}_{p}(\hat{T})$. 

We consider the following finite element spaces:
\begin{subequations}
    \label{fes}
    \begin{align}
       \bm  \Sigma_h^p = &\;\left\{
        \mathbf{r}_h\in [L^2(\Omega)]^d: \quad \mathbf{r}_h{}_{|_{T}}
        = \frac{1}{\mathrm{det} \Phi'_T}\Phi_T'\hat{\mathbf{r}} 
        \circ\Phi_T^{-1},\;
        \hat{\mathbf{r}} \in \mathrm{RT}_p(\hat T), \; \forall T \in \mathcal{T}_h
        \right\},\\
        V_h^p = &\;\left\{
        {v}_h\in L^2(\Omega): \quad \quad {v}_h{}_{|_{T}}
        = \hat{{v}} 
        \circ\Phi_T^{-1},\;
        \hat{{v}} \in \mathcal{P}_p(\hat T),\; \forall T \in \mathcal{T}_h
        \right\}.
    \end{align}
\end{subequations}
The jump of $v$ on a face $E \in \mathcal{E}_h$ is defined as 
\[
[v_h]_{\vert_E} = v_h \vert_{T_E^1} - v_h \vert_{T_E^2},  \]
where $E = \partial T_{E}^1 \cap \partial T_{E}^2$ and the normal $\bm n_E$ is chosen to point from $T_E^1$ to $T_E^2$. This choice is arbitrary. If $E \subset \partial \Omega$, then $[v_h]_{\vert_E}$ is taken as the single valued trace of $v_h$. We drop the subscript ``$\vert_{E}$" to simplify notation. The jumps of vector-valued functions are defined similarly. 

We define the $H(\mathrm{div}; \Omega)$ conforming Raviart--Thomas space,
\begin{equation}
\bm \Sigma_{h,\mathrm{div}}^{p} = \bm \Sigma_h^p \cap H(\mathrm{div};\Omega) = \{\bm r_h \in \bm \Sigma_h^p: \quad [\bm r_h] \cdot \bm n_E =0 \,\,\,\, \forall E \in \mathcal{E}_h \backslash \partial \Omega \}, 
\end{equation}
and
we consider the standard discontinuous Galerkin (DG) norm 
\[
\|v\|_{\mathrm{DG}}^2 = \sum_{T \in \mathcal{T}_h} \|A^{1/2} \nabla v\|_{L^2(T)}^2 + \sum_{E \in \mathcal{E}_h} h_E^{-1} \|[v]\|^2_{L^2(E)}  \quad \forall v \in H^1(\mathcal{T}_h). 
\]
\rami{Note that since boundary facets are included in the definition of $\mathcal{E}_h$, the above defines a norm.} We also recall the following Poincar\`e inequality \cite[Lemma 3.2]{lasis2007hp}, which holds for all $p \in [1,6]$ when $d = 3$ and for all $p \in [1,\infty)$ when $d=2$, 
\begin{equation}\label{eq:Poincare}
\|v_h\|_{L^p(\Omega)} \lesssim \|v_h\|_{\mathrm{DG}} \quad \forall v_h \in V_h^p. 
\end{equation}
In what follows, we will make use of the linear lifting operators  $\bm L: V_h^p \rightarrow \bm \Sigma_{h,\mathrm{div}}^{p}$ and $\bm L_\Gamma: L^2(\partial \Omega) \rightarrow \bm \Sigma_{h,\mathrm{div}}^{p}$ defined such that 
\begin{alignat}{2}
    (A^{-1} \bm L(u) , \bm r) &= (u, \nabla \cdot \bm r)  && \quad  \forall \bm r  \in \bm \Sigma_{h,\mathrm{div}}^{p}, \label{eq:first_lift} \\ 
     (A^{-1} \bm L_\Gamma (g) , \bm r)& = - \langle g, \bm r \cdot \bm n\rangle_{\partial \Omega} && \quad  \forall \bm r  \in \bm \Sigma_{h,\mathrm{div}}^{p}. \label{eq:second_lift} 
\end{alignat}   
\begin{lemma}\label{lemma:dg_to_L2}
Let $u_h \in V_h^p$ and define $\bm q_h = \bm L(u_h)$ by \eqref{eq:first_lift}. Then, 
\begin{equation}
\rami{c_L \|A^{-1/2} \bm q_h\|_{L^2(\Omega)}}  \leq \|u_h\|_{\mathrm{DG}} \leq C_{L} \|A^{-1/2} \bm q_h\|_{L^2(\Omega)},
\end{equation}
where \rami{$c_L$} and $C_L$ \rami{are mesh-independent positive constants}. 
\end{lemma}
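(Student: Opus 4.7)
The plan is to interpret the bound as an inf-sup condition and to verify it by constructing a suitable test function in $\bm\Sigma_{h,\mathrm{div}}^p$. First, I would observe that \eqref{eq:first_lift} identifies $\bm q_h$ as the Riesz representative of the linear form $\bm r\mapsto(u_h,\nabla\cdot\bm r)$ in the $A^{-1}$-weighted $L^2$ inner product on $\bm\Sigma_{h,\mathrm{div}}^p$. Consequently,
\begin{equation*}
\|A^{-1/2}\bm q_h\|_{L^2(\Omega)} = \sup_{\bm r\in \bm\Sigma_{h,\mathrm{div}}^p\setminus\{0\}} \frac{(u_h,\nabla\cdot\bm r)}{\|A^{-1/2}\bm r\|_{L^2(\Omega)}},
\end{equation*}
so it suffices to exhibit, for each $u_h\in V_h^p$, a particular $\bm r^*\in \bm\Sigma_{h,\mathrm{div}}^p$ satisfying
$(u_h,\nabla\cdot\bm r^*)\gtrsim\|u_h\|_{\mathrm{DG}}^2$ together with $\|A^{-1/2}\bm r^*\|_{L^2(\Omega)}\lesssim\|u_h\|_{\mathrm{DG}}$.

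Next I would construct $\bm r^*$ by prescribing its Raviart--Thomas degrees of freedom. For each facet $E\in\mathcal{E}_h$, I set the normal moment so that $\bm r^*\cdot\bm n_E = h_E^{-1}[u_h]$ (well-defined since $[u_h]|_E\in\mathcal{P}_p(E)$ and $[u_h]=u_h$ on $\partial\Omega$). For $p\ge 1$, I then fix the interior moments on each $T\in\mathcal{T}_h$ so that the face-trace-free component of $\bm r^*$ reproduces $-A\nabla u_h|_T$ when tested against $\nabla u_h\in [\mathcal{P}_{p-1}(T)]^d$. Applying element-wise integration by parts yields
\begin{equation*}
(u_h,\nabla\cdot\bm r^*) = -(\nabla_h u_h,\bm r^*)_{\mathcal{T}_h} + \sum_{E\in\mathcal{E}_h}\int_E[u_h]\,\bm r^*\!\cdot\bm n_E\,\dd s.
\end{equation*}
By the choice of face moments the boundary sum reduces exactly to $\sum_E h_E^{-1}\|[u_h]\|_{L^2(E)}^2$, and by the choice of interior moments the volume term contributes a positive multiple of $\|A^{1/2}\nabla_h u_h\|_{L^2(\Omega)}^2$. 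Together these reproduce $\|u_h\|_{\mathrm{DG}}^2$ up to a constant. (For $p=0$ the interior DOFs are absent, but $\nabla_h u_h\equiv 0$ on $V_h^0$, so only the jump part appears.)

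The last and main technical step is to control $\|A^{-1/2}\bm r^*\|_{L^2(\Omega)}$. Pulling back to the reference element via the contravariant Piola transform and using norm equivalence on the finite-dimensional space $\mathrm{RT}_p(\hat T)$, I expect an estimate of the form
$\|\bm r^*\|_{L^2(T)}^2\lesssim h_T\|\bm r^*\!\cdot\bm n\|_{L^2(\partial T)}^2 + \|\bm r^*_{\mathrm{int}}\|_{L^2(T)}^2$, where $\bm r^*_{\mathrm{int}}$ denotes the face-trace-free component. Combined with shape-regularity ($h_E\simeq h_T$), the uniform spectral bounds on $A$, and the interior moment construction, this yields $\|A^{-1/2}\bm r^*\|_{L^2(T)}^2\lesssim \sum_{E\subset\partial T} h_E^{-1}\|[u_h]\|_{L^2(E)}^2 + \|A^{1/2}\nabla u_h\|_{L^2(T)}^2$. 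Summation over $T$ gives the desired $\|A^{-1/2}\bm r^*\|_{L^2(\Omega)}\lesssim \|u_h\|_{\mathrm{DG}}$. The hard part is precisely this last step: the selection of interior moments must be coordinated so that it simultaneously delivers the gradient term on the productive side and admits the sharp scaling bound on the cost side, with the constant depending only on shape-regularity and on the spectral envelope of $A$, not on $h$.
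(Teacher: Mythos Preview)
Your approach is correct and is precisely the construction in the references the paper cites (\cite[Theorem~3.2]{gao2018error}, \cite[Theorem~2.3]{CFQ18}); compare also the identical test-function construction used for \Cref{lemma:inf_sup} in Appendix~\ref{sec:proof_Ah}. One small clarification: prescribe the interior moments by $\int_T \bm r^*\!\cdot\bm p = -(A\nabla u_h,\bm p)_T$ for \emph{all} $\bm p\in[\mathcal{P}_{p-1}(T)]^d$; then $-(\nabla_h u_h,\bm r^*)_{\mathcal{T}_h}=\|A^{1/2}\nabla_h u_h\|_{L^2(\Omega)}^2$ exactly, since the face-dual Raviart--Thomas basis functions have vanishing interior moments and no cross term appears.
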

\begin{proof}
The proof \rami{ of the second inequality} follows identically to that of \cite[Theorem 3.2]{gao2018error} for simplicial meshes.
See also a similar result in \cite[Theorem 2.3]{CFQ18}, which covers the parallelotope mesh case. \rami{For the first bound, 
we use the definition and integrate by parts locally on each element: 
\begin{align} \label{eq:second_bound_lifting}
\|A^{-1/2} \bm q_h\|_{L^2(\Omega)}^2 = (A^{-1} \bm L(u_h), \bm L(u_h)) = (u_h , \nabla \cdot \bm q_h) = (\nabla_h u_h , \bm q_h) -  \sum_{E \in \mathcal{E}_h } \int_{E} \bm q_h \cdot \bm n_E [u_h] \dd s. 
\end{align}
For a fixed edge, we employ the trace inequality 
\begin{equation}
    \| \bm q_h \cdot \bm n_E\|_{L^2(E)} \leq C_{\mathrm{tr}} h_E^{-1/2}\|\bm q_h\|_{L^2(T_E)},  \label{eq:normal_trace_discrete}
\end{equation} 
for some element $T_E \in \mathcal{T}_h$ sharing the facet $E$. Applying \eqref{eq:normal_trace_discrete}, shape regularity of the mesh, and Cauchy-Schwarz inequality, we bound  
\begin{align*}
\sum_{E \in \mathcal{E}_h}
\int_E \bm q_h \cdot \bm n_E [u_h] 
& \leq  \left( \sum_{E \in \mathcal{E}_h} h_E \| \bm q_h \cdot \bm n_E \|^2_{L^2(E)} \right)^{1/2} \left( \sum_{E \in \mathcal{E}_h } h_E^{-1} \|[u_h]\|^2_{L^2(E)}  \right) \\ 
& \lesssim C_\mathrm{tr} \|\bm q_h\|_{L^2(\Omega)} \|u_h\|_{\mathrm{DG}}, 
\end{align*}
Along with Cauchy-Schwarz inequality for the first term in \eqref{eq:second_bound_lifting}, the above arguments show that
\[ 
    \|A^{-1/2} \bm q_h\|_{L^2(\Omega)}^2  \leq C_{\mathrm{tr}}\|\bm q_h\|_{L^2(\Omega)}
    \|u_h\|_{\mathrm{DG}}. 
\]
We conclude the result by noting that $\|\bm q_h\|_{L^2(\Omega)} = \| A^{1/2} A^{-1/2} \bm q_h\|_{L^2(\Omega)} \leq \|A^{1/2}\|_{L^\infty(\Omega)} \|A^{-1/2} \bm q_h\|_{L^2(\Omega)}$. 
}
\end{proof}
We also introduce the $L^2$-projection, $\Pi_h: H^1(\mathcal{T}_h) \rightarrow V_h^p$,  and we recall the following properties:
\begin{equation}
    (\Pi_h v, q_h) = (v, q_h) \,\,\,\, \forall q_h \in V_h^p,  \qquad \|\Pi_h v\|_{\mathrm{DG}} \lesssim \|v\|_{\mathrm{DG}}.  \label{eq:local_L2_project}
\end{equation}
\rami{The proof of the stability bound above is standard and follows from the properties of $\Pi_h$ on elements and facets, see, e.g., \cite[Lemma 1.58 and Lemma 1.59]{di2011mathematical}.\footnote{\rami{
For any $v \in H^1(\mathcal{T}_h)$, we have that $|\Pi_h v|_{H^1(T)} \lesssim |v|_{H^1(T)}$ \cite[Lemma 1.58]{di2011mathematical}. In addition, for any $E \in \mathcal{E}_h$ with $E =  \partial T_E^1 \cap \partial  T_E^2$ for some $T_E^1, T_E^2 \in \mathcal{T}_h$, we have that \cite[Lemma 1.59]{di2011mathematical}
$$ \|[v - \Pi_h v]\|_{L^2(E)} \leq \|(v - \Pi_h v) \vert_{T_E^1}\|_{L^2(E)} + \|(v - \Pi_h v) \vert_{T_E^2}\|_{L^2(E)} \lesssim  h^{1/2}_{T_E^1}|v|_{H^1(T_E^1)} + h_{T_E^2}^{1/2} |v|_{H^1(T_E^2)}. $$ 
A similar estimate holds for $E \subset \partial \Omega$. Summing over mesh elements and over facets, using the shape regularity of the mesh, and applying the triangle inequality, shows the required estimate. 
}
}
}
\subsection{The first-order system proximal Galerkin method} \label{sec:mixed_lvpp}
We are now ready to present the mixed finite element discretization of \eqref{eq:saddle_point_problem}.  We start with defining the discrete solutions $(\bm q_h^k, u_h^k) \in \bm \Sigma_{h,\mathrm{div}}^{p} \times V_h^p$ and $\psi_h^k \in V_h^p$  satisfying the Galerkin discretization of \eqref{eq:alpha_step_1st}: 
\begin{subequations}\label{eq:nonlinear_subproblem_mixed}
    \begin{alignat}{2}
  \alpha^k (\nabla \cdot \bm q_h^{k}, v_h) +  (\psi_h^{k} , v_h) & = \alpha^k (f,v_h) + (\psi_h^{k-1}, v_h)  && \quad \forall v_h \in V_h^p, \label{eq:nonlinear_subproblem_mixed_0} \\ 
   (A^{-1} \bm q_h^{k}, \bm v_h) - (\nabla \cdot \bm v_h, u^k_h ) & = - \langle g , \bm v_h \cdot \bm n\rangle_{\partial \Omega} && \quad \forall \bm v_h \in\bm \Sigma_{h,\mathrm{div}}^{p}, \label{eq:nonlinear_subproblem_mixed_1}
  \\
   (u_h^{k}, q_h)  - (\mathcal{U}(\psi_h^{k}) + \mathcal S(\psi_h^k), q_h) &  = 0  && \quad \forall q_h \in V_h^p. \label{eq:nonlinear_subproblem_mixed_2}
 \end{alignat}
 \end{subequations}
In the above, $\mathcal S$ is an \emph{optional} stabilization/regularization term allowing for additional control over the latent variable $\psi_h$. For example, one can choose $\mathcal S(\psi_h^k) = \epsilon \psi_h^k$ for some $0 \leq \epsilon \ll 1$.
This choice will limit the magnitude of $\psi_h^k$, which can reduce the round-off error in the computed solution.
We discuss different choices of $\mathcal S$ in \Cref{sec:numerics}. In what follows, we analyze \eqref{eq:nonlinear_subproblem_mixed} with $ \mathcal S = 0$.  

Using \eqref{eq:nonlinear_subproblem_mixed_2} in \eqref{eq:nonlinear_subproblem_mixed_1} and the fact that $\nabla \cdot \bm v_h \in V_h^p$ for any $\bm v_h \in \bm \Sigma_{h,\mathrm{div}}^{p}$, we arrive at the two-variable formulation of the FOSPG method given in \Cref{alg:mixed_PG}. 
\begin{algorithm}[t] 
\caption{The First-Order System Proximal Galerkin Method} 
\label{alg:mixed_PG}
\begin{algorithmic}[1]\label{alg:main_alg_disc_mixed}
    \State \textbf{input:}  A discrete latent solution guess $\psi_h^0 \in V_h^p$ and a sequence of positive step sizes $\{\alpha^k\}$.
        \State \textbf{output:} A bound-preserving approximate solution $\mathcal{U}(\psi_h) \approx u$ and a locally-conservative flux approximation $\bm q_h\approx -A\nabla u$.
    \State Initialize \(k = 1\).
    \State \textbf{repeat}
    \State \quad Solve the following (nonlinear) discrete 
 saddle-point problem:
  Find $(\bm q_h, \psi_h) \in \bm \Sigma_{h,\mathrm{div}}^{p} \times V_h^p$ such that  
\begin{subequations} 
\label{eq:nonlinear_subproblem_2var}
 \begin{alignat}{2}
   (A^{-1} \bm q_h, \bm v_h) - (\nabla \cdot \bm v_h, \mathcal{U}(\psi_h) + \mathcal S(\psi_h) ) & = - \langle g , \bm v_h \cdot \bm n\rangle_{\partial \Omega} && \quad \forall \bm v_h \in\bm \Sigma_{h,\mathrm{div}}^{p}, 
    \\ 
  \alpha^k (\nabla \cdot \bm q_h, v_h) +  (\psi_h , v_h) & = \alpha^k (f,v_h) + (\psi_h^{k-1}, v_h)  && \quad \forall v_h \in V_h^p.
    \end{alignat}
   \end{subequations}
\State \quad Assign  \(\psi_h^{k} \gets \psi_h\) and \(k \gets k + 1\).
    \State \textbf{until} a convergence test is satisfied.
\end{algorithmic}
\end{algorithm}
\begin{remark}[The two-variable \eqref{eq:nonlinear_subproblem_2var} and three-variable formulations \eqref{eq:nonlinear_subproblem_mixed}]
Note that \eqref{eq:nonlinear_subproblem_2var} is equivalent to \eqref{eq:nonlinear_subproblem_mixed}. This can be immediately observed after defining $u_h^k = \Pi_h (\mathcal{U}(\psi_h) + \mathcal S(\psi_h))$.
We will use this equivalence throughout the paper and refer to the two formulations interchangeably when no confusion arises.  
Moreover, while the two-variable formulation \eqref{eq:nonlinear_subproblem_2var} is more memory efficient than \eqref{eq:nonlinear_subproblem_mixed}, one may compute with both; see, e.g., \Cref{alg:main_alg_disc} below. The three-variable formulation makes it easier to implement more exotic choices of $\mathcal S(\psi_h)$, such as  $(\mathcal S(\psi_h), q_h)  = \epsilon (\nabla_h \psi_h, \nabla_h q_h) $ for some $0 \leq \epsilon \ll 1$.
\end{remark}

 \begin{remark}[Bound preserving solution $\mathcal{U}(\psi_h)$ and an average bound preserving solution $u_h$] \label{remark:bd_preserv_avg}  In addition to the bound preserving discrete solution $\mathcal{U}(\psi_h)$, we obtain a solution $u_h$ that has bound preserving local averages whenever $(\mathcal S(\psi_h), 1)_T = 0$. Indeed, 
testing \eqref{eq:nonlinear_subproblem_mixed_2} with the indicator function of one element $T \in \mathcal{T}_h$, we obtain that 
\begin{align}
\frac{1}{|T|} \int_{T} u_h \dd x   = \frac{1}{|T|} \int_{T} \mathcal{U}(\psi_h) \dd x , \quad \forall T \in \mathcal{T}_h.
\end{align}
Therefore, if $\underline u , \overline u \in \R$, we obtain that 
\begin{align}
   \underline u <  \frac{1}{|T|} \int_{T} u_h \dd x  < \overline u, \quad \forall T \in \mathcal{T}_h.
\end{align}
Such a property is useful if one chooses to post-process $u_h$; see \Cref{remark:limiter} for further insight and a numerical example. 
\end{remark}
\begin{theorem}[Existence and uniqueness of solutions]\label{thm:ext_mixed}
For all $k \geq 1$, there exists a unique solution to \eqref{eq:nonlinear_subproblem_mixed} and to \eqref{eq:nonlinear_subproblem_2var}. 
\end{theorem}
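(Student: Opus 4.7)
The plan is to reformulate the nonlinear saddle-point problem \eqref{eq:nonlinear_subproblem_2var} as the first-order optimality condition of a strictly convex, coercive minimization problem on the finite-dimensional space $\bm \Sigma_{h,\mathrm{div}}^{p}$. The key observation is that the second equation in \eqref{eq:nonlinear_subproblem_2var} can be rearranged to express $\psi_h$ explicitly as
\[
\psi_h(\bm q_h) = \psi_h^{k-1} + \alpha^k \Pi_h f - \alpha^k \nabla \cdot \bm q_h \in V_h^p,
\]
since $\nabla \cdot \bm q_h \in V_h^p$ for $\bm q_h \in \bm \Sigma_{h,\mathrm{div}}^{p}$ and $\psi_h^{k-1} \in V_h^p$. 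This eliminates $\psi_h$ and reduces the system to a single equation in $\bm q_h$.

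Next, I would introduce the reduced energy
\[
\mathcal{F}(\bm q_h) := \frac{1}{2}(A^{-1}\bm q_h, \bm q_h) + \langle g, \bm q_h \cdot \bm n\rangle_{\partial \Omega} + \frac{1}{\alpha^k}\int_\Omega R^*\bigl(x, \psi_h(\bm q_h)(x)\bigr)\dd x
\]
on $\bm \Sigma_{h,\mathrm{div}}^{p}$. Using $\mathcal{U} = (\mathcal{R}^*)'$ from \Cref{prop:Conjugate} and the chain rule, the G\^ateaux derivative $D\mathcal{F}(\bm q_h)[\bm v_h]$ equals $(A^{-1}\bm q_h, \bm v_h) + \langle g, \bm v_h \cdot \bm n\rangle_{\partial \Omega} - (\nabla \cdot \bm v_h, \mathcal{U}(\psi_h(\bm q_h)))$, so stationarity of $\mathcal{F}$ is precisely the first equation of \eqref{eq:nonlinear_subproblem_2var}. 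Therefore minimizers of $\mathcal{F}$ are in bijection with solutions of \eqref{eq:nonlinear_subproblem_2var}.

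The remaining task is to verify three properties of $\mathcal{F}$: (i) continuity, which follows from continuity of $R^*$ (as the convex conjugate of a proper lower semicontinuous convex function) together with continuity of the linear and bilinear operations; (ii) strict convexity, which comes from the uniform positive definiteness of $A^{-1}$ applied to the quadratic term, while the $R^*$ term is convex as the composition of a convex function with an affine map in $\bm q_h$; and (iii) coercivity on the finite-dimensional space $\bm \Sigma_{h,\mathrm{div}}^{p}$. Once these are established, Weierstrass's theorem in finite dimensions yields a unique minimizer $\bm q_h$, from which $\psi_h = \psi_h(\bm q_h)$ is uniquely recovered. For the three-variable formulation \eqref{eq:nonlinear_subproblem_mixed}, equation \eqref{eq:nonlinear_subproblem_mixed_2} then determines $u_h^k = \Pi_h(\mathcal{U}(\psi_h))$ uniquely.

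The main obstacle I expect is the coercivity argument (iii). Since $R^*$ is not bounded below in general (for instance, in the Fermi--Dirac case it grows linearly as $z\to-\infty$), one must exploit that $\bm q_h$ enters $R^*$ only through $\nabla \cdot \bm q_h$. I would bound the $R^*$ term from below via the Fenchel--Young inequality, e.g.\ $R^*(x,z) \geq \underline u(x)\, z - R(x,\underline u(x))$, which is legitimate because $\underline u(x) \in \operatorname{dom} R(x,\cdot)$. Using a discrete trace inequality on $\bm \Sigma_{h,\mathrm{div}}^p$ for the boundary term and a standard inverse estimate to bound $\|\nabla \cdot \bm q_h\|_{L^1(\Omega)}$ by a (possibly $h$-dependent) multiple of $\|\bm q_h\|_{L^2(\Omega)}$, one obtains $\mathcal F(\bm q_h) \geq c_0 \|\bm q_h\|_{L^2(\Omega)}^2 - C_h \|\bm q_h\|_{L^2(\Omega)} - C$, which implies $\mathcal F(\bm q_h)\to\infty$ as $\|\bm q_h\|_{L^2(\Omega)}\to\infty$. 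Since all norms on the finite-dimensional space are equivalent, this suffices.
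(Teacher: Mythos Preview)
Your proposal is correct and follows essentially the same route as the paper: both reduce the saddle-point system to a strictly convex minimization over $\bm q_h\in\bm\Sigma_{h,\mathrm{div}}^p$ with objective $\tfrac12\|A^{-1/2}\bm q_h\|^2+\langle g,\bm q_h\cdot\bm n\rangle_{\partial\Omega}$ plus an integral of $\mathcal{R}^*$ evaluated at the affine expression for $\psi_h$, then recover $\psi_h$ and $u_h=\Pi_h\mathcal{U}(\psi_h)$ from the minimizer. The only cosmetic difference is the coercivity step: the paper bounds $\mathcal{R}^*$ from below via the subgradient inequality at the fixed point $\Pi_h\tilde f$, whereas you use the Fenchel--Young inequality $R^*(x,z)\ge \underline u(x)\,z - R(x,\underline u(x))$; both yield a linear lower bound on the $\mathcal{R}^*$ term and lead to the same quadratic-minus-linear coercivity estimate after invoking equivalence of norms in finite dimensions.
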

\begin{proof}
 For simplicity, we denote  $\tilde f = f + (1/\alpha^k) \psi_h^{k-1}$ and drop the superscript $k$. Recall that  $\mathcal{U} = (\mathcal{R}^*)'$ where $\mathcal{R}^*$ is given in \eqref{eq:convex_conj_S_superposition}. We then consider the  following minimization problem: 
\begin{align} \label{eq:min_problem_L}
\inf_{\bm v_h \in \bm \Sigma_{h,\mathrm{div}}^{p} } L(\bm v_h); \quad 
L(\bm v_h) 
= 
\frac12 \|A^{-1/2} \bm v_h\|^2_{L^2(\Omega)} 
+ \int_{\Omega} \mathcal{R}^*(\Pi_h \tilde f - \nabla \cdot \bm v_h) \dd x + \langle g , \bm v_h \cdot \bm n \rangle_{\partial \Omega}, 
\end{align}
where we recall that $\Pi_h$ is the $L^2$-projection onto $V_h^p$, see \eqref{eq:local_L2_project}. 
As we show below,~\eqref{eq:min_problem_L} admits a solution $\bm q_h$ because $L$ is convex, continuous, and coercive  \cite[Theorem 9.3--1]{ciarlet2013linear}. 

Coercivity follows from the convexity of $\mathcal{R}^*$ and equivalence of norms in finite dimensions. Indeed,  by the subgradient inequality, 
\[ 
\int_{\Omega} \mathcal{R}^*(\Pi_h \tilde f - \nabla \cdot \bm v_h) \dd x \geq  \int_{\Omega} \mathcal{R}^*(\Pi_h \tilde f) \dd x   - \int_{\Omega} (\mathcal{R}^*)'(\Pi_h \tilde f) \nabla \cdot \bm v_h \dd x . 
\]
Hence, with Cauchy--Schwarz inequality and a local trace estimate, we bound $L(\bm v_h)$ as follows 
\begin{multline}
    L(\bm v_h) \geq \frac12 \|A^{-1/2} \bm v_h\|^2_{L^2(\Omega)}  + \int_{\Omega} \mathcal{R}^*(\Pi_h \tilde f)  \dd x \\ - \|(\mathcal{R}^*)'(\Pi_h \tilde f)\|_{L^2(\Omega) } \|\nabla \cdot \bm v_h\|_{L^2(\Omega)}  -  C_{\mathrm{tr}}\Big(\max_{E  \subset \partial \Omega}  h_E^{-1/2}\Big) \|g\|_{L^2(\partial \Omega)} \|\bm v_h \|_{L^2(\Omega)}. 
\end{multline}
Therefore, from the equivalence of norms in finite dimensions, we deduce that $L(\bm v_h) \rightarrow \infty$ as $\|\bm v_h\|_{L^2(\Omega)} \rightarrow \infty$.
This establishes the existence of a solution. The uniqueness of the solution follows the strict convexity of $L$.

The solution of \eqref{eq:min_problem_L}, $\bm q_h \in \bm \Sigma_{h,\mathrm{div}}^{p}$, satisfies the following optimality condition:  
\begin{align*}  
(L'(\bm q_h), \bm v_h) 
= 
(A^{-1} \bm q_h, \bm v_h)  -  \int_{\Omega}  (\mathcal{R}^*)' (\Pi_h \tilde f - \nabla \cdot \bm q_h) \nabla \cdot \bm v_h \dd x  + \langle g, \bm v_h \cdot \bm n \rangle_{\partial \Omega} = 0. 
\end{align*} 
Define $ \psi_h \in V_h^p$  and $u_h \in V_h^p$ as 
\begin{equation}
 \psi_h =   \alpha \Pi_h \tilde f -  \alpha \nabla \cdot \bm q_h , \quad  u_h = \Pi_h (\mathcal{U}(\psi_h)). 
\end{equation}
It then easily follows that $(\bm q_h, u_h , \psi_h) \in \bm \Sigma_{h,\mathrm{div}}^{p} \times V_h^p \times V_h^p$  is the unique solution to \eqref{eq:nonlinear_subproblem_mixed}.
\end{proof}
For each $k$, we show a stability estimate with respect to the mesh size $h$.  To this end, we define the following dual norm: 
\begin{align}
\|v\|_{H^1(\mathcal{T}_h)^*} = \sup_{w \in H^1(\mathcal{T}_h)} \frac{(v, w)}{\|w\|_{\mathrm{DG}}} \quad \forall v \in L^2(\Omega). \label{eq:negative_norm}
\end{align}
The stability result we provide below depends on the choice of $\mathcal{U}$. To this end, observe that \eqref{eq:Z_1} and \eqref{eq:Z_2} can be written in the following form: 
\begin{equation}\label{eq:general_form_U}
\mathcal{U}(\psi)(x) = \Upsilon(x , \psi(x)) = \phi_0(x) \Upsilon_0(\psi(x)) + \phi_1(x), 
\end{equation}
where $\phi_0, \phi_1 \in L^{\infty}(\Omega)$ with $\phi_0 (x) \geq \underline {\phi_0} > 0$ a.e.\ in $\Omega$ for some positive constant $\underline{\phi_0}$ and $\Upsilon_0 \colon \mathbb{R} \to (-1,1)$ is a monotonically increasing bijection with $\Upsilon_0(0) = 0 $ and $\lim_{z \rightarrow \pm \infty} \Upsilon_0(z) = \pm 1.$  In particular, for examples \eqref{eq:Z_1} and \eqref{eq:Z_2}, we have $\phi_0 = 1/2( \overline u  - \underline u ) $ and $\phi_1 = 1/2 (\overline u + \underline u)$. 

\begin{lemma}[Stability] \label{lemma:stability}
    \rami{ Assume that the user specified initial guess $\psi_h^0$ is chosen such that $\|\mathcal{U}(\psi_h^0)\|_{\mathrm{DG}} + \|\psi_h^0\|_{L^\infty(\Omega)} \lesssim 1$.  Then, there exists a constant $C_0$ independent of $h$, $k$ and $\{\alpha^\ell\}_{\ell = 1,\ldots, k}$ such that for all $k \geq 1$:
    \begin{equation} \label{eq:stability}
    \|\bm q_h^k\|_{L^2(\Omega)} + \|u_h^k\|_{\mathrm{DG}} + \frac{\|\psi_h^k\|_{H^1(\mathcal{T}_h)^*}}{\sum_{\ell=1}^k \alpha^\ell} \leq C_0. 
    \end{equation}
    In addition, if either $\underline u, \overline{u} \in \R$ with $\underline u < 0 < \overline{u}$  or   $\mathcal{U}$ takes the form \eqref{eq:general_form_U} with $\phi_0 \in L^\infty(\Omega)$, $\phi_0(x) \geq \underline{\phi_0} > 0$ a.e. in $\Omega$, and  $\phi_1 \in H^1_0(\Omega)$, then 
for any $k \geq 1$
\begin{equation} \label{eq:stability_2}
\underline{\phi_0}\| \psi_h^k \|_{L^1(\Omega)} \leq C_1 \sum_{\ell = 1}^k \alpha^\ell, 
  \end{equation}
  where $C_1$ is independent of $h$, $k$ and $\{\alpha^\ell\}_{\ell = 1,\ldots, k}$. 
    } 
\end{lemma} 
\begin{remark}\label{remark:boundary_data}
For non-homogenous boundary data $g \in H^{1/2}(\partial \Omega)$, one can consider a lift $w \in H^1_g(\Omega)$ and  set $u_g = u + w$ where $u$ solves the homogenous problem. The discrete solution $u_{h,g}$ is then given by $u_{h,g} = u_h + \mathcal{I}_h w_g$, where $\mathcal{I}_h$ is a suitable quasi-interpolation operator that preserves the boundary data, see for example \cite{ern2017finite}.  Stability estimates for the approximation $(u_{h,g}, \bm q_{h,g}) = (u_h + \mathcal{I}_h w_g, \bm q_h  - A \nabla  \mathcal{I}_h w_g)$ of $(u_g, -A \nabla u_g)$  will follow from the triangle inequality, the stability of $(u_h, \bm q_h)$, and the stability properties of the interpolant $\mathcal{I}_h$. 
Moreover, the assumption $\underline u < 0 < \overline{u}$ appears to be a limitation of the current theory \rami{to obtain the $L^1$ bound on $\psi_h^k$}. In practice, setting $\underline u = 0$ can be numerically stable, as evidenced by the numerical experiment in~\Cref{example:biactive}.
\end{remark}
\begin{proof}
\rami{
\textit{Step 1. Bounding $\| \bm q_h^k \|_{L^2(\Omega)}$ and $\|u_h^k\|_{\mathrm{DG}}$}. We adapt the ideas of \cite[Appendix A]{keith2025priori} to this setting.  
Using the definition of $\bm L$ \eqref{eq:first_lift} and \eqref{eq:nonlinear_subproblem_mixed_1}, we first observe that $ \bm q_h^k = \bm L (u_h^k) $ for all $ k \geq 1$ since by assumption $g = 0$. Define $u_h^0 = \Pi_h (\mathcal{U}(\psi_h^0))$ and $\bm q_h^0 = \bm L(u_h^0)$. With \eqref{eq:first_lift}, we also rewrite   \eqref{eq:nonlinear_subproblem_mixed_0} as 
\begin{equation}
\alpha^{k} (\bm q_h^{k}, A^{-1}\bm L (v_h))  + (\psi_h^{k} - \psi_h^{k-1}, v_h) = \alpha^{k}(f,v_h) \quad \forall v_h \in V_h^p.  \label{eq:rewrite_first_eq}
\end{equation} 
Testing the above with $v_h = u_h^{k} -u_h^{k-1}$ and using  \eqref{eq:nonlinear_subproblem_mixed_2}, we obtain that 
\begin{equation*}
(A^{-1}(\bm q_h^{k} - \bm q_h^{k-1}), \bm q_h^{k})-  (f, u_h^{k} - u_h^{k-1})  = - \frac{1}{\alpha^k} (\mathcal{U}(\psi_h^{k}) - \mathcal{U}(\psi_h^{k-1}), \psi_h^k - \psi_h^{k-1}) \leq 0,   
\end{equation*}
where we used the monotonicity of $\mathcal{U}$ to conclude the last inequality. Rewriting the above inequality yields the following energy dissipation property 
\begin{align}
\frac12 \|A^{-1/2} \bm q_h^{k}\|_{L^2(\Omega)}^2 - (f,u_h^{k}) \leq \frac12 \|A^{-1/2} \bm q_h^{k-1}\|_{L^2(\Omega)}^2 - (f,u_h^{k-1}) \quad \forall k \geq 1. 
\end{align}
Iterating the above and using Cauchy-Schwarz inequality followed by \Cref{lemma:dg_to_L2} and \eqref{eq:Poincare}, we obtain that 
\begin{align}
\frac12 \|A^{-1/2}\bm q_h^k\|_{L^2(\Omega)}^2 \leq \frac{1}{2}\|A^{-1/2}\bm q_h^0\|_{L^2(\Omega)}^2  + \|f\|_{L^2(\Omega)}\|u_h^0\|_{L^2(\Omega)} + C_P C_L \|f\|_{L^2(\Omega)} \|A^{-1/2} \bm q_h^k\|_{L^2(\Omega)}. \nonumber
\end{align}
From here, it is easy to see that 
\begin{align}
\frac{1}{4} \|A^{-1/2} \bm q_h^k\|_{L^2(\Omega)}^2 \leq \frac{1}{2}\|A^{-1/2}\bm q_h^0\|_{L^2(\Omega)}^2  + \|f\|_{L^2(\Omega)}\|u_h^0\|_{L^2(\Omega)} + C_P^2 C_L^2 \|f\|_{L^2(\Omega)}^2  \leq \tilde C_0, \end{align}
where $\tilde C_0$ is a constant depending on $\|\mathcal{U}(\psi_h^0)\|_{\mathrm{DG}}$ which can be seen from \Cref{lemma:dg_to_L2} and the stability of the $L^2$ projection. 
An application of \Cref{lemma:dg_to_L2} provides the bound on $\|u_h^k\|_{\mathrm{DG}}$. 
}

\rami{
\textit{Step 2. Bounding $ \|\psi_h^k\|_{H^1(\mathcal{T}_h)^*}$. 
} 
Summing \eqref{eq:rewrite_first_eq} and defining $\overline{\bm q}_h^k = \left( \sum_{\ell  = 1}^k \alpha^\ell \bm q_h^\ell  \right)/ \left( \sum_{\ell  = 1}^k \alpha^\ell \right)$, we obtain 
\begin{equation}
    (\overline{\bm q}_h^{k}, A^{-1}\bm L (v_h))  + \frac{1}{\sum_{\ell = 1}^k \alpha^\ell} (\psi_h^{k} - \psi_h^{0}, v_h) = (f,v_h) \quad \forall v_h \in V_h^p. 
\end{equation}
for any $k\geq 1$. Therefore,  Cauchy Schwarz inequality, \Cref{lemma:dg_to_L2}  and \eqref{eq:Poincare} yield for any $v_h \in  V_h^p$
\begin{equation}
(\psi_h^{k}, v_h)  \leq  \left( \sum_{\ell = 1}^k \alpha^\ell \right) \left(c_L^{-1} \|A^{-1/2} \overline{\bm q}_h^{k}\|_{L^2(\Omega)} + C_P \|f\|_{L^2(\Omega)} \right) \|v_h\|_{\mathrm{DG}} + \|\psi_h^0\|_{H^1(\mathcal{T}_h)^*}\|v_h\|_{\mathrm{DG}}. 
\end{equation}
With the help of the $L^2$-projection, we estimate 
\begin{align}
\|\psi_h^k\|_{H^1(\mathcal{T}_h)^*} =  \sup_{w \in H^1(\mathcal{T}_h)} \frac{(\psi_h^k, w)}{\|w\|_{\mathrm{DG}}} =\sup_{w \in H^1(\mathcal{T}_h)} \frac{(\psi_h^k, \Pi_h w)}{\|w\|_{\mathrm{DG}}}  \lesssim \sup_{w \in H^1(\mathcal{T}_h)} \frac{|(\psi_h^k, \Pi_h w)|}{\|\Pi_h w\|_{\mathrm{DG}}} . \label{eq:negative_norm_estimate_psihk}
\end{align}
Combing the above two bounds provides the estimate 
\begin{align}
\|\psi_h^k\|_{H^1(\mathcal{T}_h)^*} \lesssim  \left( \sum_{\ell = 1}^k \alpha^\ell \right) \left(c_L^{-1} \|A^{-1/2}\overline{\bm q}_h^{k}\|_{L^2(\Omega)} + C_P \|f\|_{L^2(\Omega)} \right) + \|\psi_h^0\|_{H^1(\mathcal{T}_h)^*}. 
\end{align}
With noting that $\|A^{-1/2}\overline{\bm q}_h^{k}\|_{L^2(\Omega)} \leq  \tilde C_0$ and $\|\psi_h^0\|_{H^1(\mathcal{T}_h)^*} \leq C_P\|\psi_h^0\|_{L^2(\Omega)}$, we conclude the bound on $\|\psi_h^k\|_{H^1(\mathcal{T}_h)^*}$. 
}

\rami{
\textit{Step 3. Bounding $ \|\psi_h^k\|_{L^1(\Omega)}$. 
}We provide a detailed proof for the double obstacle problem case, and we discuss the modifications to the proof for the anisotropic diffusion case at the end. 
We test \eqref{eq:nonlinear_subproblem_mixed_0} with $v_h = u_h^k$, \eqref{eq:nonlinear_subproblem_mixed_1} with $\bm v_h = \alpha^k  \bm q_h^k$, and \eqref{eq:nonlinear_subproblem_mixed_2} with $q_h = -\psi_h^k$, then we add the resulting equalities. \rami{Using \eqref{eq:general_form_U}}, this delivers 
\begin{equation}\label{eq:stab_0}
\alpha^k \|A^{-1/2} \bm q_h^k\|^2_{L^2(\Omega)}  + (\phi_0 \Upsilon_0(\psi^k_h), \psi^k_h) = \alpha (\tilde f , u^k_h) - (\phi_1, \psi^k_h) . 
\end{equation}
Observe that since $\Upsilon_0$ is monotone,  $\Upsilon_0(0) = 0$, and $\phi_0 \geq \underline{\phi_0} > 0$, the second term in \eqref{eq:stab_0} is positive. In addition, we have that 
\[
\Upsilon_0( \psi^k_h)\psi^k_h  \geq  |\psi^k_h| - b \quad \text{a.e. in } \Omega,  
\] 
where $b = \min_{z \in \R} (z  \Upsilon_0(z) - |z| ) < 0$. Note that $b$ is well defined since $\lim_{z \rightarrow \pm \infty } (z\Upsilon_0(z) -  |z| ) = 0 $. Therefore, $b$ is the minimum value of a continuous function over some bounded interval. We arrive at
\begin{equation}\label{eq:stab_1}
\alpha^k \|A^{-1/2} \bm q^k_h\|^2_{L^2(\Omega)}  + \underline{\phi_0} \|\psi^k_h\|_{L^1(\Omega)} \leq (\alpha^k f + \psi_h^{k-1} , u^k_h) - (\phi_1, \psi^k_h)  + (b, 1). 
\end{equation}
Proceeding, we bound the terms on the right-hand side of \eqref{eq:stab_1}. With the Cauchy--Schwarz inequality and \eqref{eq:negative_norm}, we deduce that
\begin{align} \label{eq:stab_2}
| (\alpha^k f + \psi_h^{k-1} , u^k_h) |  & \leq \alpha^k \|f\|_{L^2(\Omega)}\| u^k_h\|_{L^2(\Omega)} + \|\psi_h^{k-1}\|_{H^1(\mathcal{T}_h)^*} \| u^k_h \|_{\mathrm{DG}} 
\end{align}
The second term in \eqref{eq:stab_1} is bounded as follows: 
\begin{align} \label{eq:stab_3}
(\phi_1, \psi^k_h)  \leq \|\psi^k_h\|_{H^1(\mathcal{T}_h)^*} \|\phi_1\|_{\mathrm{DG}} =  \|\psi_h^k\|_{H^1(\mathcal{T}_h)^*} \|A^{1/2} \nabla \phi_1\|_{L^2(\Omega)}, 
\end{align}
where the last equality follows since $\phi_1 \in H^1_0(\Omega)$. Combining the above estimate and using the proved stability bounds show the required bound. }

\rami{
If $\underline u, \overline u \in \R$, then $\phi_1 \in \R$. Thus, the equality in \eqref{eq:stab_3} no longer holds, and the DG norm of a constant depends inversely on the mesh size. In this case, we remove the dependence on $\phi_1$ by shifting the problem as follows. Since $\underline u < 0 < \overline u$ and since $\mathcal{U}$ is smooth and monotone, there exists a unique $c \in \R$ such that $\mathcal{U}(c) = 0$ by the intermediate value theorem. Set $\tilde \psi^k_h = \psi^k_h  - c$ and $\tilde{\mathcal U} (z) = \mathcal U (z + c), \; z \in \R$. Then, the solution $(\bm q^k_h, u^k_h, \tilde \psi^k_h) \in  \bm \Sigma_{h,\mathrm{div}}^{p} \times V_h^p \times V_h^p$ satisfies   
\begin{subequations} 
\label{eq:nonlinear_subproblem_mixed_shifted}
\begin{alignat}{2}
  \alpha (\nabla \cdot \bm q^k_h, v_h) +  (\tilde \psi^k_h , v_h) & = \alpha (\tilde f - c ,v_h)   && \quad \forall v_h \in V_h^p,\label{eq:nonlinear_subproblem_mixed_shifted_0}
 \\ 
   (A^{-1} \bm q^k_h , \bm v_h) - (\nabla \cdot \bm v_h, u^k_h ) & = 0 && \quad \forall \bm v_h \in\bm \Sigma_{h,\mathrm{div}}^{p},   \label{eq:nonlinear_subproblem_mixed_shifted_1}
\\
   (u^k_h, q_h)  - (\tilde{\mathcal{U}}(\tilde \psi^k_h),q_h) &  = 0  && \quad \forall q_h \in V_h^p.\label{eq:nonlinear_subproblem_mixed_shifted_2}
 \end{alignat}
 \end{subequations}
From here, the arguments follow in the same manner as before, where we use that 
\[
 \tilde{\mathcal{U}}(\tilde{\psi}^k_h ) \tilde{\psi}^k_h  \geq  a |\tilde \psi^k_h| - b, 
 \]
 with $a = \min(- \underline u , \overline u) > 0$ and $b = \min_{z \in \R} (z \tilde{\mathcal U}(z) - a |z| )$.  The final bounds on the norms of $\psi^k_h$ then follow from the triangle inequality.
}
\end{proof}

\subsection{The hybridized first-order system proximal Galerkin method} \label{sec:hybridization}
An appealing way to reduce the computational cost of FOSPG is through hybridization \cite{BBF13, AB85}.
After static condensation, this leads to symmetric positive definite linear systems for each linearized subproblem. 

On the reference facet $\hat E$, we denote by
$\mathcal{P}_p(\hat{E})$ the set of all polynomials of degree at most $k$ on $\hat{E}$ for $d=2$. For $d=3$ and for quadrilateral elements, we set $\mathcal{P}_p(\hat{E}) = \mathbb{Q}_p(\hat E)$. Consider the following space of polynomials defined locally on each facet $E \in \mathcal{E}_h$ 
\begin{align}    \label{fes_facet}
   M_{h,g}^p = &\;\bigg\{
        {\mu}_h\in L_2(\mathcal{E}_h): \quad {\mu}_h{}_{|_{E}}
        =\hat{{\mu}} \circ\Phi_E^{-1},  \; \hat{{\mu}} \in \mathcal{P}_p(\hat E) , \; \forall E\in\mathcal{E}_h\\
       & \nonumber \quad \quad \quad \quad \quad \quad \quad \quad  
       \quad \quad \quad \quad \quad
       \;\mu_h{}_{|_{E'}} = P_p(g){}_{|_{E'}},\; \forall E'\subset \partial\Omega 
        \bigg\},
    \end{align}
where $P_p(g)|_{E'}$ is the $L^2$-projection of the Dirichlet boundary data $g$  onto the polynomial space $\mathcal{P}_p(E')$. 

Following \cite{egger2010hybrid}, we consider the following energy norm. For $\bm q \in L^2(\Omega)^d, v \in H^1(\mathcal{T}_h)$ and $\hat v \in L^2(\partial \mathcal{T}_h)$, define 
\begin{equation} \label{eq:def_triple_norm}
\vvvert(\bm q, v, \hat v)\vvvert^2 = \|A^{-1/2} \bm q\|^2_{\mathcal{T}_h} + \|A^{1/2} \nabla_h v\|^2_{\mathcal{T}_h} + \sum_{T \in \mathcal{T}_h} h_T^{-1} \| v - \hat v\|^2_{L^2(\partial T)} . 
\end{equation}
From the  triangle inequality and mesh regularity, see the derivation of \cite[Bound 2.20]{10.1093/imanum/drad075} for details, we remark that for any $v \in H^1(\mathcal{T}_h)$, $\bm q \in L^2(\mathcal{T}_h)^d$ and $q \in L^2(\partial \mathcal{T}_h)$ with $ q= 0 $ on $\partial \Omega$, 
\begin{equation}
\|v\|_{\mathrm{DG}} \lesssim \vvvert(\bm q , v , q) \vvvert. \label{eq:dg_to_triple} 
\end{equation}

The hybrid mixed discretization relies on a bilinear form $\mathcal{B}_h: \bm{\Sigma}_h^p \times H^1(\mathcal{T}_h) \times L^2(\partial \mathcal{T}_h) \rightarrow \R $ to reimpose  the continuity of the normal fluxes over the facets, see \cite{egger2010hybrid} for more details, 
\begin{equation} \label{eq:def_Bh}
\mathcal{B}_h(\bm q_h, (v, \hat v)) = ( \bm q_h, \nabla_h v_h)_{\mathcal{T}_h} - (v - \hat v, \bm q_h \cdot \bm n)_{\partial \mathcal T_h}.
\end{equation} 
We also define the following form $\mathcal{A}_h: (\bm{\Sigma}_h^p \times V_h^p \times M_{h}^p)^2 \rightarrow \R$:  
\begin{equation} \label{eq:def_form_A}
\mathcal{A}_h((\bm q_h, u_h, \hat u_h), (\bm r_h, v_h, \hat v_h)) =  (A^{-1} \bm q_h, \bm r_h)_{\mathcal{T}_h} + \mathcal{B}_h (\bm r_h, (u_h, \hat u_h)) -   \mathcal{B}_h (\bm q_h, (v_h, \hat v_h)). 
\end{equation} 
\begin{lemma}[Properties of $\mathcal{A}_h$]\label{lemma:inf_sup}
For all $(\bm q_h, u_h, \hat u_h) \in \bm \Sigma_h^p \times V_h^p \times M_{h,0}^p$, there exists $\bm r_h \in \bm \Sigma_h^p$ such that 
\begin{equation} \label{eq:inf_sup}
\mathcal{A}_h((\bm q_h, u_h, \hat u_h), (\bm r_h, u_h, \hat u_h)) \gtrsim \vvvert (\bm q_h, u_h, \hat u_h) \vvvert^2. 
\end{equation}
Further, the form $\mathcal{A}_h$ is continuous over $\bm \Sigma_h^p \times V_h^p \times M_{h,0}^p$ 
\begin{equation} \label{eq:continuity_A}
  \mathcal{A}_h((\bm q_h, u_h, \hat u_h), (\bm r_h, v_h, \hat v_h)) \lesssim \vvvert (\bm q_h, u_h, \hat u_h) \vvvert \, \vvvert (\bm r_h, v_h, \hat v_h)  \vvvert
\end{equation}
\end{lemma}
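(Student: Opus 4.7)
The plan is to establish the inf-sup bound \eqref{eq:inf_sup} by constructing a carefully chosen test flux, and then deduce continuity \eqref{eq:continuity_A} from Cauchy--Schwarz plus a discrete trace inequality. The diagonal choice $(\bm r_h,v_h,\hat v_h)=(\bm q_h,u_h,\hat u_h)$ kills the mixed term $\mathcal B_h(\bm r_h,(u_h,\hat u_h))-\mathcal B_h(\bm q_h,(v_h,\hat v_h))$ and yields only $\|A^{-1/2}\bm q_h\|_{\mathcal T_h}^2$. To recover the missing control on $\|A^{1/2}\nabla_h u_h\|_{\mathcal T_h}$ and on $\sum_T h_T^{-1}\|u_h-\hat u_h\|_{L^2(\partial T)}^2$, I would take $\bm r_h = \bm q_h + \lambda\,\bm w_h$ with $\lambda>0$ chosen small enough to absorb cross terms via Young's inequality, and construct $\bm w_h\in\bm\Sigma_h^p$ locally using the Raviart--Thomas degrees of freedom.

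On each element $T$, I would define $\bm w_h|_T\in\mathrm{RT}_p(T)$ by prescribing its canonical DOFs:
\begin{align*}
  (\bm w_h\cdot\bm n,\mu)_E &= -h_E^{-1}(u_h-\hat u_h,\mu)_E \quad\text{for all } \mu\in\mathcal P_p(E),\ E\subset\partial T,\\
  (\bm w_h,\bm v)_T &= (A\nabla u_h,\bm v)_T \quad\text{for all } \bm v\in[\mathcal P_{p-1}(T)]^d \;\;(\text{if } p\geq 1).
\end{align*}
Because $\nabla u_h|_T\in[\mathcal P_{p-1}(T)]^d$ and $(u_h-\hat u_h)|_E\in\mathcal P_p(E)$, integration by parts in $\mathcal B_h(\bm w_h,(u_h,\hat u_h))=(\bm w_h,\nabla_h u_h)_{\mathcal T_h}-(u_h-\hat u_h,\bm w_h\cdot\bm n)_{\partial\mathcal T_h}$ gives exactly
\[
  \mathcal B_h(\bm w_h,(u_h,\hat u_h))=\|A^{1/2}\nabla_h u_h\|_{\mathcal T_h}^2+\sum_{E\in\mathcal E_h}h_E^{-1}\|u_h-\hat u_h\|_{L^2(E)}^2,
\]
which, by mesh regularity ($h_E\sim h_T$), is equivalent to the last two contributions in $\vvvert(\bm q_h,u_h,\hat u_h)\vvvert^2$. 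A Piola/scaling argument on the reference element, together with the uniform spectral bounds on $A$, then shows $\|A^{-1/2}\bm w_h\|_{\mathcal T_h}^2\lesssim \|A^{1/2}\nabla_h u_h\|_{\mathcal T_h}^2+\sum_T h_T^{-1}\|u_h-\hat u_h\|_{L^2(\partial T)}^2$. Expanding
\[
  \mathcal A_h\bigl((\bm q_h,u_h,\hat u_h),(\bm q_h+\lambda\bm w_h,u_h,\hat u_h)\bigr)=\|A^{-1/2}\bm q_h\|_{\mathcal T_h}^2+\lambda(A^{-1}\bm q_h,\bm w_h)_{\mathcal T_h}+\lambda\,\mathcal B_h(\bm w_h,(u_h,\hat u_h)),
\]
applying Young's inequality to the middle term, and choosing $\lambda>0$ sufficiently small (independent of $h$) produces the stated coercivity \eqref{eq:inf_sup}.

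Continuity is more routine. For the volume contributions, $(A^{-1}\bm q_h,\bm r_h)_{\mathcal T_h}\leq\|A^{-1/2}\bm q_h\|\,\|A^{-1/2}\bm r_h\|$ and $(\bm r_h,\nabla_h v_h)_{\mathcal T_h}\leq\|A^{-1/2}\bm r_h\|\,\|A^{1/2}\nabla_h v_h\|$ by Cauchy--Schwarz weighted by $A$. For the facet contributions in $\mathcal B_h$, the discrete trace inequality $h_T^{1/2}\|\bm r_h\cdot\bm n\|_{L^2(\partial T)}\lesssim\|\bm r_h\|_{L^2(T)}$ converts $(v_h-\hat v_h,\bm q_h\cdot\bm n)_{\partial\mathcal T_h}$ into $\bigl(\sum_T h_T^{-1}\|v_h-\hat v_h\|_{L^2(\partial T)}^2\bigr)^{1/2}\|A^{-1/2}\bm q_h\|_{\mathcal T_h}$ (and symmetrically). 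Summing these estimates and grouping according to the definition \eqref{eq:def_triple_norm} yields \eqref{eq:continuity_A}.

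The main obstacle is the construction in the second paragraph: one must verify that the prescribed DOFs uniquely determine $\bm w_h|_T\in\mathrm{RT}_p(T)$ on both simplicial and parallelotope reference elements, check that $(u_h-\hat u_h)|_E$ and $\nabla u_h|_T$ do land in the exact test spaces of those DOFs (so that the inner products collapse to the desired squared quantities), and carry out the scaling estimate $\|\bm w_h\|_{L^2(T)}^2\lesssim \|\nabla u_h\|_{L^2(T)}^2+h_T^{-1}\|u_h-\hat u_h\|_{L^2(\partial T)}^2$ with constants independent of $h$. All other steps are either algebraic identities or standard finite element inequalities.
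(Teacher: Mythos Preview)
Your proposal is correct and follows essentially the same strategy as the paper: the paper also takes $\bm r_h=\bm q_h+\gamma\bm\tau_h$, constructs $\bm\tau_h$ element-by-element via the Raviart--Thomas degrees of freedom (citing \cite[Lemma~3.1]{egger2010hybrid}) so that $\mathcal B_h(\bm\tau_h,(u_h,\hat u_h))$ recovers the two missing pieces of the triple norm, bounds $\|\bm\tau_h\|_{\mathcal T_h}$ by scaling, and then uses Young's inequality on the cross term $(A^{-1}\bm q_h,\gamma\bm\tau_h)$ with $\gamma$ small enough. The only cosmetic differences are that the paper weights the interior moment with $A^{-1}\nabla_h u_h$ rather than $A\nabla u_h$ and uses $h_T$ rather than $h_E$, both immaterial under the uniform spectral bounds on $A$ and mesh regularity; the continuity argument you sketch is likewise what the paper has in mind (it defers to \cite[Proposition~3.4]{egger2010hybrid}).
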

\begin{proof} 
The proof closely follows \cite{egger2010hybrid}; we provide details in  Appendix~\ref{sec:proof_Ah} for completeness. 
\end{proof}
We are now ready to present the hybridized FOSPG method, given in~\Cref{alg:main_alg_disc}. 
\begin{algorithm}[t]
\caption{The Hybridized First Order System Proximal Galerkin Method} 
\begin{algorithmic}[1]\label{alg:main_alg_disc}
    \State \textbf{input:}  A discrete latent solution guess $\psi_h^0 \in V_h^p$ and a sequence of positive step sizes $\{\alpha^k\}$.
    \State \textbf{output:} A bound-preserving approximate solution $\mathcal{U}(\psi_h) \approx u$, a piecewise-polynomial approximate solution $u_h \approx u$, and a locally-conservative flux approximation $\bm q_h\approx -A\nabla u$. 
    \State Initialize \(k = 1\). 
    \State \textbf{repeat}
    \State \quad Solve the following (nonlinear) discrete 
 saddle-point problem:
  Find $(\bm q^{k}_h, u_h^{k} , \hat u^{k}_h) \in \bm \Sigma_h^p  \times V_h^p \times  M_{h,g}^p$ and $\psi_h^{k} \in V_h^p$ such that  
\begin{subequations} 
\label{eq:nonlinear_subproblem}
\begin{alignat}{2}
  - \alpha^k \mathcal{B}_h (\bm q_h^{k}, (v_h, \hat v_h)) +  (\psi_h^{k} , v_h) & = \alpha^k (f,v_h) + (\psi_h^{k-1}, v_h) && \quad \forall (v_h, \hat v_h) \in V_h^p \times M_{h,0}^p, \label{eq:nonlinear_subproblem_0}\\ 
   (A^{-1} \bm q_h^{k}, \bm v_h) + \mathcal{B}_h (\bm v_h, (u^{k}_h, \hat u^{k}_h)) & = 0  && \quad \forall \bm v_h \in \bm \Sigma_h^p, \label{eq:nonlinear_subproblem_1}\\
   (u_h^{k}, q_h)  - (\mathcal{U}(\psi_h^{k}) + \mathcal S(\psi_h^k),q_h) &  = 0  && \quad \forall q_h \in V_h^p.\label{eq:nonlinear_subproblem_2}
 \end{alignat}
 \end{subequations}
 \State \quad Assign \(k \gets k + 1\).
    \State \textbf{until} a convergence test is satisfied.
\end{algorithmic}
\end{algorithm}
\begin{theorem} \label{thm:exist_hybrid_FOSPG}
[Existence and uniqueness of solutions]
For all $k \geq 1$, there exists a unique solution to \eqref{eq:nonlinear_subproblem}. 

In addition, \rami{under the conditions of \Cref{lemma:stability}}, the solution $ (\bm q_h^k, u_h^k, \hat{u}_h^k, \psi_h^k) \in  \bm \Sigma_h^p  \times V_h^p \times  M_{h,g}^p \times V_h^p$ to \eqref{eq:nonlinear_subproblem} satsifies the stability bounds \eqref{eq:stability}--\eqref{eq:stability_2} with $\vvvert (\bm q_h^k, u_h^k, \hat{u}_h^k)\vvvert $ replacing ($\|A^{-1} \bm q^k_h\|_{L^2(\Omega)} + \|u_h^k\|_{\mathrm{DG}}$). 
\end{theorem}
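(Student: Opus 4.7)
The proof combines two ingredients. First, the hybridized system \eqref{eq:nonlinear_subproblem} is equivalent to the mixed system \eqref{eq:nonlinear_subproblem_mixed}, so existence and uniqueness of the primary variables $(\bm q_h^k, u_h^k, \psi_h^k)$ can be inherited from \Cref{thm:ext_mixed}. Second, the inf--sup estimate of \Cref{lemma:inf_sup} replaces the coercivity bound on $\|A^{-1/2}\bm q_h^k\|_{L^2(\Omega)}^2$ that appeared in the proof of \Cref{lemma:stability}, upgrading the control of the flux to the full triple-bar norm. The principal obstacle is identifying the right linear combination of test equations so that the form $\mathcal{A}_h$ appears on the left-hand side and the inf--sup estimate can be applied; once this is done, the rest of the argument mirrors \Cref{thm:ext_mixed,lemma:stability} with only cosmetic changes.

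For existence and uniqueness, I would first test \eqref{eq:nonlinear_subproblem_0} with $v_h = 0$ and arbitrary $\hat v_h \in M_{h,0}^p$ to obtain $(\hat v_h, \bm q_h^k\cdot \bm n)_{\partial \mathcal{T}_h} = 0$. Since the normal trace of $\bm q_h^k$ on each facet lies in the same polynomial space as $\hat v_h$, this forces $[\bm q_h^k]\cdot \bm n_E = 0$ on every interior facet, so $\bm q_h^k \in \bm \Sigma_{h,\mathrm{div}}^{p}$. Testing the second equation of \eqref{eq:nonlinear_subproblem} with $\bm v_h \in \bm \Sigma_{h,\mathrm{div}}^{p}$, integrating $\mathcal{B}_h$ by parts elementwise, and using that $\hat u_h^k$ is single-valued across interior facets with boundary trace $P_p(g)$ then recovers \eqref{eq:nonlinear_subproblem_mixed_1}. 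Hence $(\bm q_h^k, u_h^k, \psi_h^k)$ solves \eqref{eq:nonlinear_subproblem_mixed} and is uniquely determined by \Cref{thm:ext_mixed}; uniqueness of $\hat u_h^k$ then follows from \Cref{lemma:inf_sup} applied to the difference of any two hybrid solutions sharing the same $(\bm q_h^k, u_h^k)$.

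For the stability bound with $g = 0$, the plan is to adapt the proof of \Cref{lemma:stability} by substituting the test function $\bm v_h = \bm r_h$ from \Cref{lemma:inf_sup} (evaluated at the triple $(\bm q_h^k, u_h^k, \hat u_h^k)$) in place of $\bm v_h = \bm q_h^k$ in the second equation of \eqref{eq:nonlinear_subproblem}. Scaling by $\alpha^k$ and adding the first equation tested with $(v_h, \hat v_h) = (u_h^k, \hat u_h^k)$ produces
\begin{equation*}
    \alpha^k \, \mathcal{A}_h\big((\bm q_h^k, u_h^k, \hat u_h^k),\, (\bm r_h, u_h^k, \hat u_h^k)\big) + (\psi_h^k, u_h^k) = \alpha^k (\tilde f, u_h^k),
\end{equation*}
with $\tilde f = f + \psi_h^{k-1}/\alpha^k$. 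Applying \eqref{eq:inf_sup} to the first term and eliminating $(\psi_h^k, u_h^k)$ via \eqref{eq:nonlinear_subproblem_1} together with the decomposition \eqref{eq:general_form_U} yields an analogue of \eqref{eq:stab_1} featuring $\vvvert(\bm q_h^k, u_h^k, \hat u_h^k)\vvvert^2$ on the left. The right-hand side is then controlled by the same steps as in \Cref{lemma:stability}: the bound \eqref{eq:dg_to_triple} (valid since $\hat u_h^k \in M_{h,0}^p$ vanishes on $\partial \Omega$) combined with \eqref{eq:Poincare} controls $\|u_h^k\|_{\mathrm{DG}}$ and $\|u_h^k\|_{L^2(\Omega)}$ by the triple-bar norm, and \eqref{eq:bound_psih_negative} carries over upon choosing $\hat v_h = 0$ in \eqref{eq:nonlinear_subproblem_0}, which is admissible precisely because $[\bm q_h^k]\cdot \bm n_E = 0$ makes the corresponding right-hand side independent of $\hat v_h$. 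The anisotropic diffusion case follows from the same shift $\tilde \psi_h^k = \psi_h^k - c$ used at the end of the proof of \Cref{lemma:stability}.
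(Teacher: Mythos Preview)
Your proposal is essentially correct and close in spirit to the paper's argument, but there is one incomplete step and one minor difference worth noting.

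\textbf{The gap.} Your second paragraph only runs the equivalence between \eqref{eq:nonlinear_subproblem} and \eqref{eq:nonlinear_subproblem_mixed} in one direction: you show that any hybridized solution has $\bm q_h^k \in \bm\Sigma_{h,\mathrm{div}}^p$ and that its components $(\bm q_h^k, u_h^k, \psi_h^k)$ solve the mixed system. This gives \emph{uniqueness}. For \emph{existence}, you still need the converse: starting from the mixed solution guaranteed by \Cref{thm:ext_mixed}, construct $\hat u_h^k \in M_{h,g}^p$ so that the full quadruple satisfies \eqref{eq:nonlinear_subproblem}. The paper does this explicitly by defining $\hat u_h^k$ through the linear problem
\[
(\hat u_h^k, \bm v_h \cdot \bm n)_{\partial\mathcal{T}_h} = -(A^{-1}\bm q_h^k, \bm v_h) + (\nabla \cdot \bm v_h, u_h^k) - \langle g, \bm v_h \cdot \bm n\rangle_{\partial\Omega} \quad \forall \bm v_h \in \bm\Sigma_h^p,
\]
whose well-posedness is standard (normal traces of $\bm\Sigma_h^p$ span $M_h^p$). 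Without this step your argument shows at most one solution exists, not that one does.

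\textbf{The difference.} For uniqueness the paper argues directly on the hybridized system: given two solutions it forms the difference, tests so that $\mathcal{A}_h$ appears, and uses strict monotonicity of $\mathcal{U}$ together with \Cref{lemma:inf_sup} to conclude. Your route---reducing to the mixed system and invoking \Cref{thm:ext_mixed}---is equally valid and perhaps cleaner, since it reuses existing machinery rather than reproducing it. The paper's direct argument, on the other hand, is self-contained and does not rely on having already analyzed the mixed formulation.

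\textbf{Stability.} Your plan here matches the paper's sketch exactly: replace the coercivity step by the inf--sup test function from \Cref{lemma:inf_sup}, use \eqref{eq:dg_to_triple} to recover $\|u_h^k\|_{\mathrm{DG}}$, and otherwise follow \Cref{lemma:stability} verbatim. One small clarification: setting $\hat v_h = 0$ in \eqref{eq:nonlinear_subproblem_0} is always admissible (since $0 \in M_{h,0}^p$), so the justification you give about $[\bm q_h^k]\cdot\bm n_E = 0$ is not needed there---integration by parts alone recovers \eqref{eq:nonlinear_subproblem_mixed_0}, after which the bound \eqref{eq:bound_psih_negative} applies unchanged.
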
 
\begin{proof} 
The proof relies on showing the uniqueness of solutions to \eqref{eq:nonlinear_subproblem} and that the solution to \eqref{eq:nonlinear_subproblem_mixed}, guaranteed by \Cref{thm:ext_mixed}, defines the solution to \eqref{eq:nonlinear_subproblem}. 
As in the proof of \Cref{thm:ext_mixed}, we denote $\tilde f = f + (1/\alpha^k) \psi_h^{k-1}$ and drop the superscript $k$.  Suppose that we are given two solutions $(\bm q^1_h, u_h^1, \hat u_h^1, \psi_h^1)$ and $(\bm q^2_h, u_h^2, \hat u_h^2, \psi_h^2)$ to \eqref{eq:nonlinear_subproblem}. Denote by $(\bm \xi_h, \xi_h ,\hat \xi_h) = (\bm q^1_h, u_h^1, \hat u_h^1) - (\bm q^2_h, u_h^2, \hat u_h^2)$. Then, utilizing the definition of $\mathcal{A}_h(\cdot, \cdot)$, we obtain that 
\begin{align}
\alpha \mathcal{A}_h((\bm \xi_h, \xi_h, \hat \xi_h), (\bm v_h, v_h,\hat v_h)) + (\psi_h^1 -\psi_h^2, v_h) & = 0, \label{eq:diff_0} \\ 
(\xi_h , q_h) - (\mathcal{U}(\psi_h^1)-\mathcal{U}(\psi_h^2), q_h) & = 0, \label{eq:diff_1}
\end{align}
for all $(\bm v_h, v_h, \hat v_h , q_h) \in  \bm \Sigma_h^p  \times V_h^p \times M_{h,0}^p \times V_h^p$. Testing \eqref{eq:diff_0} by $(\bm v_h , \xi_h, \hat \xi_h)$ and \eqref{eq:diff_1} by $\psi_h^1-\psi_h^2$ and subtracting yields 
 \[ 
 \alpha \mathcal{A}_h((\bm \xi_h, \xi_h, \hat \xi_h), (\bm v_h, \xi_h ,\hat \xi_h)) + (\mathcal{U}(\psi_h^1) -\mathcal{U}(\psi_h^2), \psi_h^1 - \psi_h^2) = 0, 
 \]
 for all $\bm v_h \in \bm \Sigma_h^p$. Using the strict monotonicity of $\mathcal{U}$  and \Cref{lemma:inf_sup}, we conclude that $(\bm \xi_h, \xi_h, \hat \xi_h) = (\bm 0, 0, 0)$. From \eqref{eq:diff_0}, we further obtain that $\psi_h^1 = \psi_h^2$. Hence, \eqref{eq:nonlinear_subproblem} must have unique solutions. 
 
  It is standard to show that the solution to \eqref{eq:nonlinear_subproblem_mixed} defines the solution to \eqref{eq:nonlinear_subproblem}. Indeed, since
 $$\mathcal{B}_h(\bm q_h, (v_h, \hat v_h)) = -(\nabla \cdot \bm q_h,  v_h )_{\mathcal{T}_h} + (\hat v_h, \bm q_h \cdot \bm n)_{\partial \mathcal{T}_h},$$ 
 and $\bm q_h \in\bm \Sigma_{h,\mathrm{div}}^{p}$, \eqref{eq:nonlinear_subproblem_0} is satisfied. Further, the facet multipliers can be uniquely determined as $\hat u_h = \lambda_h $ on $E \in \mathcal{E}_h \backslash \partial \Omega$ and $\hat u_h = P_p(g) $ on $\partial \Omega$ where $\lambda_h \in M_{h,0}^p$, see \cite[Chapter 5, Lemma 1.1]{fortin1991mixed}, uniquely solves 
 \begin{equation}\label{eq:constructing_multiplier}
     (\lambda_h, \bm v_h \cdot \bm n)_{\partial \mathcal{T}_h} = - (A^{-1} \bm{q}_h, \bm v_h) + (\nabla \cdot \bm v_h, u_h)   - \langle g , \bm v_h \cdot \bm n \rangle_{\partial \Omega}, \quad \forall \bm v_h \in \bm \Sigma_h^p, 
 \end{equation}
 which is a rewrite of \rami{\eqref{eq:nonlinear_subproblem_1}}.

 \rami{The proof of the stability estimates on $\|A^{-1/2}\bm q_h^k\|_{L^2(\Omega)}$,  $\|u_h^k\|_{\mathrm{DG}}$, $\|\psi_h^k\|_{H^1(\mathcal{T}_h)^*}$, and $\|\psi_h^k\|_{L^1(\Omega)}$ follow from the equivalence that we have shown.  To show the bound on $\|h_T^{-1/2} (\hat u^k_h -u^k_h)\|^2_{\partial \mathcal{T}_h}$, we note 
that there exists $\bm{\tau}^k_h \in \bm \Sigma_h^p$ such that 
\begin{align*}
\mathcal{B}_h( \bm \tau^k_h, (u^k_h , \hat u^k_h)) =  \|A^{-1/2} \nabla_h u^k_h\|^2_{\mathcal{T}_h} +  \|h_T^{-1/2} ( u^k_h - \hat u^k_h ) \|^2_{\partial \mathcal{T}_h}, \\ 
\|\bm \tau^k_h \|_{\mathcal{T}_h}^2 \lesssim \|A^{-1/2} \nabla_h u^k_h\|^2_{\mathcal{T}_h} +  \|h_T^{-1/2} ( u^k_h - \hat u^k_h ) \|^2_{\partial \mathcal{T}_h}. 
\end{align*}
We refer to \cite[Lemma 3.1]{egger2010hybrid} and \Cref{sec:proof_Ah} for more details. 
Proceeding, we test \eqref{eq:nonlinear_subproblem_1} with $\bm \tau_h^k$, use the above properties, Cauchy--Schwarz inequality,  and the bound on $\|A^{-1/2}\bm q_h^k\|_{L^2(\Omega)}$. This concludes the result.  
}
\end{proof}

\section{A quadrature rule space discretization and its properties} 
\label{sec:conv_mixed_vi}
In this section, we show that for a quadrature rule space discretization (in the sense of \Cref{cond:reason_for_quads} and \Cref{remark:reason_for_quads} below), the solution $u_h^k$ is bound preserving on quadrature points. This property is demonstrated in \Cref{subsec:bound_preserving_uh} and plays a key role in showing that  \Cref{alg:main_alg_disc_mixed} converges to the solution of a discrete mixed VI given in \eqref{eq:discrete_mixed_VI}, see \Cref{subsec:convergence_mixed_VI}.   This convergence result allows us to show a local mass conservation property in  \Cref{subsec:local_mass_conserv} and to derive a priori error estimates for $p=0$ in \Cref{subsec:error_estimates_p0}

\subsection{Bound preservation of $u_h^k$ on quadrature points} \label{subsec:bound_preserving_uh} 
A key feature of the proximal Galerkin method is equality \eqref{eq:nonlinear_subproblem_mixed_2}, which allows us to extract distinctive properties for the solution $u_h^k$. For example, we recover that $u_h^k$ has local averages satisfying the bound constraints, see \Cref{remark:bd_preserv_avg}.

One can exploit \eqref{eq:nonlinear_subproblem_mixed_2} further.  In particular, under certain conditions, bound preservation of  $u_h^k$ on quadrature points can be established.
Indeed, consider if one implements the inner products in \eqref{eq:nonlinear_subproblem_mixed}  by a quadrature rule  $\hat{\mathcal{R}}_p:=\{(\hat x_i, \hat w_i):\; i=1,\cdots, n_p
\}$ with positive weights $\hat w_i>0$  on $\hat T$ that is exact for $\mathcal{P}_{2p}(\hat T)$. In this case, equality~\eqref{eq:nonlinear_subproblem_mixed_2} is approximated by 
\begin{equation} \label{eq:nonlinear_quad}
(u_h^k, q_h)  - (\mathcal{U}(\psi_h^k) ,q_h)_h  = 0   \quad \forall q_h \in V_h^p, 
\end{equation}
where for all piecewise continuous functions $f$ and $g$, 
\begin{align}
\label{int}
(f, g )_h:=\sum_{T\in\mathcal{T}_h}\sum_{i=1}^{n_p}
    f( x_i^T) g( x_i^T)\omega_i^T . 
\end{align}
In the above, $w_i^T = |T|/|\hat T| \hat w_i$ and $x_i^T = \Phi_T(\hat x_i)$ for each $T \in \mathcal{T}_h$. Note that $(f,g)_h = (f,g)$ for all $f,g \in V_h^p$. For a given element $T \in \mathcal{T}_h$,  define 
the polynomial  functions $\psi_i^T$ satisfying
\begin{equation}\label{eq:def_psi_i}
\psi_i^T(x_j^{T}) = \left\{
\begin{tabular}{ll}
     $1$& if $i=j$, \\[1ex]
    $0$ & otherwise, 
\end{tabular}\right.\quad \forall 1\le j\le n_p. 
\end{equation}
For example, one can define $\psi_i^T$ as the Lagrange polynomial with quadrature points as the Lagrange nodes; i.e. (with $x[m]$ denoting the $m$-th Cartesian coordinate of a point $x \in \R^d$), 
\[ \psi_i^T(x)  = \prod_{m=1}^{d} \prod_{\substack{\ell = 1
\\  x_\ell^T[m] \neq x_i^T[m] } }^p 
\frac{x[m] - x_{\ell}^T[m]}{x_i^T[m] - x_\ell^T[m]}.   \] 
\begin{condition}\label{cond:reason_for_quads}
We say that the discretization of \Cref{alg:main_alg_disc_mixed} or \Cref{alg:main_alg_disc} is a quadrature rule space discretization 
if the extension by zero of  the polynomial $\psi_i^T$ that satisfies \eqref{eq:def_psi_i} \rami{with minimal polynomial degree} belongs to the broken polynomial space $V_h^p$, i.e., 
\begin{equation}
    \psi_i^T \in \mathcal{P}_p(T) \quad \forall  1 \leq i \leq n_p , \quad \forall T \in \mathcal{T}_h. \label{eq:reason_for_quads}
\end{equation}  
\end{condition}
In this section, we will assume that \Cref{cond:reason_for_quads} holds. We comment on its validity in the following remark. 
\begin{remark}[Validity of \Cref{cond:reason_for_quads}] \label{remark:reason_for_quads}
 Condition \eqref{eq:reason_for_quads}  can be easily verified for quadrilateral or hexahedral elements. For example, one can choose $n_p = (p+1)^d$ with $(p+1)$ Gauss-Legendre quadrature points in each Cartesian direction of $\hat T$. Then, the corresponding quadrature rule is exact for $\mathbb{Q}_{2p+1}(\hat T)$ and \eqref{eq:reason_for_quads} holds. In addition, \eqref{eq:reason_for_quads} holds for $p=0$ for simplicial elements also. For condition \eqref{eq:reason_for_quads} to hold for simplicial elements with $p \ge  1$, one needs to use an inexact quadrature to evaluate the inner product $(u_h^k,q_h)$ in~\eqref{eq:nonlinear_quad}. We choose not to explore the consequences of that setting here. 
\end{remark} 
\begin{lemma}[Bound preservation of $u_h^k$ on quadrature points] If \Cref{cond:reason_for_quads} holds, then for any $k \geq 1$ 
\begin{align}
  \underline{u}(x_i^T) < 
  u^k_h(x_i^T) <  
  \overline{u}(x_i^T)  
  \quad \forall 1 \le i\le n_p, \; \forall T \in \mathcal{T}_h.
\end{align}
\begin{proof} 
For any $T \in \mathcal{T}_h$, one can choose $q_h{}_{\vert T} = \psi_i^T $ and $q_h = 0$ elsewhere  in \eqref{eq:nonlinear_quad} thanks to \eqref{eq:reason_for_quads}. Using that $(u^k_h,q_h) = (u^k_h, q_h)_{h}$, we derive that
\begin{equation}
 u^k_h(x_i^T) = \mathcal{U}(\psi^k_h)(x_i^T) \quad \forall  1 \leq i \leq n_p, \; \forall T \in \mathcal{T}_h.  \label{eq:quad_pt_bound}
\end{equation}
The result follows from the choice of $\mathcal{U}$. 
\end{proof}

\end{lemma}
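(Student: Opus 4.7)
The plan is to exploit the modified equation \eqref{eq:nonlinear_quad} by choosing test functions that are localized Lagrange polynomials at the quadrature nodes; \Cref{cond:reason_for_quads} is precisely what makes such test functions admissible as elements of the discrete space $V_h^p$.

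First, I would fix an element $T \in \mathcal{T}_h$ and an index $i \in \{1,\dots,n_p\}$, and take the test function $q_h$ defined by $q_h|_T = \psi_i^T$ and $q_h \equiv 0$ on every other element. By \Cref{cond:reason_for_quads}, this extension-by-zero lies in $V_h^p$, so it may be substituted into \eqref{eq:nonlinear_quad}. Because both $u_h^k$ and $q_h$ are piecewise polynomials of degree $\leq p$, their product is piecewise in $\mathcal{P}_{2p}$, and the quadrature rule $\hat{\mathcal{R}}_p$ is exact on that space; hence $(u_h^k,q_h) = (u_h^k,q_h)_h$. Using the defining property $\psi_i^T(x_j^T) = \delta_{ij}$ from \eqref{eq:def_psi_i}, both quadrature sums collapse to a single term:
\begin{equation*}
  (u_h^k, q_h)_h = u_h^k(x_i^T)\,\omega_i^T,
  \qquad
  (\mathcal{U}(\psi_h^k), q_h)_h = \mathcal{U}(\psi_h^k)(x_i^T)\,\omega_i^T.
\end{equation*}
Substituting into \eqref{eq:nonlinear_quad} and dividing through by the strictly positive weight $\omega_i^T$ yields the pointwise identity
\begin{equation*}
  u_h^k(x_i^T) = \mathcal{U}(\psi_h^k)(x_i^T).
\end{equation*}

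The strict bounds then follow immediately from the structural property of the superposition operator. By construction, $\mathcal{U}(\psi)(x) = \Upsilon(x,\psi(x))$ where $\Upsilon(x,\cdot)$ is an increasing bijection from $\R$ onto the open interval $(\underline u(x), \overline u(x))$, so for every $x_i^T$ the value $\mathcal{U}(\psi_h^k)(x_i^T)$ lies strictly between $\underline u(x_i^T)$ and $\overline u(x_i^T)$. Combining with the pointwise equality above gives the claim.

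I do not anticipate a serious technical obstacle in this proof: the entire subtlety has been moved upstream into \Cref{cond:reason_for_quads}, which guarantees that the nodal basis $\{\psi_i^T\}$ at the quadrature points sits inside the test space. Once that is assumed, the argument is a direct probe of \eqref{eq:nonlinear_quad} at a single node, combined with the image property of $\Upsilon$.
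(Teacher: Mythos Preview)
Your proof is correct and follows exactly the same approach as the paper: test \eqref{eq:nonlinear_quad} with the extension-by-zero of $\psi_i^T$, use exactness of the quadrature rule to identify $(u_h^k,q_h)=(u_h^k,q_h)_h$, and conclude via the range of $\Upsilon$. You have simply made explicit a few details (the Kronecker delta collapse and the division by the positive weight) that the paper leaves to the reader.
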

\subsection{Convergence to a discrete mixed VI}\label{subsec:convergence_mixed_VI}
The main goal of this section, realized in \Cref{thm:conv_mixed_vi}, is to show that the proximal iterates generated by \Cref{alg:mixed_PG} converge to the solution of the following discrete mixed variational inequality. 

Find $(\bm q_h^*, u_h^*) \in \bm \Sigma_{h,\mathrm{div}}^{p} \times \Lambda_h$ such that 
\begin{subequations}
\label{eq:discrete_mixed_VI}
\begin{alignat}{2}
  (A^{-1} \bm q_h^*, \bm r_h) - (u_h^* , \nabla \cdot \bm r_h) &  = -\langle g, \bm r_h \cdot \bm n\rangle  && \quad \forall \bm r_h \in \bm \Sigma_{h,\mathrm{div}}^{p},    \label{eq:discrete_mixed_VI_0}
 \\ 
  (\nabla \cdot \bm q_h^* - f , v_h - u_h^* ) & \geq 0   && \quad \forall v_h \in \Lambda_h, \label{eq:discrete_mixed_VI_1}
\end{alignat}
\end{subequations}
where we define the closed and convex set  
\begin{equation} \label{eq:def_discrete_Lambda}
\Lambda_h = \{
v_h \in V_h^p,
\quad 
 \underline u( x_i^T ) \leq v_h (x_i^T)
 \leq \overline{u}(x_i^T) , \;  \; 
\forall 1 \leq i \leq n_p, \;\; \forall 
T \in \mathcal{T}_h 
\}. 
\end{equation} 
\begin{lemma}
There exists a unique solution $(\bm q_h^*, u_h^*) \in \bm \Sigma_{h,\mathrm{div}}^{p} \times \Lambda_h$  to \eqref{eq:discrete_mixed_VI}.
\end{lemma}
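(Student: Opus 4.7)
The plan is to reduce the mixed system \eqref{eq:discrete_mixed_VI} to a scalar constrained minimization problem via the lifting operators introduced in \eqref{eq:first_lift}--\eqref{eq:second_lift}. Observe that \eqref{eq:discrete_mixed_VI_0} uniquely determines the flux in terms of $u_h^*$ by the formula $\bm q_h^* = \bm L(u_h^*) + \bm L_\Gamma(g)$. Substituting this expression into \eqref{eq:discrete_mixed_VI_1} and using the symmetry of the $A^{-1}$ inner product together with \eqref{eq:first_lift}, a short calculation shows that the variational inequality is precisely the first-order optimality condition for the strictly convex functional
\[
E_h(v_h) := \tfrac{1}{2}\,\|A^{-1/2}(\bm L(v_h) + \bm L_\Gamma(g))\|^2_{L^2(\Omega)} - (f, v_h)
\]
on the admissible set $\Lambda_h$. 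It therefore suffices to prove existence and uniqueness of a minimizer $u_h^* \in \Lambda_h$; the flux $\bm q_h^*$ is then recovered from the explicit formula.

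I would verify the three standard ingredients for existence. First, $\Lambda_h$ is trivially closed and convex, and it is nonempty thanks to \Cref{cond:reason_for_quads}: the elementwise construction $v_h|_T = \sum_i c_i \psi_i^T$ with $c_i \in [\underline{u}(x_i^T), \overline{u}(x_i^T)]$ produces an element of $\Lambda_h$. Second, $E_h$ is continuous and strictly convex. Strict convexity reduces to injectivity of the linear map $\bm L\colon V_h^p \to \bm\Sigma_{h,\mathrm{div}}^{p}$, which follows from \eqref{eq:first_lift}: if $\bm L(u_h)=0$ then $(u_h, \nabla\cdot\bm r)=0$ for every $\bm r\in\bm\Sigma_{h,\mathrm{div}}^{p}$, and the surjectivity of the divergence onto $V_h^p$ (the standard commuting property of the Raviart--Thomas pairing) forces $u_h=0$. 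Third, $E_h$ is coercive: a Young's inequality to split the cross-term with $\bm L_\Gamma(g)$, combined with \Cref{lemma:dg_to_L2} to get $\|v_h\|_{\mathrm{DG}} \lesssim \|A^{-1/2}\bm L(v_h)\|_{L^2}$ and the Poincaré inequality \eqref{eq:Poincare} to control $\|v_h\|_{L^2}$, yields $E_h(v_h)\to+\infty$ as $\|v_h\|_{\mathrm{DG}}\to\infty$. Norm equivalence in finite dimensions then delivers coercivity in any norm on $V_h^p$.

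With these three properties in hand, a textbook application of the direct method guarantees a unique minimizer $u_h^* \in \Lambda_h$, and setting $\bm q_h^* := \bm L(u_h^*)+\bm L_\Gamma(g)$ gives the unique pair solving \eqref{eq:discrete_mixed_VI}. The main obstacle is arguably the strict convexity step, since the functional $E_h$ is only a seminorm-squared in the flux variable; its strictness in $v_h$ hinges entirely on the compatibility of the discrete spaces, which fortunately holds for the Raviart--Thomas pairing employed here. Everything else is a routine convex-analytic argument.
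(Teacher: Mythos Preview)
Your proposal is correct and follows essentially the same route as the paper: reduce \eqref{eq:discrete_mixed_VI} to a strictly convex constrained minimization over $\Lambda_h$ using the lifting operators \eqref{eq:first_lift}--\eqref{eq:second_lift}, establish nonemptiness via \Cref{cond:reason_for_quads}, strict convexity via injectivity of $\bm L$ (equivalently the inf-sup condition), and coercivity, then recover the flux as $\bm q_h^*=\bm L(u_h^*)+\bm L_\Gamma(g)$. The only cosmetic differences are that the paper writes the boundary contribution as a separate linear term $\langle g,\bm L(u_h)\cdot\bm n\rangle$ rather than folding $\bm L_\Gamma(g)$ into the quadratic, and it argues coercivity directly by norm equivalence in finite dimensions rather than through \Cref{lemma:dg_to_L2} and \eqref{eq:Poincare}.
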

\begin{proof}
Recall the lifting operators \eqref{eq:first_lift}-\eqref{eq:second_lift}, and 
 consider the minimization problem:
\begin{align}
\min_{u_h \in \Lambda_h} \frac12 \|A^{-1/2} \bm L(u_h)\|^2 - (f, u_h) \rami{-} \langle g, \bm L (u_h) \cdot \bm n \rangle. 
\end{align}
 Note that $\Lambda_h$ is nonempty since by \eqref{eq:reason_for_quads} $\sum_{T \in \mathcal{T}_h} \sum_{i=1}^{n_p}  \overline{u}(x_i^T) \psi_i^T \in \Lambda_h$ (here, $\psi_i^T$ are extended by zero). 
  The above problem is coercive since $\|\bm L (\cdot) \|$ defines a norm on $V_h^p$  \footnote{$\|\bm L (\cdot) \|$ defines a norm on $V_h^p$ since if $(u_h, \nabla \cdot \bm r_h) = 0 \; \forall \bm r_h    \in \bm{\Sigma}_{h,\mathrm{div}}^{p}$ then $u_h = 0. $  This follows from the discrete inf-sup condition, see for e.g. \cite[Lemma 51.10]{ern2021finite}.
   } 
and since norms are equivalent in finite dimensions. Therefore, there exists a solution $u_h^* \in \Lambda_h$ satisfying the variational inequality  \begin{align}\label{eq:euler_ineq}
     \mathcal W:= (A^{-1} \bm L(u_h^*), \bm L(v_h - u_h^*)) - (f,v_h - u_h^*) \rami{-} \langle g, \bm L(v_h - u_h^* ) \cdot \bm n \rangle \geq 0 , \quad \forall v_h \in \Lambda_h. 
 \end{align}  
 The uniqueness of $u_h^*$ follows from strict convexity (a composition of a strictly convex function $\|\cdot \|^2$ with a linear function $A^{-1/2}\bm L (\cdot) $ is strictly convex).  
 Defining
 \begin{equation}
     \bm q_h^* = \bm L(u_h^*) + \bm L_\Gamma (g), 
 \end{equation} 
 we see that \eqref{eq:discrete_mixed_VI_0} holds and from \eqref{eq:euler_ineq}, we derive that 
 \begin{align*}
    \mathcal{W} &=  (A^{-1} \bm L(u_h^*), \bm L(v_h - u_h^*)) - (f,v_h - u_h^*) + (A^{-1} \bm L_{\Gamma}(g) ,  \bm L(v_h - u_h^*))  \\ 
    & = (A^{-1} \bm q_h^*, \bm L(v_h - u_h^*) ) -  (f,v_h - u_h^*) \\
    &= (\nabla \cdot \bm q_h^* - f, v_h - u_h^*) \geq 0 .
 \end{align*}
\end{proof}

\begin{remark}
The discrete variational inequality \eqref{eq:discrete_mixed_VI}  is an approximation of the following mixed variational inequality. Find $(\bm q^*,  u^*) \in H(\mathrm{div}; \Omega) \times L^2(\Omega)$ such that 
\begin{subequations}
\label{eq:mixed_VI}
\begin{alignat}{2}
\label{eq:mixed_VI_0}
  (A^{-1} \bm q^*, \bm r) - (u^* , \nabla \cdot \bm r) &  = -\langle g, \bm r \cdot \bm n\rangle  && \quad \forall \bm r \in  H(\mathrm{div};\Omega), \\ 
\label{eq:mixed_VI_1}
  (\nabla \cdot \bm q ^* - f , v - u^* ) & \geq 0   && \quad \forall v \in \Lambda , 
\end{alignat}
\end{subequations}
where $\Lambda \subset L^2(\Omega)$ is given by 
\begin{equation}
\Lambda = \{v \in L^2(\Omega), \; \underline u \leq v  \leq \overline{u}   ~\text{ a.e.\ in } \Omega\}. 
\end{equation}
The well-posedness of the above problem can be established by showing that under sufficient regularity assumptions,  the solution to \eqref{eq:VI} defines the solution to \eqref{eq:mixed_VI}. We refer to \cite{brezzi1978error} for additional details. 
\end{remark}

Define the discrete Bregman distance that discretizes the integral of \eqref{eq:D_formal} 
\[ 
D_h(u_h, v_h) = (\mathcal{R}(u_h) -\mathcal{R}(v_h), 1)_h -  (\mathcal{R}'(v_h), u_h - v_h)_h  
\quad 
\forall u_h 
\in \Lambda_h,  v_h \in \tilde \Lambda_h, 
\]
where $\tilde \Lambda_h$ is given by 
\[ \tilde \Lambda_h =  \{
v_h \in V_h^p,
\quad 
 \underline u(x_i^T) <  v_h (x_i^T)
 <  \overline{u}(x_i^T) \;  \; 
\forall 1 \leq i \leq n_p, \;\; \forall 
T \in \mathcal{T}_h 
\}.  \]
\begin{theorem}[Convergence to the discrete mixed VI] \label{thm:conv_mixed_vi}
Assume that condition \eqref{eq:reason_for_quads} holds. 
Then, the proximal iterates  $(\bm q_h^{k+1}, u_h^{k+1})$ generated by \eqref{eq:nonlinear_subproblem_mixed} converge to the solution $(\bm q_h^*, u_h^*) \in \bm \Sigma_{h,\mathrm{div}}^{p} \times \Lambda_h$ of the discrete mixed variational inequality \eqref{eq:discrete_mixed_VI}. 
Further, the following bound holds for any $\ell \geq 1$:
\begin{equation}\label{eq:conv_mixed_vi_estimate}
   \|u_h^\ell  - u_h^*\|_{\mathrm{DG}}^2 + \|A^{-1/2} (\bm q_h^\ell - \bm q_h^*)\|_{L^2(\Omega)}^2  \lesssim  \frac{D_h(u_h^*, u_h^0)}{\sum_{k=1}^{\ell} \alpha_k} , 
\end{equation}
where $u_h^0 \in V_h^p$ is defined by 
\[ 
(u_h^0, q_h) = (\mathcal{U}(\psi_h^0), q_h)_h \quad \forall q_h \in V_h^p. 
\]
\end{theorem}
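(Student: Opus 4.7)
The plan is to derive a Fejer-type monotonicity with respect to the discrete Bregman distance $D_h(u_h^*,\cdot)$, telescope it, and then pass to the stated rate on the last iterate. The enabling observation is that \eqref{eq:quad_pt_bound} together with \Cref{cond:reason_for_quads} puts every iterate inside $\tilde\Lambda_h \subset \Lambda_h$, so $u_h^k$ is itself an admissible test function in the discrete variational inequality \eqref{eq:discrete_mixed_VI_1} satisfied by $u_h^*$. This is what allows the iterates to ``communicate'' with the VI.

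First I would test \eqref{eq:nonlinear_subproblem_mixed_0} with $v_h = u_h^k - u_h^*$ to obtain $(\psi_h^k-\psi_h^{k-1}, u_h^k - u_h^*) = -\alpha^k(\nabla\cdot\bm q_h^k - f, u_h^k - u_h^*)$. Using \eqref{eq:discrete_mixed_VI_1} with the admissible choice $v_h = u_h^k$ to control the $f$-term, and using $\bm v_h = \bm q_h^k - \bm q_h^* \in \bm \Sigma_{h,\mathrm{div}}^p$ in \eqref{eq:nonlinear_subproblem_mixed_1} and \eqref{eq:discrete_mixed_VI_0} to identify $(\nabla\cdot(\bm q_h^k-\bm q_h^*),\, u_h^k - u_h^*) = \|A^{-1/2}(\bm q_h^k - \bm q_h^*)\|_{L^2(\Omega)}^2$, I arrive at
\[
(\psi_h^k - \psi_h^{k-1},\, u_h^k - u_h^*) \leq -\alpha^k \|A^{-1/2}(\bm q_h^k - \bm q_h^*)\|_{L^2(\Omega)}^2.
\]
Next, since \eqref{eq:quad_pt_bound} gives $\psi_h^k(x_i^T) = \mathcal{R}'(u_h^k(x_i^T))$ at every quadrature point, I would apply the pointwise three-point identity $(\mathcal{R}'(b)-\mathcal{R}'(c))(a-b) = \mathcal{D}(a,c)-\mathcal{D}(a,b)-\mathcal{D}(b,c)$ with $a=u_h^*(x_i^T)$, $b=u_h^k(x_i^T)$, $c=u_h^{k-1}(x_i^T)$. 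Multiplying by $\omega_i^T$, summing, and using that the quadrature rule is exact on products in $V_h^p \cdot V_h^p$ yields
\[
(\psi_h^k-\psi_h^{k-1}, u_h^k - u_h^*) = D_h(u_h^*, u_h^k) + D_h(u_h^k, u_h^{k-1}) - D_h(u_h^*, u_h^{k-1}),
\]
and combined with the previous inequality this produces the Fejer descent relation
\[
D_h(u_h^*, u_h^k) + D_h(u_h^k, u_h^{k-1}) + \alpha^k \|A^{-1/2}(\bm q_h^k - \bm q_h^*)\|_{L^2(\Omega)}^2 \leq D_h(u_h^*, u_h^{k-1}).
\]

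Telescoping and using non-negativity of the Bregman terms then give $\sum_{k=1}^{\ell}\alpha^k \|A^{-1/2}(\bm q_h^k - \bm q_h^*)\|_{L^2(\Omega)}^2 \leq D_h(u_h^*, u_h^0)$. Convergence of $(\bm q_h^k, u_h^k) \to (\bm q_h^*, u_h^*)$ follows from this summability, Fejer monotonicity of $k \mapsto D_h(u_h^*, u_h^k)$, and the finite-dimensionality of $V_h^p$. For the DG component of \eqref{eq:conv_mixed_vi_estimate}, I would note that subtracting \eqref{eq:nonlinear_subproblem_mixed_1} and \eqref{eq:discrete_mixed_VI_0} gives $\bm q_h^\ell - \bm q_h^* = \bm L(u_h^\ell - u_h^*)$, and then invoke \Cref{lemma:dg_to_L2} to dominate $\|u_h^\ell - u_h^*\|_{\mathrm{DG}}$ by $\|A^{-1/2}(\bm q_h^\ell - \bm q_h^*)\|_{L^2(\Omega)}$.

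The main obstacle is the passage from the summed bound to a bound on the final iterate as written in \eqref{eq:conv_mixed_vi_estimate}. The cleanest route is to establish that $k \mapsto \|A^{-1/2}(\bm q_h^k - \bm q_h^*)\|_{L^2(\Omega)}^2$ is monotonically non-increasing, after which $\|A^{-1/2}(\bm q_h^\ell - \bm q_h^*)\|_{L^2(\Omega)}^2 \sum_{k=1}^\ell \alpha^k \leq \sum_{k=1}^\ell \alpha^k \|A^{-1/2}(\bm q_h^k - \bm q_h^*)\|_{L^2(\Omega)}^2 \leq D_h(u_h^*, u_h^0)$ would deliver the stated rate. I would try to establish this monotonicity by applying the three-point identity a second time, now with the pair $(u_h^{k-1}, u_h^k)$ playing the role of ``reference'' and ``current'' instead of $(u_h^*, u_h^k)$, which should yield a descent identity of the form $\|A^{-1/2}(\bm q_h^k - \bm q_h^*)\|^2 \leq \|A^{-1/2}(\bm q_h^{k-1} - \bm q_h^*)\|^2$ modulo a non-negative discrete Bregman correction. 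As a fallback, the same bound holds verbatim for the ergodic iterate $\bar u_h^\ell = \bigl(\sum_{k=1}^\ell \alpha^k\bigr)^{-1}\sum_{k=1}^\ell \alpha^k u_h^k$ by Jensen's inequality, which would preserve the $O\bigl(1/\sum_{k=1}^\ell \alpha_k\bigr)$ rate in \eqref{eq:conv_mixed_vi_estimate}.
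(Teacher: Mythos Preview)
Your Fej\'er descent relation
\[
D_h(u_h^*, u_h^k) + D_h(u_h^k, u_h^{k-1}) + \alpha^k \|A^{-1/2}(\bm q_h^k - \bm q_h^*)\|_{L^2(\Omega)}^2 \leq D_h(u_h^*, u_h^{k-1})
\]
is correct and is in fact a consequence of the paper's own identity; the paper derives the equality
\[
D_h(u_h^*, u_h^k) + D_h(u_h^k, u_h^{k-1}) + \alpha^k\Big(J_h(u_h^k) - J_h(u_h^*) + \tfrac12\|A^{-1/2}\bm L(u_h^k - u_h^*)\|^2\Big) = D_h(u_h^*, u_h^{k-1}),
\]
where $J_h(v)=\tfrac12\|A^{-1/2}\bm L(v)\|^2 - \langle g,\bm L(v)\cdot\bm n\rangle_{\partial\Omega} - (f,v)$, and your inequality follows from this by the VI bound $J_h(u_h^k)-J_h(u_h^*)\ge \tfrac12\|A^{-1/2}\bm L(u_h^k-u_h^*)\|^2$.

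The genuine gap is the passage to the last iterate. Your proposed route---monotonicity of $k\mapsto\|A^{-1/2}(\bm q_h^k-\bm q_h^*)\|^2$---is not what the paper does, and the sketch you give (``apply the three-point identity with $(u_h^{k-1},u_h^k)$ in place of $(u_h^*,u_h^k)$'') does not produce such a statement: the three-point identity relates Bregman distances, not the $A$-weighted flux error, and for Bregman proximal iterations it is $D_h(u_h^*,u_h^k)$, not $\|\bm q_h^k-\bm q_h^*\|$, that is Fej\'er monotone. Your ergodic fallback proves a different (weaker) statement than \eqref{eq:conv_mixed_vi_estimate}.

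The paper closes this gap by introducing the discrete energy $J_h$ and observing that \eqref{eq:nonlinear_subproblem_mixed} is precisely the optimality system for
\[
u_h^k = \argmin_{v\in\Lambda_h}\Big\{\tfrac{1}{\alpha^k}D_h(v,u_h^{k-1}) + J_h(v)\Big\},
\]
which immediately yields the \emph{energy} monotonicity $J_h(u_h^k)\le J_h(u_h^{k-1})$. Telescoping the identity above (dropping the non-negative $\tfrac12\|\cdot\|^2$ term) gives $\sum_{k=1}^\ell \alpha^k(J_h(u_h^k)-J_h(u_h^*))\le D_h(u_h^*,u_h^0)$; the energy monotonicity then converts this sum into $(J_h(u_h^\ell)-J_h(u_h^*))\sum_{k=1}^\ell\alpha_k$, and a second use of the VI, now in the direction $\tfrac12\|A^{-1/2}(\bm q_h^\ell-\bm q_h^*)\|^2 \le J_h(u_h^\ell)-J_h(u_h^*)$, gives the last-iterate bound. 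In short, the monotone quantity you need is $J_h(u_h^k)$, not $\|A^{-1/2}(\bm q_h^k-\bm q_h^*)\|^2$, and its monotonicity comes from recasting each subproblem as a minimization over $\Lambda_h$.
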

\rami{
\begin{remark} The result of \Cref{thm:conv_mixed_vi} also holds for the hybridized FOSPG method presented in \Cref{alg:main_alg_disc}.  This is true since the unique solutions $(\bm q_h^k, u_h^k)  \in \bm \Sigma_h^p \times V_h^p$ and $\psi_h^k \in V_h^p$ to \eqref{eq:nonlinear_subproblem} are also the unique solutions to \eqref{eq:nonlinear_subproblem_mixed} of \Cref{alg:main_alg_disc_mixed}.  We refer to the proof of \Cref{thm:exist_hybrid_FOSPG} for more details. 
\end{remark}
}
\begin{proof}
\textit{Step 1} (Reformulation). 
Considering \eqref{eq:quad_pt_bound} and recalling that $\mathcal{U} = (\mathcal{R}')^{-1}$ \eqref{setting:main}, we have that $u_h^k \in \tilde \Lambda_h$ for all $k \geq 0$ and that 
\begin{equation}
\psi^k_h(x^T_i) = \mathcal{R}'(u_h^k)(x_i^T)  \quad \forall 1\leq i\leq n_p , \; \forall T\in \mathcal{T}_h. 
\end{equation}
It also follows that for $u_h, v_h \in \tilde \Lambda_h$, the derivative of $D_h$ with respect to the first argument is 
\[ 
D_h'(u_h,v_h)(w) = (\mathcal{R}'(u_h) - \mathcal{R}'(v_h), w)_h  \quad  \forall w \in \Lambda_h .  \]
The above arguments allow us to write that for any $v \in \Lambda_h$
\begin{equation}
\frac{1}{\alpha^k} (\psi_h^{k} -\psi_h^{k-1}, v)_h = \frac{1}{\alpha^k} (\mathcal{R}'(u_h^k) - \mathcal{R}'(u_h^{k-1}), v)_h = \frac{1}{\alpha^k} D_h'(u_h^k, u_h^{k-1})(v).  
\end{equation}
Therefore, from \eqref{eq:nonlinear_subproblem_mixed_0}, we deduce that
\begin{align}
\label{eq:rewritea}
 \frac{1}{\alpha^k} D_h'(u_h^k, u_h^{k-1})(v) + (\nabla \cdot \bm q^k_h, v) - (f,v) = 0  \quad \forall v \in \Lambda_h. 
\end{align}
Using the lifting operator \eqref{eq:first_lift} in \eqref{eq:rewritea}, we obtain that 
\begin{equation}
\label{eq:D_h_rewrite0}
    \frac{1}{\alpha^k} D_h'(u_h^k, u_h^{k-1})(v) + (A^{-1} \bm L(v), \bm q_h^k) - (f,v)  = 0 \quad \forall v \in \Lambda_h.
 \end{equation}
We now rewrite $\bm q_h^k$ using the lifting operators \eqref{eq:first_lift}--\eqref{eq:second_lift} and \eqref{eq:nonlinear_subproblem_mixed_1}: 
\begin{equation}
    \bm q_h^k = \bm L(u^k_h) + \bm L_{\Gamma}(g).  \label{eq:expression_qh}
\end{equation}
Substituting the expression \eqref{eq:expression_qh} for $\bm q_h^k$ into \eqref{eq:D_h_rewrite0} 
 and using \eqref{eq:second_lift} yields
\begin{equation}
\frac{1}{\alpha^k} D_h'(u_h^k, u_h^{k-1})(v)  + (A^{-1} \bm L (v), \bm L (u_h^k)) - \langle g, \bm L (v) \cdot \bm n \rangle_{\partial \Omega}  - (f,v) = 0, \quad \forall v \in \Lambda_h. \label{eq:error_eq_vi}
\end{equation}
Since  $u_h^k \in \tilde \Lambda_h \subset \Lambda_h$, we see from the above that $u_h^k$ solves the following strictly convex and coercive minimization problem:\footnote{The coercivity of the problem follows from the fact $D_h (v, u_h^{k-1})$ is always positive and that $\|\bm L (\cdot) \|$ defines a norm on $V_h^p$ since if $(u_h, \nabla \cdot \bm r_h) = 0 \; \forall \bm r_h    \in \bm \Sigma_{h,\mathrm{div}}^{p}$ then $u_h = 0. $  This follows from the discrete inf-sup condition. 
} 
\begin{align} \label{eq:min_Dh}
\min_{ v \in \Lambda_h } \frac{1}{\alpha^k} D_h(v,u_h^{k-1}) + J_h(v);  \; \; J_h(v) =  \frac{1}{2} (A^{-1} \bm L (v), \bm L (v)) - \langle g, \bm L (v) \cdot \bm n \rangle_{\partial \Omega}  - (f,v). 
\end{align}
Therefore, $u_h^k$ is the unique solution to \eqref{eq:min_Dh}. From here, we see that for any $k \geq 1$ 
\begin{equation} \label{eq:energy_decay}
    J_h(u_h^k) \leq  \frac{1}{\alpha^k} D_h(u_h^{k-1},u_h^{k-1}) + J_h(u_h^{k-1}) =  J_h(u_h^{k-1}) . 
\end{equation}
\textit{Step 2}. (Convergence of $J_h(u_h^\ell)$). Here, we adapt  the arguments from \cite[proof of Theorem 4.13]{keith2023proximal}.  We test \eqref{eq:error_eq_vi} with $v = u_h^k$ and with $v = u_h^* \in \Lambda_h$ and subtract the resulting equations:  
\begin{multline}\label{eq:testing_u*}
\frac{1}{\alpha^k} D_h'(u_h^k, u_h^{k-1})(u_h^k - u_h^*)  + (A^{-1} \bm L (u_h^k  - u_h^*), \bm L (u_h^k)) \\  - \langle g, \bm L (u_h^k -u_h^*) \cdot \bm n \rangle_{\partial \Omega}  - (f,u_h^k -u_h^*) = 0.  
\end{multline}
We readily check that the three points identity \cite[Proposition 4.6]{keith2023proximal} also holds for the discrete distance: \begin{align*}
 D_h'(u_h^k, u_h^{k-1})(u_h^k - u_h^*)  & = 
 D_h(u_h^*, u_h^k) - D_h(u_h^*, u_h^{k-1}) + D_h(u_h^k, u_h^{k-1}). 
 \end{align*}
Using the definition of $J_h$ and the above in \eqref{eq:testing_u*}, we obtain 
\begin{multline}
 D_h(u_h^k, u_h^{k-1}) 
 +  (D_h(u^*, u_h^k) - D_h(u_h^*, u_h^{k-1}))  
\\  +  \alpha_k \left( J_h(u_h^k) - J_h(u_h^*) +\frac12 \|A^{-1/2} \bm L(u_h^k - u_h^*)\|^2 \right) 
 \leq 0. 
\end{multline}
Summing the above from $k=1, \ldots, \ell$ yields 
\begin{equation}
D_h(u^*, u_h^\ell) + \sum_{k = 1}^{\ell} D_h(u_h^k, u_h^{k-1}) + \sum_{k=1}^{\ell} \alpha_k (J_h(u_h^k) - J_h(u_h^*) ) \leq D_h(u^*, u_h^0).  
\end{equation}
Utilizing \eqref{eq:energy_decay}, we obtain that 
\begin{align}
  D_h(u^*, u_h^\ell) +  \sum_{k = 1}^{\ell} D_h(u_h^k, u_h^{k-1}) +  (J_h(u_h^\ell) - J_h(u_h^*) ) \sum_{k=1}^{\ell} \alpha_k  \leq D_h(u^*, u_h^0).    
\end{align} 
Therefore, 
\begin{equation} \label{eq:conv_in_energy}
J_h(u_h^\ell)  \leq  J_h(u_h^*)  +  \frac{D_h(u^*, u_h^0)}{\sum_{k=1}^{\ell} \alpha_k} .    
\end{equation}
\textit{Step 3}. (Convergence of $\bm q_h^*$ and $u_h^*$) Using the definitions of $\bm L$ and $\bm L_\Gamma$ \eqref{eq:first_lift}--\eqref{eq:second_lift}, the discrete solution $u_h^*$ satisfies 
\begin{align}
(A^{-1} \bm L(v-u_h^*), \bm L(u_h^*)) - (f, v-u_h^*)  - \langle g, \bm L(v-u_h^*) \cdot \bm n \rangle_{\partial \Omega}  \geq 0, \quad \forall v \in \Lambda_h. 
\end{align}
Choosing $v = u_h^\ell$ and rewriting, the above reads 
\begin{equation}
J_h(u_h^\ell) - J_h(u_h^*)   - \frac{1}{2}\|A^{-1/2} \bm L(u_h^\ell - u_h^*)\|^2 \geq 0 .
\end{equation}
Rearranging and using \eqref{eq:conv_in_energy}, we obtain 
\begin{equation}
\frac{1}{2}\|A^{-1/2} \bm L(u_h^\ell - u_h^*)\|^2 \leq J_h(u_h^\ell) - J_h(u_h^*)   \leq \frac{D_h(u^*, u_h^0)}{\sum_{k=1}^{\ell} \alpha_k}  .
\end{equation}
Since $\bm q_h^* =  \bm L(u_h^*) + \bm L_{\Gamma} (g) $ and $\bm q_h^\ell =\bm L(u_h^\ell) + \bm L_{\Gamma} (g)$, we obtain the required bound on $\|A^{-1/2} (\bm q^\ell_h - \bm q_h^*)\|$. 
Utilizing \Cref{lemma:dg_to_L2}, one obtains the required bound on $\|u_h^\ell - u^*_h\|_{\mathrm{DG}}$.
\end{proof}
\subsection{Local mass conservation} \label{subsec:local_mass_conserv}
We now show that for a fixed mesh, the iterates of \Cref{alg:main_alg_disc_mixed} converge (with $\alpha_k$) to a discrete solution satisfying a local mass conservation property on elements that do not intersect the obstacle. 
\begin{corollary}[Local mass conservation] \label{cor:mass_conservation}
Fix $T  \in \mathcal{T}_h$. If the solution $u_h^*$ to \eqref{eq:discrete_mixed_VI} satisfies
\begin{equation}
\underline u < u^*_h  < \overline{u}\quad \text{ in } T,  
\label{eq:condition_in_T}
\end{equation}
then the following property holds 
\begin{equation}
    |(\nabla \cdot \bm q_h^\ell - f, 1)_{T}| \lesssim  \frac{D_h(u_h^*, u_h^0)}{\sum_{k=1}^{\ell} \alpha_k} \, h_T^{(2d-3)/2}. \label{eq:mass_conserv_one_element}
\end{equation}
\end{corollary}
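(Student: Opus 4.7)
The plan is to leverage strict feasibility in $T$ in order to reduce the mass conservation defect to a boundary-type quantity controlled by the convergence rate from \Cref{thm:conv_mixed_vi}.

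First, I would exploit the strict inequality \eqref{eq:condition_in_T}: since $\underline{u} < u_h^* < \overline{u}$ pointwise on $T$, the same strict inequality holds at each quadrature node $x_i^T \in T$. Hence for any polynomial $q_h \in V_h^p$ supported in $T$ (extended by zero elsewhere), the perturbed function $v_h = u_h^* \pm \delta q_h$ belongs to $\Lambda_h$ for all $\delta > 0$ sufficiently small. Substituting into \eqref{eq:discrete_mixed_VI_1} and rescaling by $\delta$, the inequality becomes an equality with both signs, giving
\begin{equation*}
(\nabla \cdot \bm q_h^* - f, q_h)_T = 0 \quad \forall\, q_h \in V_h^p|_T.
\end{equation*}
In particular, since $1 \in V_h^p|_T$ for all $p \geq 0$, this yields $(\nabla \cdot \bm q_h^* - f, 1)_T = 0$; equivalently, $\nabla \cdot \bm q_h^* = \Pi_h f$ on $T$.

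Next, I would subtract to write the defect as a flux difference,
\begin{equation*}
(\nabla \cdot \bm q_h^\ell - f, 1)_T = (\nabla \cdot (\bm q_h^\ell - \bm q_h^*), 1)_T = \int_{\partial T} (\bm q_h^\ell - \bm q_h^*) \cdot \bm n \, ds,
\end{equation*}
using the divergence theorem and $\nabla 1 = 0$. I would then estimate this boundary integral via Cauchy--Schwarz on $\partial T$, a discrete trace inequality for the Raviart--Thomas flux, and a standard inverse inequality to produce a power of $h_T$ times $\|\bm q_h^\ell - \bm q_h^*\|_{L^2(T)}$. The global flux error is finally controlled using
\begin{equation*}
\|A^{-1/2}(\bm q_h^\ell - \bm q_h^*)\|_{L^2(\Omega)}^2 \lesssim \frac{D_h(u_h^*, u_h^0)}{\sum_{k=1}^{\ell} \alpha_k}
\end{equation*}
from \Cref{thm:conv_mixed_vi}.

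The main obstacle is in matching the asserted scaling $h_T^{(2d-3)/2}$ together with the \emph{linear} (not square-root) dependence on $D_h(u_h^*,u_h^0)/\sum_k \alpha_k$. A naive use of trace and inverse inequalities together with $\|\bm q_h^\ell - \bm q_h^*\|_{L^2(\Omega)} \lesssim (D_h/\sum_k\alpha_k)^{1/2}$ yields only the weaker bound $h_T^{(d-2)/2}(D_h/\sum_k\alpha_k)^{1/2}$. To recover the sharper statement, I would try a second, complementary representation: since $\nabla \cdot \bm q_h^* = \Pi_h f$ on $T$ and \eqref{eq:nonlinear_subproblem_mixed_0} implies $\nabla \cdot \bm q_h^\ell = \Pi_h f - (\psi_h^\ell - \psi_h^{\ell-1})/\alpha^\ell$ on $T$, we get the pointwise identity $\nabla \cdot(\bm q_h^\ell - \bm q_h^*) = -(\psi_h^\ell - \psi_h^{\ell-1})/\alpha^\ell$ on $T$. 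Controlling consecutive latent increments $\psi_h^\ell - \psi_h^{\ell-1}$ using the Bregman telescoping sum $\sum_k D_h(u_h^k,u_h^{k-1}) \leq D_h(u_h^*,u_h^0)$ (implicit in the proof of \Cref{thm:conv_mixed_vi}), combined with local scaling factors, should deliver the required form. Reconciling the two bounds and nailing the correct $h_T$-exponent via this Bregman-telescoping/energy-decrement route is the delicate step.
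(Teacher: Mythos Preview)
Your two-step strategy is exactly the paper's: first use strict feasibility on $T$ to show $(\nabla\cdot\bm q_h^*-f,1)_T=0$ (the paper tests \eqref{eq:discrete_mixed_VI_1} with $v_h=u_h^*\pm\delta\chi_T$, which is your argument with $q_h=\chi_T$), and then control the remaining term $(\nabla\cdot(\bm q_h^\ell-\bm q_h^*),\chi_T)$ by the flux error from \Cref{thm:conv_mixed_vi}. The only packaging difference is that the paper writes the second step through the dual norm, introducing $\lambda_h^\ell=(\psi_h^{\ell-1}-\psi_h^\ell)/\alpha^\ell$ and $\lambda_h^*=\nabla\cdot\bm q_h^*-f$, and bounding
\[
|(\lambda_h^\ell-\lambda_h^*,\chi_T)|\le \|\lambda_h^\ell-\lambda_h^*\|_{H^1(\mathcal T_h)^*}\,\|\chi_T\|_{\mathrm{DG}},
\qquad
\|\lambda_h^\ell-\lambda_h^*\|_{H^1(\mathcal T_h)^*}\lesssim\|\bm q_h^\ell-\bm q_h^*\|_{L^2(\Omega)}.
\]
Your divergence-theorem/trace route is equivalent and produces the same powers of $h_T$.

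Your scaling concern is well placed and is \emph{not} resolved by any additional argument in the paper. The paper simply invokes \Cref{thm:conv_mixed_vi} and records the bound on $\|\bm q_h^\ell-\bm q_h^*\|_{L^2(\Omega)}$ as linear in $D_h(u_h^*,u_h^0)/\sum_k\alpha_k$, even though that theorem only controls the square of this norm. Likewise the paper asserts $\|\chi_T\|_{\mathrm{DG}}\lesssim h_T^{(2d-3)/2}$, whereas a direct computation (only jump terms contribute, and $h_E^{-1}|E|\sim h_T^{d-2}$) yields $h_T^{(d-2)/2}$, matching your ``naive'' bound. There is no Bregman-telescoping refinement in the paper's proof, and you should not try to manufacture one: your bound $h_T^{(d-2)/2}\big(D_h/\sum_k\alpha_k\big)^{1/2}$ is precisely what the paper's own argument actually delivers, and the exponents in the stated corollary appear to be typographical slips rather than the outcome of a sharper estimate.
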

\begin{proof}
Define the discrete function $\lambda_h^{\ell} = (\psi_h^{\ell-1} - \psi_h^{\ell})/\alpha^\ell \in V_h^p$ for $ \ell \geq 1 $. Further, let $\lambda_h^* = (\nabla \cdot \bm q_h^* -f) \in L^2(\Omega)$ be the discrete Lagrange multiplier.  Then, considering \eqref{eq:nonlinear_subproblem_mixed_0}, we have for any $v_h \in V_h^p$
\begin{align*}
(\lambda_h^\ell - \lambda_h^*, v_h) = (\nabla \cdot (\bm q_h^\ell - \bm q_h^*), v_h) = - (\bm q^\ell_h - \bm q_h^*, \nabla v_h )+ \sum_{E \in \mathcal{E}_h} \int_{E} ( \bm q_h - \bm q_h^*) \cdot \bm n_{E} [v_h]. 
\end{align*}
Considering \eqref{eq:negative_norm} and \rami{using a similar argument to \Cref{lemma:dg_to_L2}}
and \Cref{thm:conv_mixed_vi}, we obtain that 
\begin{equation}\label{eq:mass_conserv_0}
\|\lambda_h^\ell - \lambda_h^*\|_{H^1(\mathcal{T}_h)^*} \lesssim \|\bm q_h^\ell - \bm q_h^*\|_{L^2(\Omega)}
\lesssim
\frac{D_h(u_h^*, u_h^0)}{\sum_{k=1}^{\ell} \alpha_k}.
\end{equation}

Now, test \eqref{eq:nonlinear_subproblem_mixed_0} with $\chi_T$ the indicator function of the element $T\in \mathcal{T}_h$:
\begin{align} \label{eq:mass_conserv_corr}
(\nabla \cdot \bm q_h^\ell - f, 1)_{T} = (\lambda_h^\ell - \lambda_h^*, \chi_T) + (\lambda_h^*, 1)_{T}. 
\end{align}
Observe that if $T$ satisfies \eqref{eq:condition_in_T}, the second term above evaluates to zero. This can be seen from \eqref{eq:discrete_mixed_VI_1} where we test with $v_h = u_h^* \pm \delta \chi_T  \in  \Lambda_h$ for small enough constant $\delta$. Namely, one can choose  \rami{$\delta = \min (\inf_{ x \in T} ( u_h^* - \underline u) ,  \sup
_{x \in T} (u_h^* - \underline u ) ).$}
Bounding the first term in \eqref{eq:mass_conserv_corr} with $\|\lambda_h^k - \lambda_h^*\|_{H^1(\mathcal{T}_h)^*} \|\chi_T\|_{\mathrm{DG}}$, using that $\|\chi_T\|_{\mathrm{DG}} \lesssim h_T^{(2d-3)/2}$, and invoking \eqref{eq:mass_conserv_0} shows \eqref{eq:mass_conserv_one_element}. 

\end{proof}
\subsection{A priori error estimates} \label{subsec:error_estimates_p0}
\Cref{thm:conv_mixed_vi} allows us to complete the analysis and provide an error estimate between the iterations $(\bm q_h^k, u_h^k) \in  \bm \Sigma_{h,\mathrm{div}}^{p} \times \Lambda_h$ and the solution $(\bm q^*, u^*) = (-A\nabla u, u)$ of \eqref{eq:model_problem}.  The proof of such a bound follows from the triangle inequality $\|\bm q_h^\ell - \bm q^*\| \leq \|\bm q_h^\ell - \bm q_h^*\| + \|\bm q_h^* - \bm q^*\|$ and \eqref{eq:conv_mixed_vi_estimate} after establishing an estimate for $\|\bm q_h^* - \bm q^*\|$. In Theorem \ref{cor:complete_story_p0}, we provide an estimate for the latter error when $p=0$. For $p \geq 1$, the convergence of discrete mixed VI \eqref{eq:discrete_mixed_VI} requires further investigation.
\begin{theorem}[Convergence of the discrete mixed VI for $p=0$]\label{cor:complete_story_p0} 
Let $(\bm q_h^*, u_h^*) \in \bm \Sigma_{h,\rm{div}}^0 \times V_h^0$ be the solution to the discrete mixed VI \eqref{thm:conv_mixed_vi}. 

\textit{(Anisotropic diffusion)}. If $\underline u , \overline{u} \in \R$ and $(\bm q^*, u^*) = ( -A\nabla u , u) \in H(\mathrm{div}, \Omega) \times L^2(\Omega)$ where $u$ solves \eqref{eq:model_problem}, then  
\begin{multline}\label{eq:conv_disc_VI_anis}
  \|u_h^*  - u^*\|_{L^2(\Omega)} + \|A^{-1/2} (\bm q_h^* - \bm q^*)\|_{L^2(\Omega)}  \lesssim  \inf_{\bm r_h \in \bm \Sigma_{h,\mathrm{div}}^0} \|A^{-1/2}(\bm q^* - \bm r_h) \|_{L^2(\Omega)}  \\ + h \|\nabla \cdot \bm q^*\|_{L^2(\Omega)} + \inf_{v_h \in V_h^0} \|u_h^* - v_h\|_{L^2(\Omega)}. 
 \end{multline} 
 
\rami{ (\textit{Unilateral obstacle problem}, $\overline{u} = \infty$). If $\underline u \in H^2(\Omega)$ and $(\bm q^*, u^*) = ( -A\nabla u , u)  \in H^1(\Omega) \times H^2(\Omega)$  where $u$ solves \eqref{eq:min_pb}, then 
\begin{equation}
\|u_h^*  - u^*\|_{L^2(\Omega)} + \|A^{-1/2} (\bm q_h^* - \bm q^*)\|_{L^2(\Omega)}  \lesssim C_{\mathrm{reg}} h, 
\end{equation}
where $C_{\mathrm{reg}}$ depends on $\|\bm q^*\|_{H^1(\Omega)}, \|u^* \|_{H^2(\Omega)}, \|\underline u\|_{H^2(\Omega)}$, and $ \|f\|_{L^2(\Omega)}$.  } 
 \end{theorem}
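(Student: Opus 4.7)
The plan is to treat the two cases via analogous quasi-best-approximation arguments derived from error equations. The common ingredient is that the $L^2$-projection $P_h\colon L^2(\Omega)\to V_h^0$ nearly preserves the discrete admissibility set $\Lambda_h$: in the anisotropic setting where $\underline u$ and $\overline u$ are constants, $P_h u^*\in\Lambda_h$ exactly because averaging preserves constant bounds; in the obstacle setting where $\underline u,\overline u\in H^2$ vary spatially, $P_h u^*$ may violate $\Lambda_h$ by an amount $O(h)$ controlled by Sobolev norms of the obstacles, which can be absorbed into the final $O(h)$ bound.

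For the anisotropic diffusion case, since $u^*$ solves \eqref{eq:model_problem} weakly, the pair $(\bm q^*,u^*)=(-A\nabla u^*,u^*)$ satisfies the continuous mixed equation \eqref{eq:mixed_VI_0} together with $\nabla\cdot\bm q^*=f$. Subtracting from \eqref{eq:discrete_mixed_VI_0} yields the Galerkin-type error equation $(A^{-1}(\bm q^*-\bm q_h^*),\bm r_h) = (u^*-u_h^*,\nabla\cdot\bm r_h)$ for all $\bm r_h\in\bm\Sigma_{h,\mathrm{div}}^0$. The next step is to split $\bm q^*-\bm q_h^* = (\bm q^*-\bm r_h)+(\bm r_h-\bm q_h^*)$ for arbitrary $\bm r_h\in\bm\Sigma_{h,\mathrm{div}}^0$, test with $\bm r_h-\bm q_h^*$, and use $\nabla\cdot\bm r_h,\nabla\cdot\bm q_h^*\in V_h^0$: the cross term reduces to $(P_h u^*-u_h^*,\nabla\cdot\bm r_h-f) + (P_h u^*-u_h^*,f-\nabla\cdot\bm q_h^*)$, whose second piece is nonpositive by testing \eqref{eq:discrete_mixed_VI_1} with $v_h=P_h u^*\in\Lambda_h$. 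A discrete inf-sup argument for the $\mathrm{RT}_0$--$\mathbb P_0$ pair applied to the error equation gives $\|P_h u^*-u_h^*\|_{L^2}\lesssim\|A^{-1/2}(\bm q^*-\bm q_h^*)\|_{L^2}$, which together with Cauchy--Schwarz closes the flux estimate. Selecting $\bm r_h=\Pi_h^{\mathrm{RT}}\bm q^*$ and invoking the commuting identity $\nabla\cdot\Pi_h^{\mathrm{RT}}=P_h\nabla\cdot$ turns $\|\nabla\cdot\bm r_h-f\|_{L^2}$ into $\|P_h f-f\|_{L^2}$, which is the source of the $h\|\nabla\cdot\bm q^*\|_{L^2}$ contribution. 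The $L^2$ bound on $u^*-u_h^*$ then follows via the triangle inequality through $P_h u^*$.

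For the double obstacle case, there is no equation at the continuous level but only the mixed VI analogue with $\nabla\cdot\bm q^* = f - \lambda$ for a Lagrange multiplier $\lambda$ supported on the coincidence set. Under the stated $H^2$-regularity of $u^*,\underline u,\overline u$ and the resulting $\bm q^*\in H^1$, I would adapt the mixed-VI analysis of \cite{brezzi1978error}: test the discrete VI with a suitably clipped version of $P_h u^*$ lying in $\Lambda_h$, test the continuous mixed VI with an admissible continuous approximation of $u_h^*$, and combine with standard Raviart--Thomas and $L^2$-projection interpolation estimates of order $h$. The hard part will be controlling the cross term involving $\lambda$; the $H^2$-regularity of $u^*$ and of both obstacles is precisely what ensures $\lambda\in L^2(\Omega)$ and that its pairings with the various approximation residuals are $O(h)$, which propagates through to the claimed $C_{\mathrm{reg}}\,h$ bound with the indicated dependence on $\|\bm q^*\|_{H^1}$, $\|u^*\|_{H^2}$, $\|\underline u\|_{H^2}$, $\|\overline u\|_{H^2}$, and $\|f\|_{L^2}$.
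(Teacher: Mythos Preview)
Your anisotropic-diffusion argument is essentially the paper's. Two refinements are worth noting. First, the canonical Raviart--Thomas interpolant $\Pi_h^{\mathrm{RT}}$ is not defined on all of $H(\mathrm{div};\Omega)$, which is the only regularity assumed on $\bm q^*$; the paper instead uses the bounded commuting quasi-interpolant $\bm I_h$ of Ern--Gudi--Smears--Vohral\'{\i}k, whose approximation estimate already carries the extra $h\|\nabla\cdot\bm q^*\|_{L^2}$ term. Second, once you use such a commuting operator, the cross term $(P_h u^*-u_h^*,\nabla\cdot\bm r_h-f)$ is \emph{identically zero}, because $\nabla\cdot\bm r_h=P_h f$ and $P_h u^*-u_h^*\in V_h^0$ is orthogonal to $f-P_h f$; so the $h\|\nabla\cdot\bm q^*\|$ contribution comes entirely from the interpolant's estimate, not from this pairing as you suggested.

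For the double obstacle case your outline is in the right spirit but misses the key mechanism the paper uses, and your stated route would lose a power of $h$ if taken literally. The paper does \emph{not} clip $\Pi_h u^*$ and does not work with $\lambda$ at the level of Cauchy--Schwarz pairings. Instead, after testing the discrete VI with $\Pi_h u^*$ and the continuous mixed VI with $u_h^*$, the only surviving cross term is $(\Pi_h u^*-u^*,\nabla\cdot\bm q^*-f)$. The decisive step is to invoke the pointwise complementarity relation $\nabla\cdot\bm q^*-f=0$ on $\{\underline u<u^*<\overline u\}$, so on each element the integral localizes to the coincidence sets $T_{\underline u}=\{u^*=\underline u\}$ and $T_{\overline u}=\{u^*=\overline u\}$, where $\Pi_h u^*-u^*$ becomes $\Pi_h u^*-\underline u$ (resp.\ $-\overline u$). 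Since $u^*-\underline u$ vanishes on a set of positive measure in $T$, the extended Steklov--Poincar\'e inequality (applied twice) gives $\|\Pi_h u^*-\underline u\|_{L^2(T_{\underline u})}\lesssim h_T^2(\|u^*\|_{H^2(T)}+\|\underline u\|_{H^2(T)})$, which after Cauchy--Schwarz against $\|\nabla\cdot\bm q^*-f\|_{L^2}$ yields the needed $O(h^2)$ bound on the cross term and hence $O(h)$ for the flux. Your phrasing ``$\lambda\in L^2$ and its pairings \ldots\ are $O(h)$'' would, if taken at face value, only give the cross term of order $h$, hence a flux error of order $h^{1/2}$; you would need to also use $u^*\in H^2$ to get $\|\Pi_h u^*-u^*\|_{L^2}=O(h^2)$, and then separately control the errors introduced by clipping, which the paper's complementarity argument bypasses entirely.
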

 \begin{proof}
 We make use of the interpolation operator introduced in \cite{ern2022equivalence} for simplicial elements  that satisfies $\bm I_h: H(\mathrm{div};\Omega) \rightarrow \bm \Sigma_{h, \mathrm{div}}^0$ with 
 \begin{align}
 (\nabla \cdot \bm{I}_h \bm q^*, v_h ) &= (\nabla \cdot \bm q^*, v_h) \quad \forall v_h \in V_h^0, \label{eq:commuting}\\ 
 \|\bm q^* - \bm{I}_h \bm q^* \|_{L^2(\Omega)} & \lesssim \inf_{\bm r_h \in \bm \Sigma_{h,\mathrm{div}}^0} \|\bm q^* - \bm r_h\|_{L^2(\Omega)}  + h \|\nabla \cdot \bm q^*\|_{L^2(\Omega)} \label{eq:approximation_estimate}. 
 \end{align}
 Note that no additional assumptions on the regularity of $\bm q^*$ are needed; see \cite[Theorem 3.2]{ern2022equivalence}. Considering \eqref{eq:discrete_mixed_VI_0}, we derive that 
 \begin{equation} \label{eq:conv_VI_0}
 (A^{-1} (\bm q^* - \bm q_h^*), \bm r_h) = (u^* - u_h^*, \nabla \cdot \bm r_h) \quad \forall \bm r_h \in \bm \Sigma_{h,\mathrm{div}}^0. 
 \end{equation}
 The above property and \eqref{eq:commuting} allow us to derive that 
 \begin{align}\label{eq:error_estimate_p0_0}
 \|A^{-1/2} (\bm q^* - \bm q_h^*)\|_{L^2(\Omega)}^2 &= (A^{-1} (\bm q^* - \bm q_h^*), \bm q^* - \bm{I}_h \bm q^*) + (u^* - u^*_h, \nabla \cdot ( \bm{I}_h \bm q^* - \bm q_h^*))     \end{align}
Denoting $\Pi_h u^* \in V_h^0$ as the $L^2$-projection onto $V_h^0$, there holds
\begin{align}
\label{eq:s0}
    (u^* - u^*_h, \nabla \cdot (\bm I_h \bm q^* - \bm q_h^*))
    & = 
        (\Pi_h u^* - u_h^*, \nabla \cdot ( \bm I_h \bm q^* - \bm q_h^*)) \\  
&  =   (\Pi_h u^* - u_h^*, \nabla \cdot  \bm q^* -  f) + (\Pi_h u^* - u_h^*, f-  \nabla \cdot \bm q_h^*) \nonumber \\ \nonumber
& \rami{\text{ = $Z_1 + Z_2.$}}
\end{align} 
We now consider two cases. 

\textit{Case 1}: If $u^*$ solves \eqref{eq:model_problem}, then 
\rami{$Z_1 = 0 $ since $\nabla \cdot \bm q^* - f= 0$ and by testing \eqref{eq:discrete_mixed_VI_1} with $v_h=\Pi_hu^* \in K_h$, $Z_2 \leq 0$.} A simple application of Cauchy--Schwarz inequality for the first term in \eqref{eq:error_estimate_p0_0} and the approximation property \eqref{eq:approximation_estimate} yields the required bound on $ \|A^{-1/2} (\bm q_h^* - \bm q^*)\|_{L^2(\Omega)}$ in \eqref{eq:conv_disc_VI_anis}. 
 
\textit{Case 2}: If $u^*$ solves \eqref{eq:min_pb},  then the result essentially follows from \cite[Section 4]{brezzi1978error} \rami{with key technical differences that we detail next}. \rami{ To simplify the write--up, define $\underline u_h$ as the piecewise constant function evaluating to $\underline u (x_1^T)$ on $T$ where we recall the definition of $\Lambda_h$ \eqref{eq:def_discrete_Lambda} in this case is $\Lambda_h = \{ v_h \in V_h^0, \underline u(x_1^T) \leq v_h, \; \forall T \in \mathcal{T}_h\}$ and that $n_0 = 1$. We begin by bounding $Z_2$ and write 
\begin{align*}
\Pi_h u^* - u_h^* = (\Pi_h u^* - \Pi_h \underline u) + \underline u_h  - u_h^*  +   (\Pi_h \underline u - \underline u_h)  
\end{align*} 
Testing \eqref{eq:discrete_mixed_VI_1} with test function $v_h=(\Pi_h u^* - \Pi_h \underline u) + \underline u_h \in \Lambda_h$ since $(\Pi_h u^* - \Pi_h \underline u) \geq 0$, we bound 
\begin{align}\label{eq:s-2}
Z_1 = (\Pi_h u^* - u_h^*, f-  \nabla \cdot \bm q_h^*)   \leq  (\Pi_h \underline u - \underline u_h , f - \nabla \cdot \bm q_h^*). 
\end{align}
Taking $x_1^T$ as the centroid of $T$, we write on an element $T$
\[ 
\Pi_h \underline u - \underline u_h = \Pi_h (\underline u - \underline u_I) + (\underline u_I - \underline u) (x_1^T), 
\]
where $\underline u_I$ denotes the linear Lagrange interpolant of $\underline u$ and where we used that $\Pi_h \underline u_I = u_I (x_1^T)$. The stability of $\Pi_h$ and the approximation properties of the Lagrange interpolant  (see for e.g. \cite[Proposition 4.6.3]{bernardi2024mathematics}) gives
\begin{align} \label{eq:s-1}
\|\Pi_h \underline u - \underline u_h\|_{L^2(T)} \leq \|\underline u - \underline u_I\|_{L^2(T)} + |T|^{1/2} \|\underline u - \underline u_I\|_{L^\infty(T)} \lesssim h_T^2 \|\underline u\|_{H^2(T)}. 
\end{align}
We then proceed by writing
\begin{align*}
(\Pi_h \underline u - \underline u_h , f - \nabla \cdot \bm q_h^*)_T  & = (\Pi_h \underline u - \underline u_h , f - \nabla \cdot \bm q^*)_T + (\Pi_h \underline u - \underline u_h ,  \nabla \cdot (\bm I_h \bm q^* - \bm q_h^*))_T  \\ 
& \lesssim h^2 \|\underline u\|_{H^2(T)}\|f - \nabla \cdot \bm q^*\|_{L^2(T)} +  h \|\underline u\|_{H^2(T)} \|\bm I_h \bm q^* - \bm q_h^* \|_{L^2(T)},  
\end{align*}
where we used the local inverse estimate $\|\nabla \cdot (\bm I_h \bm q^* - \bm q_h^*)\|_{L^2(T)}\lesssim h_T^{-1} \|\bm I_h \bm q^* - \bm q_h^*\|_{L^2(T)}$. Summing over all elements and using \eqref{eq:s-2}, we obtain that 
\begin{align}
   Z_1 \lesssim h^2  \|\underline u\|_{H^2(\Omega)}\|f - \nabla \cdot \bm q^*\|_{L^2(\Omega)} +  h \|\underline u\|_{H^2(\Omega)} \|\bm I_h \bm q^* - \bm q_h^* \|_{L^2(\Omega)}.  
\end{align}
For the second term $Z_2$ of \eqref{eq:s0}, we write 
\begin{align} \nonumber
Z_2 = (\Pi_h u^* - u_h^*, \nabla \cdot  \bm q^* -  f) 
& =  (\Pi_h (u^* - \underline u) - (u^* - \underline u) ,  \nabla \cdot  \bm q^* -  f) \\ \nonumber  & + (u^* - (u_h^* -\underline u_h + \underline u),  \nabla \cdot  \bm q^* -  f)  + (\Pi_h \underline u - \underline u_h,  \nabla \cdot  \bm q^* -  f) \\ 
& \nonumber \leq  (\Pi_h (u^* - \underline u) - (u^* - \underline u) ,  \nabla \cdot  \bm q^* -  f) + (\Pi_h \underline u - \underline u_h,  \nabla \cdot  \bm q^* -  f), 
\end{align}
where we tested \eqref{eq:mixed_VI} by $ v = u_h^* - \underline u_h + \underline u \geq \underline u$ to obtain the last inequality. Proceeding, we note that on each element $T$, either $\nabla \cdot \bm q^* - f =0$ or $u^* - \underline u = 0 $ on a subset of positive measure. For the latter case, applying the  extended Steklov--Poincar\'e inequality twice (see \cite[Lemma 3.30]{Ern:booki}) gives that $\| u^* -  \underline u \|_{L^2(T)} \lesssim h_T^2 \|u^* - \underline u \|_{H^2(T)}$. This observation, Cauchy--Schwarz inequality, the stability of $\Pi_h$, and \eqref{eq:s-1} yield that 
$$Z_2 \lesssim h^2 (\|u^*\|_{H^2(\Omega)} + \| \underline u \|_{H^2(\Omega)} ) \|\nabla \cdot \bm q^* - f\|_{L^2(\Omega)} .$$
Using the estimates on $Z_1$ and $Z_2$ in \eqref{eq:s0}, Cauchy--Schwarz  and triangle inequalities in \eqref{eq:error_estimate_p0_0} gives 
\begin{align}
\|A^{-1/2} (\bm q^* - \bm q_h^*)\|_{L^2(\Omega)}^2  & \lesssim \|A^{-1/2} (\bm q^* - \bm q_h^*)\|_{L^2(\Omega)} \|\bm I_h\bm q^*-\bm q^*\|_{L^2(\Omega)} \\ \nonumber & +  h^2 (\|u^*\|_{H^2(\Omega)} + \| \underline u \|_{H^2(\Omega)} ) \|\nabla \cdot \bm q^* - f\|_{L^2(\Omega)} \\ \nonumber &   + h \|\underline u\|_{H^2(\Omega)} ( \|\bm q^* - \bm I_h \bm q^*\|_{L^2(\Omega)} + \|\bm q^* - \bm q_h^*\|_{L^2(\Omega)}). 
\end{align}
The approximation bound  $\|\bm I_h\bm q^*-\bm q^*\|_{L^2(\Omega)} \lesssim  h \|\bm q^*\|_{H^1(\Omega)}$ and appropriate applications of Young's inequality provide the desired bound. 
}

Finally, the inf-sup stability of the finite element spaces yields the required bounds on  $\|u_h^* - u^*\|_{L^2(\Omega)}$; see also \cite[Theorem 2.3]{brezzi1978error}. 
\end{proof}
\rami{
\begin{remark}
The result of \Cref{cor:complete_story_p0} holds for the double obstacle problem if $\underline u , \overline{u} \in V_h^0$ where the proof simplifies. Extending this result for general double obstacles requires additional technical arguments and special assumptions on the mesh. We reserve this extension for future work. 
 \end{remark}
}
\section{Analysis of the Linearized sub-problems from Newton's method} \label{sec:linear_analysis}
The convergence property established in \Cref{sec:conv_mixed_vi} relies on condition \eqref{eq:reason_for_quads}, which holds for $p=0$ or for quadrilateral or hexahedral elements with \eqref{eq:reason_for_quads} enforced. For general meshes and spaces, we present here an analysis of the stability and convergence properties of the linearized sub-problems resulting from Newton's method. We view this as an important step towards establishing the convergence of the hybrid first-order system  proximal Galerkin method 
and its mesh independence properties.

For simplicity, we drop the superscripts $k$ to simplify the notation and we only consider the homogeneous problem and its solution At each step $k$, one solves a sequence of linearized subproblems of \eqref{eq:nonlinear_subproblem} with Newton's method of the following form:

Given $\psi \in L^{\infty}(\Omega), \tilde f \in L^2(\Omega)$, $\phi \in H^1_0(\Omega)$ and $\chi \in L^2(\Omega)$, find $(\bm  q_h, u_h, \hat u_h) \in \bm \Sigma_h^p \times V_h^p \times M^{p}_{h,0}$ and $\delta_h \in V_h^p$ solving 
\begin{subequations} \label{eq:linearized_subproblem}
    \begin{alignat}{2}
(\delta_h, v_h) -  \alpha \mathcal{B}_h (\bm q_h, (v_h, \hat v_h)) & = (\tilde f,v_h) && \quad \forall (v_h, \hat v_h) \in V_h^p \times M_{h,0}^p,  \label{eq:linearized_scheme0}   \\  
 (A^{-1} \bm q_h, \bm r_h) + \mathcal{B}_h (\bm r_h, (u_h, \hat u_h)) & = 0  && \quad \forall \bm r_h \in \bm \Sigma_h^p,  \\
   (u_h, q_h)  - (\mathcal{U}'(\psi) \delta_h ,q_h)&  = (\phi + \chi, q_h)  && \quad \forall q_h \in V_h^p.
\end{alignat}
\end{subequations}
\rami{We note that $\mathcal{U}$ is  Fr\`echet differentiable over $L^\infty(\Omega)$. }
With the definition of $\mathcal{A}_h$ \eqref{eq:def_form_A}, we write the above scheme as follows: Find $(\bm  q_h, u_h, \hat u_h) \in \bm \Sigma_h^p \times V_h^p \times M^{p}_{h,0}$ and $\delta_h \in V_h^p$  such that 
\begin{subequations}
\label{eq:summary_scheme}
\begin{alignat}{2}
 \alpha \mathcal{A}_h ((\bm q_h, u_h, \hat u_h), (\bm r_h, v_h, \hat v_h) ) + (\delta_h, v_h) & = (\tilde f,v_h) && \;  \; \forall (\bm r_h, v_h,\hat v_h) \in   \bm \Sigma_h^p \times V_h^p \times M^{p}_{h,0} ,\label{eq:summary_scheme0}\\ 
(u_h, q_h)  - (\mathcal{U}'(\psi) \delta_h ,q_h)&  = (\phi + \chi, q_h)  && \; \; \forall q_h \in V_h^p. \label{eq:summary_scheme1}
\end{alignat}
\end{subequations}
\subsection{Stability} We establish existence and uniqueness and stability estimates for \eqref{eq:summary_scheme}. The proof utilizes similar arguments to \cite[Section 4.3.2 and Section 5.5.3]{BBF13} and \cite[Appendix B]{keith2023proximal}. Since our setting is nonconforming, we provide the details. 
\begin{lemma}[First stability estimate]  
The linearized problem \eqref{eq:summary_scheme} admits a unique solution satisfying 
\begin{multline} \label{eq:stability_linearized}
\alpha \vvvert(\bm q_h, u_h, \hat u_h)\vvvert + \|\delta_h \|_{H^1(\mathcal{T}_h)^*} + \|\mathcal{U'}(\psi)^{1/2} \delta_h\|_{L^2(\Omega)} \\  \lesssim \|\tilde f \|_{H^1(\mathcal{T}_h)^*} + \|\nabla \phi\|_{L^2(\Omega)} + \|\mathcal{U}'(\psi)^{-1/2} \chi \|_{L^2(\Omega)} .
\end{multline}
\end{lemma}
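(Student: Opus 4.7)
I would combine the two equations of \eqref{eq:summary_scheme} into a single energy identity by testing \eqref{eq:summary_scheme0} with $(\bm r_h^\star, u_h, \hat u_h)$---where $\bm r_h^\star \in \bm \Sigma_h^p$ is the auxiliary flux supplied by the inf-sup property of \Cref{lemma:inf_sup}---and testing \eqref{eq:summary_scheme1} with $q_h = \delta_h$. Subtracting, the $(\delta_h, u_h)$ cross terms cancel by symmetry of the $L^2$ inner product, yielding
\[
\alpha\, \mathcal{A}_h\big((\bm q_h, u_h, \hat u_h), (\bm r_h^\star, u_h, \hat u_h)\big) + (\mathcal{U}'(\psi)\delta_h, \delta_h) \;=\; (\tilde f, u_h) - (\phi, \delta_h) - (\chi, \delta_h).
\]
The left-hand side is bounded below by $\alpha\,\vvvert(\bm q_h, u_h, \hat u_h)\vvvert^2 + \|\mathcal{U}'(\psi)^{1/2}\delta_h\|_{L^2(\Omega)}^2$ via \Cref{lemma:inf_sup} together with the positivity of $\mathcal{U}'(\psi)$, which holds because $\mathcal{U}$ is strictly increasing.

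Next I would estimate each right-hand side term. By \eqref{eq:dg_to_triple} and the definition of the dual norm, $|(\tilde f, u_h)| \leq \|\tilde f\|_{H^1(\mathcal{T}_h)^*}\|u_h\|_{\mathrm{DG}} \lesssim \|\tilde f\|_{H^1(\mathcal{T}_h)^*}\vvvert(\bm q_h, u_h, \hat u_h)\vvvert$. The weighted Cauchy--Schwarz inequality gives $|(\chi,\delta_h)| \leq \|\mathcal{U}'(\psi)^{-1/2}\chi\|_{L^2(\Omega)}\|\mathcal{U}'(\psi)^{1/2}\delta_h\|_{L^2(\Omega)}$. The delicate term is $(\phi, \delta_h)$: since $\phi \in H^1_0(\Omega)$, jumps vanish and $\|\phi\|_{\mathrm{DG}} = \|A^{1/2}\nabla\phi\|_{L^2(\Omega)} \lesssim \|\nabla\phi\|_{L^2(\Omega)}$, whence $|(\phi, \delta_h)| \leq \|\phi\|_{\mathrm{DG}}\|\delta_h\|_{H^1(\mathcal{T}_h)^*}$. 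This introduces $\|\delta_h\|_{H^1(\mathcal{T}_h)^*}$, which is not yet controlled, and is the main obstacle of the proof.

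To resolve this, I would derive an independent bound on $\|\delta_h\|_{H^1(\mathcal{T}_h)^*}$ from \eqref{eq:summary_scheme0} by testing with $(\bm r_h, v_h, \hat v_h) = (\bm 0, \Pi_h w, 0)$ for arbitrary $w \in H^1(\mathcal{T}_h)$, mimicking the projection argument used in \eqref{eq:negative_norm_proj}. Using the $L^2$-orthogonality of $\Pi_h$ together with the DG-stability $\|\Pi_h w\|_{\mathrm{DG}} \lesssim \|w\|_{\mathrm{DG}}$ from \eqref{eq:local_L2_project}, and the continuity of $\mathcal{B}_h$ built into \Cref{lemma:inf_sup}, this produces
\[
\|\delta_h\|_{H^1(\mathcal{T}_h)^*} \lesssim \|\tilde f\|_{H^1(\mathcal{T}_h)^*} + \alpha\,\vvvert(\bm q_h, u_h, \hat u_h)\vvvert.
\]
Substituting this into the energy identity and applying Young's inequality with carefully chosen weights (to absorb the resulting $\alpha\vvvert\cdot\vvvert^2$ and $\|\mathcal{U}'(\psi)^{1/2}\delta_h\|^2$ contributions into the left-hand side) yields the combined bound on $\alpha\,\vvvert(\bm q_h, u_h, \hat u_h)\vvvert$ and $\|\mathcal{U}'(\psi)^{1/2}\delta_h\|$, and the bound on $\|\delta_h\|_{H^1(\mathcal{T}_h)^*}$ then follows from the preceding display. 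Existence reduces to uniqueness for the square linear system: setting $\tilde f = \phi = \chi = 0$, the stability estimate together with the strict positivity of $\mathcal{U}'(\psi)$ forces $\bm q_h = 0$, $u_h = 0$, $\hat u_h = 0$, and $\delta_h = 0$. The principal challenge throughout is the asymmetric $\alpha$-scaling between the left- and right-hand sides of the claim, which dictates how aggressively one can absorb terms without spoiling the final bound.
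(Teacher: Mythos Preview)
Your approach is essentially the same as the paper's: an energy identity from testing with $(\bm r_h^\star, u_h, \hat u_h)$ and $q_h = \delta_h$, followed by an independent bound on $\|\delta_h\|_{H^1(\mathcal{T}_h)^*}$, then Young's inequality. The estimates for $(\tilde f, u_h)$, $(\chi,\delta_h)$, and $(\phi,\delta_h)$ are handled exactly as the paper does.

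There is one gap in your dual-norm step. Testing \eqref{eq:summary_scheme0} with $(\bm 0, \Pi_h w, 0)$ and invoking the continuity \eqref{eq:continuity_A} does not give the bound you state, because that continuity is with respect to the triple norm \eqref{eq:def_triple_norm}, and
\[
\vvvert(\bm 0, \Pi_h w, 0)\vvvert^2 \;=\; \|A^{1/2}\nabla_h \Pi_h w\|_{\mathcal{T}_h}^2 + \sum_{T\in\mathcal{T}_h} h_T^{-1}\|\Pi_h w\|_{L^2(\partial T)}^2
\]
is \emph{not} controlled by $\|\Pi_h w\|_{\mathrm{DG}}$: the last sum involves full traces, not jumps. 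The paper fixes this by choosing the facet component $\hat w_h \in M_{h,0}^p$ as the average of the two traces of $\Pi_h w$ on interior facets (and zero on $\partial\Omega$), cf.~\eqref{eq:triple_to_dg}; then $\Pi_h w - \hat w_h = \tfrac12[\Pi_h w]$ on each interior face, so $\vvvert(\bm 0, \Pi_h w, \hat w_h)\vvvert \lesssim \|\Pi_h w\|_{\mathrm{DG}}$ and the continuity estimate goes through. Alternatively, you could first observe that testing \eqref{eq:summary_scheme0} with $(\bm 0, 0, \hat v_h)$ forces $\bm q_h \in \bm \Sigma_{h,\mathrm{div}}^p$ and then mimic \eqref{eq:negative_norm_proj_0} directly; but as written, your justification via \Cref{lemma:inf_sup} with $\hat v_h = 0$ fails.
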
 
\begin{proof}
The existence of solutions follows by uniqueness since this is a square linear system. Therefore, it suffices to show the stability estimate. 

Testing \eqref{eq:summary_scheme0} with $(\bm r_h, u_h,\hat u_h )$ where $\bm r_h$ satisfies \eqref{eq:inf_sup} (see \Cref{lemma:inf_sup}) and \eqref{eq:summary_scheme1} with $\delta_h$,  subtracting the resulting equations, and   using  Cauchy-Schwarz inequality, we obtain 
\begin{multline} \label{eq:stability_1_0}
\alpha  \vvvert (\bm q_h, u_h, \hat u_h) \vvvert^2 + \|\mathcal{U}'(\psi)^{1/2} \delta_h\|_{L^2(\Omega)}^2 \\ \lesssim \|\tilde f\|_{H^1(\mathcal{T}_h)^*}\|u_h\|_{\mathrm{DG}}  + 
\|\mathcal{U}'(\psi)^{-1/2} \chi \|_{L^2(\Omega)} \|\mathcal{U}'(\psi)^{1/2} \delta_h\|_{L^2(\Omega)} + \|\nabla \phi\|_{L^2(\Omega)} \|\delta_h\|_{H^1(\mathcal{T}_h)^*}.
\end{multline}
Proceeding, we obtain a bound on $\|\delta_h\|_{H^1(\mathcal{T}_h)^*}$. 
Recall the definition of the $L^2$ projection \eqref{eq:local_L2_project} and that we can bound 
\begin{align}
    \|\delta_h \|_{H^1(\mathcal{T}_h)^*} =   \sup_{w \in H^1(\mathcal{T}_h)} \frac{(\delta_h, w)}{\|w\|_{\mathrm{DG}}} =\sup_{w \in H^1(\mathcal{T}_h)} \frac{(\delta_h, \Pi_h w)}{\|w\|_{\mathrm{DG}}}  \lesssim  \sup_{w \in H^1(\mathcal{T}_h)} \frac{(\delta_h, \Pi_h w)}{\|\Pi_h w\|_{\mathrm{DG}}}.  \label{eq:dual_norm_estimate}
 \end{align}
 Define $\hat w_h \in M_{h,0}^p$ as follows
 \begin{equation}\label{eq:triple_to_dg}
     \hat w_h =
     \begin{cases}
  \frac12 (\Pi_h w \vert_{T_E^1} + \Pi_h w \vert_{T_E^2})  & E \in \mathcal{E}_h^0 \quad  E =\partial T_E^1 \cap \partial T_E^2 ,  \\ 
  0   & E \subset \partial \Omega .
  \end{cases}
 \end{equation}
It is then easy to see that 
\[\vvvert(\bm  0, \Pi_h w, \hat w_h)\vvvert \lesssim \|\Pi_h w\|_{\mathrm{DG}}. \]
Testing \eqref{eq:summary_scheme0} with $(\bm 0,\Pi_h w, \hat w_h )$ yields 
\begin{align}
(\delta_h, \Pi_h w) & = (\tilde f, \Pi_h w) -\alpha \mathcal{A}_h((\bm q_h, u_h, \hat u_h), (\bm 0,\Pi_h w, \hat w_h ))  \\ & \lesssim \|\tilde f\|_{H^1(\mathcal{T}_h)^*} \|\Pi_h w \|_{\mathrm{DG}} + \alpha \vvvert(\bm q_h, u_h, \hat u_h)\vvvert \vvvert(\bm 0, \Pi_h w, \hat w_h )\vvvert ,  \nonumber
\end{align}
where we used continuity of the form $\mathcal{A}_h$ on the discrete spaces, see \eqref{eq:continuity_A}. 
This implies that 
\begin{align}\label{eq:bound_dual_norm}
    \|\delta_h \|_{H^1(\mathcal{T}_h)^*}  
    \lesssim
    \|\tilde f\|_{H^1(\mathcal{T}_h)^*} +  
    \alpha \vvvert(\bm q_h, u_h, \hat u_h)\vvvert. 
\end{align} 
We use the above in \eqref{eq:stability_1_0} and that $ \|u_h\|_{\mathrm{DG}} \lesssim \vvvert (\bm q_h, u_h, \hat u_h) \vvvert$ which follows from \eqref{eq:dg_to_triple}. Applications of Young's inequality yield the result.
\end{proof} 
\begin{remark}
In the linearized subproblems arising from Newton's method, the function $\chi =   \mathcal{U}(\psi)$. Thus, if $\| \mathcal{U}'(\psi)^{-1/2} \mathcal{U}(\psi) \|_{L^\infty(\Omega)}$ is bounded uniformly as $\mathrm{ess} \inf \psi \rightarrow -\infty$ or as $\mathrm{ess} \sup \psi \rightarrow \infty$, then one obtains a uniform estimate for the first two terms in \eqref{eq:stability_linearized}. This is the case for the unilateral obstacle problems and examples \eqref{eq:U3} and \eqref{eq:U4} with $\mathrm{ess} \inf \psi \rightarrow - \infty$. For the double obstacle problem, the situation is more delicate. In the error analysis below, we obtain bounds in the first two norms of \eqref{eq:stability_linearized} that are uniformly bounded with $\psi$ for $\mathcal{U}$ given in \eqref{eq:Z_1}--\eqref{eq:Z_2}. 
\end{remark}

\subsection{Error Analysis} 
In this subsection, we show error estimates between the discretized solutions of the linear subproblems \eqref{eq:summary_scheme} and their continuous counterparts resulting from applying Newton's method to the nonlinear subproblems \eqref{eq:saddle_point_problem}:  Find $( u, \delta) \in H^1_0(\Omega) \times L^2(\Omega)$ such that 
\begin{subequations} 
\label{eq:linearized_subproblem_cont}
\begin{alignat}{2}
\alpha  (A \nabla u , \nabla v) +  (\delta , v) &=  (\tilde f, v)   && \quad \forall v \in H^1_0(\Omega), \\
    (u , q) - (\mathcal{U}'(\psi) \delta , q) & = (\phi + \chi, q)  &&  \quad \forall q \in L^2(\Omega). 
 \end{alignat}
 \end{subequations}
The well-posedness of \eqref{eq:linearized_subproblem_cont} follows from an immediate application of the  Lax--Milgram Theorem since $\mathcal{U}'(\psi) > 0$ a.e. in $\Omega$. See \cite[Appendix B]{keith2023proximal} for a similar argument.

Since we are interested in anisotropic diffusion, one can not assume $H^{3/2 + \eta}$-regularity for $\eta >0$ for the solution $u$ of the linearized subproblems \eqref{eq:linearized_subproblem_cont}. However, this regularity is needed for strong notions of consistency. In particular, it is required in order to use $(-A\nabla u, u,\hat u)$ as the first argument in the form $\mathcal{A}_h$, as is done in \cite{egger2010hybrid}. This is due to the presence of the normal flux on element boundaries in the form $\mathcal{B}_h$.   

Here, we use the tools developed in \cite{ern2021quasi} to define a suitable extension of the form $\mathcal{A}_h$ that it is well defined 
for the following space. For $s \in (0,1/2)$, 
\begin{equation}
    V^{s} = \{ u \in H^{1+s} (\Omega): \,\, \nabla \cdot (A \nabla u) \in L^2(\Omega) \}.
\end{equation}
First, we define the normal trace of the flux by duality. 
This requires the definition of the \textit{face-to-element lifting} \cite[Lemma 3.1]{ern2021quasi}. Let $\rho = 2d/(d-2s)$, $\rho'$ be its conjugate ($1/\rho +1/\rho' = 1$), and let $\gamma_{\partial T}$ denote the Dirichlet trace operator. For any $E \subset \partial T$, there exists a lifting operator:
\begin{equation}
L^T_E: W^{\frac{1}{\rho}, \rho'}(E) \rightarrow W^{1, \rho'}(T), \quad \gamma_{\partial T} ( L_E^T(\varphi) ) = \begin{cases} \varphi & \mathrm{on} \,\,\, E, 
\\ 
0 & \mathrm{otherwise}. 
\end{cases}
\end{equation}
With this map, the normal trace of a function belonging to the space 
\begin{equation}
\bm S^d = \{\bm \tau \in L^\rho(\Omega): \,\, \rami{ \nabla \cdot \bm \tau{}_{\vert_{T}}} \in L^2(T), \quad \forall T \in \mathcal{T}_h\} 
\end{equation}
is defined as a functional on $W^{\frac1\rho, \rho'}(E)$ as follows 
\begin{equation}
\langle ( \bm \tau \cdot \bm n_K)_{\vert E}, \varphi\rangle_{E} = \int_T  (\bm \tau \cdot \nabla L_E^T(\varphi) + (\nabla \cdot \bm \tau) L_E^T(\varphi)  )\, \mathrm{d}x.  \label{eq:def_duality_pair}
\end{equation}
Observe that the second term above is well-defined since $W^{1,\rho'}(T) \hookrightarrow L^2(T)$ \cite[Theorem 2.31]{Ern:booki}.
Following \cite[Section 4.5]{ern2021quasi}, we then define the form $\eta_{\sharp}: \bm S^d \times V_h^p \times M_h^p \rightarrow \R$ 
\begin{equation}
    \eta_{\sharp}(\bm \tau , (v_h, \hat v_h)) = \sum_{T \in \mathcal{T}_h} \sum_{E  \in \mathcal{F}_T } \langle ( \bm \tau \cdot \bm n_T)_{\vert E}, (v_h - \hat v_h)_{\vert E} \rangle_{E}, 
\end{equation}
where $\mathcal{F}_T $ is the collection of all facets $E$ of the element $T$. The above form allows us to define the following extension of $\mathcal{A}_h$ given by 
$\tilde{\mathcal{A}}:(\bm S^d \times H^1(\mathcal{T}_h) \times L^2(\mathcal{E}_h)) \times (\bm \Sigma_h^p \times V_h^p \times M^{p}_{h,0}) \rightarrow \R $ 
\begin{multline}
    \tilde{\mathcal{A}}  ( (\bm q, u ,\hat u) , (\bm r_h, v_h ,\hat v_h)) = (A^{-1} \bm q,\bm r_h)_{\mathcal{T}_h} + \mathcal{B}_h (\bm r_h, (u, \hat u)) \\  
    + \eta_{\sharp}(\bm q, (v_h, \hat v_h)) - (\bm q, \nabla_h v_h)_{\mathcal{T}_h} . 
\end{multline}
Finally, recall that $\rho = 2d/(d-2s)$ and equip the space $\bm S^d$  with the following weighted norm 
\begin{align}
\|\bm \tau\|^2_{\bm{S}^d } = \sum_{T \in \mathcal{T}_h}  \left( h_T^{2s}\|A^{-1/2} \bm \tau \|^2_{L^{\rho}(T)} + h_T^{2} \|A^{-1/2} \nabla \cdot \bm \tau \|_{L^2(T)}^2 \right).  \label{eq:def_norm_Sd}
\end{align}
\begin{lemma}[Consistency] \label{lemma:consistency}
Let $u \in V^s \cap H^1_0(\Omega)$ and $\delta \in L^2(\Omega)$ be the solutions of the linearized sub-problem \eqref{eq:linearized_subproblem_cont}. Denote by $\bm \sigma(u) = - A \nabla u $.  Then, 
\begin{subequations}
\begin{align}
\alpha \tilde{\mathcal{A}} ( (\bm \sigma(u), u , u) , (\bm r_h, v_h ,\hat v_h))  + (\delta, v_h) &  =  (\tilde f, v_h) \label{eq:consistency_0} \\ 
(u, q_h) - (\mathcal{U}'(\psi) \delta ,q_h) & = (\phi + \chi , q_h),  \label{eq:consistency_1} 
\end{align}
\end{subequations}
for all $(\bm r_h, v_h, \hat v_h) \in \bm \Sigma_h^p \times V_h^p \times M^{k}_{h,0}$ and for all $q_h \in V_h^p$.
\end{lemma}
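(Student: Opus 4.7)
The plan is to verify the two equations separately. The second equation is immediate: the second continuous equation in \eqref{eq:linearized_subproblem_cont} holds for every $q \in L^2(\Omega)$, and $V_h^p \subset L^2(\Omega)$, so specializing $q = q_h$ yields \eqref{eq:consistency_1} with nothing to prove.

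For the first equation, I would begin by extracting the strong PDE from \eqref{eq:linearized_subproblem_cont}. Because $u \in V^s$ gives $\nabla\cdot(A\nabla u)\in L^2(\Omega)$, testing the first continuous equation with $v\in C_c^\infty(\Omega)$ and integrating by parts yields the equality $\alpha\,\nabla\cdot\bm\sigma(u) + \delta = \tilde f$ in $L^2(\Omega)$, where $\bm\sigma(u) = -A\nabla u$. Pairing with $v_h \in V_h^p$ gives $\alpha(\nabla\cdot\bm\sigma(u), v_h) + (\delta, v_h) = (\tilde f, v_h)$, so the whole claim reduces to the identity
\begin{equation*}
\tilde{\mathcal{A}}((\bm\sigma(u), u, u), (\bm r_h, v_h, \hat v_h)) = (\nabla\cdot\bm\sigma(u), v_h)
\quad \text{for all } (\bm r_h, v_h, \hat v_h).
\end{equation*}

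Next I would split $\tilde{\mathcal{A}}$ into its $\bm r_h$-block $(A^{-1}\bm\sigma(u), \bm r_h)_{\mathcal T_h} + \mathcal{B}_h(\bm r_h, (u,u))$ and its $(v_h, \hat v_h)$-block $\eta_\sharp(\bm\sigma(u), (v_h, \hat v_h)) - (\bm\sigma(u), \nabla_h v_h)_{\mathcal T_h}$. The $\bm r_h$-block cancels term-by-term, since $A^{-1}\bm\sigma(u) = -\nabla u$, $u\in H^1_0(\Omega)$ forces $\nabla_h u = \nabla u$ in $L^2(\Omega)$, and the jump $(u-u, \bm r_h\cdot\bm n)_{\partial\mathcal T_h}$ vanishes identically. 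For the $(v_h, \hat v_h)$-block I would first erase $\hat v_h$: rearranging $\eta_\sharp$ as a sum over facets, the $\hat v_h$-contribution is zero because (i) $\bm\sigma(u)\in H(\mathrm{div}; \Omega)$, a direct consequence of $u\in V^s$, makes the normal-trace functionals on interior facets opposite in the duality sense, and (ii) $\hat v_h\in M_{h,0}^p$ vanishes on $\partial\Omega$. The remaining elementwise integration-by-parts identity is dispatched via the face-to-element liftings $L_E^T$: setting $\bar v_h^T := \sum_{E\in\mathcal F_T} L_E^T(v_h|_E) \in W^{1,\rho'}(T)$, the definition \eqref{eq:def_duality_pair} gives $\sum_{E\in\mathcal F_T}\langle\bm\sigma(u)\cdot\bm n_T|_E, v_h|_E\rangle_E = \int_T[\bm\sigma(u)\cdot\nabla\bar v_h^T + (\nabla\cdot\bm\sigma(u))\bar v_h^T]\,\mathrm dx$, while $v_h - \bar v_h^T\in W^{1,\rho'}_0(T)$ and the distributional definition of $\nabla\cdot\bm\sigma(u)$ yield $\int_T[\bm\sigma(u)\cdot\nabla(v_h - \bar v_h^T) + (\nabla\cdot\bm\sigma(u))(v_h - \bar v_h^T)]\,\mathrm dx = 0$. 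Summing over $T$ produces the desired identity.

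The main obstacle is not algebraic but regularity-theoretic. Because $u$ is only assumed to lie in $H^{1+s}$ with $s\in(0,\tfrac12)$, classical trace theory is unavailable for $\bm\sigma(u)$ and every boundary pairing must be routed through \eqref{eq:def_duality_pair}. The bookkeeping closes provided two Sobolev embeddings hold on each element: $H^s(T)\hookrightarrow L^\rho(T)$ with $\rho = 2d/(d-2s)$, which guarantees $\bm\sigma(u)\in\bm S^d$; and $W^{1,\rho'}(T)\hookrightarrow L^2(T)$ with $\rho' = 2d/(d+2s)$, which makes $(\nabla\cdot\bm\sigma(u))(v_h - \bar v_h^T)$ a bona fide $L^1$ integral and lets one extend the distributional divergence identity from $C_c^\infty(T)$ to $W^{1,\rho'}_0(T)$ by density. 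The analogous density argument on $\Omega$ is what legitimizes the interior-facet cancellation used to eliminate $\hat v_h$. These are the technical pillars of the argument; everything else is standard manipulation.
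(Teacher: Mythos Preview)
Your proof is correct and follows the same overall strategy as the paper: verify that $(\bm\sigma(u),u,u)$ lies in the right space, show the $\bm r_h$-block of $\tilde{\mathcal A}$ vanishes via $A^{-1}\bm\sigma(u)=-\nabla u$, reduce the $(v_h,\hat v_h)$-block to $(\nabla\cdot\bm\sigma(u),v_h)$, and finish with the strong equation $\alpha\nabla\cdot\bm\sigma(u)+\delta=\tilde f$. The paper, however, dispatches the identity
\[
\eta_\sharp(\bm\sigma(u),(v_h,\hat v_h)) - (\bm\sigma(u),\nabla_h v_h)_{\mathcal T_h} = (\nabla\cdot\bm\sigma(u),v_h)_{\mathcal T_h}
\]
in one line by invoking \cite[Lemma~4.16]{ern2021quasi}, whereas you reconstruct it by hand: you split off the $\hat v_h$-contribution and kill it by a global $H(\mathrm{div})$ cancellation argument across interior facets, then handle the $v_h$-contribution elementwise via the lifting $\bar v_h^T=\sum_{E}L_E^T(v_h|_E)$ and a density argument in $W^{1,\rho'}_0(T)$. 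Your version is more self-contained and makes the regularity bookkeeping (the embeddings $H^s\hookrightarrow L^\rho$ and $W^{1,\rho'}\hookrightarrow L^2$) explicit, at the cost of re-deriving machinery already packaged in \cite{ern2021quasi}; the paper's version is shorter but opaque without that reference in hand.
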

\begin{proof}
We first check that $(\bm \sigma(u), u , u) \in \bm S^d \times H^1(\mathcal{T}_h) \times L^2(\mathcal{E}_h)$. To this end,  observe that if $u \in V^{s}$ then $\nabla u \in H^s(T)^d$ for any element $T \in \mathcal{T}_h$. Then, by the Sobolev embedding theorem \cite[Theorem 2.31]{Ern:booki} and the assumption that $A \in L^{\infty}(\Omega)$, we have that $A \nabla u \in L^\rho(T)$ where $\rho = 2d/(d-2s)$ for any $T \in \mathcal{T}_h$. From trace theory, it also follows that $u \in L^2(\mathcal{E}_h)$. 

To show the consistency property, we use \cite[Lemma 4.16]{ern2021quasi} to obtain that
\begin{align}
\eta_{\sharp}(\bm \sigma(u) , (v_h, \hat v_h)) - (\bm \sigma (u) , \nabla_h v_h)_{\mathcal{T}_h}
=   (\nabla \cdot \bm \sigma(u), v_h)_{\mathcal{T}_h}. 
\end{align}
Further, observe that by definition 
\[ \mathcal{B}_h (\bm r_h , u,u ) = 
 (\bm r_h, \nabla u )_{\mathcal{T}_h} = - ( A^{-1} \bm \sigma (u), \bm r_h)_{\mathcal{T}_h}. 
\]
Collecting the above and using that $ \alpha \nabla \cdot \bm \sigma(u) =  \tilde f  - \delta \in L^2(\Omega)$ yield  \eqref{eq:consistency_0}. The derivation of \eqref{eq:consistency_1} is immediate since $V_h^p \subset L^2(\Omega)$. 
\end{proof}
The next Lemma shows that the form $\rami{\tilde{\mathcal{A}}}$ indeed defines an extension of $\mathcal{A}_h$. 
\begin{lemma}[Extension]\label{lemma:B_h_to_etah}
For $\bm q_h \in \bm \Sigma_h^p$, we have that 
\begin{equation}
  \mathcal{B}_h(\bm q_h , (v_h, \hat v_h)) = -\eta_{\sharp}(\bm q_h, (v_h, \hat v_h) ) + (\bm q_h, \nabla_h v_h)_{\mathcal{T}_h} \quad \forall (v_h , \hat v_h) \in V_h^p \times M_h^p.
  \end{equation}
\end{lemma}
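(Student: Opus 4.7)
The plan is to reduce the claim to the observation that, for the piecewise-smooth test function $\bm q_h \in \bm\Sigma_h^p$, the abstract normal trace defined by duality in \eqref{eq:def_duality_pair} coincides with the classical $L^2$ facet trace $\bm q_h \cdot \bm n_T|_E$. Once this identification is in hand, comparing the definitions
\[
\mathcal{B}_h(\bm q_h,(v_h,\hat v_h))=(\bm q_h,\nabla_h v_h)_{\mathcal{T}_h}-(v_h-\hat v_h,\bm q_h\cdot\bm n)_{\partial\mathcal{T}_h}
\]
and
\[
\eta_\sharp(\bm q_h,(v_h,\hat v_h))=\sum_{T\in\mathcal{T}_h}\sum_{E\in\mathcal{F}_T}\langle(\bm q_h\cdot\bm n_T)|_E,(v_h-\hat v_h)|_E\rangle_E
\]
yields the desired equality by inspection.

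First, I would verify that $\bm q_h\in\bm S^d$, so that the definition~\eqref{eq:def_duality_pair} actually applies. This is immediate: on each element $T$, $\bm q_h|_T$ is obtained from a polynomial on $\hat T$ via the Piola transform, hence $\bm q_h|_T\in C^\infty(T)^d$ and therefore $\bm q_h\in L^\rho(\Omega)$ with $\nabla_h\cdot\bm q_h\in L^2(T)$ for every $T$. In particular, the classical normal trace $(\bm q_h\cdot\bm n_T)|_E$ is a well-defined function in $L^2(E)$ for every facet $E\subset\partial T$.

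Second, I would fix $T\in\mathcal{T}_h$, $E\in\mathcal{F}_T$, and $\varphi:=(v_h-\hat v_h)|_E\in L^2(E)$, and apply the classical divergence theorem to the smooth vector field $\bm q_h|_T$ and the $W^{1,\rho'}(T)$-function $L_E^T(\varphi)$:
\[
\int_T\bigl(\bm q_h\cdot\nabla L_E^T(\varphi)+(\nabla\cdot\bm q_h)\,L_E^T(\varphi)\bigr)\dd x
=\int_{\partial T}(\bm q_h\cdot\bm n_T)\,\gamma_{\partial T}(L_E^T(\varphi))\dd s.
\]
By the defining property of the face-to-element lifting, $\gamma_{\partial T}(L_E^T(\varphi))$ equals $\varphi$ on $E$ and vanishes on $\partial T\setminus E$, so the right-hand side collapses to $\int_E(\bm q_h\cdot\bm n_T)\,\varphi\dd s$. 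Comparing with~\eqref{eq:def_duality_pair} shows that $\langle(\bm q_h\cdot\bm n_T)|_E,\varphi\rangle_E$ agrees with the standard $L^2(E)$ pairing $\int_E(\bm q_h\cdot\bm n_T)\varphi\dd s$.

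Third, summing this identity over all $E\in\mathcal{F}_T$ and all $T\in\mathcal{T}_h$ gives
\[
\eta_\sharp(\bm q_h,(v_h,\hat v_h))=\sum_{T\in\mathcal{T}_h}\int_{\partial T}(\bm q_h\cdot\bm n_T)(v_h-\hat v_h)\dd s=(v_h-\hat v_h,\bm q_h\cdot\bm n)_{\partial\mathcal{T}_h},
\]
which, combined with the definition of $\mathcal{B}_h$ displayed above, yields the claim. There is essentially no obstacle here beyond checking regularity: the only subtle point is making sure that Green's formula is valid for the pair $(\bm q_h|_T,L_E^T(\varphi))$, but this follows from $\bm q_h|_T\in C^\infty(T)^d$ and $L_E^T(\varphi)\in W^{1,\rho'}(T)\hookrightarrow L^2(T)$ (cf.\ \cite[Theorem 2.31]{Ern:booki}), so the standard $W^{1,\rho}$--$W^{1,\rho'}$ Green identity applies.
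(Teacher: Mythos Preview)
Your proof is correct and follows essentially the same approach as the paper: both arguments fix $T$ and $E$, apply Green's theorem on $T$ (using that $\bm q_h|_T$ is smooth) to identify the duality pairing $\langle(\bm q_h\cdot\bm n_T)|_E,\cdot\rangle_E$ with the classical $L^2(E)$ integral via the trace property of $L_E^T$, and then sum over facets and elements. Your additional remarks verifying $\bm q_h\in\bm S^d$ and the applicability of Green's formula are more explicit than the paper's, but the structure is identical.
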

\begin{proof}
Fix $T \in \mathcal{T}_h$ and let $E \in \mathcal{F}_T$ . By definition of the duality pairing \eqref{eq:def_duality_pair} and Green's theorem since $\bm q_h{}_{\vert T}$ is smooth, we write 
\begin{align*}
\langle ( \bm q_h \cdot \bm n_T)_{\vert E}, (v_h - \hat v_h)\vert_{E} \rangle &=  \int_T  (\bm q_h  \cdot \nabla L_E^T(v_h - \hat v_h) + (\nabla \cdot \bm q_h) L_E^T(v_h - \hat v_h)) \dd x \\
& = \int_{\partial T}  \bm q_h \cdot \bm n_{T} L_E^T(v_h - \hat v_h) \dd s  \\ 
& = \int_E \bm q_h \cdot \bm n_{T} (v_h - \hat v_h) \dd s . 
\end{align*}
The last equality holds since $L_E^T(v_h - \hat v_h)$ vanishes on $\partial T \backslash E$. Summing over all $E \in \mathcal{F}_T$ and over all $T \in \mathcal{T}_h$ yields the result.
\end{proof}
Proceeding, we derive error equations. Observe that there is no Galerkin orthogonality property here. However, the previous results allow us to adopt an argument similar to Strang's Second Lemma. 
\begin{lemma}[Error equations] Let $(\bm q_h, u_h, \hat u_h) \in \bm \Sigma_h^p \times V_h^p \times M^{p}_{h,0}$ and $\delta_h \in V_h^p$ solve \eqref{eq:linearized_subproblem}. 
For any $(\bm r_h , v_h, \hat v_h) \in \bm \Sigma_h^p \times V_h^p \times M^{p}_{h,0}$ and $\varphi_h \in V_h^p$, let $(\bm e_h , e_h, \hat e_h) = (\bm q_h - \bm r_h, u_h - v_h , \hat u_h - \hat v_h )$ and $e_{\delta,h} = \delta_h -\varphi_h$. 

The following error equations hold for all $  (\bm w_h , w_h , \hat w_h) \in   \bm \Sigma_h^p \times V_h^p \times M^{p}_{h,0} $ and for all $q_h \in V_h^p$,
\begin{align}
    \alpha \mathcal{A}_h((\bm e_h, e_h ,\hat e_h), (\bm w_h , w_h , \hat w_h ))  +    (e_{\delta, h}, w_h) & =  \mathcal{L}_1(\bm w_h , w_h , \hat w_h), \label{eq:first_error_eq}\\  
    (e_h, q_h ) - (\mathcal{U}'(\psi)  e_{\delta,h}, q_h) & = \mathcal{L}_2(q_h),  \label{eq:second_error_eq}
\end{align}
where $\mathcal{L}_1$ and $\mathcal{L}_2$ are given by 
\begin{align} \label{eq:consistency_err0}
\mathcal{L}_1(\bm w_h , w_h , \hat w_h) & = (\delta -\varphi_h, w_h )  \\ & + \alpha \tilde{\mathcal{A}} ( (\bm \sigma(u), u , u) , (\bm w_h, w_h ,\hat w_h))  - \alpha  \mathcal{A}_h((\bm r_h, v_h ,\hat v_h), (\bm w_h , w_h , \hat w_h )) \nonumber  \\ 
\mathcal{L}_2(q_h)  & = (u - v_h, q_h)- (\mathcal{U}'(\psi) (\delta - \varphi_h), q_h). \label{eq:consistency_err1}  
\end{align}
\end{lemma}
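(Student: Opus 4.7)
The plan is to obtain the error equations by straightforward subtraction of the discrete scheme \eqref{eq:summary_scheme} from a discretely-tested version of the continuous linearized problem \eqref{eq:linearized_subproblem_cont}, and then rearranging using bilinearity of $\mathcal{A}_h$ and the substitutions $\bm q_h = \bm r_h + \bm e_h$, $u_h = v_h + e_h$, $\hat u_h = \hat v_h + \hat e_h$, $\delta_h = \varphi_h + e_{\delta,h}$.

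\textbf{Step 1 (Setup).} First, I would write out the two discrete equations in \eqref{eq:summary_scheme} tested against arbitrary $(\bm w_h, w_h, \hat w_h)\in \bm\Sigma_h^p\times V_h^p\times M_{h,0}^p$ and $q_h\in V_h^p$. Next, I invoke the consistency property of \Cref{lemma:consistency}: since $u\in V^s\cap H^1_0(\Omega)$ and $\delta\in L^2(\Omega)$ solve \eqref{eq:linearized_subproblem_cont}, the extended form $\tilde{\mathcal{A}}$ satisfies
\[
    \alpha\,\tilde{\mathcal{A}}((\bm\sigma(u),u,u),(\bm w_h,w_h,\hat w_h)) + (\delta,w_h) = (\tilde f,w_h),
    \qquad
    (u,q_h) - (\mathcal{U}'(\psi)\delta,q_h) = (\phi+\chi,q_h).
\]
The reason I must use $\tilde{\mathcal{A}}$ rather than $\mathcal{A}_h$ in this identity is exactly the low-regularity issue flagged in the text: $\bm\sigma(u)=-A\nabla u$ does not have a well-defined normal trace on facets without extra regularity, so the duality pairing built into $\eta_\sharp$ is required. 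This is the one subtle point of the argument.

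\textbf{Step 2 (Subtract and decompose).} Subtracting the continuous identity from the discrete one yields
\[
    \alpha\bigl[\mathcal{A}_h((\bm q_h,u_h,\hat u_h),(\bm w_h,w_h,\hat w_h)) - \tilde{\mathcal{A}}((\bm\sigma(u),u,u),(\bm w_h,w_h,\hat w_h))\bigr] + (\delta_h-\delta,w_h) = 0,
\]
together with $(u_h-u,q_h) - (\mathcal{U}'(\psi)(\delta_h-\delta),q_h) = 0$. Now I split $\bm q_h = \bm r_h + \bm e_h$ etc., use bilinearity of $\mathcal{A}_h$ in its first slot, and write $\delta_h - \delta = e_{\delta,h} - (\delta-\varphi_h)$. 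Collecting the $\bm e_h$, $e_h$, $\hat e_h$, $e_{\delta,h}$ terms on the left and moving the inserted projections $(\bm r_h,v_h,\hat v_h,\varphi_h)$ and the continuous form to the right produces precisely \eqref{eq:first_error_eq} with the right-hand side $\mathcal{L}_1$ as defined in \eqref{eq:consistency_err0}.

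\textbf{Step 3 (Second equation).} For the second error equation I do the analogous substitution in the identity $(u_h-u,q_h) - (\mathcal{U}'(\psi)(\delta_h-\delta),q_h)=0$: write $u_h - u = e_h - (u-v_h)$ and $\delta_h-\delta = e_{\delta,h} - (\delta-\varphi_h)$. Rearranging so that only the error quantities remain on the left yields \eqref{eq:second_error_eq} with $\mathcal{L}_2$ as in \eqref{eq:consistency_err1}.

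\textbf{Anticipated difficulties.} The computation itself is purely algebraic, so I do not expect any real obstacle beyond bookkeeping. The only conceptually non-trivial ingredient is the appeal to \Cref{lemma:consistency}, and implicit reliance on \Cref{lemma:B_h_to_etah} to guarantee that $\tilde{\mathcal{A}}$ and $\mathcal{A}_h$ agree on discrete triples (so that the error equations, which live entirely in the discrete spaces, remain phrased in terms of $\mathcal{A}_h$). Once this is recognized, the derivation is simply bilinearity plus regrouping of terms.
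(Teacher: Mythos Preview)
Your proposal is correct and follows essentially the same route as the paper: start from the discrete scheme \eqref{eq:summary_scheme}, invoke the consistency identity of \Cref{lemma:consistency} to replace $(\tilde f,w_h)$ by $\alpha\tilde{\mathcal{A}}((\bm\sigma(u),u,u),\cdot)+(\delta,w_h)$, then split $(\bm q_h,u_h,\hat u_h,\delta_h)=(\bm r_h,v_h,\hat v_h,\varphi_h)+(\bm e_h,e_h,\hat e_h,e_{\delta,h})$ and regroup. One small remark: your appeal to \Cref{lemma:B_h_to_etah} is not actually needed in this lemma, since $\tilde{\mathcal{A}}$ only ever appears with the continuous triple $(\bm\sigma(u),u,u)$ in its first slot and $\mathcal{A}_h$ only with discrete triples; the two forms are never compared on a common argument here.
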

\begin{proof}
Since $(\bm q_h, u_h, \hat u_h) $ solve \eqref{eq:summary_scheme0}-\eqref{eq:summary_scheme1} and by linearity of $\mathcal{A}_h$, we have 
    \begin{align}
       \alpha \mathcal{A}_h((\bm e_h, e_h ,\hat e_h), (\bm w_h , w_h , \hat w_h )) =  (\tilde f, w_h )- (\delta_h, w_h) - \alpha  \mathcal{A}_h((\bm r_h, v_h ,\hat v_h), (\bm w_h , w_h , \hat w_h )).
    \end{align}
    With the consistency property \Cref{lemma:consistency}, 
    \begin{multline}
        \alpha \mathcal{A}_h((\bm e_h, e_h ,\hat e_h), (\bm w_h , w_h , \hat w_h )) =  (\delta -\delta_h, w_h )  \\ + \alpha \tilde{\mathcal{A}} ( (\bm \sigma(u), u , u) , (\bm w_h, w_h ,\hat w_h))  - \alpha  \mathcal{A}_h((\bm r_h, v_h ,\hat v_h), (\bm w_h , w_h , \hat w_h )). 
    \end{multline}
    With writing $\delta - \delta_h = \delta - \varphi_h -  e_{\delta,h}$, we obtain \eqref{eq:first_error_eq}. The equality given by \eqref{eq:second_error_eq} is immediate.
\end{proof}

\begin{theorem}[Convergence]\label{thm:conv_linearized}
Let $(\bm q_h, u_h, \hat u_h) $ solve \eqref{eq:summary_scheme} and let $u$ solve the linearized sub-problem \eqref{eq:linearized_subproblem_cont}.   Assume that $u \in V^{s} $ and let $\bm \sigma(u) = -A \nabla u$. The following estimate holds 
\begin{equation}
\vvvert (\bm \sigma(u)-\bm q_h, u- u_h , \hat u - \hat u_h )\vvvert + \|\delta - \delta_h\|_{H^1(\mathcal{T}_h)^* } \lesssim \mathcal{E}(u) + \mathcal{E}(\delta), \label{eq:main_error_estimate}
\end{equation}
where  $\mathcal{E}(u)$ and $\mathcal{E}(\delta)$ are best approximation errors given by
\begin{align}
    \mathcal{E}(u)  = & \inf_{(\bm r_h, v_h,\hat v_h)} \left(  \|u - v_h\|_{\mathrm{DG}} + \|\bm \sigma(u) - \bm r_h \|_{\bm S^d} + \vvvert (\bm \sigma (u) - \bm r_h , u-v_h, u - \hat v_h) \vvvert   \right) \\ 
    \mathcal{E}(\delta)  =& \,\,\,\,\,\,\, \inf_{\varphi_h}\,\,( \|\delta - \varphi_h\|_{H^1(\mathcal{T}_h)^*} +\|\mathcal{U}'(\psi)\|_{L^\infty(\Omega)}^{1/2} \| \delta - \varphi_h\|_{L^2(\Omega)}  ). 
\end{align}
The hidden constant in \eqref{eq:main_error_estimate} is independent of $\psi$.
 \end{theorem}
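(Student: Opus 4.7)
The plan is a Strang-type error analysis driven by the error equations \eqref{eq:first_error_eq}--\eqref{eq:second_error_eq}. For arbitrary $(\bm r_h, v_h, \hat v_h, \varphi_h) \in \bm \Sigma_h^p \times V_h^p \times M_{h,0}^p \times V_h^p$, I split the total error via the triangle inequality: writing $(\bm\sigma(u) - \bm q_h, u - u_h, \hat u - \hat u_h) = (\bm\sigma(u) - \bm r_h, u - v_h, \hat u - \hat v_h) - (\bm e_h, e_h, \hat e_h)$ with $\hat u = u|_{\mathcal{E}_h}$, and similarly $\delta - \delta_h = (\delta - \varphi_h) - e_{\delta,h}$. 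The approximation pieces are already absorbed in $\mathcal{E}(u) + \mathcal{E}(\delta)$, so it remains to control the discrete residuals $\vvvert(\bm e_h, e_h, \hat e_h)\vvvert$ and $\|e_{\delta,h}\|_{H^1(\mathcal{T}_h)^*}$.

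Because the residuals satisfy \eqref{eq:first_error_eq}--\eqref{eq:second_error_eq}, a system with exactly the same algebraic structure as \eqref{eq:summary_scheme} but with $(\mathcal{L}_1, \mathcal{L}_2)$ replacing the data, I would replay the argument for the first stability estimate verbatim---testing with $(\bm w_h^*, e_h, \hat e_h)$ where $\bm w_h^*$ realizes the inf-sup \eqref{eq:inf_sup}, testing the second error equation with $e_{\delta,h}$, and using the duality bound \eqref{eq:bound_dual_norm}. This controls $\vvvert(\bm e_h, e_h, \hat e_h)\vvvert$, $\|e_{\delta,h}\|_{H^1(\mathcal{T}_h)^*}$, and $\|\mathcal{U}'(\psi)^{1/2} e_{\delta,h}\|_{L^2(\Omega)}$ by the norms of $\mathcal{L}_1, \mathcal{L}_2$ on the specific test tuples encountered. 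I would then bound $\mathcal{L}_1$ from \eqref{eq:consistency_err0} by handling $(\delta - \varphi_h, w_h)$ via $\|\delta - \varphi_h\|_{H^1(\mathcal{T}_h)^*}\|w_h\|_{\mathrm{DG}}$ and using \Cref{lemma:B_h_to_etah} together with linearity of $\tilde{\mathcal{A}}$ in its first argument to collapse the remainder to $\tilde{\mathcal{A}}((\bm\sigma(u) - \bm r_h, u - v_h, u - \hat v_h), \cdot)$. Continuity of $\tilde{\mathcal{A}}$ then produces the $\vvvert(\bm\sigma(u) - \bm r_h, u - v_h, u - \hat v_h)\vvvert$ factor from the $(A^{-1}\cdot,\cdot)$, $\mathcal{B}_h$, and $(\cdot,\nabla_h\cdot)$ contributions, and the $\|\bm\sigma(u) - \bm r_h\|_{\bm S^d}$ factor from the jump form $\eta_\sharp$. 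For $\mathcal{L}_2$ in \eqref{eq:consistency_err1}, Cauchy--Schwarz gives $(u - v_h, q_h) \le \|u - v_h\|_{\mathrm{DG}}\|q_h\|_{H^1(\mathcal{T}_h)^*}$ and $(\mathcal{U}'(\psi)(\delta - \varphi_h), q_h) \le \|\mathcal{U}'(\psi)\|_{L^\infty(\Omega)}^{1/2}\|\delta - \varphi_h\|_{L^2(\Omega)}\|\mathcal{U}'(\psi)^{1/2} q_h\|_{L^2(\Omega)}$, both of which fit the LHS of the stability estimate via Young's inequality, with the only $\psi$-dependent factor absorbed into $\mathcal{E}(\delta)$ by design. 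Infimizing over $(\bm r_h, v_h, \hat v_h, \varphi_h)$ then delivers \eqref{eq:main_error_estimate}.

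The main obstacle is the continuity of $\tilde{\mathcal{A}}$ in its first flux argument with respect to the weighted $\bm S^d$-norm. The jump contribution $\eta_\sharp$ is defined through the face-to-element lifting $L_E^T$, so its bound requires H\"older's inequality with the conjugate exponents $(\rho, \rho')$ associated with $\rho = 2d/(d - 2s)$, the $W^{1,\rho'}$-stability of $L_E^T$ from \cite{ern2021quasi}, local trace and inverse estimates on the reference element $\hat T$, and a scaling argument that generates exactly the $h_T$-weighting in \eqref{eq:def_norm_Sd}. This tight coupling between the fractional Sobolev embedding $H^{1+s}(T) \hookrightarrow W^{1,\rho'}(T)$, the Raviart--Thomas jump data, and element scaling is what pins down the $\bm S^d$-norm and explains why a fractional index $s \in (0, 1/2)$ is enough for the low-regularity heterogeneous anisotropic-diffusion setting.
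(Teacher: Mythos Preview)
Your proposal is correct and follows the paper's strategy: the same consistency bound on $\mathcal{L}_1$ via \Cref{lemma:B_h_to_etah} and the $\eta_\sharp$-estimate from \cite{ern2021quasi}, the same dual-norm bound \eqref{eq:bound_dual_norm} for $e_{\delta,h}$, and the same Cauchy--Schwarz/Young treatment of $\mathcal{L}_2$. The only organizational difference is that the paper splits $(\bm e_h,e_h,\hat e_h,e_{\delta,h})$ by linearity into a part driven solely by $\mathcal{L}_1$ and a part driven solely by $\mathcal{L}_2$, bounding each separately, whereas you keep them together and close the loop by substituting the dual-norm bound $\|e_{\delta,h}\|_{H^1(\mathcal{T}_h)^*}\lesssim \sup\mathcal{L}_1/\vvvert\cdot\vvvert + \vvvert(\bm e_h,e_h,\hat e_h)\vvvert$ into the $\mathcal{L}_2$ estimate before absorbing the resulting $\vvvert(\bm e_h,e_h,\hat e_h)\vvvert$ term via Young's inequality; both routes work and use the identical ingredients.
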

Error estimates are shown in the following corollary; see \Cref{appendix:error_estimate} for the proof. \begin{corollary}\label{cor:err_estimate}
Assume that $u \in V^s$, $\delta \in H^s(\Omega)$ for $s \in (0,1/2)$ \rami{and that $A$ is piecewise constant}, then 
\begin{align} \label{eq:error_rate}
    \vvvert (\bm \sigma(u) - \bm q_h , u - & u_h , u - \hat u_h )\vvvert + \|\delta - \delta_h\|_{H^1(\mathcal{T}_h)^* } \\ \nonumber & \lesssim  h^s (|u|_{H^{1+s}(\Omega)} + \|\mathcal{U}'(\psi)\|^{1/2}_{L^{\infty}(\Omega)} \|\delta\|_{H^s(\Omega)})\nonumber  \\ 
    & + h ( \|\nabla \cdot \bm \sigma(u) \|_{L^2(\Omega)} + \|\delta \|_{L^2(\Omega)}) .  \nonumber 
\end{align}
\end{corollary}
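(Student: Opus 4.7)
The plan is to mimic the proof of the first stability estimate, but now applied to the error equations \eqref{eq:first_error_eq}--\eqref{eq:second_error_eq} with the consistency residuals $\mathcal{L}_1,\mathcal{L}_2$ playing the role of the data. Since these error equations share the algebraic structure of the original discrete problem \eqref{eq:summary_scheme}, the coercivity argument that powered the stability proof carries over verbatim; all that is genuinely new is the estimation of $\mathcal{L}_1$ and $\mathcal{L}_2$. A final triangle inequality against the best-approximation will convert the resulting bound on $\vvvert(\bm e_h, e_h, \hat e_h)\vvvert$ (the difference between the discrete solution and an arbitrary discrete triple) into the stated error against the continuous solution.

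First, I would test \eqref{eq:first_error_eq} with $(\bm w_h, w_h, \hat w_h) = (\bm r_h^\ast, e_h, \hat e_h)$, where $\bm r_h^\ast$ is the auxiliary element delivering the inf-sup property \eqref{eq:inf_sup} for $(\bm e_h, e_h, \hat e_h)$, and test \eqref{eq:second_error_eq} with $q_h = e_{\delta,h}$. Subtracting eliminates the cross term $(e_{\delta,h}, e_h)$ and, via \Cref{lemma:inf_sup}, yields
\[
\alpha\, \vvvert(\bm e_h, e_h, \hat e_h)\vvvert^2 + \|\mathcal{U}'(\psi)^{1/2} e_{\delta,h}\|_{L^2(\Omega)}^2 \lesssim |\mathcal{L}_1(\bm r_h^\ast, e_h, \hat e_h)| + |\mathcal{L}_2(e_{\delta,h})|.
\]
\Cref{lemma:B_h_to_etah} identifies $\mathcal{A}_h$ with $\tilde{\mathcal{A}}$ on discrete arguments, so \eqref{eq:consistency_err0} collapses into a single best-approximation term:
\[
\mathcal{L}_1(\bm w_h, w_h, \hat w_h) = (\delta - \varphi_h, w_h) + \alpha\, \tilde{\mathcal{A}}\bigl((\bm\sigma(u)-\bm r_h,\, u-v_h,\, u-\hat v_h),\, (\bm w_h, w_h, \hat w_h)\bigr).
\]
The key technical ingredient is then a continuity estimate
\[
|\tilde{\mathcal{A}}((\bm\tau, v, \hat v), (\bm w_h, w_h, \hat w_h))| \lesssim \bigl(\|\bm\tau\|_{\bm S^d} + \vvvert(\bm\tau, v, \hat v)\vvvert\bigr)\,\vvvert(\bm w_h, w_h, \hat w_h)\vvvert,
\]
which, paired with the dual-norm definition \eqref{eq:negative_norm} and \eqref{eq:dg_to_triple}, gives $|\mathcal{L}_1(\bm r_h^\ast, e_h, \hat e_h)| \lesssim (\|\delta-\varphi_h\|_{H^1(\mathcal{T}_h)^\ast} + \alpha\, \mathcal{E}(u))\, \vvvert(\bm e_h, e_h, \hat e_h)\vvvert$. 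Cauchy--Schwarz with weights $\mathcal{U}'(\psi)^{\pm 1/2}$ dispatches $\mathcal{L}_2$,
\[
|\mathcal{L}_2(e_{\delta,h})| \lesssim \bigl(\|u-v_h\|_{L^2(\Omega)} + \|\mathcal{U}'(\psi)\|_{L^\infty(\Omega)}^{1/2}\|\delta-\varphi_h\|_{L^2(\Omega)}\bigr)\,\|\mathcal{U}'(\psi)^{1/2} e_{\delta,h}\|_{L^2(\Omega)},
\]
and a Young's inequality absorbs the error-norm factors, producing the desired bound on $\vvvert(\bm e_h, e_h, \hat e_h)\vvvert$ by $\mathcal{E}(u) + \mathcal{E}(\delta)$.

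Finally, for $\|\delta - \delta_h\|_{H^1(\mathcal{T}_h)^\ast}$ I would reuse the projection duality trick \eqref{eq:dual_norm_estimate}--\eqref{eq:bound_dual_norm}: splitting $\delta - \delta_h = (\delta - \varphi_h) - e_{\delta,h}$ handles the continuous piece by $\mathcal{E}(\delta)$, while for the discrete piece I test \eqref{eq:first_error_eq} with $(\bm 0, \Pi_h w, \hat w_h)$ (with $\hat w_h$ defined as in \eqref{eq:triple_to_dg}), invoke the continuity of $\tilde{\mathcal{A}}$ once more, and divide by $\|\Pi_h w\|_{\mathrm{DG}}$; the already-established bound on $\vvvert(\bm e_h, e_h, \hat e_h)\vvvert$ closes the estimate. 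A triangle inequality of the form $\vvvert(\bm\sigma(u)-\bm q_h, u-u_h, \hat u-\hat u_h)\vvvert \leq \vvvert(\bm\sigma(u)-\bm r_h, u-v_h, u-\hat v_h)\vvvert + \vvvert(\bm e_h, e_h, \hat e_h)\vvvert$ (and the analogous split for the dual norm) delivers the theorem. The hard part is proving the continuity of $\tilde{\mathcal{A}}$ on the augmented space $\bm S^d$ in the weighted norm \eqref{eq:def_norm_Sd}: the non-standard duality pairing defining $\eta_\sharp$ must be estimated via Hölder's inequality with conjugate exponents $(\rho,\rho')$ and a face-to-element lifting bound, in such a way that the $h_T^s$ and $h_T$ weights in $\|\cdot\|_{\bm S^d}$ appear exactly, with constants independent of both $h$ and $\psi$. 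Every other step is a direct adaptation of the stability proof.
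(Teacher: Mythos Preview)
Your proposal is a re-proof of \Cref{thm:conv_linearized}, not of \Cref{cor:err_estimate}. The theorem already delivers the abstract best-approximation bound
\[
\vvvert(\bm\sigma(u)-\bm q_h,\, u-u_h,\, u-\hat u_h)\vvvert + \|\delta-\delta_h\|_{H^1(\mathcal T_h)^*} \lesssim \mathcal E(u) + \mathcal E(\delta),
\]
and the corollary's entire content is to \emph{estimate} $\mathcal E(u)$ and $\mathcal E(\delta)$ by $h^s$ times regularity norms. That requires choosing concrete approximants --- in the paper: $\bm r_h = \bm\Pi_h\bm\sigma(u)$ the local $L^2$-projection onto piecewise constants, $v_h = \hat v_h = \mathcal I_h u$ a quasi-interpolant, and $\varphi_h = \Pi_h\delta$ --- and then invoking approximation estimates (fractional Poincar\'e on $H^s(T)$, the Sobolev embedding $H^s\hookrightarrow L^\rho$ with $\rho = 2d/(d-2s)$, and a duality argument for $\|\delta-\Pi_h\delta\|_{H^1(\mathcal T_h)^*}$). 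Your outline never selects approximants and never produces an $h^s$; the final ``triangle inequality against the best-approximation'' you mention is where the actual work of the corollary begins, not ends.

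Separately, even as a proof of the theorem your treatment of $\mathcal L_2$ has a gap. You claim
\[
|(u-v_h,\, e_{\delta,h})| \lesssim \|u-v_h\|_{L^2(\Omega)}\,\|\mathcal U'(\psi)^{1/2} e_{\delta,h}\|_{L^2(\Omega)},
\]
but Cauchy--Schwarz with weight $\mathcal U'(\psi)^{1/2}$ on $e_{\delta,h}$ forces $\|\mathcal U'(\psi)^{-1/2}(u-v_h)\|_{L^2(\Omega)}$ on the other factor, which blows up as $\psi\to\pm\infty$ and destroys the $\psi$-independence you need. The paper circumvents this by splitting the discrete error into two pieces, one solving the error equations with $\mathcal L_2=0$ and the other with $\mathcal L_1=0$; for the second piece the dual-norm bound \eqref{eq:bound_edelta} reduces to $\|\rho_{\delta,h}\|_{H^1(\mathcal T_h)^*}\lesssim\vvvert(\bm\rho_h,\rho_h,\hat\rho_h)\vvvert$, which lets one pair $(u-v_h,\rho_{\delta,h})$ against $\|u-v_h\|_{\mathrm{DG}}$ instead of a weighted $L^2$ norm. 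Your single-shot argument does not have access to that reduction because the $\mathcal L_1$ contribution is still present.
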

\begin{remark}
    If the solution has additional regularity $u \in H^{p+1}(\Omega)$ and $\delta \in H^{p-1}(\Omega)$ for $p \geq 1$, then optimal error estimates can be readily derived. One can also easily obtain an error estimate for $\|\mathcal{U}'(\psi)^{1/2} (\delta - \delta_h)\|_{L^2(\Omega)}$. However, the corresponding bound degenerates with $\psi \rightarrow -\infty$ or with $\psi \rightarrow \infty$. In contrast, observe that the bound  \eqref{eq:error_rate} remains bounded.   
\end{remark}
\begin{proof}(\Cref{thm:conv_linearized}) 
\textbf{Step 1}. (Estimating the consistency error $\mathcal{L}_1(\bm w_h, w_h , \hat w_h)$ for any $(\bm w_h, w_h , \hat w_h) \in \bm \Sigma_h^p \times V_h^p \times M^{p}_{h,0}$). Recalling the definition of $\mathcal{L}_1$ in \eqref{eq:consistency_err0}, we bound its first term by the definition of $\|\cdot\|_{H^{1}(\mathcal{T}_h)^*}$ and by \eqref{eq:dg_to_triple}.
\begin{equation}
|(\delta - \varphi_h , w_h)| \leq \|\delta - \varphi_h\|_{H^1(\mathcal{T}_h)^*}\|w_h\|_{\mathrm{DG}} \lesssim  \|\delta - \varphi_h\|_{H^1(\mathcal{T}_h)^*}\vvvert (\bm w_h, w_h, \hat w_h)\vvvert. 
\end{equation}
Consider the last two terms in \eqref{eq:consistency_err0} which we denote by $\alpha Q$. With Lemma \ref{lemma:B_h_to_etah}, we write 
    \begin{multline}
        Q  = (A^{-1}(\bm \sigma(u) - \bm r_h) , \bm w_h)_{\mathcal{T}_h} + \mathcal{B}_h(\bm w_h, u-v_h, u- \hat v_h)  \\  + \eta_{\sharp}(\bm \sigma(u) - \bm r_h , w_h , \hat w_h) - (\bm \sigma(u) - \bm r_h, \nabla_h w_h) = Q_1 + \ldots + Q_4. 
    \end{multline}
From Cauchy-Schwarz inequality and the definition of $\vvvert \cdot \vvvert$ \eqref{eq:def_triple_norm}, it is easy to see that 
\begin{align}
Q_1 + Q_4  & \leq \|A^{-1/2} (\bm \sigma(u) - \bm r_h)\|_{L^2(\Omega)} ( \| A^{-1/2} \bm w_h\|_{L^2(\Omega)} + \|A^{1/2} \nabla_h w_h \|_{L^2(\Omega)} )\\
\nonumber &   \lesssim  \|A^{-1/2} (\bm \sigma(u) - \bm r_h)\|_{L^2(\Omega)} \vvvert (\bm w_h ,w_h ,\hat w_h) \vvvert.
\end{align} 
For $Q_2$, \rami{we use the definition \eqref{eq:def_Bh} and Cauchy--Schwarz inequality}
\rami{
\begin{align*}
Q_2 & = (\bm w_h, \nabla_h (u-v_h))_{\mathcal{T}_h} - (v_h - \hat v_h, \bm w_h \cdot \bm n)_{\partial \mathcal{T}_h}  
\\ 
& \leq \|A^{-1/2} \bm w_h\|_{L^2(\Omega)} \|A^{1/2} \nabla_h (u-v_h)\|_{L^2(\Omega)}  \\ & \quad  + \left( \sum_{E\in\mathcal{E}_h} h_E^{-1} \|v_h - \hat v_h\|_{L^2(E)}^2 \right)^{1/2} \left(\sum_{E\in\mathcal{E}_h}  h_E \|\bm w_h \cdot \bm n\|_{L^2(E)}^2 \right)^{1/2}. 
\end{align*}
We now apply the discrete trace estimate \eqref{eq:normal_trace_discrete} and shape regularity to conclude that }
\begin{multline}
Q_2 \lesssim \|A^{-1/2} \bm w_h\|_{\mathcal{T}_h} \|A^{1/2} \nabla_h (u-v_h)\|_{\mathcal{T}_h} \\ + \left(\sum_{E \in \mathcal{E}_h} h_E^{-1} \|u-v_h-(u-\hat v_h)\|_{L^2(E)}^2  \right)^{1/2}  \|A^{-1/2} \bm w_h\|_{L^2(\Omega)}, 
\end{multline}
\rami{where we also used that 
$\|\bm w_h\|_{L^2(\Omega)} = \| A^{1/2} A^{-1/2} \bm w_h\|_{L^2(\Omega)} \leq \|A^{1/2}\|_{L^\infty(\Omega)} \|A^{-1/2} \bm w_h\|_{L^2(\Omega)}$.}
We utilize \cite[Lemma 3.2]{ern2021quasi} (with $p =\rho$ and $q=2$) to bound $Q_3$. In particular, we have that
\begin{multline}
\langle ( (\bm \sigma(u) - \bm r_h ) \cdot \bm n_T)_{\vert E}, (w_h - \hat w_h)_{\vert E} \rangle_{E}\\  \lesssim  (h_T^{s} \|A^{-1/2} (\bm \sigma(u) - \bm r_h)\|_{L^\rho(T)}  + h_T \|A^{-1/2}(\nabla \cdot (\bm \sigma(u) - \bm r_h))\|_{L^2(T)}) h_E^{-1/2} \|w_h - \hat w_h\|_{L^2(E)}.
\end{multline}
Summing over all $E \in \mathcal{F}_T$ and over all $T \in \mathcal{T}_h$, recalling the definition of $\|\cdot\|_{\bm S^d}$ \eqref{eq:def_norm_Sd} and applying Cauchy-Schwarz inequality for sums, we obtain 
\begin{align}
   Q_3 \lesssim \|\bm \sigma(u) - \bm r_h \|_{\bm S^d}  \vvvert (\bm w_h, w_h , \hat w_h) \vvvert. 
\end{align}
Collecting the estimates above, we conclude that 
\begin{multline}
\sup_{(\bm w_h , w_h ,\hat w_h) } \frac{|\mathcal{L}_1(\bm w_h , w_h ,\hat w_h)|}{\vvvert (\bm w_h , w_h ,\hat w_h) \vvvert} \lesssim \|\delta - \varphi_h\|_{H^1(\mathcal{T}_h)^*} +   \|\bm \sigma(u) - \bm r_h \|_{\bm S^d} + \vvvert (\bm \sigma(u) -\bm r_h , u-v_h, u - \hat v_h) \vvvert  . \label{eq:bounding_L1} 
\end{multline}
\textbf{Step 2.} (Bounding the dual norm of $e_{\delta,h}$). With similar arguments to \eqref{eq:dual_norm_estimate}, observe that 
\begin{align}
      \|e_{\delta,h} \|_{H^1(\mathcal{T}_h)^*}   
      \lesssim 
      \sup_{w \in H^1(\mathcal{T}_h)} \frac{|(e_{\delta,h}, \Pi_h w)|}{\|\Pi_h w\|_{\mathrm{DG}}}.   
\end{align}
Testing \eqref{eq:first_error_eq} with $(\bm 0, \Pi_h w, \hat w_h)$, with $\hat w_h$ defined in \eqref{eq:triple_to_dg}, we write 
\begin{equation}
(e_{\delta,h}, \Pi_h w) =   \mathcal{L}_1(\bm 0, \rami{\Pi_h w} , \hat w_h) -  \alpha \mathcal{A}_h((\bm e_h, e_h ,\hat e_h), (\bm 0 , \rami{\Pi_h w} , \hat w_h )) . 
\end{equation}
From \eqref{eq:bounding_L1}, continuity of $\mathcal{A}_h$, and that  $\vvvert(\bm 0, \Pi_h w, \hat w_h )\vvvert \lesssim \|\Pi_h w\|_{\mathrm{DG}}$, we obtain that 
\begin{equation}
  \|e_{\delta,h} \|_{H^1(\mathcal{T}_h)^*}   \lesssim   \sup_{(\bm w_h , w_h ,\hat w_h) } \frac{\mathcal{L}_1(\bm w_h , w_h ,\hat w_h)}{\vvvert (\bm w_h , w_h ,\hat w_h) \vvvert}  + \vvvert (\bm e_h, e_h ,\hat e_h)\vvvert.  \label{eq:bound_edelta}
\end{equation}
\textbf{Step 3.} (Bounding $\vvvert (\bm e_h, e_h ,\hat e_h)\vvvert$). We split the error into $(\bm e_h, e_h ,\hat e_h) = (\bm \eta_h, \eta_h, \hat \eta_h) +  (\bm \rho_h, \rho_h, \hat \rho_h)$ and $e_{\delta,h} =  \eta_{\delta,h} +  \rho_{\delta,h}$ where $ (\bm \eta_h, \eta_h, \hat \eta_h, \eta_{\delta,h}) $ solves \eqref{eq:first_error_eq}-\eqref{eq:second_error_eq} with $\mathcal{L}_2 = 0$ and  $ (\bm \rho_h, \rho_h, \hat \rho_h,\rho_{\delta,h}) $ solves \eqref{eq:first_error_eq}-\eqref{eq:second_error_eq} with $\mathcal{L}_1 = 0$. The existence of such a decomposition follows by the well-posedness of the formulation and the fact that $\mathcal{L}_1$ and $\mathcal{L}_2$ are bounded linear functionals on $\bm \Sigma_h^p \times V_h^p \times M_{h,0}^p$. 
 
\textit{Step 3.1}. We begin with bounding $\vvvert(\bm \eta_h, \eta_h, \hat \eta_h)\vvvert$.  From Lemma \ref{lemma:inf_sup}, we obtain that there exists $\bm w_h \in \bm \Sigma_h^p$ such that 
    \begin{equation}
    \vvvert (\bm \eta_h, \eta_h, \hat \eta_h) \vvvert^2 
    \lesssim \mathcal{A}_h((\bm \eta_h, \eta_h, \hat \eta_h), (\rami{\bm w_h} , \eta_h , \hat \eta_h )).\label{eq:step_31_0}
    \end{equation}
\rami{In addition, we have that 
\begin{equation}
    \vvvert (\bm w_h , \eta_h, \hat \eta_h) \vvvert \lesssim  \vvvert (\bm \eta_h, \eta_h, \hat \eta_h) \vvvert. \label{eq:step_31_1}
\end{equation}
For the proof of the above bound, we refer to the derivation of \eqref{eq:inf_sup_stab_bound} in  \Cref{sec:proof_Ah}. 
}
 Since $(\bm \eta_h, \eta_h, \hat \eta_h) $ solve \eqref{eq:first_error_eq}, we have 
    \begin{align}
       \alpha \mathcal{A}_h((\bm \eta_h, \eta_h ,\hat \eta_h), (\rami{\bm w_h} , \eta_h , \hat \eta_h )) = - (\eta_{\delta,h}, \eta_h) + \mathcal{L}_1( \rami{\bm w_h} , \eta_h , \hat \eta_h ).
    \end{align}
   Testing \eqref{eq:second_error_eq} with $\eta_{\delta,h}$ and recalling that $\mathcal{L}_2=0$ yield
\begin{align}
       \alpha \mathcal{A}_h((\bm \eta_h, \eta_h ,\hat \eta_h), (\rami{\bm w_h} , \eta_h , \hat \eta_h )) = - (\mathcal{U}'(\psi) \eta_{\delta,h}, \eta_{\delta,h}) + \mathcal{L}_1(\rami{\bm w_h} , \eta_h , \hat \eta_h ) \leq \mathcal{L}_1(\rami{\bm w_h} , \eta_h , \hat \eta_h ).
    \end{align}
    \rami{ 
From here, we combine \eqref{eq:step_31_0} and \eqref{eq:step_31_1} to derive that
\begin{equation} \nonumber
 \vvvert (\bm \eta_h, \eta_h, \hat \eta_h) \vvvert^2 \lesssim \frac{\mathcal{L}_1(\bm w_h , \eta_h , \hat \eta_h) }{\vvvert (\bm w_h , \eta_h , \hat \eta_h) \vvvert} \vvvert (\bm w_h , \eta_h , \hat \eta_h) \vvvert \lesssim  \sup_{(\bm w_h , w_h ,\hat w_h) } \frac{\mathcal{L}_1(\bm w_h , w_h ,\hat w_h)}{\vvvert (\bm w_h , w_h ,\hat w_h) \vvvert} \vvvert (\bm \eta_h , \eta_h ,\hat \eta_h) \vvvert. 
\end{equation}
}
Invoking \eqref{eq:bounding_L1}, we obtain 
\begin{equation} \label{eq:bound_eta}
     \vvvert (\bm \eta_h, \eta_h, \hat \eta_h) \vvvert \lesssim  \|\delta - \varphi\|_{H^1(\mathcal{T}_h)^*} +  \|\bm \sigma(u) - \bm r_h \|_{\bm S^d} + \vvvert (\bm \sigma(u) - \bm r_h , u-v_h, u - \hat v_h) \vvvert . 
\end{equation}

\textit{Step 3.2}. To bound $\vvvert(\bm \rho_h, \rho_h, \hat \rho_h)\vvvert$, we use that $(\bm \rho_h, \rho_h, \hat \rho_h, \rho_{\delta,h})$ solves \eqref{eq:first_error_eq}--\eqref{eq:second_error_eq} with $\mathcal{L}_1 = 0$. Namely, we can write for any $\bm w_h \in \bm \Sigma_h^p $
\begin{align} \label{eq:expansion_rho_h}
   \mathcal{A}_h((\bm \rho_h, \rho_h ,\hat \rho_h), (\bm w_h , \rho_h, \hat \rho_h )) & =   -(\rho_{\delta,h}, \rho_h) \\ 
   \nonumber & = -\mathcal{L}_2(\rho_{\delta, h}) - (\mathcal{U}'(\psi) \rho_{\delta,h}, \rho_{\delta,h}).  
\end{align}
That is, we derived that
\begin{align} \label{eq:expansion_rho_h1}
   \mathcal{A}_h((\bm \rho_h, \rho_h ,\hat \rho_h), (\bm w_h , \rho_h, \hat \rho_h ))  +  (\mathcal{U}'(\psi) \rho_{\delta,h}, \rho_{\delta,h}) =  -\mathcal{L}_2(\rho_{\delta, h}).  
\end{align}
With applications of H\"{o}lder's and  Poincar\'e's inequality along with \eqref{eq:dg_to_triple}, we  estimate
\begin{equation}  \label{eq:bound_L2}
|\mathcal{L}_2(\rho_{\delta,h} )| \lesssim  
\|u-v_h\|_{\mathrm{DG}} \|\rho_{\delta,h}\|_{H^1(\mathcal{T}_h)^*} + 
\|\mathcal{U}'(\psi)^{1/2} ( \delta - \varphi_h ) \|_{L^2(\Omega)}  \|\mathcal{U}'(\psi)^{1/2}\rho_{\delta, h}\|_{L^{2}(\Omega)}. 
\end{equation}
From \eqref{eq:bound_edelta} with $\mathcal{L}_1 = 0$, we have that 
\begin{equation} \label{eq:bound_L21}
\|\rho_{\delta,h}\|_{H^1(\mathcal{T}_h)^*}
\leq 
\vvvert(\bm \rho_h, \rho_h, \hat \rho_h)\vvvert. 
\end{equation} 
Along with Lemma \ref{lemma:inf_sup} and \eqref{eq:expansion_rho_h1}, this allows us to bound 
\begin{align}\label{eq:bound_L22}
  \vvvert(\bm \rho_h, \rho_h, \hat \rho_h)\vvvert 
 + \|\mathcal{U}'(\psi)^{1/2} \rho_{\delta,h} \|_{L^2(\Omega)}  
\lesssim 
\| u - v_h\|_{\mathrm{DG}} 
+ 
\|\mathcal{U}'(\psi)^{1/2} ( \delta - \varphi_h )\|_{L^2(\Omega)}. 
\end{align}
The final result is concluded by combining \eqref{eq:bound_edelta} with  \eqref{eq:bound_eta} and \eqref{eq:bound_L22}  along with applications of the triangle inequality. 
    \end{proof} 

\section{Numerical Results} \label{sec:numerics}
In this section, we consider anisotropic diffusion and obstacle examples. We implement \Cref{alg:main_alg_disc} and denote the converged solution by  $(u_h, \bm q_h , \hat u_h) \in  \bm \Sigma_h^p  \times V_h^p \times  M_{h,g}^p$. The   stopping criteria is $\|u_h^k - u_h^{k-1}\|_{L^2(\Omega)} < \mathtt{tol}$. 
All simulation results were obtained using the NGSolve software \cite{schoberl2014c++}. We share our code at  \href{https://github.com/ramimasri/FOSPG-first-order-system-proximal-Galerkin.git}{https://github.com/ramimasri/FOSPG-first-order-system-proximal-Galerkin.git}.
\subsection{Anisotropic diffusion}  We consider three benchmark examples taken from \cite{herbinbenchmark,li2010anisotropic}. While the standard hybrid mixed method \cite{arnold1985mixed}  may violate the discrete maximum principle, the FOSPG produces a solution $\mathcal{U}(\psi_h)$ satisfying the pointwise bounds everywhere.   \Cref{fig:anisotropy} shows the converged solutions $\mathcal{U}(\psi_h)$ for $p=2$.   
\begin{example}[Oblique flow] \normalfont \label{example:oblique}
We adapt this example from \cite{herbinbenchmark}. Consider $\Omega = (0,1)^2$ and   
\[ A = Q \begin{pmatrix}
    1 & 0 \\ 
    0 & \lambda
\end{pmatrix} Q^T, \quad Q = \begin{pmatrix}
    \cos(\theta) & - \sin(\theta) \\
    \sin(\theta) & \cos(\theta), 
\end{pmatrix},
\] 
where $\lambda = 10^{-3}$ and $\theta = 2\pi/9$. We let $f =0$ and
\[ 
g(x,0) = \begin{cases}
    1 & \text{if } x \leq 0.2\\
    2 - 5x & \text{if } 0.2 < x \leq 0.3, \\
    0.5 & \text{if } x > 0.3, \\
\end{cases} \;\; 
g(x,1) = \begin{cases}
    0.5 & \text{if } x \leq 0.7, \\
    4 - 5x & \text{if } 0.7 < x \leq 0.8, \\
    0 & \text{if } x > 0.8. \\
\end{cases}
\]
Further, we set $g(0,y) = g(y,0)$ and $g(1,y) =  g(y,1)$. 
\end{example}
\begin{example}[Vertical faults] \label{example:vertical_faults} \normalfont We consider the example from \cite{herbinbenchmark} and  consider $ 
\Omega = \Omega_1 \cup \Omega_2$  with $ \Omega_2 = \Omega \setminus \Omega_1, \; \Omega_1 = \Omega_1^\ell \cup \Omega_1^r$ where 
\begin{align*}
\Omega_1^\ell &= (0, 0.5] \times  \bigcup_{k=0}^{4} \left[ 0.05 + 0.2k, 0.15 + 0.2k \right], \; 
\Omega_1^r  = (0.5, 1) \times  \bigcup_{k=0}^{4} \left[ 0.2k, 0.2k+0.1  \right] .
\end{align*}
We set 
\[
A_{\vert_{\Omega_1}} = \begin{pmatrix}
    10^3 & 0 \\
    0 & 10
\end{pmatrix}, \quad 
A_{\vert_{\Omega_2}} = \begin{pmatrix}
    10^{-2} & 0 \\ 
    0 & 10^{-3}
\end{pmatrix},
\]
the source term $f = 0$, and the boundary data $g(x,y) =  1- x$. 
\end{example}
\begin{example}[Punctured domain] \label{example:punctured}\normalfont We adapt the example from \cite{li2010anisotropic}. The domain $\Omega = \Omega_0 \backslash \Omega_1 =  (0,1)^2 \backslash (4/9,5/9)^2 $. We let 
\[ A = Q \begin{pmatrix}
    \lambda  & 0 \\ 
    0 & 1
\end{pmatrix} Q^T, \quad Q = \begin{pmatrix}
    \cos(\theta) & - \sin(\theta) \\
    \sin(\theta) & \cos(\theta) 
\end{pmatrix} , 
\]   
where $\lambda = 10^3$ and $\theta = \pi \sin(x) \sin(y) $. We set $f=0$,  $u \vert_{\partial \Omega_1} = 1$, and $u \vert_{\partial \Omega_2} = 0$.
\end{example} 
\begin{figure}[t]
  \begin{center}
  \hspace{-13em}
     \begin{minipage}{0.3\textwidth}
 \centering
 \includegraphics[scale=0.15]{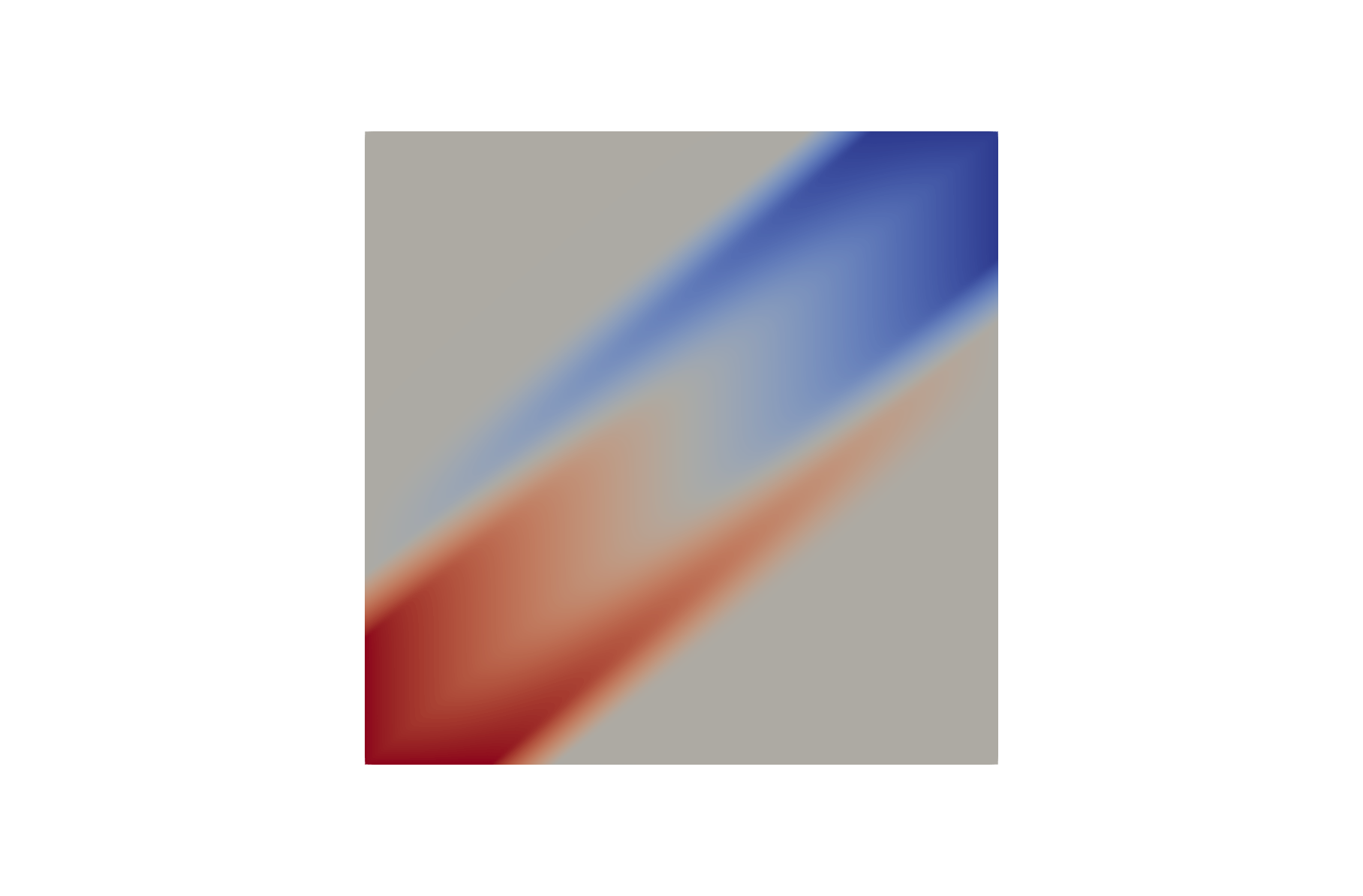}
 \end{minipage}
  \begin{minipage}{0.3\textwidth}
  \centering 
       \includegraphics[scale=0.15]{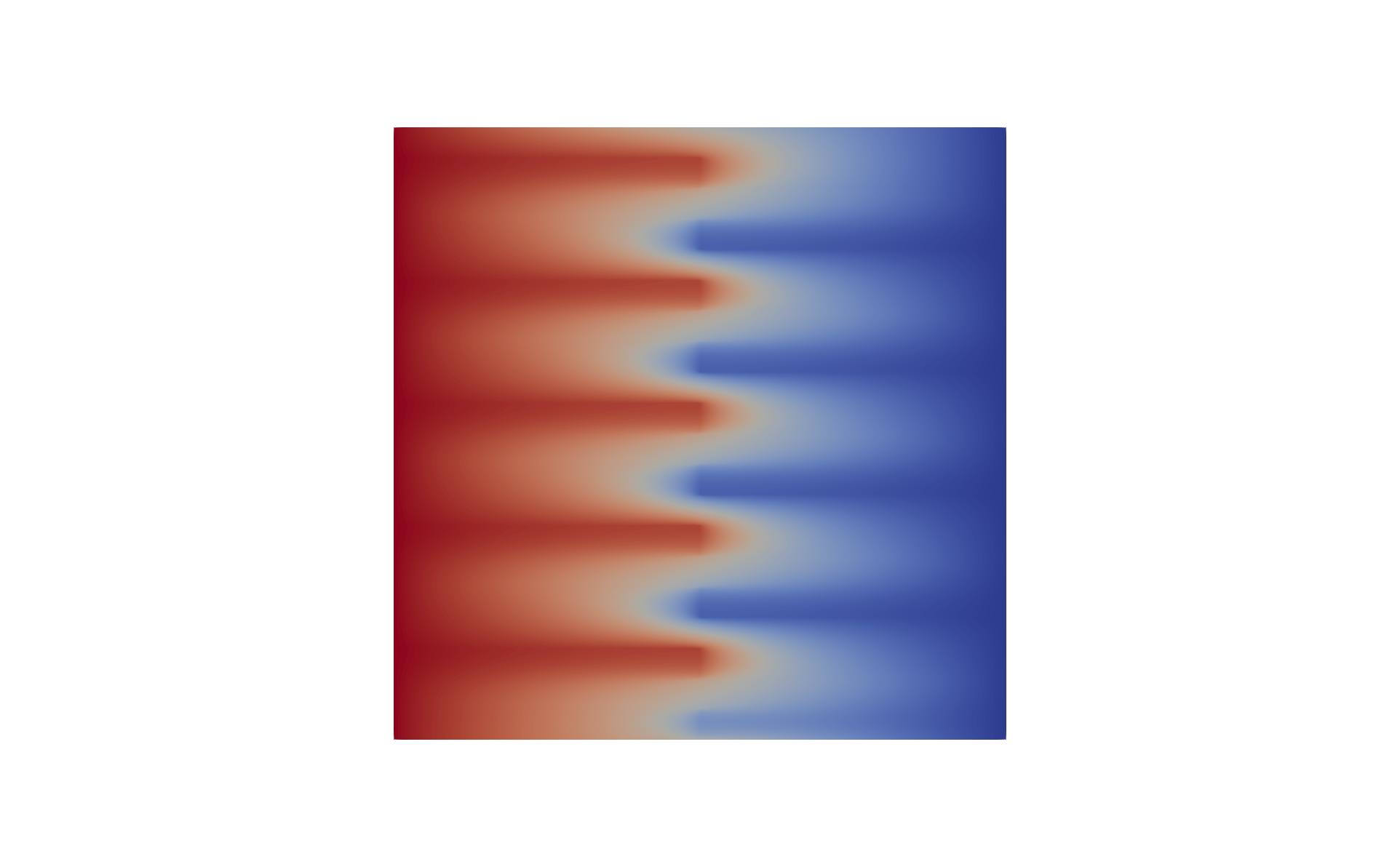}
 \end{minipage}
 \;
  \begin{minipage}{0.3\textwidth}
  \centering 
       \includegraphics[scale=0.15]{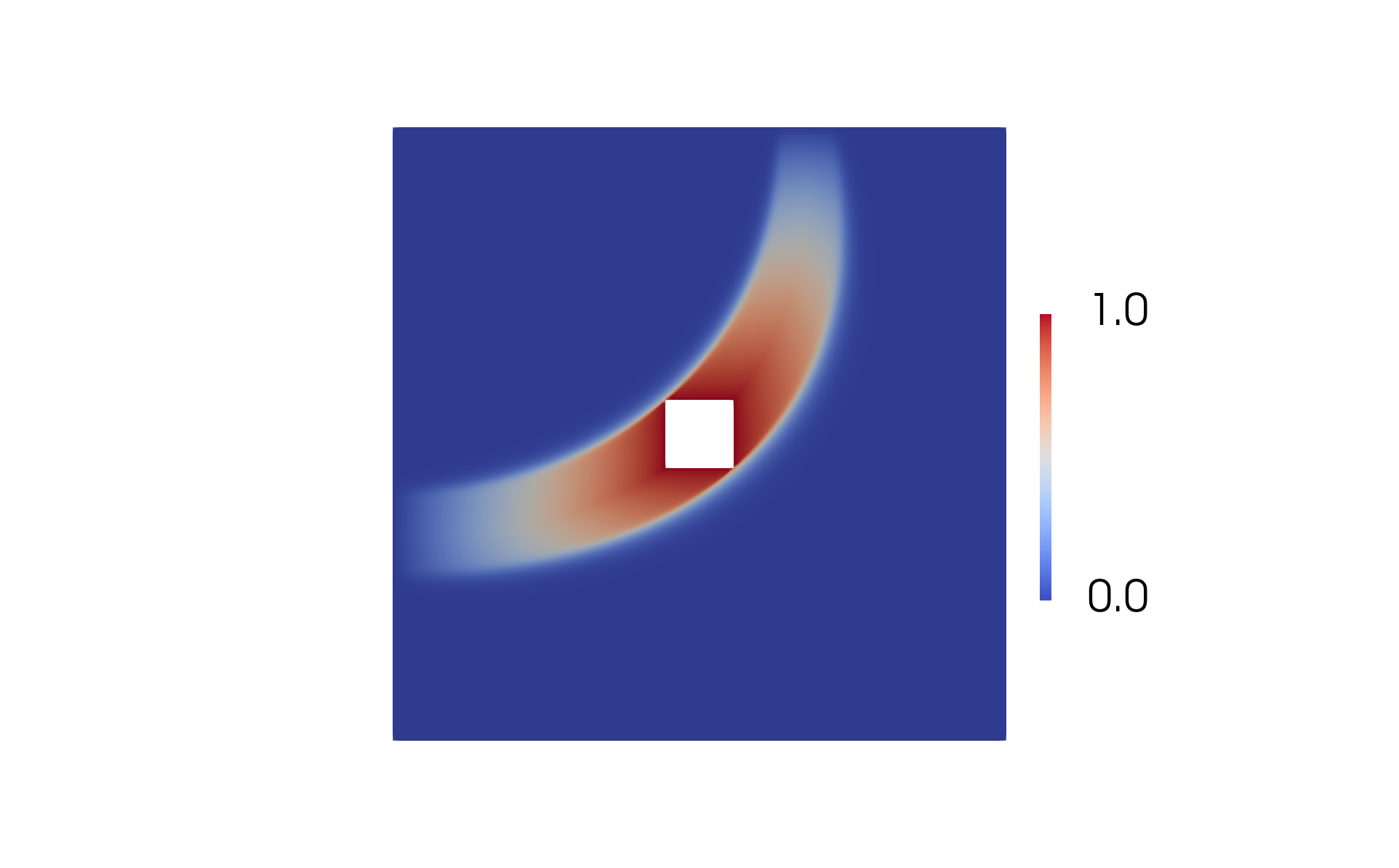}
 \end{minipage}
\caption{Computed solution $\mathcal{U}(\psi_h)$  for the oblique flow \Cref{example:oblique} (left),  for the vertical faults  \Cref{example:vertical_faults} (middle) and for the punctured domain \Cref{example:punctured} (right). We set $p =2$,  $h\approx 0.006$, $\mathrm{tol} = 1e-10$, and select the superposition operator $\mathcal{U}$ given in \eqref{eq:Z_2}. The solutions $\mathcal{U}(\psi_h)$ satisfy the discrete maximum principle by design. Further, the obtained flux approximation $\bm q_h$ retains the local mass conservation property. In particular, $\max_{T \in \mathcal{T}_h} |(\nabla \cdot \bm q_h -f ,1)_T |<  5\cdot 10^{-13}$ for all three examples. 
For \Cref{example:oblique} and \Cref{example:vertical_faults}, we set $\alpha^k = 4^k$ and $\mathcal S(\psi_h^k) = 0$. For \Cref{example:punctured}, we set $\alpha^0= 10^{-4}$, $\alpha^k = 1.5\alpha^{k-1}$, and use $\mathcal S(\psi_h^k)$ given in \eqref{eq:extra_stablization} with $\epsilon_1=\epsilon_2=0.1$.
} 
\label{fig:anisotropy}
  \end{center}
\end{figure}
We found \Cref{example:punctured} to be the most challenging, inspiring us to explore this example further. In particular, we consider the following choice for $\mathcal S(\psi_h^k)$ in  \eqref{eq:nonlinear_subproblem_1}:
\begin{equation} \label{eq:extra_stablization}
(\mathcal S(\psi_h^k), q_h) =   \epsilon_1 h^{p+1}(\psi_h^k, q_h) + \epsilon_2 h^{p+1} (\nabla_h \psi_h^{k}, \nabla_h q_h)   \quad \forall q_h \in V_h^p, 
\end{equation} 
where $\epsilon_1, \epsilon_2 \geq 0$.  This choice provided additional control on the broken $H^1$ norm of $\psi^k_h$ leading to a stable solution $\mathcal U(\psi_h)$.  For $p=0,1$, such stabilization is not needed; see \Cref{rem:mass_conservation}. For $p=1$, we use a quadrature rule of order $2$ that includes the element vertices for a more stable solution $\mathcal{U}(\psi_h)$ in all of this section. We remark that the choice of stabilization whether via special quadrature rules or additional terms like in \eqref{eq:extra_stablization} requires a detailed and additional study.

\Cref{fig:overshoots} compares the standard mixed method solution $u_h^{\mathrm{mixed}}$ \cite{egger2010hybrid} to the solutions $(u_h, \mathcal{U}(\psi_h))$ generated by \Cref{alg:main_alg_disc}. We observe that $u_h^{\mathrm{mixed}}$ violates DMP even with fine meshes for $p=0$. For $p=2$, mesh refinement improves the solution; however, $u_h^{\mathrm{mixed}}$ is still not bound preserving. This is also evident from \Cref{table:min_max}, which reports the minimum and maximum values of $u_h^{\mathrm{mixed}}$ and $\mathcal{U}(\psi_h)$ over the quadrature points. The classical solution $u_h^{\mathrm{mixed}}$ violates the DMP while  the solution $\mathcal{U}(\psi_h)$ remains between $0$ and $1$ everywhere. Furthermore, the solution $u_h$ has bound-preserving local averages, a property that can be optionally used in postprocessing to locally construct a bound-preserving $\tilde u_h$ (see \Cref{remark:limiter}). 
\begin{figure}
\centering
\begin{center}
   \hspace{-2em}
   \begin{minipage}{0.3\textwidth}\centering
 \begin{overpic}[scale=0.05]
     {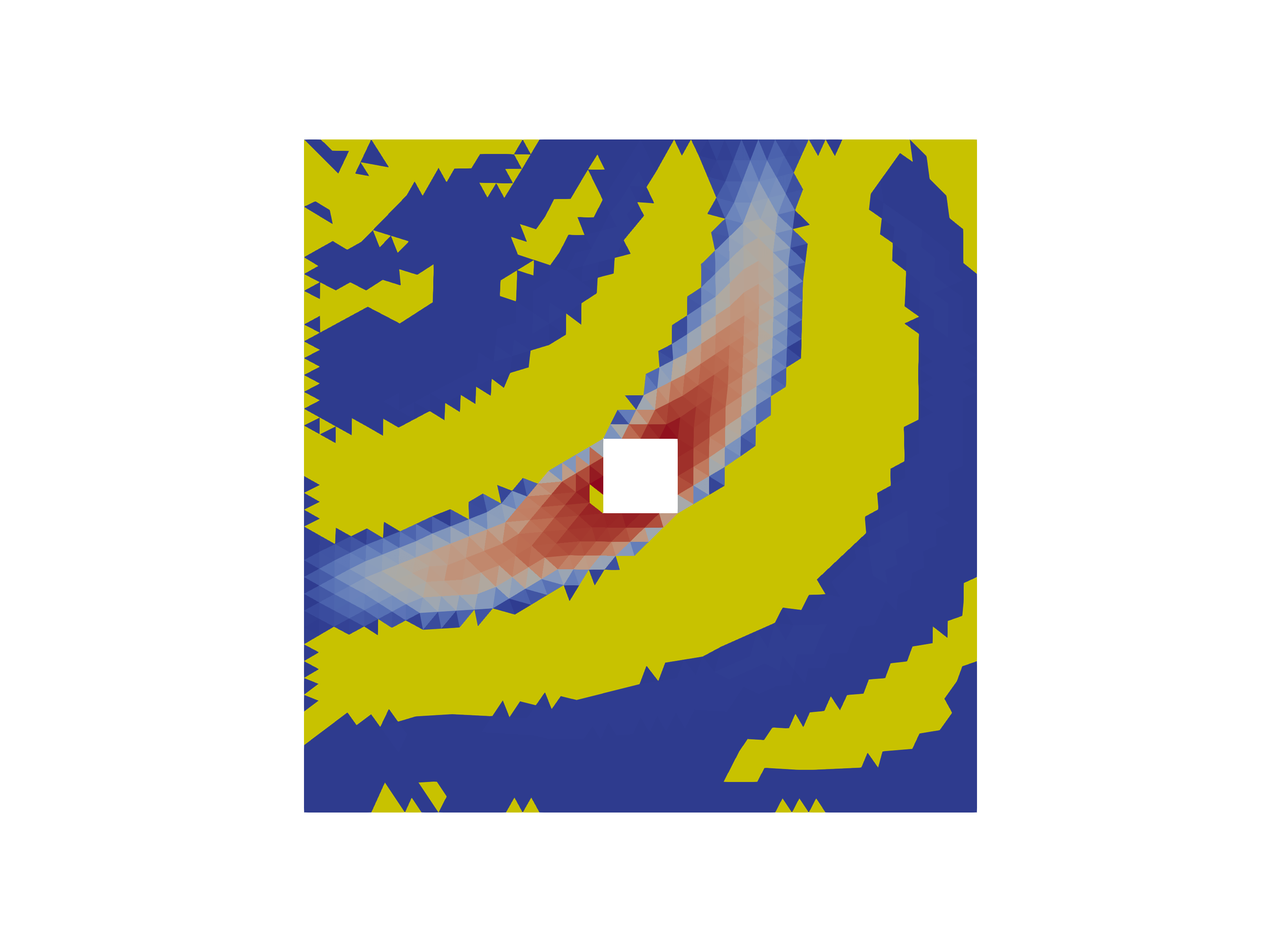}
     \put(44,71){$u_h^{\mathrm{mixed}}$}
     \put(-4.5,40){$h \approx 0.03$} 
     \put(-4.5,30){$p=0$}
 \end{overpic}
 \end{minipage}
  \begin{minipage}{0.3\textwidth}\centering
      \begin{overpic}[scale=0.05]
     {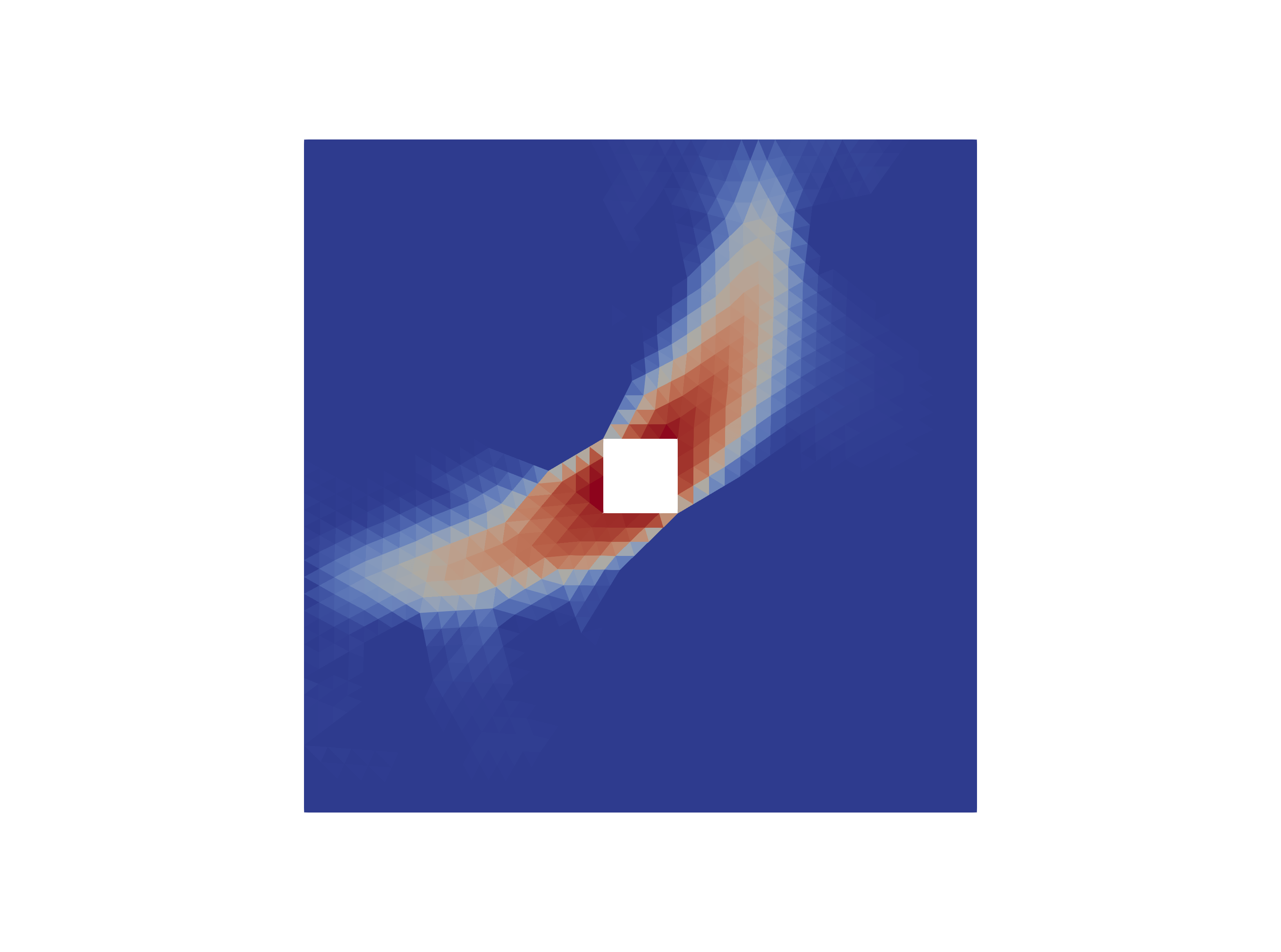}
     \put(48,71){$u_h$}
 \end{overpic}
 \end{minipage}
  \begin{minipage}{0.3\textwidth}\centering
     \begin{overpic}[scale=0.05]
     {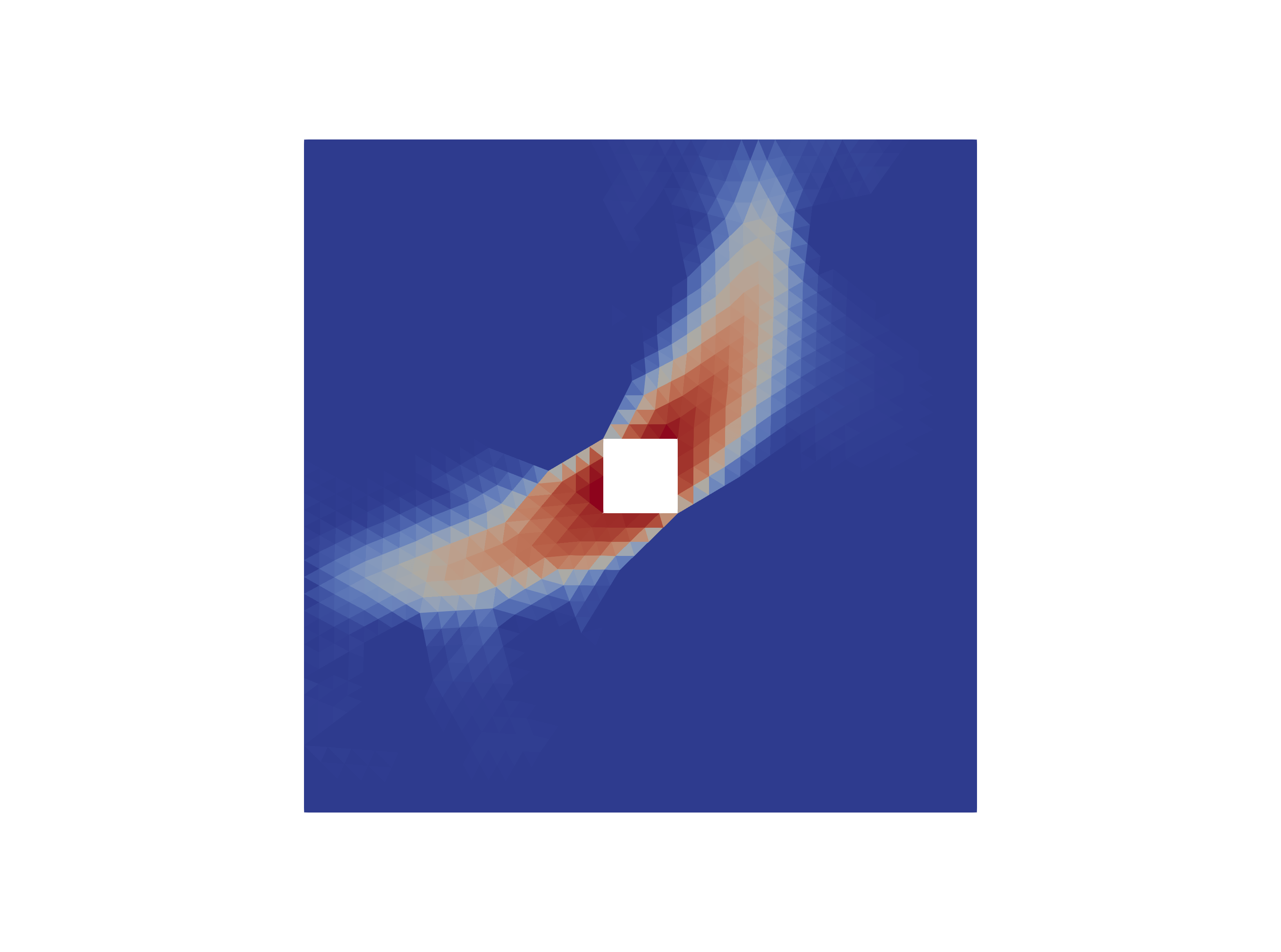}
     \put(45,71){$\mathcal{U}(\psi_h)$}
     \end{overpic}
 \end{minipage} 
 \end{center}
 \begin{center}
    \hspace{-2em}
     \begin{minipage}{0.3\textwidth}\centering
 \begin{overpic}[scale=0.05]{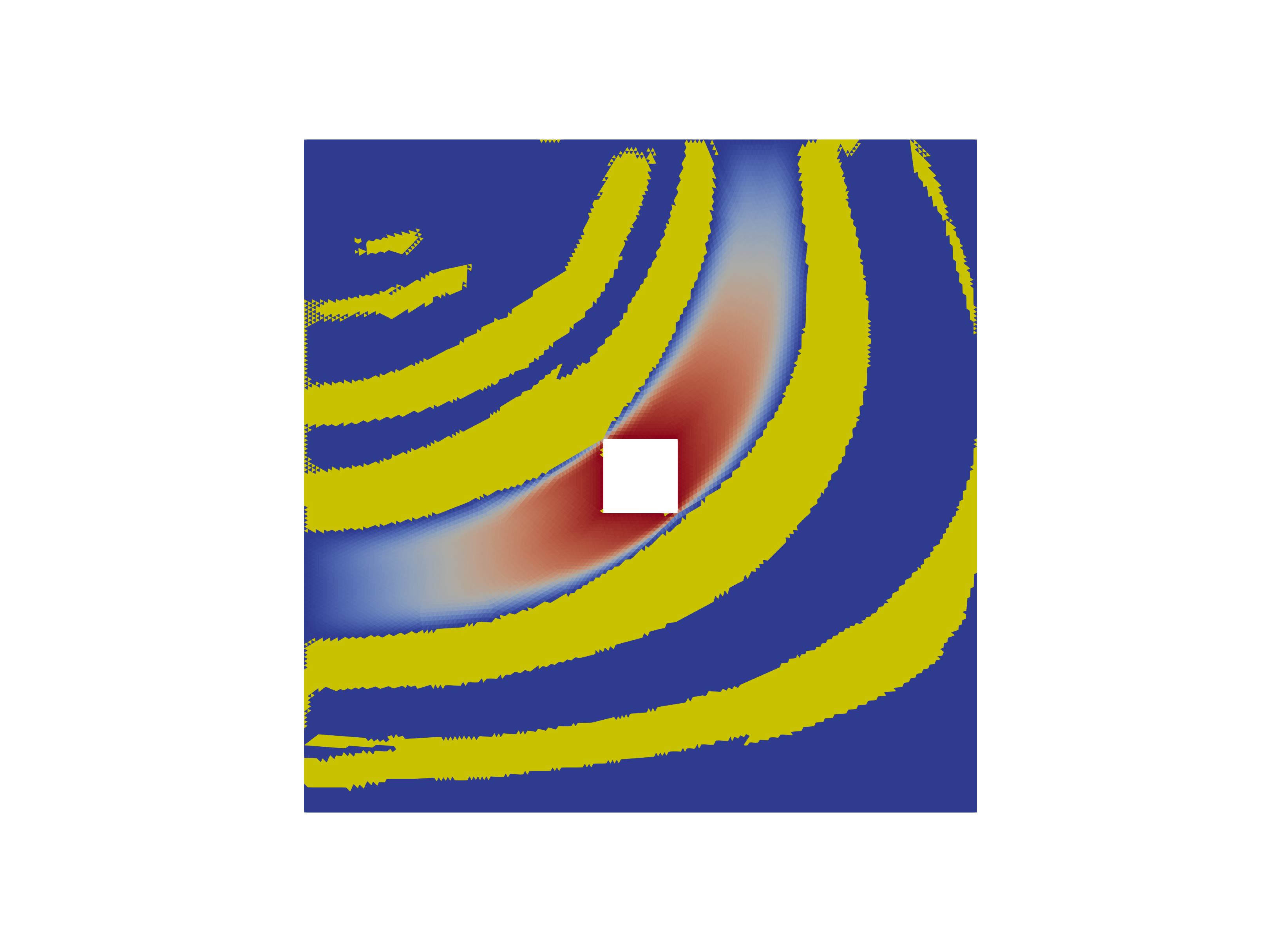}
  \put(-4.5,40){$h \approx 0.007$} 
\put(-4.5,30){$p=0$}
 \end{overpic}
 \end{minipage}
  \begin{minipage}{0.3\textwidth}\centering
     \includegraphics[scale=0.05]{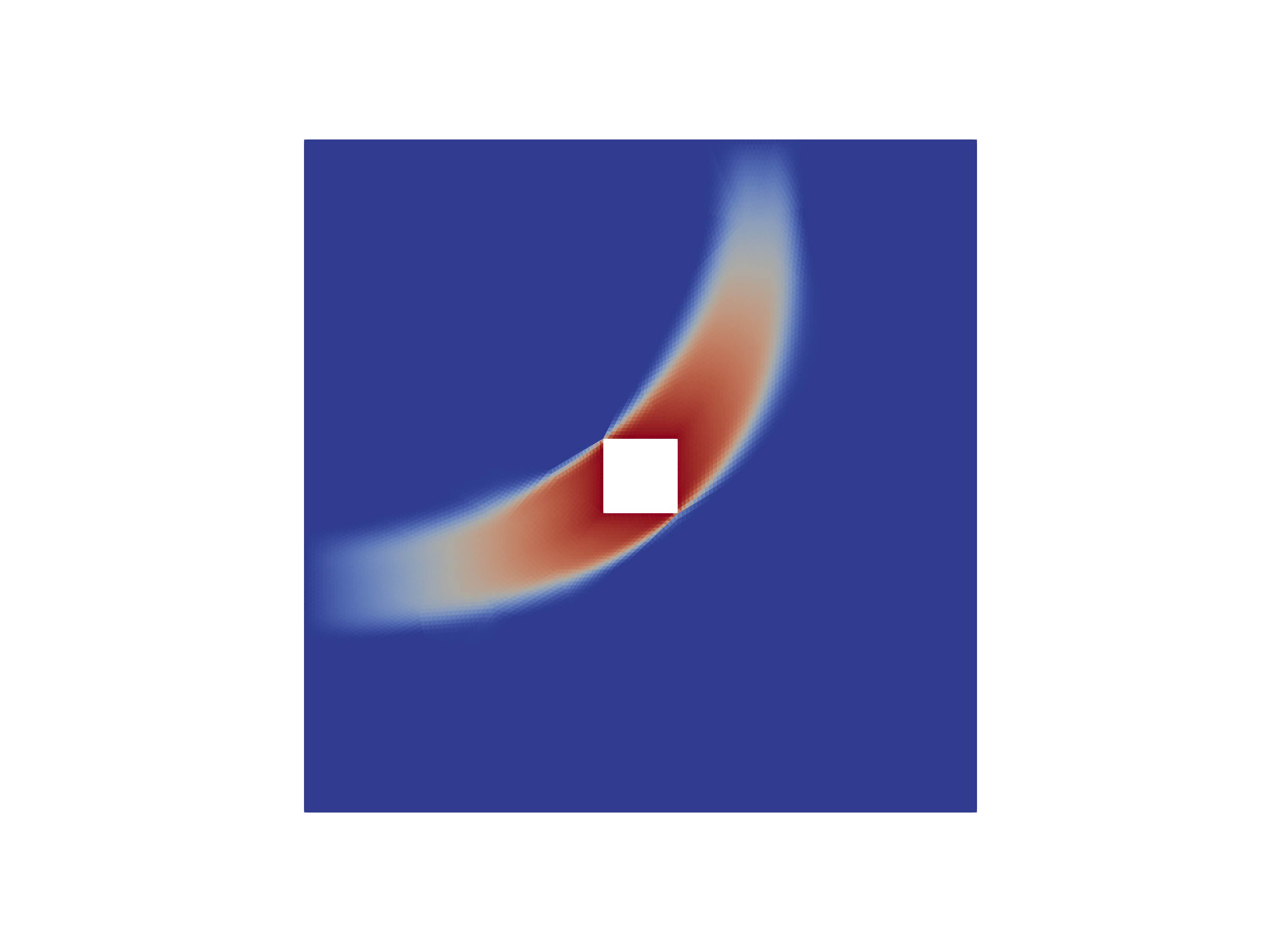}
 \end{minipage}
  \begin{minipage}{0.3\textwidth}\centering
     \includegraphics[scale=0.05]{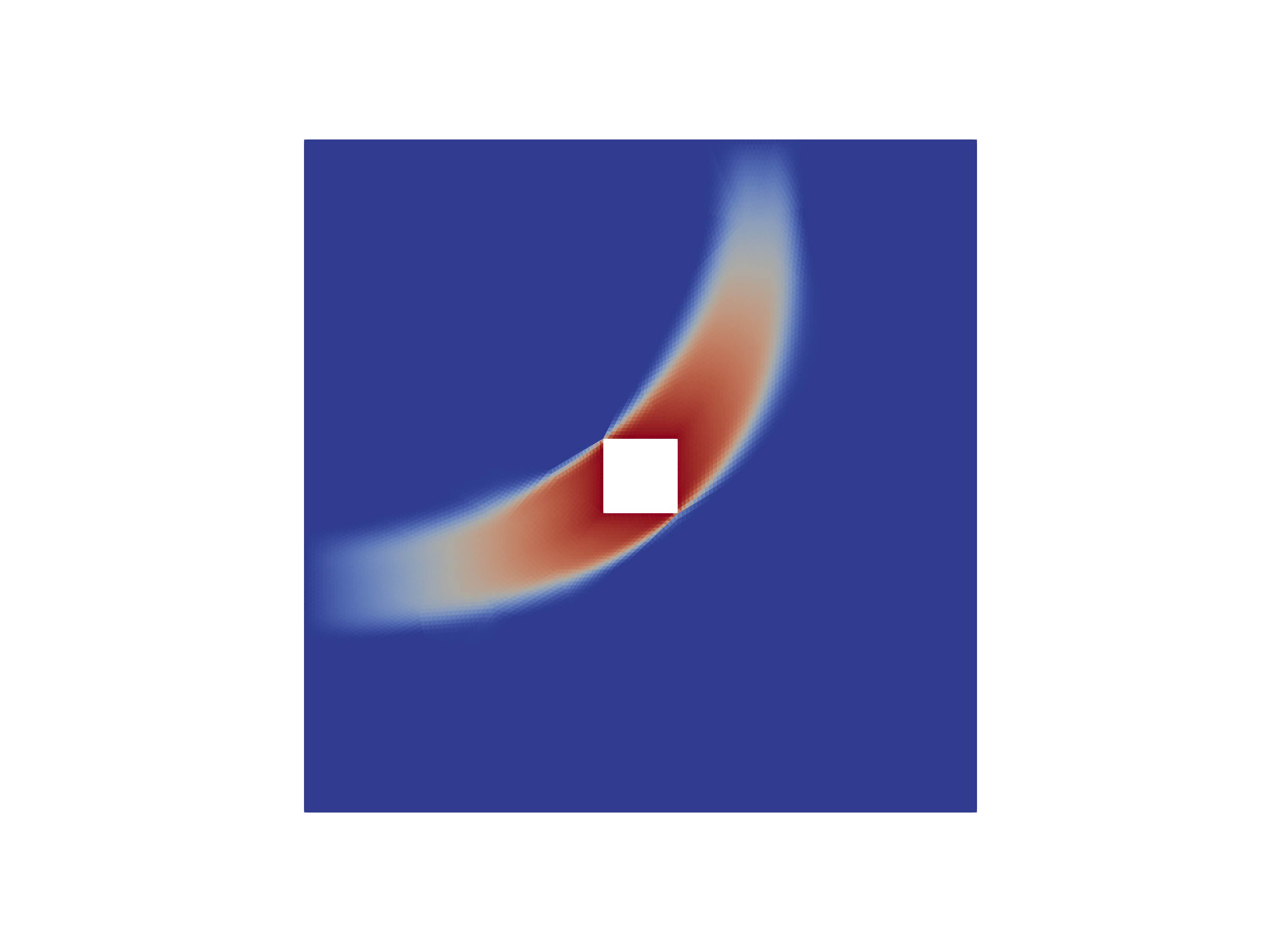}
 \end{minipage}
  \end{center}
  \begin{center}
 \hspace{-2em}
     \begin{minipage}{0.3\textwidth}\centering
 \begin{overpic}[scale=0.05]{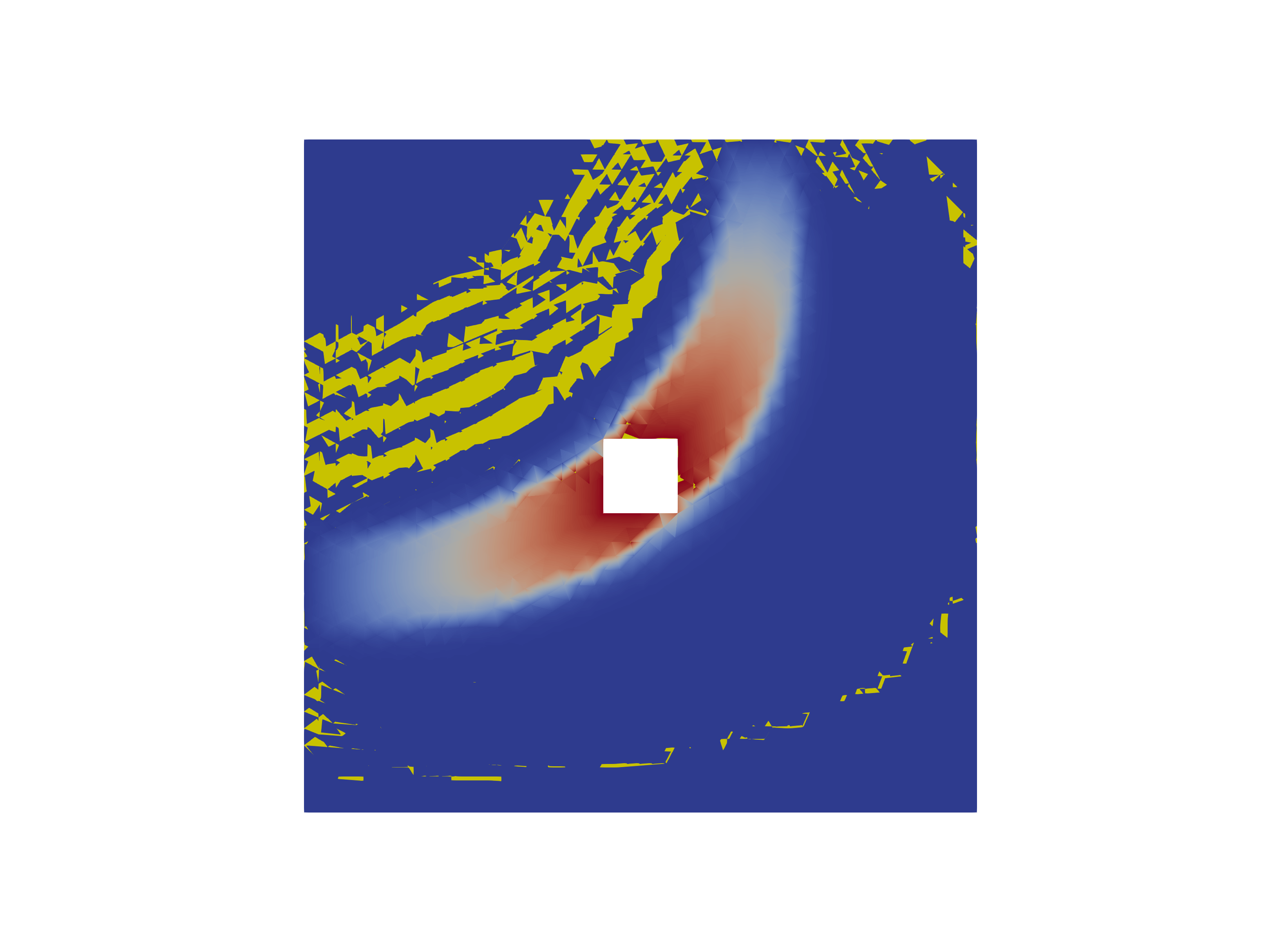}
  \put(-4.5,40){$h \approx 0.03$} 
\put(-4.5,30){$p=2$}
 \end{overpic}
 \end{minipage}
  \begin{minipage}{0.3\textwidth}\centering
     \includegraphics[scale=0.05]{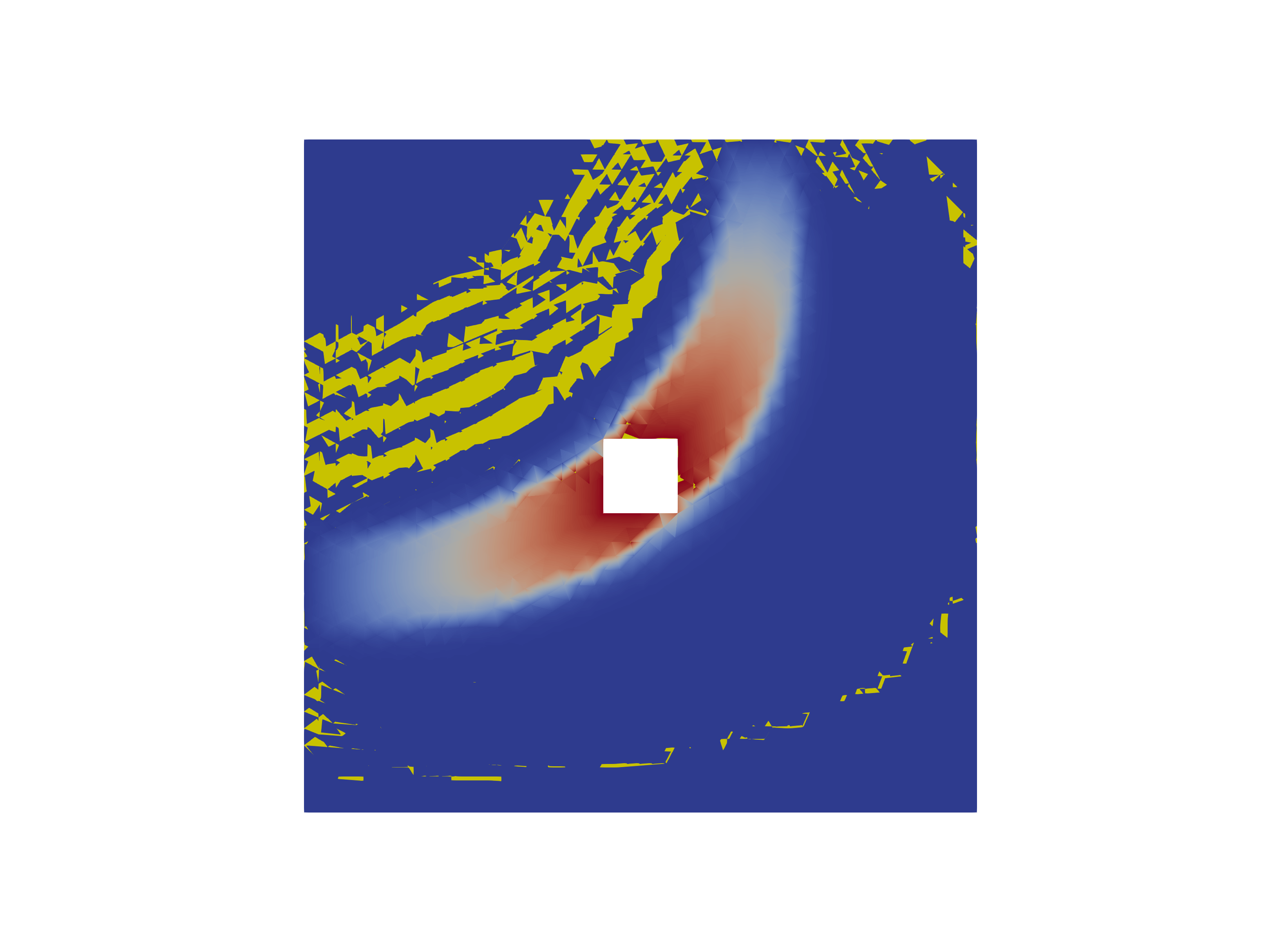}
 \end{minipage}
  \begin{minipage}{0.3\textwidth}\centering
\includegraphics[scale=0.05]{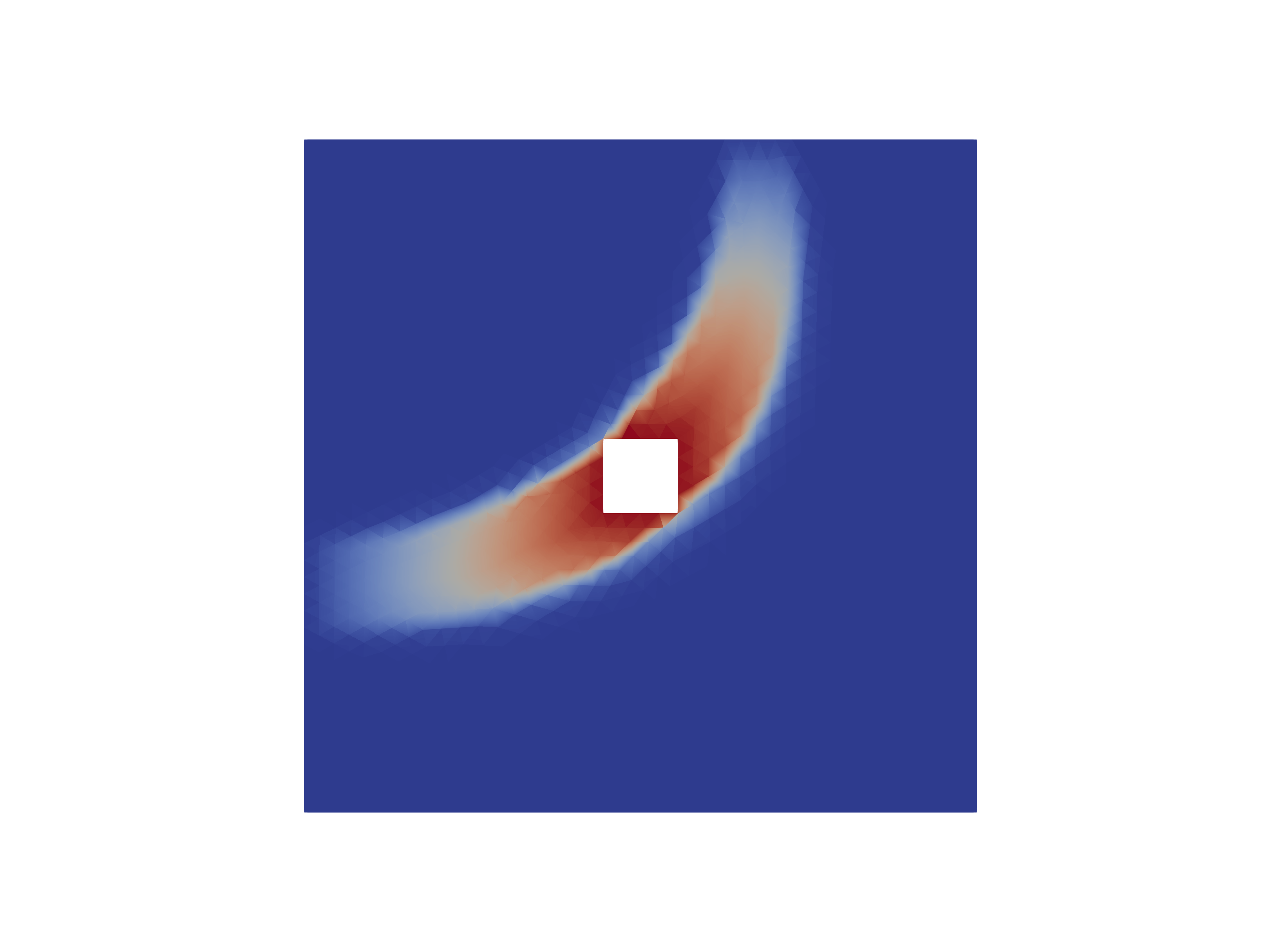}
 \end{minipage}
 \end{center}
   \begin{center}
 \hspace{-2em}
     \begin{minipage}{0.3\textwidth}\centering
 \begin{overpic}[scale=0.05]{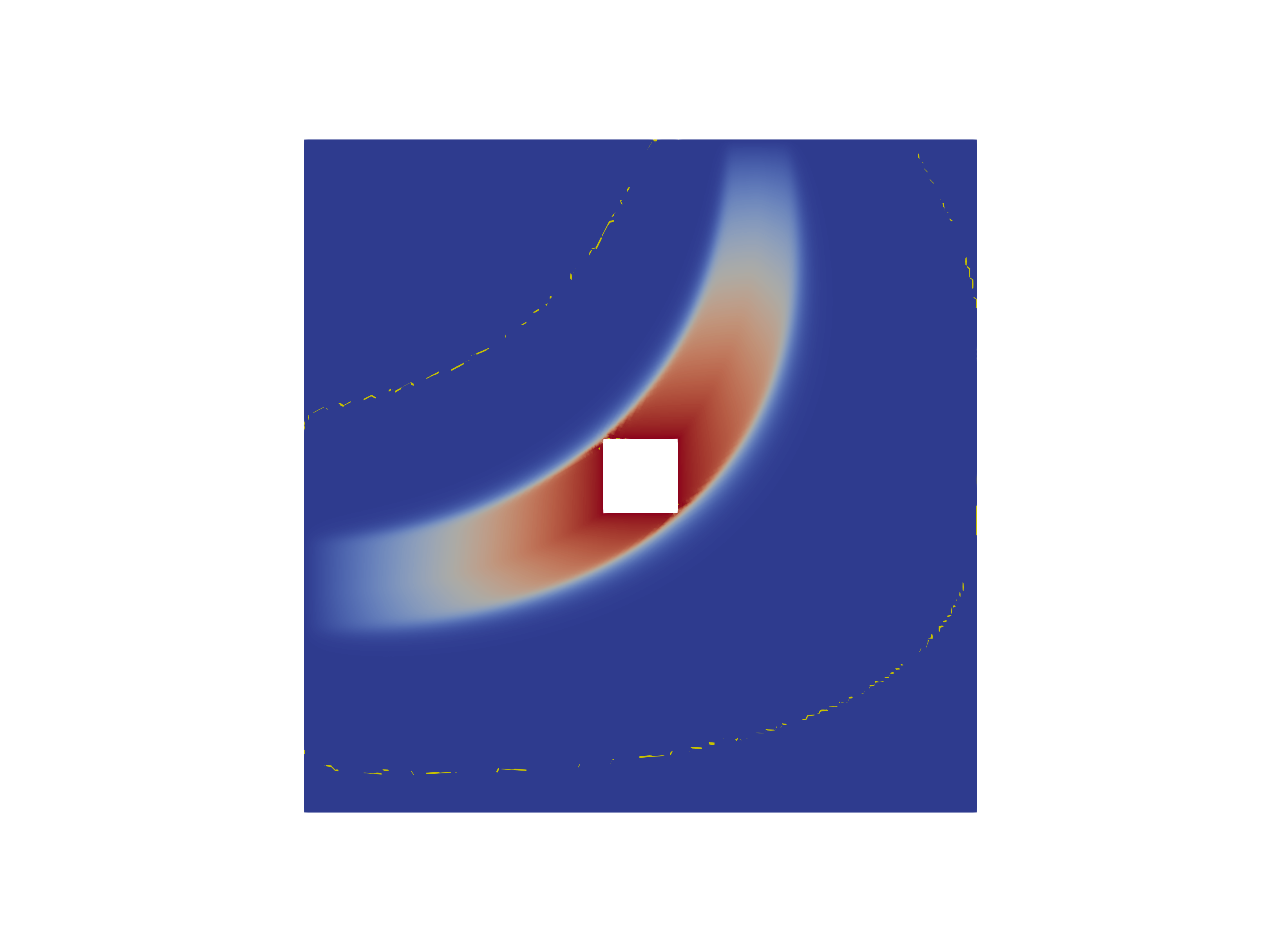}
  \put(-4.5,40){$h \approx 0.007$} 
\put(-4.5,30){$p=2$}
 \end{overpic}
 \end{minipage}
  \begin{minipage}{0.3\textwidth}\centering
     \includegraphics[scale=0.05]{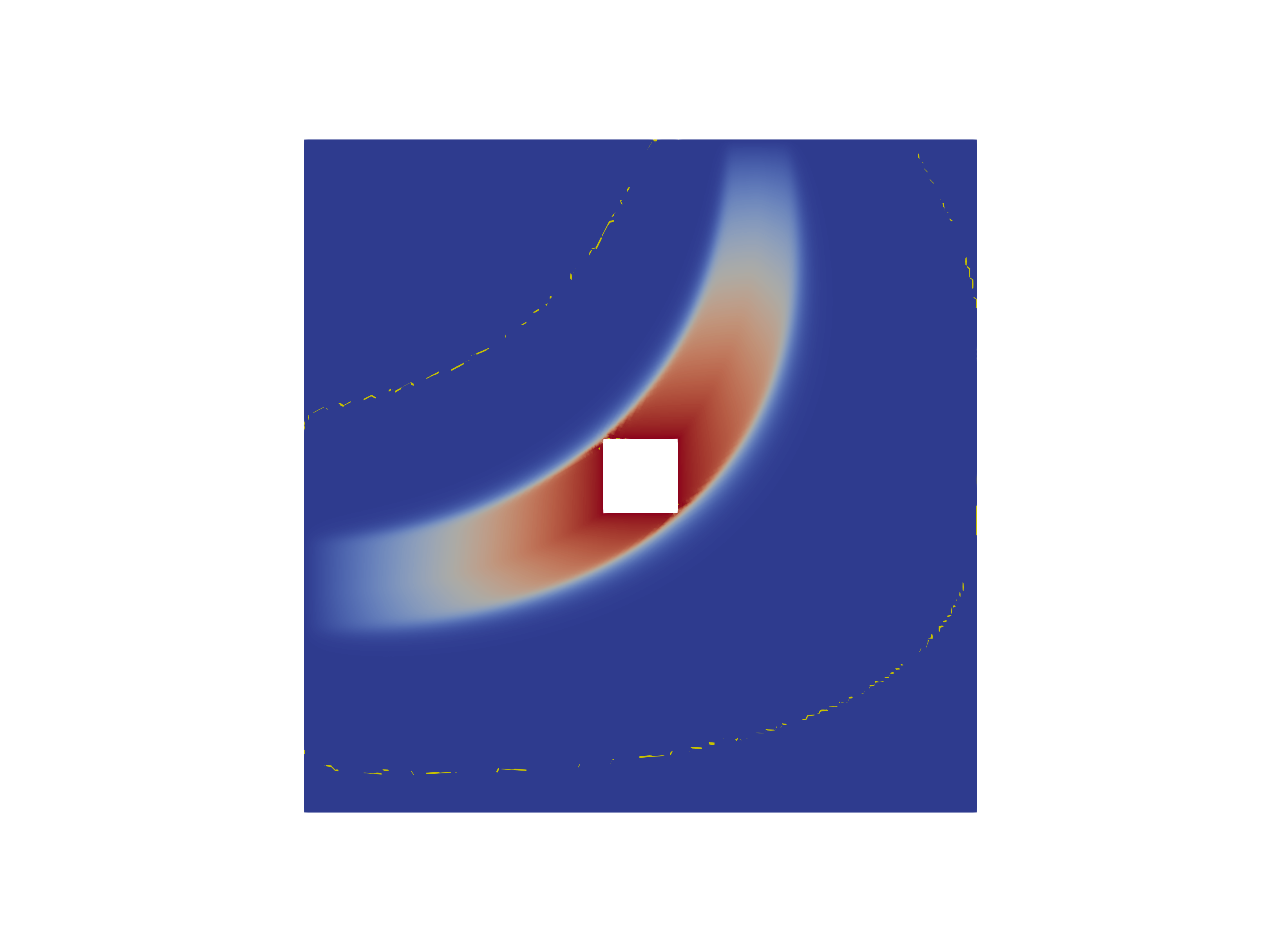}
 \end{minipage}
  \begin{minipage}{0.3\textwidth}\centering
     \includegraphics[scale=0.05]{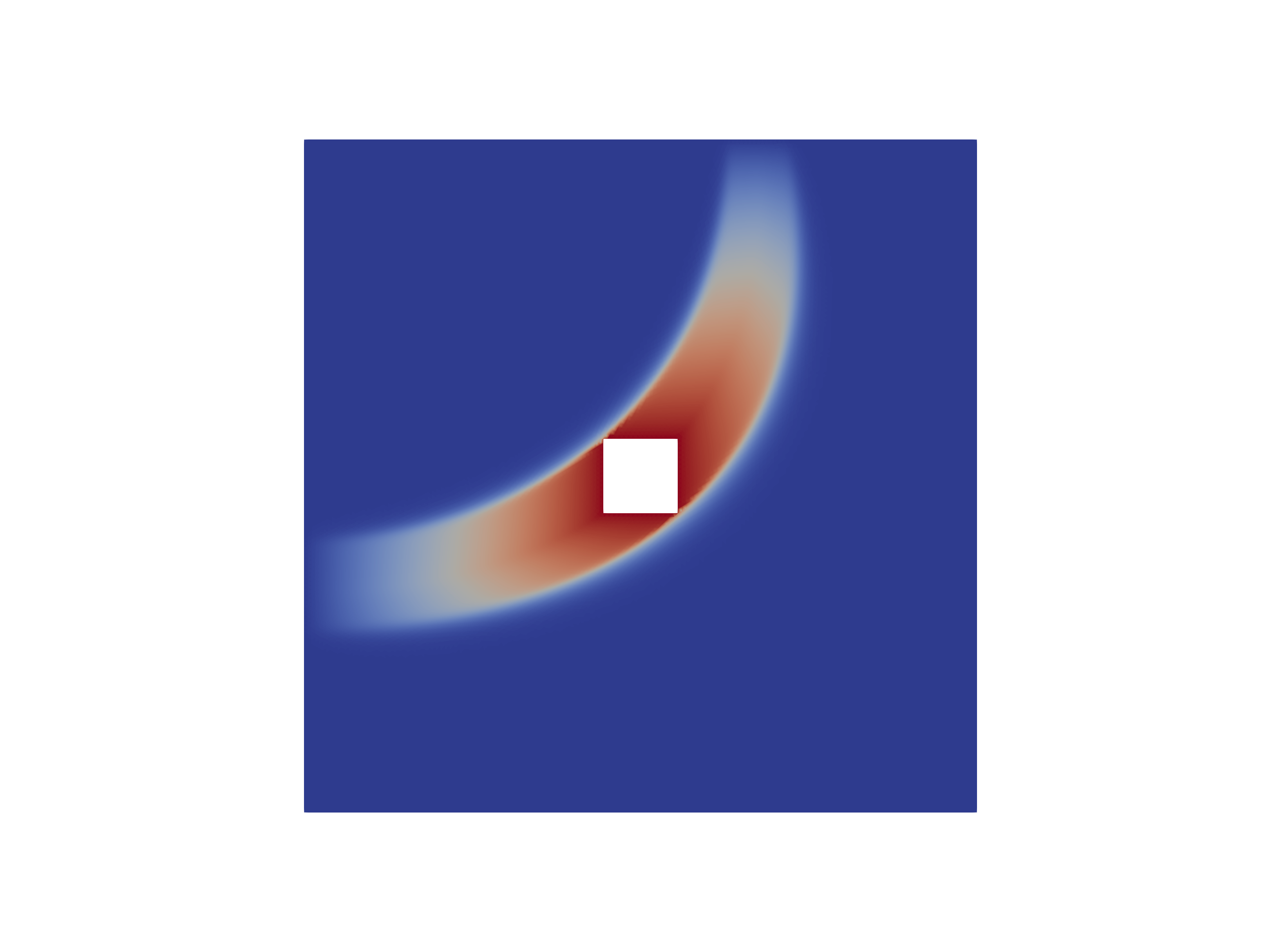}
 \end{minipage}
 \end{center}
\caption{ Comparison between the standard hybrid mixed solution $u_h^{\mathrm{mixed}}$ and the FOSPG solutions $u_h$ and $\mathcal{U}(\psi_h)$ of \Cref{alg:main_alg_disc} with $\mathcal{U}$ given in \eqref{eq:Z_2}.
We use \eqref{eq:extra_stablization} and  set $\epsilon_1 = \epsilon_2 =0.0$ for $p=0$ and $\epsilon_1 = \epsilon_2 = 0.1$ for $p=2$. The violation of DMP is indicated with yellow. 
} 
\label{fig:overshoots}
\end{figure}

\begin{table}[t]
    \centering
    \footnotesize{
    \begin{tabular}{|c|c c c c|c c c c|}
    \hline 
  & \multicolumn{4}{c|}{$h = 0.03$} & \multicolumn{4}{c|}{$h=0.007$} \\ 
  \hline  \hline
  $p$ & $\max u_h^{\mathrm{mixed}}$ & $\min u_h^{\mathrm{mixed}}$ & $\max \mathcal{U}(\psi_h)$  & $\min \mathcal{U}(\psi_h)$ & $\max u_h^{\mathrm{mixed}}$ &$\min u_h^{\mathrm{mixed}}$ & $\max \mathcal{U}(\psi_h)$  & $ \min \mathcal{U}(\psi_h)$  \\  
  \hline 
  $0$ & $1.014$ & $-0.620$ & $0.996$& $2.78e-06$ &$1.004$ &$-0.191$&$0.997$&$1.85e-06 $ \\  \hline
$1$&  $1.144$ & $-0.161$ & $1.000$ & $6.91e-08$  & $1.085$ & $-1.95e-03$ & $1.000$ & $4.89e-06$  \\  
\hline
$2$&  $1.157$ & $-0.012$ & $0.994$ & $3.10e-07$ &$1.081$&$-8.48e-11$ & $0.999$ & $3.29e-06$  \\
 \hline 
\end{tabular}
}
    \caption{ Minimum and maximum values (at quadrature points) of the standard mixed method solution $u_h^{\mathrm{mixed}}$ and the solution $\mathcal{U}(\psi_h)$ obtained by \Cref{alg:main_alg_disc} (hybridized FOSPG) on \Cref{example:punctured}.
    Both methods return a numerical flux that is locally mass conserving up to almost double precision-accuracy $\mathcal{O}(10^{-12})$.
    The solution $\mathcal{U}(\psi_h)$, with $\mathcal{U}$ given by \eqref{eq:Z_2}, is bound preserving everywhere in the domain by construction. 
    However, the mixed method solution $u_h^{\mathrm{mixed}}$ violates the DMP.
    Here, we use the stabilization term \eqref{eq:extra_stablization} with $\epsilon_1 = \epsilon_2 = 0.1$ for each polynomial degree $p$. 
    }
    \label{table:min_max}
\end{table}

\begin{remark}[Local mass conservation for anisotropic diffusion]
\label{rem:mass_conservation}
\Cref{cor:mass_conservation} guarantees mass-conservation at the elements where $u_h^*$ is strictly within the bounds. Nonetheless, we numerically observe local mass conservation everywhere. \Cref{example:punctured}  presented further challenges and required $\epsilon_1, \epsilon_2 \neq 0$ since the magnitude of the converged latent variable $\psi_h^k$ is of the order $\max |\psi_h^k| \approx 10^{10}$ otherwise. 
Having such a large solution variable can introduce round-off errors that dominate the discretization error in the computed solution.
Including the term \eqref{eq:extra_stablization} with $\epsilon_1=\epsilon_2 = 0.1$ limited the magnitude of $\psi_h^k$, delivering local mass conservation up to almost double-precision accuracy; i.e, $\max_{T \in \mathcal{T}_h} |(\nabla \cdot \bm q_h - f, 1)_T | = \mathcal{O}(10^{-12})$.
\end{remark}

\begin{remark}[Post processing $u_h$]\label{remark:limiter}
Recall that the local average of the solution $u_h$ is bound preserving by \Cref{remark:bd_preserv_avg}. Therefore, 
we can apply the classical linear scaling limiter \cite{zhang2010maximum} to obtain a bound-preserving polynomial approximation over the whole domain. 
On each element $T\in \mathcal{T}_h$, let $\overline{u}_h$ be the cell average of $u_h$, 
$M = \max_{x\in T} u_h(x)$ and $m = \min_{x\in T} u_h(x)$.
Define $\tilde u_h$ as
$$\tilde u_h(x) = \overline{u}_h+\theta (u_h(x)-\overline{u}_h), \quad \theta = \min\left\{
\left|\frac{\overline{u}-\overline{u}_h}{M-\overline{u}_h}\right|,
\left|\frac{\underline{u}-\overline{u}_h}{m-\overline{u}_h}\right|, 1
\right\}, \quad x\in T.$$ 
Then $\underline{u}\le \tilde u_h(x)\le \overline{u}$ for all $x\in T$.
\Cref{fig:limiter} shows the solutions $u_h$ and the limited solution $\tilde{u}_h$.
It is clear that the limited solution is bound-preserving.
\end{remark}
\begin{figure}[t]
   \begin{center}
  \hspace{-6em}
  \begin{minipage}{0.45\textwidth}
  \centering 
    \begin{overpic}[scale=0.075]{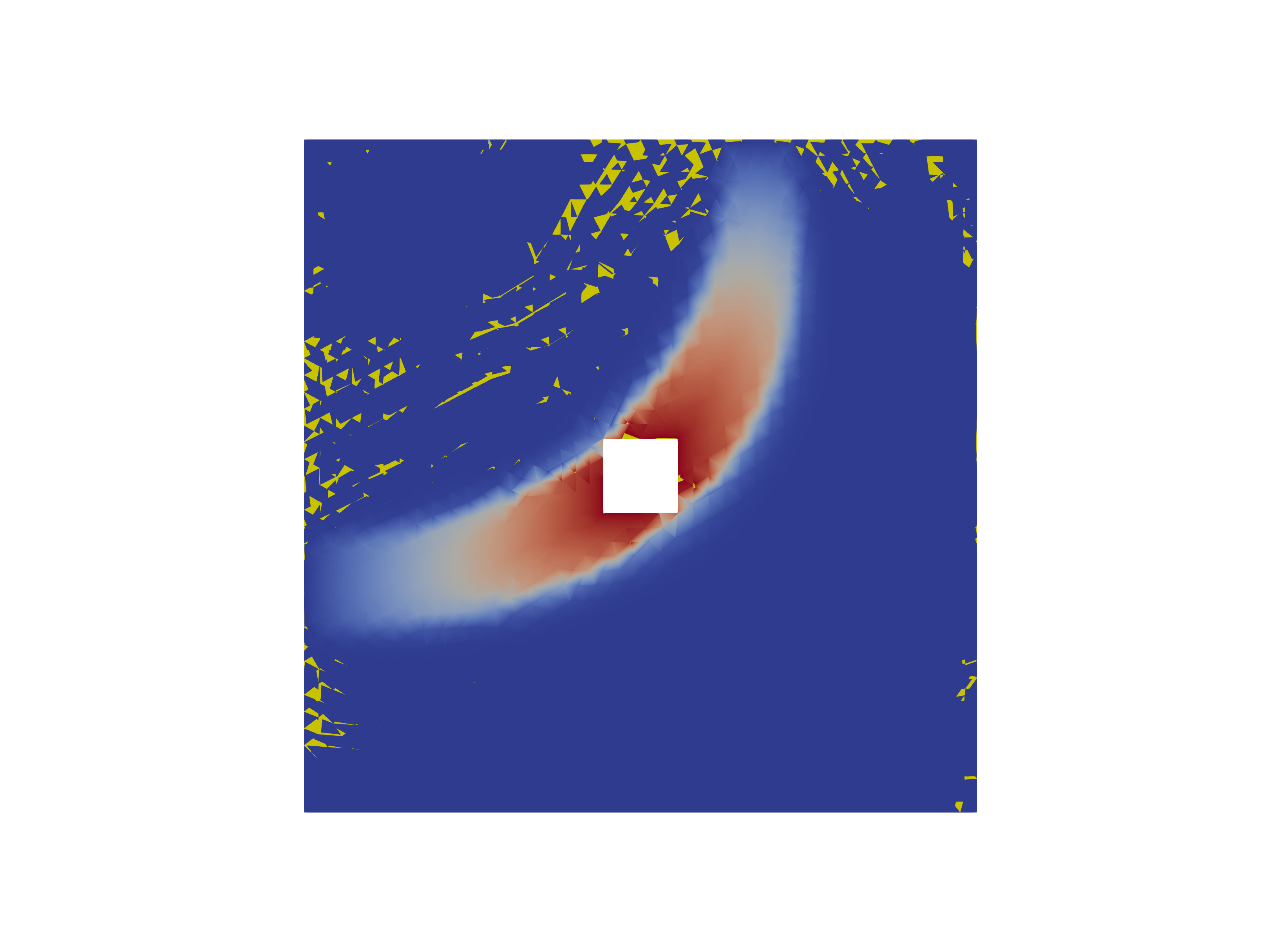}
    \put(48,67){$u_h$}
    \end{overpic}
 \end{minipage}
   \begin{minipage}{0.45\textwidth}
  \centering 
    \begin{overpic}[scale=0.075]{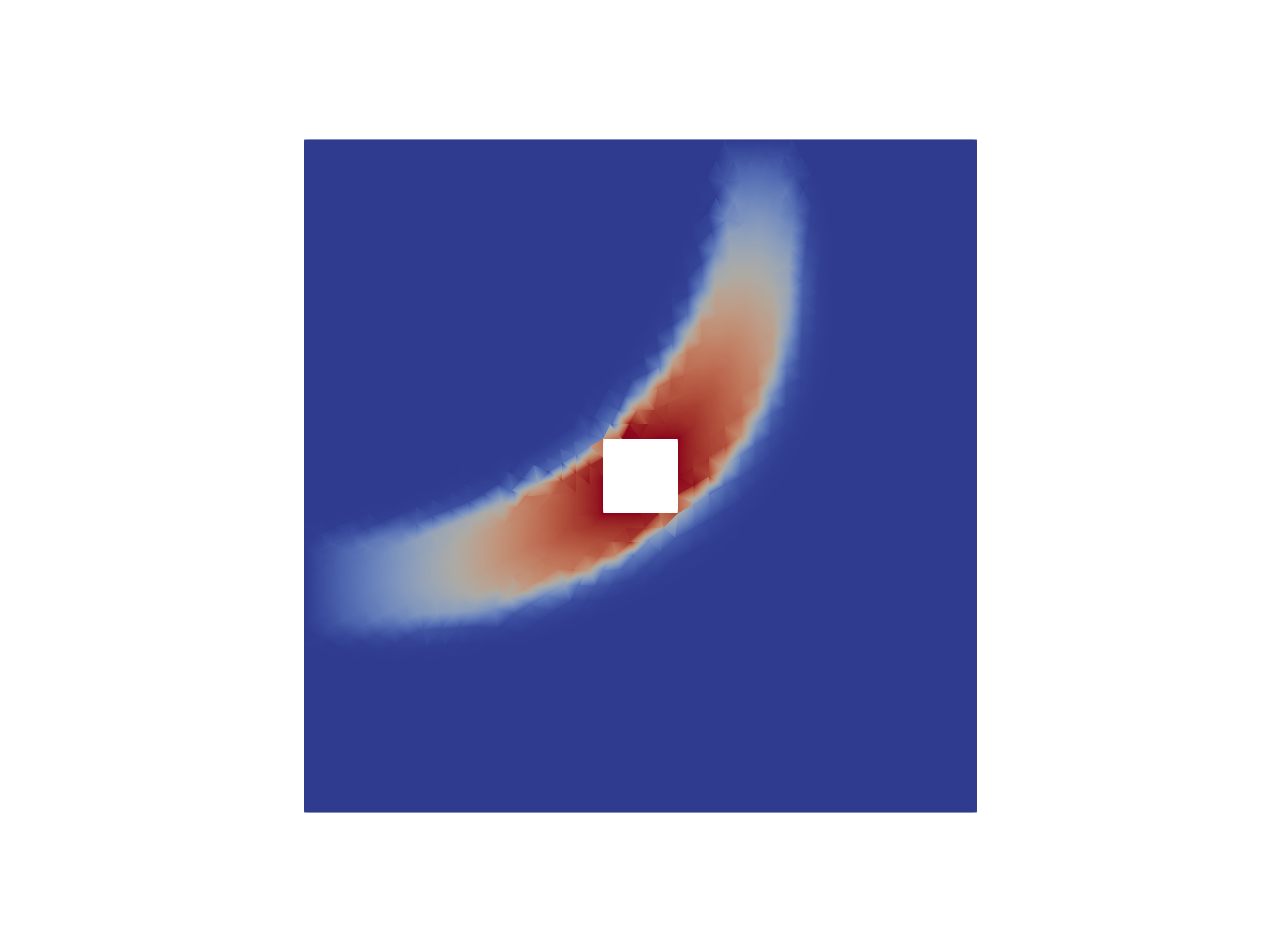}
    \put(48,67){$\tilde{u}_h$}
    \end{overpic}
 \end{minipage}
 \end{center}
\caption{(Post processing the solution $u_h$ to \Cref{example:punctured}). We set $p =2$ and $h\approx 0.03$ , $ \epsilon_1 = 0.0$ and $\epsilon_2 = 0.01$ . Left: Computed solution $u_h$ that has bound preserving local averages. Right: Limited solution $\tilde u_h$ that is bound preserving. } 
\label{fig:limiter}
\end{figure}

\subsection{Obstacle problems}  We consider two examples taken from \cite{keith2023proximal}. The first one is a smooth biactive solution,  see \cite[Subsection 4.8.1]{keith2023proximal} for a discussion on \textit{biactivity} and the challenges this property presents to active set methods. The second example is a nonsmooth spherical obstacle problem.
\begin{example}[Biactive solution]\label{example:biactive}
\normalfont 
In \eqref{eq:VI}, we set $\Omega = [-1,1]\times [-1,1]$, $\underline u = 0$,  and $\overline{u} = \infty$. We consider the smooth manufactured solution $u$ with source term $f$ given by:  
\begin{equation} \nonumber
u(x,y) = \begin{cases}
    0 & \mathrm{if} \quad x <0, \\ 
    x^4 & \mathrm{otherwise},\end{cases}  \quad f(x,y) = \begin{cases}
    0 & \mathrm{if} \quad x <0, \\ 
    -12x^2 & \mathrm{otherwise}.  
\end{cases}
\end{equation}
We set $g = u \vert_{\partial \Omega}$. \Cref{fig:rates_ex1} demonstrates that the approximation resulting from \Cref{alg:main_alg_disc} yields the expected error rates for each $p \in \{ 0,1,2,3\}$ for this example.  
\begin{figure}[t]
\begin{center}
\hspace{-3em}
 \begin{minipage}{0.31\textwidth}
         \resizebox{\textwidth}{!}{\pgfplotsset{width=7cm,compat=1.8}
\definecolor{color0}{rgb}{0.7843, 0.7843, 0.7843}
\definecolor{color1}{rgb}{0, 0.4470, 0.7410}
\definecolor{color2}{rgb}{0.8500, 0.3250, 0.0980}
\definecolor{color3}{rgb}{0.9290, 0.6940, 0.1250}
\definecolor{color4}{rgb}{0.7060, 0.3840, 0.7650}
\definecolor{color5}{rgb}{0.4660, 0.6740, 0.1880}
\definecolor{color6}{rgb}{0.3010, 0.7450, 0.9330}
\definecolor{color7}{rgb}{0.6350, 0.0780, 0.1840}
\definecolor{color8}{rgb}{0.0, 0.4078, 0.3412}

\newcommand{\logLogSlopeTriangle}[6]
{
							
	\pgfplotsextra
	{
		\pgfkeysgetvalue{/pgfplots/xmin}{\xmin}
		\pgfkeysgetvalue{/pgfplots/xmax}{\xmax}
		\pgfkeysgetvalue{/pgfplots/ymin}{\ymin}
		\pgfkeysgetvalue{/pgfplots/ymax}{\ymax}
		
				\pgfmathsetmacro{\xArel}{#1-#2}
		\pgfmathsetmacro{\yArel}{#3}
		\pgfmathsetmacro{\xBrel}{#1}
		\pgfmathsetmacro{\yBrel}{\yArel}
		\pgfmathsetmacro{\xCrel}{\xArel}
				
		\pgfmathsetmacro{\lnxB}{\xmin*(1-(#1-#2))+\xmax*(#1-#2)} 		\pgfmathsetmacro{\lnxA}{\xmin*(1-#1)+\xmax*#1} 		\pgfmathsetmacro{\lnyA}{\ymin*(1-#3)+\ymax*#3} 		\pgfmathsetmacro{\lnyC}{\lnyA+#5/#4*(\lnxA-\lnxB)}
		\pgfmathsetmacro{\yCrel}{\lnyC-\ymin)/(\ymax-\ymin)} 		
				\coordinate (A) at (rel axis cs:\xArel,\yArel);
		\coordinate (B) at (rel axis cs:\xBrel,\yBrel);
		\coordinate (C) at (rel axis cs:\xCrel,\yCrel);
		
				\draw[gray!65!black, line width=0.8pt, #6]   (A)-- node[pos=0.5,anchor=south] {\small #4}
		(B)-- 
		(C)-- node[pos=0.5,anchor=east] {\small #5}
		cycle;
	}
}

\begin{tikzpicture}[
  font=\large
  ]
	\begin{loglogaxis}[
			  xscale=1.2,
		xlabel={Mesh size $h$},
		ylabel={$\|u-u_h\|_{L^2(\Omega)}$},
  				xmajorgrids,
		ymajorgrids,
		clip mode=individual,
		legend style={fill=white, fill opacity=0.6, draw opacity=1, text opacity=1, draw = none, at={(1.06,0.45)}},
		mark repeat=1,
			ytick = {1e-8, 1e-6, 1e-4,1e-2},
            ymin = 1e-9,
            ymax = 1,
	xtick = {0.33607, 0.16803, 0.08401, 0.0420},
		 xticklabels = {$h$,$h/2$,$h/4$,$h/8$}
		]
  	\addplot[color7, mark=diamond*, very thick] table [x index={0}, y index={1}, col sep=comma] {experiment1/primal_error_0.csv};
		\addlegendentry{$p=0$};
	
		\addplot[color1, mark=diamond*, very thick] table [x index={0}, y index={1}, col sep=comma] {experiment1/primal_error_1.csv};
		\addlegendentry{$p=1$};
     
		\addplot[color2, mark=diamond*, very thick] table [x index={0}, y index={1}, col sep=comma] {experiment1/primal_error_2.csv};
		\addlegendentry{$p=2$};

		\addplot[color4, mark=diamond*, very thick] table [x index={0}, y index={1}, col sep=comma] {experiment1/primal_error_3.csv};
		\addlegendentry{$p=3$};

		\logLogSlopeTriangle{0.215}{0.1}{0.65}{1}{-2.0}{fill=color1};
		\logLogSlopeTriangle{0.215}{0.1}{0.47}{1}{-3.0}{fill=color2};
		\logLogSlopeTriangle{0.215}{0.1}{0.85}{1}{-1.0}{fill=color7};
		\logLogSlopeTriangle{0.215}{0.1}{0.15}{1}{-4.0}{fill=color4};

			\end{loglogaxis}
\end{tikzpicture}
}
 \end{minipage}
  \begin{minipage}{0.31\textwidth}
         \resizebox{\textwidth}{!}{\pgfplotsset{width=7cm,compat=1.8}
\definecolor{color0}{rgb}{0.7843, 0.7843, 0.7843}
\definecolor{color1}{rgb}{0, 0.4470, 0.7410}
\definecolor{color2}{rgb}{0.8500, 0.3250, 0.0980}
\definecolor{color3}{rgb}{0.9290, 0.6940, 0.1250}
\definecolor{color4}{rgb}{0.7060, 0.3840, 0.7650}
\definecolor{color5}{rgb}{0.4660, 0.6740, 0.1880}
\definecolor{color6}{rgb}{0.3010, 0.7450, 0.9330}
\definecolor{color7}{rgb}{0.6350, 0.0780, 0.1840}
\definecolor{color8}{rgb}{0.0, 0.4078, 0.3412}

\newcommand{\logLogSlopeTriangle}[6]
{
							
	\pgfplotsextra
	{
		\pgfkeysgetvalue{/pgfplots/xmin}{\xmin}
		\pgfkeysgetvalue{/pgfplots/xmax}{\xmax}
		\pgfkeysgetvalue{/pgfplots/ymin}{\ymin}
		\pgfkeysgetvalue{/pgfplots/ymax}{\ymax}
		
				\pgfmathsetmacro{\xArel}{#1-#2}
		\pgfmathsetmacro{\yArel}{#3}
		\pgfmathsetmacro{\xBrel}{#1}
		\pgfmathsetmacro{\yBrel}{\yArel}
		\pgfmathsetmacro{\xCrel}{\xArel}
				
		\pgfmathsetmacro{\lnxB}{\xmin*(1-(#1-#2))+\xmax*(#1-#2)} 		\pgfmathsetmacro{\lnxA}{\xmin*(1-#1)+\xmax*#1} 		\pgfmathsetmacro{\lnyA}{\ymin*(1-#3)+\ymax*#3} 		\pgfmathsetmacro{\lnyC}{\lnyA+#5/#4*(\lnxA-\lnxB)}
		\pgfmathsetmacro{\yCrel}{\lnyC-\ymin)/(\ymax-\ymin)} 		
				\coordinate (A) at (rel axis cs:\xArel,\yArel);
		\coordinate (B) at (rel axis cs:\xBrel,\yBrel);
		\coordinate (C) at (rel axis cs:\xCrel,\yCrel);
		
				\draw[gray!65!black, line width=0.8pt, #6]   (A)-- node[pos=0.5,anchor=south] {\small #4}
		(B)-- 
		(C)-- node[pos=0.5,anchor=east] {\small #5}
		cycle;
	}
}

\begin{tikzpicture}[
  font=\large
  ]
	\begin{loglogaxis}[
			  xscale=1.2,
		xlabel={Mesh size $h$},
		ylabel={$\|u-\mathcal{U}(\psi_h)\|_{L^2(\Omega)}$},
  				xmajorgrids,
		ymajorgrids,
		clip mode=individual,
		legend style={fill=white, fill opacity=0.6, draw opacity=1, text opacity=1, draw = none, at={(1.2,0.45)}},
		mark repeat=1,
			ytick = {1e-8,1e-6, 1e-4,1e-2},
            ymin = 1e-9,
            ymax = 1,
	xtick = {0.33607, 0.16803, 0.08401, 0.0420},
		 xticklabels = {$h$,$h/2$,$h/4$,$h/8$}
		]
  
		\addplot[color7, mark=diamond*, very thick] table [x index={0}, y index={1}, col sep=comma] {experiment1/latent_error_0.csv};
		\addlegendentry{$p=0$};
	
		\addplot[color1, mark=diamond*, very thick] table [x index={0}, y index={1}, col sep=comma] {experiment1/latent_error_1.csv};
		\addlegendentry{$p=1$};
     
		\addplot[color2, mark=diamond*, very thick] table [x index={0}, y index={1}, col sep=comma] {experiment1/latent_error_2.csv};
		\addlegendentry{$p=2$};

		\addplot[color4, mark=diamond*, very thick] table [x index={0}, y index={1}, col sep=comma] {experiment1/latent_error_3.csv};
		\addlegendentry{$p=3$};

	      \logLogSlopeTriangle{0.215}{0.1}{0.65}{1}{-2.0}{fill=color1};
		\logLogSlopeTriangle{0.215}{0.1}{0.47}{1}{-3.0}{fill=color2};
		\logLogSlopeTriangle{0.215}{0.1}{0.85}{1}{-1.0}{fill=color7};
		\logLogSlopeTriangle{0.215}{0.1}{0.15}{1}{-4.0}{fill=color4};

		\legend{}; 	\end{loglogaxis}
\end{tikzpicture}
}
 \end{minipage}
 \begin{minipage}{0.31\textwidth}
         \resizebox{\textwidth}{!}{
\pgfplotsset{width=7cm,compat=1.8}
\definecolor{color0}{rgb}{0.7843, 0.7843, 0.7843}
\definecolor{color1}{rgb}{0, 0.4470, 0.7410}
\definecolor{color2}{rgb}{0.8500, 0.3250, 0.0980}
\definecolor{color3}{rgb}{0.9290, 0.6940, 0.1250}
\definecolor{color4}{rgb}{0.7060, 0.3840, 0.7650}
\definecolor{color5}{rgb}{0.4660, 0.6740, 0.1880}
\definecolor{color6}{rgb}{0.3010, 0.7450, 0.9330}
\definecolor{color7}{rgb}{0.6350, 0.0780, 0.1840}
\definecolor{color8}{rgb}{0.0, 0.4078, 0.3412}

\newcommand{\logLogSlopeTriangle}[6]
{
							
	\pgfplotsextra
	{
		\pgfkeysgetvalue{/pgfplots/xmin}{\xmin}
		\pgfkeysgetvalue{/pgfplots/xmax}{\xmax}
		\pgfkeysgetvalue{/pgfplots/ymin}{\ymin}
		\pgfkeysgetvalue{/pgfplots/ymax}{\ymax}
		
				\pgfmathsetmacro{\xArel}{#1-#2}
		\pgfmathsetmacro{\yArel}{#3}
		\pgfmathsetmacro{\xBrel}{#1}
		\pgfmathsetmacro{\yBrel}{\yArel}
		\pgfmathsetmacro{\xCrel}{\xArel}
				
		\pgfmathsetmacro{\lnxB}{\xmin*(1-(#1-#2))+\xmax*(#1-#2)} 		\pgfmathsetmacro{\lnxA}{\xmin*(1-#1)+\xmax*#1} 		\pgfmathsetmacro{\lnyA}{\ymin*(1-#3)+\ymax*#3} 		\pgfmathsetmacro{\lnyC}{\lnyA+#5/#4*(\lnxA-\lnxB)}
		\pgfmathsetmacro{\yCrel}{\lnyC-\ymin)/(\ymax-\ymin)} 		
				\coordinate (A) at (rel axis cs:\xArel,\yArel);
		\coordinate (B) at (rel axis cs:\xBrel,\yBrel);
		\coordinate (C) at (rel axis cs:\xCrel,\yCrel);
		
				\draw[gray!65!black, line width=0.8pt, #6]   (A)-- node[pos=0.5,anchor=south] {\small #4}
		(B)-- 
		(C)-- node[pos=0.5,anchor=east] {\small #5}
		cycle;
	}
}

\begin{tikzpicture}[
  font=\large
  ]
	\begin{loglogaxis}[
			  xscale=1.2,
		xlabel={Mesh size $h$},
		ylabel={$\|\bm q-\bm q_h\|_{L^2(\Omega)}$},
  				xmajorgrids,
		ymajorgrids,
		clip mode=individual,
		legend style={fill=white, fill opacity=0.6, draw opacity=1, text opacity=1, draw = none},
		legend pos=south east,
		mark repeat=1,
            ymin = 1e-9,
            ymax = 1,   
		ytick = {1e-8, 1e-6, 1e-4, 1e-2}, 
			xtick = {0.33607, 0.16803, 0.08401, 0.0420},
		 xticklabels = {$h$,$h/2$,$h/4$,$h/8$}
		]
  
		\addplot[color7, mark=diamond*, very thick] table [x index={0}, y index={1}, col sep=comma] {experiment1/flux_error_0.csv};
		\addlegendentry{$p=0$};
	
		\addplot[color1, mark=diamond*, very thick] table [x index={0}, y index={1}, col sep=comma] {experiment1/flux_error_1.csv};
		\addlegendentry{$p=1$};
     
		\addplot[color2, mark=diamond*, very thick] table [x index={0}, y index={1}, col sep=comma] {experiment1/flux_error_2.csv};
		\addlegendentry{$p=2$};

		\addplot[color4, mark=diamond*, very thick] table [x index={0}, y index={1}, col sep=comma] {experiment1/flux_error_3.csv};
		\addlegendentry{$p=3$};

\logLogSlopeTriangle{0.215}{0.1}{0.72}{1}{-2.0}{fill=color1};
		\logLogSlopeTriangle{0.215}{0.1}{0.47}{1}{-3.0}{fill=color2};
		\logLogSlopeTriangle{0.215}{0.1}{0.93}{1}{-1.0}{fill=color7};
		\logLogSlopeTriangle{0.215}{0.1}{0.27}{1}{-4.0}{fill=color4};

		 \legend{}; 	\end{loglogaxis}
\end{tikzpicture}
}
 \end{minipage}
 \end{center}
 \caption{ (\Cref{example:biactive}). Computed $L^2$ errors and rates of the approximations $(u_h, \mathcal{U}(\psi_h), \bm q_h)$ of \Cref{alg:main_alg_disc} with  $\mathcal{U}(\psi_h) = \exp(\psi_h)$.   The algorithm is terminated when $\|u_h^{k} -u_h^{k-1}\|_{L^2(\Omega)} < 10^{-12}$ and the tolerance for the Newton solver is set to $10^{-10}$.  The coarsest mesh size $h\approx 0.33$. We set $\epsilon_1 = 0$ in \eqref{eq:extra_stablization} for all $p$.  For $p=0$ and $p=1$, we set $\epsilon_2 = 0$. For $p=2$, we set $\epsilon_2 = 1e-05$ and for $p=3$, we set $\epsilon_2 = 1e-07$. In this example, we set $\alpha^k = 1.5^k$. }
\label{fig:rates_ex1}
 \end{figure}

\end{example}
\begin{example}[Spherical obstacle] \normalfont \label{example:spherical}
We consider the example from \cite[Section 4.8.4]{keith2023proximal}. Accordingly, we set $\Omega$ to be the circle centered at $(0,0)$ of radius 1, $\overline{u} = \infty$, $A$ to be the identity, $g = f = 0$, and 
\[ 
    \underline u = \begin{cases}
        \sqrt{ 1/4 - r^2} & \mathrm{if } \quad r \leq 9/20, \\ 
                \varphi (r) & \mathrm{otherwise},
    \end{cases}
    \qquad
    \text{where }
    r = \sqrt{x^2 + y^2}
    \,.
\]
In the above, $\varphi(r)$ is the unique $C^1$ linear extension of $r \mapsto \sqrt{1/4 - r^2}$ for $r > 9/20$. The exact solution,
\[
    u = \begin{cases}
    Q  \ln \sqrt{x^2+y^2} &\mathrm{if } \quad \sqrt{x^2 +y^2} >  a, \\ 
    \underline u  & \mathrm{otherwise},
    \end{cases}
\]
where $a= \exp(W_{-1}(-1/(2e^2))/2 + 1)\approx 0.34898$ where $W_{-1}(\cdot)$ is the $-1$-branch of the Lambert W-function, and $Q = \sqrt{1/4-a^2}/\ln a$, belongs to $H^{5/2-\epsilon}(\Omega)$.
Hence, one can only expect at most an order of $5/2$ (resp.\ $3/2$) for $\|u-u_h\|_{L^2(\Omega)}$ (resp.\ $\|\bm q - \bm q_h\|_{L^2(\Omega)} $). 

\Cref{fig:spherical_obs} shows the solution computed with $p=2$ along with 
a local mass conservation indicator $\xi_h\in V_h^0$ defined as $\xi_h|_{T} = |(\nabla \cdot \bm q_h , 1)_T|$ for all $T\in \mathcal{T}_h$.
This demonstrates the result stated in \Cref{cor:mass_conservation}; i.e., that the computed solution is locally mass conservative away from the contact zone. 
\Cref{fig:rates_ex2} reports the rates of decay of the discretization error. For $p=2$, the optimal rate of $1.5$ is observed for the flux. For error in the primal variable, we observe a rate of $2.0$ for $p= 1$ and $p=2$. 
\begin{figure}
  \begin{center}
  \hspace{-5cm}
 \begin{minipage}{0.45\textwidth}
 \centering
\includegraphics[scale=0.15]{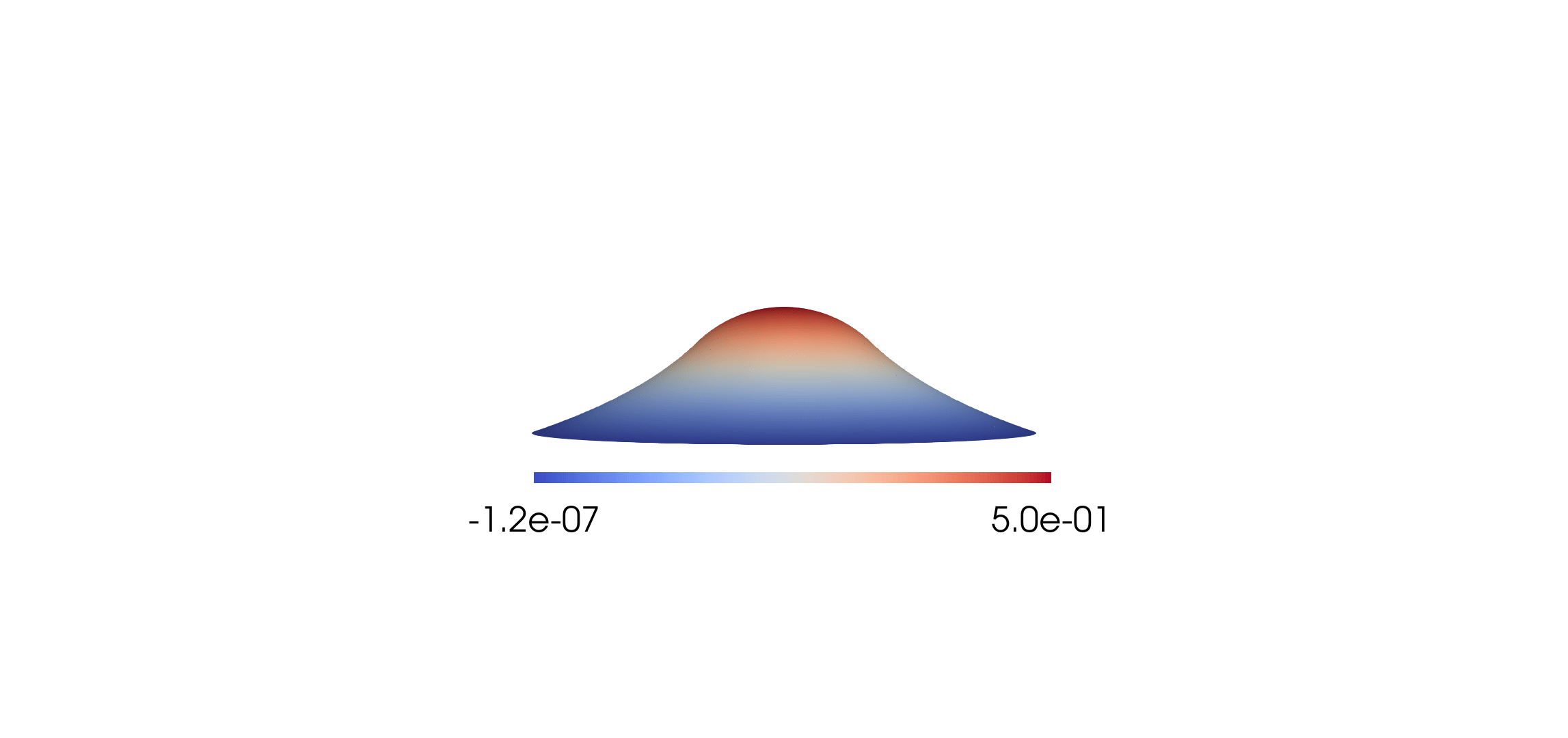}
 \end{minipage}
  \begin{minipage}{0.45\textwidth} 
  \centering
\includegraphics[scale=0.15]{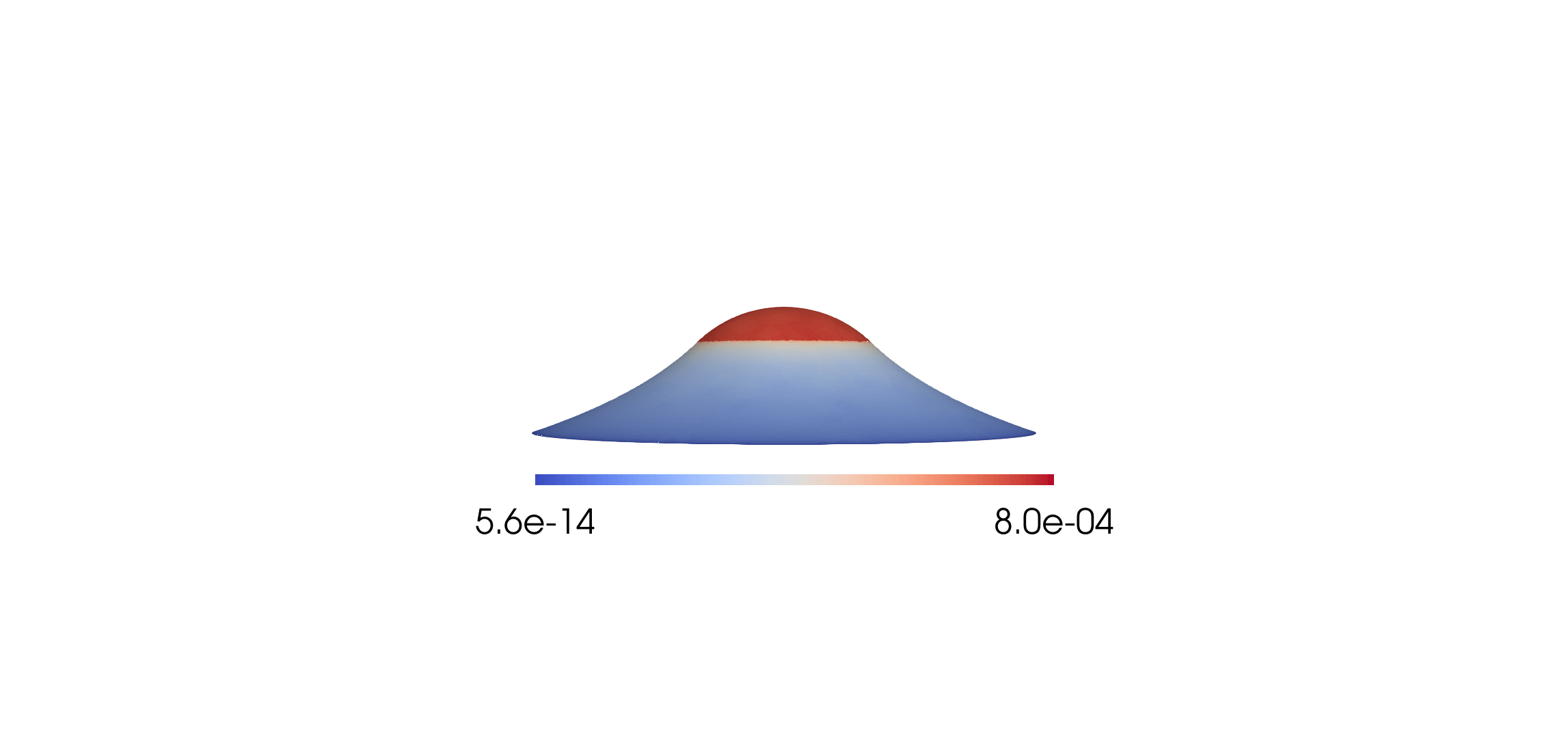}
 \end{minipage}
 \end{center}
\caption{Left: Computed solution $u_h$ for $p =2$ and $h\approx 0.015$ with $\epsilon_1 = 0$ and $\epsilon_2 = 1e-04$ and $\mathcal{U}(\psi) = \underline u + \exp(\psi)$. Right: Local mass conservation indicator 
$\xi_h$, plotted with log scale.
Observe that we obtain local mass conservation with almost double precision accuracy on elements not intersecting the obstacle. } 
\label{fig:spherical_obs}
\end{figure}
\begin{figure}[htb]
\begin{center}
\hspace{-3em}
 \begin{minipage}{0.32\textwidth}
         \resizebox{\textwidth}{!}{\pgfplotsset{width=7cm,compat=1.8}
\definecolor{color0}{rgb}{0.7843, 0.7843, 0.7843}
\definecolor{color1}{rgb}{0, 0.4470, 0.7410}
\definecolor{color2}{rgb}{0.8500, 0.3250, 0.0980}
\definecolor{color3}{rgb}{0.9290, 0.6940, 0.1250}
\definecolor{color4}{rgb}{0.7060, 0.3840, 0.7650}
\definecolor{color5}{rgb}{0.4660, 0.6740, 0.1880}
\definecolor{color6}{rgb}{0.3010, 0.7450, 0.9330}
\definecolor{color7}{rgb}{0.6350, 0.0780, 0.1840}
\definecolor{color8}{rgb}{0.0, 0.4078, 0.3412}

\newcommand{\logLogSlopeTriangle}[6]
{
							
	\pgfplotsextra
	{
		\pgfkeysgetvalue{/pgfplots/xmin}{\xmin}
		\pgfkeysgetvalue{/pgfplots/xmax}{\xmax}
		\pgfkeysgetvalue{/pgfplots/ymin}{\ymin}
		\pgfkeysgetvalue{/pgfplots/ymax}{\ymax}
		
				\pgfmathsetmacro{\xArel}{#1-#2}
		\pgfmathsetmacro{\yArel}{#3}
		\pgfmathsetmacro{\xBrel}{#1}
		\pgfmathsetmacro{\yBrel}{\yArel}
		\pgfmathsetmacro{\xCrel}{\xArel}
				
		\pgfmathsetmacro{\lnxB}{\xmin*(1-(#1-#2))+\xmax*(#1-#2)} 		\pgfmathsetmacro{\lnxA}{\xmin*(1-#1)+\xmax*#1} 		\pgfmathsetmacro{\lnyA}{\ymin*(1-#3)+\ymax*#3} 		\pgfmathsetmacro{\lnyC}{\lnyA+#5/#4*(\lnxA-\lnxB)}
		\pgfmathsetmacro{\yCrel}{\lnyC-\ymin)/(\ymax-\ymin)} 		
				\coordinate (A) at (rel axis cs:\xArel,\yArel);
		\coordinate (B) at (rel axis cs:\xBrel,\yBrel);
		\coordinate (C) at (rel axis cs:\xCrel,\yCrel);
		
				\draw[gray!65!black, line width=0.8pt, #6]   (A)-- node[pos=0.5,anchor=south] {\small #4}
		(B)-- 
		(C)-- node[pos=0.5,anchor=east] {\small #5}
		cycle;
	}
}

\begin{tikzpicture}[
  font=\large
  ]
	\begin{loglogaxis}[
			  xscale=1.2,
		xlabel={Mesh size $h$},
		ylabel={$\|u-u_h\|_{L^2(\Omega)}$},
  				xmajorgrids,
		ymajorgrids,
		clip mode=individual,
		legend style={fill=white, fill opacity=0.6, draw opacity=1, text opacity=1, draw = none, at={(1.15,0.36)}},
		mark repeat=1,
			ytick = {1e-6, 1e-4,1e-2, 1},
            ymin = 1e-7,
            ymax = 1e-1,
		xtick = {0.0625,0.03125,0.015625,0.0078125},
		 xticklabels = {$h$,$h/2$,$h/4$,$h/8$}
		]
  
		\addplot[color7, mark=diamond*, very thick] table [x index={0}, y index={1}, col sep=comma] {circle/primal_error_0_0.0.csv};
		\addlegendentry{$p=0$};
	
		\addplot[color1, mark=diamond*, very thick] table [x index={0}, y index={1}, col sep=comma] {circle/primal_error_1_1e-05.csv};
		\addlegendentry{$p=1$};
        	  
		\addplot[color2, mark=diamond*, very thick] table [x index={0}, y index={1}, col sep=comma] {circle/primal_error_2_0.0002.csv};
		\addlegendentry{$p=2$};

		\logLogSlopeTriangle{0.2}{0.1}{0.35}{1}{-2.0}{fill=color1};
				\logLogSlopeTriangle{0.215}{0.1}{0.75}{1}{-1.0}{fill=color7};
		
			\end{loglogaxis}
\end{tikzpicture}
}
 \end{minipage}
  \begin{minipage}{0.32\textwidth}
         \resizebox{\textwidth}{!}{\pgfplotsset{width=7cm,compat=1.8}
\definecolor{color0}{rgb}{0.7843, 0.7843, 0.7843}
\definecolor{color1}{rgb}{0, 0.4470, 0.7410}
\definecolor{color2}{rgb}{0.8500, 0.3250, 0.0980}
\definecolor{color3}{rgb}{0.9290, 0.6940, 0.1250}
\definecolor{color4}{rgb}{0.7060, 0.3840, 0.7650}
\definecolor{color5}{rgb}{0.4660, 0.6740, 0.1880}
\definecolor{color6}{rgb}{0.3010, 0.7450, 0.9330}
\definecolor{color7}{rgb}{0.6350, 0.0780, 0.1840}
\definecolor{color8}{rgb}{0.0, 0.4078, 0.3412}

\newcommand{\logLogSlopeTriangle}[6]
{
							
	\pgfplotsextra
	{
		\pgfkeysgetvalue{/pgfplots/xmin}{\xmin}
		\pgfkeysgetvalue{/pgfplots/xmax}{\xmax}
		\pgfkeysgetvalue{/pgfplots/ymin}{\ymin}
		\pgfkeysgetvalue{/pgfplots/ymax}{\ymax}
		
				\pgfmathsetmacro{\xArel}{#1-#2}
		\pgfmathsetmacro{\yArel}{#3}
		\pgfmathsetmacro{\xBrel}{#1}
		\pgfmathsetmacro{\yBrel}{\yArel}
		\pgfmathsetmacro{\xCrel}{\xArel}
				
		\pgfmathsetmacro{\lnxB}{\xmin*(1-(#1-#2))+\xmax*(#1-#2)} 		\pgfmathsetmacro{\lnxA}{\xmin*(1-#1)+\xmax*#1} 		\pgfmathsetmacro{\lnyA}{\ymin*(1-#3)+\ymax*#3} 		\pgfmathsetmacro{\lnyC}{\lnyA+#5/#4*(\lnxA-\lnxB)}
		\pgfmathsetmacro{\yCrel}{\lnyC-\ymin)/(\ymax-\ymin)} 		
				\coordinate (A) at (rel axis cs:\xArel,\yArel);
		\coordinate (B) at (rel axis cs:\xBrel,\yBrel);
		\coordinate (C) at (rel axis cs:\xCrel,\yCrel);
		
				\draw[gray!65!black, line width=0.8pt, #6]   (A)-- node[pos=0.5,anchor=south] {\small #4}
		(B)-- 
		(C)-- node[pos=0.5,anchor=east] {\small #5}
		cycle;
	}
}

\begin{tikzpicture}[
  font=\large
  ]
	\begin{loglogaxis}[
				  xscale=1.2,
		xlabel={Mesh size $h$},
		ylabel={$\|u-\mathcal{U}(\psi_h)\|_{L^2(\Omega)}$},
  				xmajorgrids,
		ymajorgrids,
		clip mode=individual,
		legend style={fill=white, fill opacity=0.6, draw opacity=1, text opacity=1},
		legend pos=south east,
		mark repeat=1,
		ytick = {1e-6, 1e-4,1e-2},
            ymin = 1e-7,
            ymax = 1e-1,
		xtick = {0.0625,0.03125,0.015625,0.0078125},
		 xticklabels = {$h$,$h/2$,$h/4$,$h/8$}
		]
         \addplot[color7, mark=diamond*, very thick] table [x index={0}, y index={1}, col sep=comma] {circle/latent_error_0_0.0.csv};
		\addlegendentry{$p=0$};
				\addplot[color1, mark=diamond*, very thick] table [x index={0}, y index={1}, col sep=comma] {circle/latent_error_1_0.0.csv};
		\addlegendentry{$p=1$};

		\addplot[color2, mark=diamond*, very thick] table [x index={0}, y index={1}, col sep=comma] {circle/latent_error_2_0.0002.csv};
		\addlegendentry{$p=2$};

		\logLogSlopeTriangle{0.2}{0.1}{0.45}{1}{-2.0}{fill=color1};
				\logLogSlopeTriangle{0.215}{0.1}{0.85}{1}{-1.0}{fill=color7};
				
				  \legend{}; 	\end{loglogaxis}
\end{tikzpicture}
}
 \end{minipage}
 \begin{minipage}{0.32\textwidth}
         \resizebox{\textwidth}{!}{
\pgfplotsset{width=7cm,compat=1.8}
\definecolor{color0}{rgb}{0.7843, 0.7843, 0.7843}
\definecolor{color1}{rgb}{0, 0.4470, 0.7410}
\definecolor{color2}{rgb}{0.8500, 0.3250, 0.0980}
\definecolor{color3}{rgb}{0.9290, 0.6940, 0.1250}
\definecolor{color4}{rgb}{0.7060, 0.3840, 0.7650}
\definecolor{color5}{rgb}{0.4660, 0.6740, 0.1880}
\definecolor{color6}{rgb}{0.3010, 0.7450, 0.9330}
\definecolor{color7}{rgb}{0.6350, 0.0780, 0.1840}
\definecolor{color8}{rgb}{0.0, 0.4078, 0.3412}

\newcommand{\logLogSlopeTriangle}[6]
{
							
	\pgfplotsextra
	{
		\pgfkeysgetvalue{/pgfplots/xmin}{\xmin}
		\pgfkeysgetvalue{/pgfplots/xmax}{\xmax}
		\pgfkeysgetvalue{/pgfplots/ymin}{\ymin}
		\pgfkeysgetvalue{/pgfplots/ymax}{\ymax}
		
				\pgfmathsetmacro{\xArel}{#1-#2}
		\pgfmathsetmacro{\yArel}{#3}
		\pgfmathsetmacro{\xBrel}{#1}
		\pgfmathsetmacro{\yBrel}{\yArel}
		\pgfmathsetmacro{\xCrel}{\xArel}
				
		\pgfmathsetmacro{\lnxB}{\xmin*(1-(#1-#2))+\xmax*(#1-#2)} 		\pgfmathsetmacro{\lnxA}{\xmin*(1-#1)+\xmax*#1} 		\pgfmathsetmacro{\lnyA}{\ymin*(1-#3)+\ymax*#3} 		\pgfmathsetmacro{\lnyC}{\lnyA+#5/#4*(\lnxA-\lnxB)}
		\pgfmathsetmacro{\yCrel}{\lnyC-\ymin)/(\ymax-\ymin)} 		
				\coordinate (A) at (rel axis cs:\xArel,\yArel);
		\coordinate (B) at (rel axis cs:\xBrel,\yBrel);
		\coordinate (C) at (rel axis cs:\xCrel,\yCrel);
		
				\draw[gray!65!black, line width=0.8pt, #6]   (A)-- node[pos=0.5,anchor=south] {\small #4}
		(B)-- 
		(C)-- node[pos=0.5,anchor=east] {\small #5}
		cycle;
	}
}

\begin{tikzpicture}[
  font=\large
  ]
	\begin{loglogaxis}[
			  xscale=1.2,
		xlabel={Mesh size $h$},
		ylabel={$\|\bm q-\bm q_h\|_{L^2(\Omega)}$},
  				xmajorgrids,
		ymajorgrids,
		clip mode=individual,
		legend style={fill=white, fill opacity=0.6, draw opacity=1, text opacity=1, draw = none},
		legend pos=south east,
		mark repeat=1,
            ymin = 1e-4,
            ymax = 1e-1,   
		ytick = {1e-4, 1e-2, 1}, 
			xtick = {0.0625,0.03125,0.015625,0.0078125},
		 xticklabels = {$h$,$h/2$,$h/4$,$h/8$}
		]
  
            \addplot[color7, mark=diamond*, very thick] table [x index={0}, y index={1}, col sep=comma] {circle/flux_error_0_0.0.csv};
            \addlegendentry{$p=0$};

        \addplot[color1, mark=diamond*, very thick] table [x index={0}, y index={1}, col sep=comma] {circle/flux_error_1_0.0.csv};
		\addlegendentry{$p=1$} ;
  
		\addplot[color2, mark=diamond*, very thick] table [x index={0}, y index={1}, col sep=comma] {circle/flux_error_2_0.0002.csv};
		\addlegendentry{$p=2$};

		\logLogSlopeTriangle{0.215}{0.1}{0.35}{1}{-1.5}{fill=color1};
		\logLogSlopeTriangle{0.215}{0.1}{0.64}{1}{-1.0}{fill=color7};
  						
		 \legend{}; 	\end{loglogaxis}
\end{tikzpicture}
}
 \end{minipage}
 \end{center}
 \begin{center}
 \hspace{-3em}
     \begin{minipage}{0.32\textwidth}
         \resizebox{\textwidth}{!}{\pgfplotsset{width=7cm,compat=1.8}
\definecolor{color0}{rgb}{0.7843, 0.7843, 0.7843}
\definecolor{color1}{rgb}{0, 0.4470, 0.7410}
\definecolor{color2}{rgb}{0.8500, 0.3250, 0.0980}
\definecolor{color3}{rgb}{0.9290, 0.6940, 0.1250}
\definecolor{color4}{rgb}{0.7060, 0.3840, 0.7650}
\definecolor{color5}{rgb}{0.4660, 0.6740, 0.1880}
\definecolor{color6}{rgb}{0.3010, 0.7450, 0.9330}
\definecolor{color7}{rgb}{0.6350, 0.0780, 0.1840}
\definecolor{color8}{rgb}{0.0, 0.4078, 0.3412}

\newcommand{\logLogSlopeTriangle}[6]
{
							
	\pgfplotsextra
	{
		\pgfkeysgetvalue{/pgfplots/xmin}{\xmin}
		\pgfkeysgetvalue{/pgfplots/xmax}{\xmax}
		\pgfkeysgetvalue{/pgfplots/ymin}{\ymin}
		\pgfkeysgetvalue{/pgfplots/ymax}{\ymax}
		
				\pgfmathsetmacro{\xArel}{#1-#2}
		\pgfmathsetmacro{\yArel}{#3}
		\pgfmathsetmacro{\xBrel}{#1}
		\pgfmathsetmacro{\yBrel}{\yArel}
		\pgfmathsetmacro{\xCrel}{\xArel}
				
		\pgfmathsetmacro{\lnxB}{\xmin*(1-(#1-#2))+\xmax*(#1-#2)} 		\pgfmathsetmacro{\lnxA}{\xmin*(1-#1)+\xmax*#1} 		\pgfmathsetmacro{\lnyA}{\ymin*(1-#3)+\ymax*#3} 		\pgfmathsetmacro{\lnyC}{\lnyA+#5/#4*(\lnxA-\lnxB)}
		\pgfmathsetmacro{\yCrel}{\lnyC-\ymin)/(\ymax-\ymin)} 		
				\coordinate (A) at (rel axis cs:\xArel,\yArel);
		\coordinate (B) at (rel axis cs:\xBrel,\yBrel);
		\coordinate (C) at (rel axis cs:\xCrel,\yCrel);
		
				\draw[gray!65!black, line width=0.8pt, #6]   (A)-- node[pos=0.5,anchor=south] {\small #4}
		(B)-- 
		(C)-- node[pos=0.5,anchor=east] {\small #5}
		cycle;
	}
}

\begin{tikzpicture}[
  font=\large
  ]
	\begin{loglogaxis}[
			  xscale=1.2,
		xlabel={Mesh size $h$},
		ylabel={$\|u-u_h\|_{L^2(\Omega)}$},
  				xmajorgrids,
		ymajorgrids,
		clip mode=individual,
		legend style={fill=white, fill opacity=0.6, draw opacity=1, text opacity=1, draw = none, at={(1.15,0.36)}},
		mark repeat=1,
			ytick = { 1e-6,1e-4,1e-2, 1},
            ymin = 1e-7,
            ymax = 1e-1,
		xtick = {0.0625,0.03125,0.015625,0.0078125},
		 xticklabels = {$h$,$h/2$,$h/4$,$h/8$}
		]
  
		\addplot[color7, mark=diamond*, very thick] table [x index={0}, y index={1}, col sep=comma] {circle/primal_error_0_Z2_0.0.csv};
		\addlegendentry{$p=0$};
	 
				        \addplot[color1, mark=diamond*, very thick] table [x index={0}, y index={1}, col sep=comma] {circle/primal_error_1_Z2_0.0.csv};
		\addlegendentry{$p=1$};
  
		\addplot[color2, mark=diamond*, very thick] table [x index={0}, y index={1}, col sep=comma] {circle/primal_error_2_Z2_0.0002.csv};
		\addlegendentry{$p=2$};

		\logLogSlopeTriangle{0.2}{0.1}{0.35}{1}{-2.0}{fill=color1};
				\logLogSlopeTriangle{0.215}{0.1}{0.75}{1}{-1.0}{fill=color7};
		
			\end{loglogaxis}
\end{tikzpicture}
}
 \end{minipage}
  \begin{minipage}{0.32\textwidth}
         \resizebox{\textwidth}{!}{\pgfplotsset{width=7cm,compat=1.8}
\definecolor{color0}{rgb}{0.7843, 0.7843, 0.7843}
\definecolor{color1}{rgb}{0, 0.4470, 0.7410}
\definecolor{color2}{rgb}{0.8500, 0.3250, 0.0980}
\definecolor{color3}{rgb}{0.9290, 0.6940, 0.1250}
\definecolor{color4}{rgb}{0.7060, 0.3840, 0.7650}
\definecolor{color5}{rgb}{0.4660, 0.6740, 0.1880}
\definecolor{color6}{rgb}{0.3010, 0.7450, 0.9330}
\definecolor{color7}{rgb}{0.6350, 0.0780, 0.1840}
\definecolor{color8}{rgb}{0.0, 0.4078, 0.3412}

\newcommand{\logLogSlopeTriangle}[6]
{
							
	\pgfplotsextra
	{
		\pgfkeysgetvalue{/pgfplots/xmin}{\xmin}
		\pgfkeysgetvalue{/pgfplots/xmax}{\xmax}
		\pgfkeysgetvalue{/pgfplots/ymin}{\ymin}
		\pgfkeysgetvalue{/pgfplots/ymax}{\ymax}
		
				\pgfmathsetmacro{\xArel}{#1-#2}
		\pgfmathsetmacro{\yArel}{#3}
		\pgfmathsetmacro{\xBrel}{#1}
		\pgfmathsetmacro{\yBrel}{\yArel}
		\pgfmathsetmacro{\xCrel}{\xArel}
				
		\pgfmathsetmacro{\lnxB}{\xmin*(1-(#1-#2))+\xmax*(#1-#2)} 		\pgfmathsetmacro{\lnxA}{\xmin*(1-#1)+\xmax*#1} 		\pgfmathsetmacro{\lnyA}{\ymin*(1-#3)+\ymax*#3} 		\pgfmathsetmacro{\lnyC}{\lnyA+#5/#4*(\lnxA-\lnxB)}
		\pgfmathsetmacro{\yCrel}{\lnyC-\ymin)/(\ymax-\ymin)} 		
				\coordinate (A) at (rel axis cs:\xArel,\yArel);
		\coordinate (B) at (rel axis cs:\xBrel,\yBrel);
		\coordinate (C) at (rel axis cs:\xCrel,\yCrel);
		
				\draw[gray!65!black, line width=0.8pt, #6]   (A)-- node[pos=0.5,anchor=south] {\small #4}
		(B)-- 
		(C)-- node[pos=0.5,anchor=east] {\small #5}
		cycle;
	}
}

\begin{tikzpicture}[
  font=\large
  ]
	\begin{loglogaxis}[
			  xscale=1.2,
		xlabel={Mesh size $h$},
		ylabel={$\|u-\mathcal{U}(\psi_h)\|_{L^2(\Omega)}$},
  				xmajorgrids,
		ymajorgrids,
		clip mode=individual,
		legend style={fill=white, fill opacity=0.6, draw opacity=1, text opacity=1, draw = none},
		legend pos=south east,
		mark repeat=1,
			ytick = {1e-6, 1e-4,1e-2, 1},
            ymin = 1e-7,
            ymax = 1e-1,
		xtick = {0.0625,0.03125,0.015625,0.0078125},
		 xticklabels = {$h$,$h/2$,$h/4$,$h/8$}
		]
  
		\addplot[color7, mark=diamond*, very thick] table [x index={0}, y index={1}, col sep=comma] {circle/latent_error_0_Z2_0.0.csv};
		\addlegendentry{$p=0$};
	 
				        \addplot[color1, mark=diamond*, very thick] table [x index={0}, y index={1}, col sep=comma] {circle/latent_error_1_Z2_0.0.csv};
		\addlegendentry{$p=1$};
  
		\addplot[color2, mark=diamond*, very thick] table [x index={0}, y index={1}, col sep=comma] {circle/latent_error_2_Z2_0.0002.csv};
		\addlegendentry{$p=2$};

		\logLogSlopeTriangle{0.2}{0.1}{0.45}{1}{-2.0}{fill=color1};
				\logLogSlopeTriangle{0.215}{0.1}{0.85}{1}{-1.0}{fill=color7};

		\legend{}; 
    \end{loglogaxis}
\end{tikzpicture}
}
 \end{minipage}
 \begin{minipage}{0.32\textwidth}
         \resizebox{\textwidth}{!}{\pgfplotsset{width=7cm,compat=1.8}
\definecolor{color0}{rgb}{0.7843, 0.7843, 0.7843}
\definecolor{color1}{rgb}{0, 0.4470, 0.7410}
\definecolor{color2}{rgb}{0.8500, 0.3250, 0.0980}
\definecolor{color3}{rgb}{0.9290, 0.6940, 0.1250}
\definecolor{color4}{rgb}{0.7060, 0.3840, 0.7650}
\definecolor{color5}{rgb}{0.4660, 0.6740, 0.1880}
\definecolor{color6}{rgb}{0.3010, 0.7450, 0.9330}
\definecolor{color7}{rgb}{0.6350, 0.0780, 0.1840}
\definecolor{color8}{rgb}{0.0, 0.4078, 0.3412}

\newcommand{\logLogSlopeTriangle}[6]
{
							
	\pgfplotsextra
	{
		\pgfkeysgetvalue{/pgfplots/xmin}{\xmin}
		\pgfkeysgetvalue{/pgfplots/xmax}{\xmax}
		\pgfkeysgetvalue{/pgfplots/ymin}{\ymin}
		\pgfkeysgetvalue{/pgfplots/ymax}{\ymax}
		
				\pgfmathsetmacro{\xArel}{#1-#2}
		\pgfmathsetmacro{\yArel}{#3}
		\pgfmathsetmacro{\xBrel}{#1}
		\pgfmathsetmacro{\yBrel}{\yArel}
		\pgfmathsetmacro{\xCrel}{\xArel}
				
		\pgfmathsetmacro{\lnxB}{\xmin*(1-(#1-#2))+\xmax*(#1-#2)} 		\pgfmathsetmacro{\lnxA}{\xmin*(1-#1)+\xmax*#1} 		\pgfmathsetmacro{\lnyA}{\ymin*(1-#3)+\ymax*#3} 		\pgfmathsetmacro{\lnyC}{\lnyA+#5/#4*(\lnxA-\lnxB)}
		\pgfmathsetmacro{\yCrel}{\lnyC-\ymin)/(\ymax-\ymin)} 		
				\coordinate (A) at (rel axis cs:\xArel,\yArel);
		\coordinate (B) at (rel axis cs:\xBrel,\yBrel);
		\coordinate (C) at (rel axis cs:\xCrel,\yCrel);
		
				\draw[gray!65!black, line width=0.8pt, #6]   (A)-- node[pos=0.5,anchor=south] {\small #4}
		(B)-- 
		(C)-- node[pos=0.5,anchor=east] {\small #5}
		cycle;
	}
}

\begin{tikzpicture}[
  font=\large
  ]
	\begin{loglogaxis}[
			  xscale=1.2,
		xlabel={Mesh size $h$},
		ylabel={$\|\bm q - \bm q_h\|_{L^2(\Omega)}$},
  				xmajorgrids,
		ymajorgrids,
		clip mode=individual,
		legend style={fill=white, fill opacity=0.6, draw opacity=1, text opacity=1, draw = none},
		legend pos=south east,
		mark repeat=1,
			ytick = { 1e-6, 1e-4,1e-2, 1},
            ymin = 1e-4,
            ymax = 1e-1,
		xtick = {0.0625,0.03125,0.015625,0.0078125},
		 xticklabels = {$h$,$h/2$,$h/4$,$h/8$}
		]
  
		\addplot[color7, mark=diamond*, very thick] table [x index={0}, y index={1}, col sep=comma] {circle/flux_error_0_Z2_0.0.csv};
		\addlegendentry{$p=0$};
	 
				        \addplot[color1, mark=diamond*, very thick] table [x index={0}, y index={1}, col sep=comma] {circle/flux_error_1_Z2_0.0.csv};
		\addlegendentry{$p=1$};
  
		\addplot[color2, mark=diamond*, very thick] table [x index={0}, y index={1}, col sep=comma] {circle/flux_error_2_Z2_0.0002.csv};
		\addlegendentry{$p=2$};

	\logLogSlopeTriangle{0.215}{0.1}{0.35}{1}{-1.5}{fill=color1};
		\logLogSlopeTriangle{0.215}{0.1}{0.65}{1}{-1.0}{fill=color7};

	 \legend{}; 	\end{loglogaxis}
\end{tikzpicture}
}
 \end{minipage}
  \end{center}
\caption{ (\Cref{example:spherical}). Computed $L^2$ errors and rates of the approximations $(u_h, \mathcal{U}(\psi_h), \bm q_h)$ of \Cref{alg:main_alg_disc} with different choices for $\mathcal{U}$.  (Top row: $\mathcal U(\psi) = \underline u + \exp(\psi)$. Bottom row: $ \mathcal U(\psi) = \underline u + \ln(1+\exp(\psi))$.)  The algorithm is terminated once $\|u_h^{k} -u_h^{k-1}\|_{L^2(\Omega)} < 10^{-6}$ and the tolerance for the Newton solver is set to $10^{-10}$.  The coarsest mesh size $h\approx 0.058$. For $p=0$ and $p=1$, we set $\epsilon_1 = \epsilon_2 = 0$. For $p=2$, we set  $\epsilon_1 =0 , \epsilon_2 = 2e-04$.  In this example, we set $\alpha^k = 1$. }
\label{fig:rates_ex2}
\end{figure}

We now investigate the number of linear solves required per proximal iteration $k$ for different stopping criteria for Newton's method.  Here, we choose $\alpha^k = 1$, $p=1$, and $\mathcal{U}(\psi) = \underline u + \exp(\psi)$. We use a quadrature rule with the element vertices and we set $\epsilon_1= \epsilon_2 = 0$ in \eqref{eq:extra_stablization}. The algorithm is stopped when $\|u_h^k-u_h^{k-1}\|_{L^2(\Omega)} < 10^{-6}$. 

In \Cref{table:linearized}, we use only one linear solve per proximal step and see that the required number of subproblems does not change as the mesh is refined. However, the approximation $\mathcal{U}(\psi_h)$ stops converging for the last mesh refinement in this case. Decreasing the tolerance on the successive difference of iterates to $\|u_h^k-u_h^{k-1}\|_{L^2(\Omega)} < 10^{-8}$ fixes this issue but requires more proximal steps.  In \Cref{table:accurate_Newton} and \Cref{table:in_between},  we allow for multiple linear solves per nonlinear problem, stopping Newton's method when the square root of the inner product between the residual and the linear update reaches a certain tolerance  \cite{schoberl2014c++}. In \Cref{table:accurate_Newton}, we set this tolerance to $10^{-10}$ and observe that the number of linear solves does not change with mesh refinement, and $\mathcal{U}(\psi_h)$ converges with $h$. Finally, in \Cref{table:in_between}, we adaptively update the tolerance for Newton's method based on the successive difference between the two proximal iterates; i.e, we set the Newton tolerance to be $\min(0.1,\|u_h^k - u_h^{k-1}\|)$.  Here, fewer linear solvers are needed, and $\mathcal{U}(\psi_h)$ still converges with $h$. 
This leads us to conclude that future research is required to optimize the nonlinear solvers used in proximal Galerkin methods.
\begin{table}
\footnotesize{
\begin{tabular}{||c|c|c|c|c|c||}
\hline
\multicolumn{6}{|| c || }{$\|A^{-1/2} (\bm q-\bm q^k_h)\|_{L^2(\Omega)}$ } \\
\hline
$k$ & Linear solves &  $h$ & $h/2$ & $h/4$  & $h/8$ \\
\hline  
1  &  1 & 5.163e-01  &5.163e-01 &5.163e-01& 5.163e-01 \\ 
\hline 
2 &1  &4.543e-01 &4.543e-01&  4.543e-01 & 4.543e-01 \\
\hline 
3 & 1   & 2.567e-01  &2.567e-01&  2.568e-01& 2.568e-01 \\  
\hline
\vdots  & \vdots  & \vdots   & \vdots   &\vdots  &\vdots   \\ 
\hline 
\multicolumn{2}{||c|}{Total iterations $k$} & 14 & 13 & 13 & 13  \\ 
\hline 
\multicolumn{2}{||c|}{Total linear solves} & 14 & 13 & 13 & 13 \\ 
\hline
\multicolumn{2}{||c|}{Final error: $\|A^{-1/2}(\bm q - \bm q_h^k)\|_{L^2(\Omega)}$} &5.240e-03 & 2.054e-03& 6.852e-04 & 2.774e-04 \\ 
\hline 
\multicolumn{2}{||c|}{Final error: $\|u  - \mathcal{U}(\psi_h^k)\|_{L^2(\Omega)}$} &7.114e-04 & 1.847e-04& 4.660e-05 & 3.062e-03 \\ 
\hline
\end{tabular}
}
\caption{ (One Newton step per proximal step). Number of linear solves  needed per proximal Galerkin iteration $k$ and the corresponding error $\|A^{-1/2} (\bm q- \bm q_h^k)\|_{L^2(\Omega)}$.  }
\label{table:linearized}
\end{table}
\begin{table}
\footnotesize{
\begin{tabular}{||c|c|c|c|c|c||}
\hline
\multicolumn{6}{|| c || }{$\|A^{-1/2} (\bm q-\bm q^k_h)\|_{L^2(\Omega)}$ } \\
\hline
$k$ & Linear solves &  $h$ & $h/2$ & $h/4$  & $h/8$ \\
\hline  
1  &  7 & 3.408e-01  & 3.408e-01 &3.408e-01 & 3.408e-01 \\ 
\hline 
2 &6-7 &8.277e-02 &8.272e-02&   8.271e-02 & 8.271e-02\\
\hline 
3 & 7   & 2.298e-02  &2.247e-02&  2.242e-02& 2.242e-02 \\  
\hline
\vdots  & \vdots  & \vdots   & \vdots   &\vdots  &\vdots   \\ 
\hline 
\multicolumn{2}{||c|}{Total iterations $k$} & 12 & 12 & 12 & 11  \\ 
\hline 
\multicolumn{2}{||c|}{Total linear solves} & 59 & 62 & 63 & 63 \\ 
\hline
\multicolumn{2}{||c|}{Final error: $\|A^{-1/2}(\bm q - \bm q_h^k)\|_{L^2(\Omega)}$} &5.219e-03 & 2.034e-03& 6.633e-04 & 2.381e-04 \\ 
\hline 
\multicolumn{2}{||c|}{Final error: $\|u  - \mathcal{U}(\psi_h^k)\|_{L^2(\Omega)}$} &7.104e-04 & 1.842e-04& 4.640e-05 & 1.166e-05 \\ 
\hline
\end{tabular}
}
\caption{(Newton tolerance $= 10^{-10}$). Number of linear solves  needed per proximal Galerkin iteration $k$ and the corresponding error $\|A^{-1/2} (\bm q- \bm q_h^k)\|_{L^2(\Omega)}$.  }
\label{table:accurate_Newton}
\end{table}

\begin{table}
\footnotesize{
\begin{tabular}{||c|c|c|c|c|c||}
\hline
\multicolumn{6}{|| c || }{$\|A^{-1/2} (\bm q-\bm q^k_h)\|_{L^2(\Omega)}$ } \\
\hline
$k$ & Linear solves &  $h$ & $h/2$ & $h/4$  & $h/8$ \\
\hline  
1  &  3 & 3.389e-01   & 3.389e-01 &3.389e-01 & 3.389e-01 \\ 
\hline 
2 & 3  &8.059e-02 &8.055e-02&   8.054e-02 &  8.054e-02\\
\hline 
3 & 3    & 2.265e-02  &2.213e-02&  2.208e-02& 2.208e-02 \\  
\hline 
\vdots  & \vdots  & \vdots   & \vdots   &\vdots  &\vdots   \\ 
\hline 
\multicolumn{2}{||c|}{Total iterations $k$} & 12 & 12 & 12 & 11  \\ 
\hline 
\multicolumn{2}{||c|}{Total linear solves} & 27 & 27 & 28 & 27 \\ 
\hline
\multicolumn{2}{||c|}{Final error: $\|A^{-1/2}(\bm q - \bm q_h^k)\|_{L^2(\Omega)}$} &5.219e-03 & 2.034e-03& 6.633e-04 & 2.381e-04 \\ 
\hline 
\multicolumn{2}{||c|}{Final error: $\|u  - \mathcal{U}(\psi_h^k)\|_{L^2(\Omega)}$} &7.104e-04 & 1.842e-04& 4.641e-05 & 1.166e-05 \\ 
\hline
\end{tabular}
}
\caption{ (Newton tolerance = $\min(0.1, \|u_h^{k} - u_h^{k-1}\|_{L^2(\Omega)}))$. Number of linear solves needed per proximal Galerkin iteration $k$ and the corresponding error $\|A^{-1/2} (\bm q- \bm q_h^k)\|_{L^2(\Omega)}$.  }
\label{table:in_between}
\end{table}
\end{example}

\bibliographystyle{plain}
\bibliography{references}

\appendix    
\section{Proof of Lemma \ref{lemma:inf_sup}} \label{sec:proof_Ah}
\begin{proof}
First,  observe that 
\begin{equation}
\mathcal{A}_h ((\bm q_h, u_h, \hat u_h), (\bm q_h, u_h, \hat u_h))
=
\|A^{-1/2} \bm q_h\|^2_{L^2(\Omega)}  \label{eq:inf_sup_0}
\end{equation}
From \cite[Lemma 3.1]{egger2010hybrid}, see also \cite[lemma 3.1]{gao2018error} for a detailed proof, there exists $\bm{\tau}_h \in \bm \Sigma_h^p$ such that for any $T \in \mathcal{T}_h$ and for all $ \bm p \in \mathbb{P}^{p-1}(T)^d$ and $ q \in \mathbb{P}^{p}(\partial T), $
\begin{align}
\label{eq:def_test_infsup}
(\bm \tau_h, \bm p)_T + \langle \bm \tau_h \cdot \bm n , q \rangle_{\partial T} = (A^{-1} \nabla_h u_h, \bm p)_{T}
+  
\langle h_T^{-1} (\hat u_h -  u_h) , q \rangle_{\partial T},  
\end{align}
and 
\begin{equation}
\|\bm \tau_h \|^2_{\mathcal{T}_h } \leq c_I ( \|A^{-1} \nabla_h u_h\|^2_{\mathcal{T}_h} +  \| h_T^{-1/2} ( u_h - \hat u_h ) \|^2_{\partial \mathcal{T}_h} ).    \label{eq:norm_tau}
\end{equation}
Then, for $\gamma > 0$, we have that 
\begin{equation}
\mathcal{B}_h( \gamma \bm \tau_h, (u_h , \hat u_h)) = \gamma ( \|A^{-1/2} \nabla_h u_h\|^2_{\mathcal{T}_h} +  \|h_T^{-1/2} ( u_h - \hat u_h ) \|^2_{\partial \mathcal{T}_h} )
\end{equation}
It then follows that 
\begin{equation}
\begin{aligned}
\mathcal{A}_h ((\bm q_h, u_h, \hat u_h), (\gamma \bm \tau_h, 0,0 )) & - (A^{-1} \bm q_h, \gamma \bm \tau_h) \\ & =\gamma ( \|A^{-1/2} \nabla_h u_h\|^2_{\mathcal{T}_h} + \| h_T^{-1/2} ( u_h - \hat u_h ) \|^2_{\partial \mathcal{T}_h}).  \label{eq:inf_sup_1}
\end{aligned}
\end{equation}
Using Cauchy-Schwarz and Young's inequalities, we obtain that
 \begin{align*}
    | (A^{-1} \bm q_h, \gamma \bm \tau_h) |  & \leq  \gamma \|A^{-1/2}\|_{L^{\infty}(\Omega)} \|A^{-1/2}\bm q_h\|_{\mathcal T_h} \| \bm \tau_h \|_{\mathcal{T}_h} \\ & \leq \frac{1}{2} \|A^{-1/2}\bm q_h\|_{\mathcal T_h}^2    +\frac{\gamma^2 c_I c_A}{2} (\|A^{-1/2} \nabla_h u_h\|^2_{\mathcal{T}_h} + \|h_T^{-1/2} ( u_h - \hat u_h ) \|^2_{\partial \mathcal{T}_h})
 \end{align*}
 Thus, choosing $\gamma = 1/(c_Ic_A)$  and adding \eqref{eq:inf_sup_0} with \eqref{eq:inf_sup_1} yield 
 \begin{equation}
  \mathcal{A}_h ((\bm q_h, u_h, \hat u_h), (\gamma \bm \tau_h + \bm q_h , u_h,\hat u_h )) \gtrsim   \vvvert(\bm q_h, u_h, \hat u_h)\vvvert^2. \label{eq:inf_supAh} 
 \end{equation}
 In addition, observe that with \eqref{eq:norm_tau} and triangle inequality, we easily derive that 
 \begin{equation} 
 \vvvert (\gamma \bm \tau_h + \bm q_h , u_h,\hat u_h ) \vvvert \lesssim \vvvert (\bm q_h, u_h, \hat u_h) \vvvert. \label{eq:inf_sup_stab_bound}
 \end{equation}
 
      The proof of \eqref{eq:continuity_A} follows from applications of discrete trace inequalities, see \cite[Proposition 3.4]{egger2010hybrid}. We omit the details.  
 \end{proof} 
 
\section{Proof of \Cref{cor:err_estimate}}\label{appendix:error_estimate}

\begin{proof} (\Cref{cor:err_estimate})
Select $\bm r_h = \bm \Pi_h \bm \sigma (u)$ where $\bm \Pi_h$ is the local $L^2$-projection onto the space $\mathbb{P}^0(T)^d$. Note that $\bm r_h \in \bm \Sigma_h^p$ and $\nabla \cdot \bm r_h = 0$ \rami{in $T$ since $A$ is assumed to be piecewise constant}. By definition of the norm $\|\cdot\|_{\bm S^d}$ \eqref{eq:def_norm_Sd} and stability of the $L^2$-projection (averaging operator) in $L^{\rho}(T)$, we obtain that 
\begin{align}
\|\bm \sigma(u) - \bm r_h \|_{\bm S^d} &\leq h^s \|A^{-1/2} ( \bm \sigma(u) - \bm r_h) \|_{L^{\rho}(\Omega)} + h \|A^{-1/2} \nabla \cdot \bm \sigma(u)\|_{L^2(\Omega)} 
\\ & \lesssim h^{s} (|u|_{H^{1+s}(\Omega)} + h^{1-s}\|\nabla \cdot (\bm \sigma(u)) \|_{L^2(\Omega)}). \nonumber
\end{align}
In the above, we also used the embedding $H^{s}(\Omega)^d \hookrightarrow L^{\rho}(\Omega)^d$. Further, according to \cite[Lemma 7.1]{ern2017finite}, we also have a Poincar\'e inequality in $H^s(T)$: 
\begin{equation}
\|\bm \sigma(u) - \bm r_h \|_{L^2(T)} \lesssim h_T^s |\bm \sigma(u)|_{H^s(T)} \lesssim h_T^s \|u\|_{H^{1+s}(T)}. 
\end{equation}
For $v_h$, select $v_h = \mathcal{I}_h u $ a quasi-interpolant of $u$ \cite[Section 6]{ern2017finite} which satisfies 
\begin{equation}
  \|u - \mathcal{I}_h u \|_{L^2(T)} + h \|\nabla (u - \mathcal{I}_h u)\|_{L^2(T) } \lesssim h^{1+s} |u|_{H^{1+s}(\Delta_T)}, 
\end{equation}
where $\Delta_T$ is a macro-element. We may also select $\hat v_h = \mathcal{I}_h u \in M_{h,0}^p$ since it is single valued on interior facets and $\mathcal{I}_h u \vert_{\partial \Omega} = 0$ .  This implies that the third term in $\vvvert (\bm \sigma (u) - \bm r_h , u-v_h, u - \hat v_h)  \vvvert$ is zero. Collecting the above observations and summing over elements yield 
\begin{align}
\mathcal{E}(u) \lesssim  h^{s} (|u|_{H^{1+s}(\Omega)} + h^{1-s}\|\nabla \cdot (\bm \sigma(u)) \|_{L^2(\Omega)}). 
\end{align}
Proceeding, we bound $\mathcal{E}(\delta)$. Selecting $\varphi_h = \Pi_h \delta$, the $L^2$- projection onto $V_h^p$, we derive 
\begin{align}
\|\delta - \Pi_h \delta\|_{H^1(\mathcal{T}_h)^*}  = \sup_{w \in H^1(\mathcal{T}_h) } \frac{(\delta - \Pi_h \delta, w - \Pi_h w ) }{\|w\|_{\mathrm{DG}}} 
\end{align}
With the approximation properties of the $L^2$ projection, we  estimate 
\begin{equation}
    \|w -  \Pi_h w \|_{L^2(\Omega)}
    \leq 
    h \|\nabla_h  w\|_{\mathcal{T}_h} .    
\end{equation}
From the $L^2$ stability of $\Pi_h$, it follows that 
\[\|\delta - \Pi_h \delta\|_{H^1(\mathcal{T}_h)^*} \lesssim h \|\delta\|_{L^2(\Omega)}. \]
The second term in $\mathcal{E}(\delta)$ is bounded by approximation properties of the $L^2$-projection.
\end{proof}

\end{document}